\newcommand{\sect}[1]{\section{#1}\setcounter{equation}{0}}
\newcommand{\subsect}[1]{\subsection{#1}}
\font\mbn=msbm10 scaled \magstep1
\font\mbs=msbm7 scaled \magstep1
\font\mbss=msbm5 scaled \magstep1
\newcommand{\Di}      {\mathbb{D}}
\newcommand{\RR}       { \mathbb{R}}
\newcommand{\N}       { \mathbb{N}}
\newcommand{\Z}        {\mathbb{Z}  } 
\newcommand\Co           {{\mathbb C}}
\newtheorem{Th}{Theorem}[section]
\newtheorem{Lm}[Th]{Lemma}
\newtheorem{C}[Th]{Corollary}
\newtheorem{Proposition}[Th]{Proposition}
\newtheorem{R}[Th]{Remark}
\newtheorem*{Problem}{Sz.-Nagy Problem}
\newtheorem*{Convention}{Convention}
\newtheorem{E}[Th]{Example}
\newtheorem*{P2}{Completion Problem}
\begin{document}
\title[Oka Principle on  the Maximal Ideal Space of  ${\mathbf H^\infty}$]{Oka Principle on the Maximal Ideal Space of  ${\mathbf H^\infty}$}

\author{Alexander Brudnyi} 
\address{Department of Mathematics and Statistics\newline
\hspace*{1em} University of Calgary\newline
\hspace*{1em} Calgary, Alberta\newline
\hspace*{1em} T2N 1N4}
\email{abrudnyi@ucalgary.ca}
\keywords{Oka principle, Maximal ideal space of $H^\infty$, Grauert theorem, Ramspott theorem}
\subjclass[2010]{Primary 30H05. Secondary 32L05.}

\thanks{Research supported in part by NSERC}

\begin{abstract}
The classical Grauert and Ramspott theorems constitute the foundation of the Oka principle on Stein spaces. In this paper we establish analogous results on the maximal ideal space $M(H^\infty)$ of the Banach algebra $H^\infty$ of bounded holomorphic functions on the open unit disk $\Di\subset\Co$. We illustrate our results by some examples and applications
to the theory of operator-valued $H^\infty$ functions.
 \end{abstract}

\date{}

\maketitle

\sect{Introduction}
Let $H^\infty$ be the Banach algebra of bounded holomorphic functions in the open unit disk $\Di\subset\Co$ equipped with pointwise multiplication and supremum norm. In this paper, following our earlier work \cite{Br1}, \cite{Br2}, \cite{Br3}, we investigate further the relationship between certain analytic and topological objects on the {\em maximal ideal space} of $H^\infty$. The subject is intertwined with the area of the so-called {\em Oka Principle} which in a broad sense means that on Stein spaces (closed complex subvarieties of complex coordinate  spaces) cohomologically formulated analytic problems have only topological obstructions (for recent advances in the theory see, e.g., survey \cite{FL}).
This principle can be transferred, to some extent, to the theory of commutative Banach algebras to reveal (via the Gelfand transform) some connections between algebraic structure of a Banach algebra and topological properties of its maximal ideal space. (The most general results there are due to Novodvorski, Taylor and Raeburn, see, e.g., \cite{R} and references therein.)

Recall that for a unital commutative complex Banach algebra $A$ the maximal
ideal space $M(A)$ is the set of all nonzero homomorphisms $A\rightarrow\Co$.
Since norm of each $\varphi\in M(A)$ is at least one, 
$M(A)$ is a subset of the closed unit ball of the dual space $A^{*}$. It is a compact Hausdorff space in the weak$^*$ topology induced by
$A^{*}$ (called the {\em Gelfand topology}). Let $C(M(A))$ be
the Banach algebra of continuous complex-valued functions on $M(A)$ equipped with 
supremum norm. An element $a\in A$ can be thought of as a
function in $C(M(A))$ via the {\em Gelfand transform}
$\, \hat{} : A\rightarrow
C(M(A))$, $\hat a(\varphi):=\varphi(a)$. Map
$\,\hat{}\ $ is a nonincreasing-norm morphism
of Banach algebras. Algebra $A$ is called {\em uniform} if the Gelfand transform is an isometry (as for $H^\infty$).

For $H^\infty$ evaluation at a point of $\Di$ is an element of $M(H^\infty)$, so $\Di$ is naturally embedded into $M(H^\infty)$ as an open subset. The famous Carleson corona theorem \cite{C} asserts that $\Di$ is dense in $M(H^\infty)$.
In general, if a Stein space $X$ is embedded as an open dense subset into a normal topological space $\bar X$, it is natural to introduce an analog of the complex analytic structure on $\bar X$ regarding it as a ringed space with the structure sheaf $\mathcal O_{\bar X}$  of germs of complex-valued continuous functions on open subsets $U$ of $\bar X$ whose restrictions to $U\cap \bar X$ are holomorphic.  
Then one can ask whether analogs of classical results of complex analysis (including
the Oka Principle) are valid on $(\bar X,\mathcal O_{\bar X})$. Some results in this direction for $\bar X$ being a fiberwise compactification of an unbranched covering $X$ of a Stein manifold have been obtained in \cite{BK1}, \cite{BK2}.  Another example, $X=\Di$ and $\bar X=M(H^\infty)$, has been the main subject of papers \cite{Br1}, \cite{Br2}, \cite{Br3} where the Stein-like theory analogous to the classical complex function theory on Stein spaces (see, e.g., \cite{GR}) has been developed and then applied to
the celebrated Sz.-Nagy operator corona problem \cite{SN}.  In the present paper we continue this line of research and establish (in the framework of the Oka Principle on $M(H^\infty)$) the Grauert and Ramspott type theorems (see \cite{Gra}, \cite{Ra}) for holomorphic principal bundles on $M(H^\infty)$ and holomorphic maps of $M(H^\infty)$ into complex homogeneous spaces. Then we formulate and prove some applications of the obtained results, in particular, to the theory of operator-valued $H^\infty$ functions. \smallskip

The paper is organized as follows.\smallskip

Section 2 is devoted to the Oka Principle for holomorphic principal bundles. We work in a more general setting of unital subalgebras $H_I^\infty\subseteq H^\infty$ generated by closed ideals $I\subset H^\infty$. In Proposition \ref{prop1.1} we describe the topological structure of maximal ideal spaces $M(H_I^\infty)$. Then we define holomorphic principal bundles on $M(H_I^\infty)$ with fibres complex Banach Lie groups and prove Grauert-Bungart-type theorems for them (Theorems \ref{teo1.7}, \ref{teo2.9}) asserting that  inclusion of the category of holomorphic principal bundles on $M(H_{I}^\infty)$ to the category of topological ones induces a bijective map between the corresponding sets of isomorphism classes of bundles. 
Further, in Theorem \ref{teo1.10} we prove  that  holomorphic principal bundles on $M(H_I^\infty)$ are isomorphic to holomorphic bundles pulled back from certain Stein domains by maps of the inverse limit construction describing $M(H_I^\infty)$ as the inverse limit of the inverse system of maximal ideal spaces of finitely generated subalgebras of $H_I^\infty$ (see, e.g., \cite{Ro}). This establishes a connection between our intrinsic analytic theory of $M(H_I^\infty)$ and that of Novodvorski, Taylor and Raeburn.\smallskip

Section 3 deals with the Oka Principle for holomorphic maps from $M(H_I^\infty)$ to complex Banach homogeneous spaces.

In Section~3.1 we prove a Ramspott-type theorem (Theorem \ref{teo3.5}) providing a one-to-one correspondence between sets of path-connected components of spaces of holomorphic and continuous maps of $M(H_I^\infty)$ into complex Banach homogeneous spaces $X$.  In turn,  Theorem \ref{teo3.6}
asserts that up to homotopy equivalence holomorphic maps $M(H_I^\infty)\rightarrow X$  can be uniformly approximated by holomorphic maps into $X$ pulled back from certain Stein domains by the inverse limit projections of Theorem \ref{teo1.10}.

Section~3.2 is devoted to a Runge-type theorem 
(Theorem \ref{nonlinrunge}) for holomorphic maps from some subsets of $M(H_I^\infty)$ to complex Banach homogeneous spaces $X$ and a nonlinear interpolation problem for such maps  (Theorem \ref{te2.2}) going back to Carleson \cite{C} ($X=\Co$) and Treil \cite{T6} ($X=\Co^*\, (:=\Co\setminus\{0\}$)). 
For instance, we prove that if $X$ is simply connected, then a holomorphic map into $X$ defined on a neighbourhood of a holomorphically convex subset $K\subset M(H_I^\infty)$ can be uniformly approximated on $K$ by holomorphic maps $M(H_I^\infty)\rightarrow X$ and that if $X$ is a complex Banach Lie group and $K$ is the zero locus of the image under the Gelfand transform of $I$ in $C(M(H_I^\infty))$, then for each holomorphic map $F:U\rightarrow X$ defined on a neighbourhood $U$ of $K$ there is a holomorphic map $\tilde F: M(H_I^\infty)\rightarrow X$ such that $\tilde F|_K=F|_K$.
\smallskip

Section 4 contains some applications and examples of the obtained results. 

In Section 4.1 we describe the structure of spaces of holomorphic maps of $M(H_I^\infty)$ into complex flag manifolds and tori, the latter by means of  BMOA functions (Theorem \ref{bmoa1}). 

In Section 4.2 we prove some results about holomorphic maps from $M(H_I^\infty)$ into the space of idempotents of a complex unital Banach algebra $\mathfrak A$ (Theorem \ref{teo4.1}).
In particular, we show that in some cases (e.g., if $\mathfrak A$ is the algebra of bounded linear operators on a Hilbert or $\ell^p$ space) such
$\mathfrak A$-valued holomorphic idempotents can be transformed by appropriate holomorphic similarity transformations to constant idempotents of $\mathfrak A$. 

In Section 4.3 we study analogs of the Sz.-Nagy operator corona problem for holomorphic maps of $M(H_I^\infty)$ into the space of left-invertible elements of a complex Banach algebra $\mathfrak A$ with unit $1_{\mathfrak A}$. In particular, we solve a general problem considered by Vitse \cite{V} proving that 
a holomorphic map $F$ on $\Di$ with a relatively compact image in $\mathfrak A$ has a holomorphic left inverse with a relatively compact image $G$ (i.e., such that $G(z)H(z)=1_{\mathfrak A}$ for all $z\in \Di$) if and only if for every $z\in \Di$ there is a left inverse $G_z\in\mathfrak A$ of $F(z)$ such that the family $\{G_z\}_{z\in\Di}\subset\mathfrak A$ is uniformly bounded.

Finally, in Example \ref{ex4.11} we discuss a more general than the Sz.-Nagy problem, the so-called Completion Problem, asking about extension of a bounded $H^\infty$ operator-valued function to an invertible one (see, e.g., \cite{T5} and references therein).\smallskip

Sections 5--12 contain proofs of results of the paper.

\sect{Oka Principle for Principal Bundles}
\subsection{Maximal Ideal Spaces of Algebras ${\mathbf H_I^\infty}$.} We work in a more general setting of (uniform) Banach algebras $H_I^\infty:=\Co+I$, where $I\subset H^\infty$ is a closed ideal. Such algebras arise naturally in the theory of bounded holomorphic functions in balls and polydisks, see \cite{AM1}, \cite{AM2}. They were also studied in the framework of the theory of univariate $H^\infty$ functions, see \cite{Gam}, \cite{MSW}. 

The corona theorem for algebra $H_I^\infty$ can be derived from the Carleson corona theorem, see \cite[Th.\,1.6]{MSW}. It states that for a $n$-tuple of functions  $f_1,\dots, f_n\in H_I^\infty$, $n\in\mathbb N$, satisfying the corona condition
\begin{equation}\label{e1.1}
\sum_{j=1}^n |f_j(z)|\ge\delta>0\quad {\rm for\ all}\quad z\in\Di,
\end{equation}
there exist functions $g_1,\dots, g_n\in H_I^\infty$ such that 
\begin{equation}\label{bezout}
\sum_{j=1}^n f_j g_j=1\quad {\rm on}\quad \Di.
\end{equation}
From here using some basic results due to Su\'{a}rez \cite{S1} and Treil \cite{T6} on the structure of $M(H^\infty)$ one obtains the following topological description of $M\bigl(H_{I}^\infty\bigr)$.

For an ideal $I\subset H^\infty$, we define 
\begin{equation}\label{e1.2}
{\rm hull}(I):=\{x\in M(H^\infty)\, :\,  \hat f(x)=0\quad \forall f\in I\}.
\end{equation}
\begin{Proposition}\label{prop1.1}
{\rm (a)} There is a continuous surjective map $Q_Z: M(H^\infty)\rightarrow M(H_{I}^\infty)$, $Z:={\rm hull}(I)$, sending $Z$ to a point and one-to-one outside of $Z$.\smallskip

\noindent {\rm (b)} Covering dimension ${\rm dim}\, M(H_{I}^\infty)=2$. \smallskip
 
\noindent {\rm (c)} \v{C}ech cohomology group $H^2(M(H_{I}^\infty),\mathbb Z)=0$.
 \end{Proposition}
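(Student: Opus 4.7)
The plan is to construct $Q_Z$ as the transpose of the inclusion $H_I^\infty\hookrightarrow H^\infty$, namely $Q_Z(\varphi):=\varphi|_{H_I^\infty}$; this is automatically continuous in the Gelfand topology. For (a), note that if $\varphi\in Z$ then every $f\in I$ satisfies $\varphi(f)=0$, so $\varphi|_{H_I^\infty}$ reduces to the single character $\psi_0\colon c+f\mapsto c$ on $H_I^\infty=\Co+I$; hence $Q_Z(Z)=\{\psi_0\}$. For injectivity outside $Z$, suppose $\varphi_1,\varphi_2\in M(H^\infty)\setminus Z$ agree on $H_I^\infty$ and choose $f\in I$ with $\varphi_1(f)=\varphi_2(f)\neq 0$; for any $h\in H^\infty$ the product $hf$ lies in $I\subset H_I^\infty$, so $\varphi_1(h)\varphi_1(f)=\varphi_1(hf)=\varphi_2(hf)=\varphi_2(h)\varphi_2(f)$, forcing $\varphi_1(h)=\varphi_2(h)$. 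Surjectivity follows from the corona theorem for $H_I^\infty$ recorded in \eqref{e1.1}--\eqref{bezout}: it is equivalent to the density of $\Di$, via point evaluations, in $M(H_I^\infty)$, so any $\psi\in M(H_I^\infty)$ is a limit $\psi=\lim\mathrm{ev}_{z_\alpha}$ with $z_\alpha\in\Di$; a subnet of $\{z_\alpha\}\subset M(H^\infty)$ converges by compactness to some $\varphi$ with $Q_Z(\varphi)=\psi$.

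For (b), the lower bound is immediate: $Q_Z$ restricts to a homeomorphism of the open set $\Di\setminus(Z\cap\Di)$ onto an open subset of $M(H_I^\infty)$ (the common zero locus $Z\cap\Di\subset\Di$ being discrete for a nonzero closed ideal $I$), so $\dim M(H_I^\infty)\ge 2$. The upper bound rests on Su\'arez's theorem $\dim M(H^\infty)=2$ \cite{S1}: the open set $M(H_I^\infty)\setminus\{\psi_0\}$ is homeomorphic via $Q_Z$ to $M(H^\infty)\setminus Z$ and hence has covering dimension at most $2$; arbitrarily small closed neighbourhoods of $\psi_0$ arise as images of closed saturated neighbourhoods of $Z$ in $M(H^\infty)$, and since saturated open refinements have the same order upstairs and downstairs under $Q_Z$, Su\'arez's dimension bound transfers to such neighbourhoods. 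The compact Hausdorff sum theorem then glues the two pieces to yield $\dim M(H_I^\infty)\le 2$.

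For (c), the plan is to pass through the exponential sheaf sequence $0\to\Z\to\mathcal C\to\mathcal C^{*}\to 0$ of continuous $\Co$- and $\Co^{*}$-valued functions on the paracompact space $M(H_I^\infty)$: since $\mathcal C$ is fine, $H^2(M(H_I^\infty),\Z)\cong H^1(M(H_I^\infty),\mathcal C^{*})$ is the group of isomorphism classes of topological complex line bundles on $M(H_I^\infty)$, so it suffices to show every such bundle $L$ is trivial. Pulling $L$ back under $Q_Z$ to $M(H^\infty)$ and invoking the Su\'arez--Treil vanishing $H^2(M(H^\infty),\Z)=0$ \cite{S1},\cite{T6} yields a nowhere vanishing section $s$ of $Q_Z^{*}L$. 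Descending $s$ to a global section of $L$ amounts to multiplying $s$ by a nowhere vanishing continuous function on $M(H^\infty)$ whose restriction to $Z$ cancels $s|_Z\colon Z\to L_{\psi_0}\setminus\{0\}$. The main obstacle is precisely this extension problem: each continuous $\Co^{*}$-valued function on $Z$ must be realized as the restriction of a continuous $\Co^{*}$-valued function on $M(H^\infty)$. A direct long-exact-sequence attack is circular (surjectivity of the restriction $H^1(M(H^\infty),\Z)\to H^1(Z,\Z)$ is equivalent to $H^2(M(H_I^\infty),\Z)=0$), so I would either build the extension explicitly using the Gleason-part description of $M(H^\infty)$ from \cite{S1} together with the Carleson corona theorem, or use the inverse limit presentation of $M(H_I^\infty)$ (cf.\ Theorem~\ref{teo1.10}) together with continuity of \v Cech cohomology: each finitely generated subalgebra has maximal ideal space the polynomial hull in some $\Co^n$ of a holomorphic image of $\Di$, a $1$-complex-dimensional Stein compact with $H^2(-,\Z)=0$, and the direct limit of these vanishings is the desired conclusion.
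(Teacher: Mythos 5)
Parts (a) and (b) of your proposal are correct and follow essentially the same route as the paper (your injectivity argument in (a), multiplying a test $h\in H^\infty$ by a single $f\in I$ with $\varphi_1(f)\neq0$, is a slight — and clean — variation on the paper's use of separating functions, but there is no essential difference; for (b) both arguments exhaust $M(H_I^\infty)$ by $\{z\}$ and compacta in its complement and invoke Su\'arez's $\dim M(H^\infty)=2$ plus standard dimension theory).

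For (c), however, your proposal has a genuine gap. You correctly reduce the vanishing of $H^2(M(H_I^\infty),\Z)$ to the extension problem for $\Co^*$-valued continuous functions from $Z$ to $M(H^\infty)$, and you correctly observe that this is equivalent to surjectivity of the restriction $H^1(M(H^\infty),\Z)\rightarrow H^1(Z,\Z)$. But you then dismiss this as ``circular'' and look for a detour, when in fact this surjectivity is precisely what is available and is the crux of the paper's argument: it is Su\'arez's cohomological reformulation (\cite{S1}, Th.\,1.3) of Treil's theorem (\cite{T6}) that the Bass stable rank of $H^\infty$ equals one. Treil's theorem is an independent, deep input — not a restatement of the conclusion — so there is no circularity. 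Once one has that surjectivity plus $H^2(M(H^\infty),\Z)=0$ (Su\'arez, \cite{S1}, Cor.\,3.9), the long exact sequence of the pair $(M(H^\infty),Z)$ gives $H^2(M(H^\infty),Z;\Z)=0$, and strong excision for the quotient map $Q_Z$ identifies this with $H^2(M(H_I^\infty),\Z)$. Your two fallback alternatives do not fill the gap: the Gleason-part construction is not carried out, and the inverse-limit route rests on the unproved claim that $H^2(M(A_\alpha),\Z)=0$ for each finitely generated $A_\alpha$; the polynomially convex hull of the (closure of the) image of a bounded holomorphic map $\Di\to\Co^n$ need not be a one-dimensional analytic set, and no justification is offered for why its second \v{C}ech cohomology should vanish — that claim is of the same order of difficulty as the statement being proved. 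The missing ingredient in your argument is exactly the Treil stable-rank theorem.
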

\noindent (Recall that for a normal space $X$, $\text{dim}\, X\le n$ if every finite open cover
of $X$ can be refined by an open cover whose order $\le n + 1$. If $\text{dim}\, X\le n$ and
the statement $\text{dim}\, X\le n-1$ is false, we say that $\text{dim}\, X = n$.)
\begin{R}\label{rem1.2}
{\rm
(1) Part (a) of the proposition says that $M(H_{I}^\infty)$ is homeomorphic to the (Alexandroff) one-point compactification of space $M(H^\infty)\setminus Z$ and $\Di\setminus Z$ is an open dense subset of $M(H_{I}^\infty)$. 
\smallskip

\noindent (2)  Proposition \ref{prop1.1} implies that algebra $H_I^\infty$ is {\em projective free} (i.e., every projective $H_I^\infty$-module is free), see, e.g., \cite[Cor.\,1.4]{BS}.
}
\end{R}

A closed subset $Z\subset M(H^\infty)$ such that $Z={\rm hull}(I)$  for an ideal $I\subset H^\infty$ is called a {\em hull}. For a hull $Z$ by ${\mathscr A}_Z$ we denote the partially ordered by inclusion set of all algebras $H_I^\infty$ for which ${\rm hull}(I)=Z$.  E.g., if $Z=\emptyset$, then $\mathscr A_Z=\{H^\infty\}$. But, in general, set ${\mathscr A}_Z$ may be even uncountable.
\begin{E}\label{ex1.3}
{\rm Let $v$ be a (unbounded) holomorphic function on $\Di$ such that $u:=e^v\in H^\infty$ is an inner function.
For each $\alpha\in (0,1)$ we define the inner function $u_\alpha:=e^{\alpha v}\in H^\infty$. Let $I(u_\alpha)\subset H^\infty $ be the principal ideal generated by $u_\alpha$.
Then ${\rm hull}(I(u_\alpha))={\rm hull}(I(u_1))=:Z$ is a nonempty compact subset of $M(H^\infty)\setminus\Di$.  Moreover, all ideals $I(u_\alpha)$ are closed and $ I(u_{\alpha})\subsetneq I(u_\beta)$ for $\beta<\alpha$. Thus, the corresponding set ${\mathscr A}_Z$ contains a subset of the cardinality of the continuum. }
\end{E}
Each set $\mathscr A_Z$ contains the unique maximal subalgebra $H^\infty_{I(Z)}$, where
$I(Z):=\{f\in H^\infty\,:\,\hat f(x)=0\quad \forall x\in Z\}$.
For instance, if $Z$ is a single point, then $I(Z)\subset H^\infty$ is a maximal ideal and $H^\infty_{I(Z)}= H^\infty$. In turn, if $Z$ is the zero locus of $\hat b$, where $b$ is a Blaschke product with simple zeros, then $H^\infty_{I(Z)}=H^\infty_{I(b)}$. 

\begin{Convention} {\rm In what follows we assume that map $Q_Z$ of Proposition \ref{prop1.1} satisfies \penalty-10000
$Q_Z|_{M(H^\infty)\setminus Z}={\rm id}$. Then 
maximal ideal spaces of algebras in $\mathscr A_Z$ coincide with $M(H_{I(Z)}^\infty)$. This particular space will be denoted by $M(\mathscr A_Z)$.}
\end{Convention}

\subsection{ Oka Principle for Holomorphic Principal Bundles on  $\mathbf{M({\mathscr A}_Z)}$.}  
Let $U\subset M({\mathscr A}_Z)$ be an open subset and $X$ be a complex Banach manifold (i.e., a complex manifold modelled on a complex Banach space).  
\smallskip

A  continuous map $f:U\rightarrow X$ is said to be {\em holomorphic} (written as $f\in \mathcal O(U,X)$) if restriction $f|_{U\cap( \Di\setminus Z)}$ is a holomorphic map of complex manifolds. 

For $X=\Co$ we set $\mathcal O(U):=\mathcal O(U,\Co)$.\smallskip

Using \cite[Prop.\,1.3]{Br1} one obtains the following description of $X$-valued holomorphic maps on $U\cap( \Di\setminus Z)$ having continuous extensions to $U$.
\begin{Proposition}\label{prop1.4}
A map $f\in\mathcal O(U\cap( \Di\setminus Z),X)$ extends to a map in $\mathcal O(U,X)$ if and only if there exist open covers $(U_\alpha)_{\alpha\in A}$ of $U$  and $(V_\beta)_{\beta\in B}$ of $X$ and a map $\tau :A\rightarrow B$ such that 
\begin{itemize}
\item[(a)] for each $\beta\in B$ holomorphic functions on $V_\beta$ separate points and
\[
f(U_\alpha\cap( \Di\setminus Z))\Subset V_{\tau(\alpha)}\quad {\rm for\ all}\quad \alpha\in A;
\]
\item[(b)]
\[
\bigcap_{W\subset Q_Z^{-1}(U)\,:\, \mathring{W}\cap Z\ne\emptyset}\overline{f\bigl(W\cap( \Di\setminus Z)}\ne\emptyset.
\]
\end{itemize}
\end{Proposition}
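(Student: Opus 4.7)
The plan is to reduce to the $M(H^\infty)$-version of this extension statement, \cite[Prop.\,1.3]{Br1}, via the description of $M(\mathscr A_Z)$ in Proposition~\ref{prop1.1} as the one-point compactification of $M(H^\infty)\setminus Z$ obtained by collapsing the hull $Z$ to a single point $z_\ast:=Q_Z(Z)$, together with the scalar case of that proposition applied to compositions of $f$ with holomorphic functions separating points.

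\emph{Necessity.} Given an extension $\tilde f\in\mathcal O(U,X)$, cover $X$ by Banach chart domains $V_\beta$ on which holomorphic functions separate points and, for each $x\in U$, shrink a neighborhood of $x$ into $\tilde f^{-1}(V_{\tau(x)})$ so that $\tilde f(U_x)\Subset V_{\tau(x)}$; restriction to $\Di\setminus Z$ yields (a). For (b), set $y_\ast:=\tilde f(z_\ast)$. For any $W\subset Q_Z^{-1}(U)$ with $p\in\mathring W\cap Z$, continuity of $\tilde f\circ Q_Z$ at $p$, Carleson density of $\Di$ in $M(H^\infty)$, and the fact that $Z\cap\Di$ has empty interior in $\Di$ jointly produce sequences $p_n\in\mathring W\cap(\Di\setminus Z)$ with $f(p_n)\to y_\ast$, so $y_\ast$ lies in the intersection.

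\emph{Sufficiency.} Applying \cite[Prop.\,1.3]{Br1} to the open subset $Q_Z^{-1}(U)\setminus Z\subset M(H^\infty)$ (identified with $U\setminus\{z_\ast\}$ via $Q_Z$): the restrictions of (a) and (b) to this $Z$-free region satisfy the hypotheses of that proposition and produce a continuous extension $\tilde f:U\setminus\{z_\ast\}\to X$. Choose $y_\ast$ in the nonempty intersection of (b) and set $\tilde f(z_\ast):=y_\ast$. To verify continuity at $z_\ast$, fix $U_\alpha\ni z_\ast$ with $K:=\overline{f(U_\alpha\cap(\Di\setminus Z))}\Subset V_{\tau(\alpha)}$ from (a); then $\tilde f(U_\alpha\setminus\{z_\ast\})\subset K$, and taking $W=Q_Z^{-1}(U_\alpha)$ in (b) gives $y_\ast\in K$. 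For each holomorphic $g\in\mathcal O(V_{\tau(\alpha)})$ apply the scalar case of \cite[Prop.\,1.3]{Br1} to the bounded holomorphic function $g\circ f$ on $Q_Z^{-1}(U_\alpha)\cap(\Di\setminus Z)$ to obtain a continuous extension to $Q_Z^{-1}(U_\alpha)\subset M(H^\infty)$; hypothesis (b) forces its value at every $z\in Z\cap Q_Z^{-1}(U_\alpha)$ to equal $g(y_\ast)$, so the extension is constant on $Z$ and descends through the quotient map $Q_Z$ to a continuous scalar function on $U_\alpha$ with value $g(y_\ast)$ at $z_\ast$. Since holomorphic functions separate points of $V_{\tau(\alpha)}$, any cluster value of $\tilde f$ at $z_\ast$ must coincide with $y_\ast$, yielding continuity.

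\emph{Main obstacle.} The scalar composition $g\circ f$ is defined only on the open subset $U_\alpha\cap(\Di\setminus Z)$ of $\Di$, not on all of $\Di$, so the Gelfand transform on $H^\infty$ cannot be invoked directly---this is precisely the localized situation \cite[Prop.\,1.3]{Br1} is designed to handle. The form of the quantifier in (b), over all $W$ with $\mathring W\cap Z\ne\emptyset$ rather than only over neighborhoods of a single $z\in Z$, is what promotes the pointwise information ``$y_\ast$ is a cluster value of $f$ at each $z\in Z$'' to the uniform constancy of $g\circ f$ along $Z$ needed for the descent through the quotient $Q_Z$.
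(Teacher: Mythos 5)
Your proof is correct and uses the same essential ingredients as the paper's: the scalar extension result (\cite[Th.\,3.2]{S1}, i.e.\ the scalar case of \cite[Prop.\,1.3]{Br1}) applied to compositions $g\circ f$, the separating-point property of $\mathcal O(V_{\tau(\alpha)})$ to upgrade these scalar extensions to a manifold-valued one, and condition (b) to control the behavior along $Z$. The organization differs only superficially — the paper's Lemma~\ref{lem5.1} packages the scalar extensions via the embedding $\Psi_\alpha$ into $\prod_{g}C_g$ so as to extend $Q_Z^*f$ directly over $Q_Z^{-1}(U_\alpha)$ including $Z$ and then uses (b) only to show $\tilde F|_Z$ is constant, whereas you first extend over $U\setminus\{z_\ast\}$ and then attach $z_\ast$ by a cluster-value argument — so it is essentially the same approach repackaged.
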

\noindent (Here for topological spaces $S,Y$ the implication $S\Subset Y$ means that the closure $\bar S$ of $S$ in $Y$ is compact and $\mathring{S}$ stands for the interior of $S$.)

\begin{R}\label{rem2.5}
{\rm (1) Condition (a) implies that map $Q_Z^*f\, (:=f\circ Q_Z)$ extends continuously to $M(H^\infty)$ and then (b) guarantees  that  this extension is constant on $Z$. If $Z=\overline{Z\cap\Di}$, then instead of (b) one can assume that $Q_Z^*f$ extends to a continuous map on $\Di$ taking  the same value on $Z$.\smallskip

\noindent (2) If $X$ is a complex submanifold of a complex Banach space,  $f\in\mathcal O( \Di\setminus Z,X)$ extends to a map in $\mathcal O(M(\mathscr A_Z),X)$ iff $f( \Di\setminus Z)\Subset X$ and for each $g\in\mathcal O(X)$ the Gelfand transform $\widehat{g\circ f}$ is constant on $Z$.  (Note that $g\circ f\in\mathcal O(\Di\setminus Z)$ extends to a function in $H^\infty$ by the Riemann extension theorem.)
Here the first condition implies that $Q_Z^*f$ extends continuously to $M(H^\infty)$ by \cite[Prop.\,1.3]{Br1}, and the second one that this extension is constant on $Z$.
}
\end{R}
 
 Let $U$ be an open subset of  $M(\mathscr A_Z)$.
A topological principal $G$-bundle $\pi: P\rightarrow U$ with fibre a complex Banach Lie group is called {\em holomorphic} if it is defined on an open cover $(U_i)_{i\in I}$ of $U$ by a cocycle $\{g_{ij}\in \mathcal O(U_i\cap U_j, G)\}_{i,j\in I}$.  In this case, $P|_{U\cap (\Di\setminus Z)}$ is a holomorphic principal $G$-bundle on $U\cap (\Di\setminus Z)$ in the usual sense.\smallskip

Recall that $P$ is defined as the quotient space of disjoint union $\sqcup_{i\in I}\,U_i\times G$ by the equivalence relation:
\begin{equation}\label{equ1.3}
U_j\times G\ni u\times g\sim u\times g g_{ij}(u)\in U_i\times G. 
\end{equation}
The projection $\pi:P\rightarrow U$ is induced by natural projections $U_i\times G\rightarrow U_i$, $i\in I$.\smallskip

A bundle  isomorphism $\varphi : (P_1, G, \pi_1)\rightarrow (P_2, G, \pi_2)$ of holomorphic principal $G$-bundles on $U$ is called {\em holomorphic} if $\varphi|_{U\cap (\Di\setminus Z)}: P_1|_{U\cap (\Di\setminus Z)}\rightarrow P_2|_{U\cap (\Di\setminus Z)}$ is a biholomorphic map of complex Banach manifolds.  \smallskip

We say that a holomorphic principal $G$-bundle $(P,G,\pi)$ on $U$ is {\em trivial} if it is holomorphically isomorphic to the trivial bundle
$U\times G$. (For basic facts of the theory of bundles, see, e.g., \cite{Hus}.)\smallskip

For a  complex Banach Lie group $G$ by $G_0$ we denote the connected component containing   unit $1_G\in G$. Then $G_0$ is a clopen normal subgroup of $G$. By $q:G\rightarrow G/G_0=:C(G)$ we denote the continuous quotient homomorphism onto the discrete group of connected components of $G$. Let $\pi: P\rightarrow  U\, (\subset M(\mathscr A_Z))$ be a holomorphic principal $G$-bundle  defined on an open cover $\mathfrak U=(U_i)_{i\in I}$ of $U$ by a cocycle $g=\{g_{ij}\in \mathcal O (U_i\cap U_j, G)\}_{i,j\in I}$. By $P_{C(G)}$ we denote the principal bundle on $U$ with discrete fibre $C(G)$ defined on cover $\mathfrak U$ by  locally constant cocycle $q(g)=\bigl\{q(g_{ij})\in C(U_i\cap U_j, C(G))\bigr\}_{i,j\in I}$. Let $K\subset U$ be  a compact subset.

The next result used in the applications in particular characterizes trivial holomorphic bundles on $M(\mathscr A_Z)$.

\begin{Th}\label{te1.4}
$P$ is trivial over a neighbourhood of $K$ if and only if the associated bundle $P_{C(G)}$ is topologically trivial over $K$. 
\end{Th}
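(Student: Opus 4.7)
The ``only if'' implication is immediate: since $q\colon G\to C(G)$ is a continuous homomorphism, any holomorphic trivialization of $P$ over a neighbourhood of $K$ descends to a topological trivialization of $P_{C(G)}$ over $K$. For the converse the plan is to first reduce the structure group to $G_0$, then to apply Theorem \ref{teo1.7}, and finally to exhibit the required underlying topological triviality using the structure of $M(\mathscr A_Z)$ supplied by Proposition \ref{prop1.1} and the inverse limit description of Theorem \ref{teo1.10}.

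For the reduction of structure group, a topological trivialization of $P_{C(G)}$ over $K$ yields a continuous section $s\colon K\to P_{C(G)}|_K$. Because $C(G)$ is discrete, $P_{C(G)}$ is a covering space of $M(\mathscr A_Z)$, so by compactness of $K$ the section $s$ extends to a locally constant section on a clopen neighbourhood $V\supset K$. After refining the defining cover $(U_i)_{i\in I}$ so that this extended section is constant on each $U_i\cap V$ and lifting these constants to elements of $G$, conjugating the cocycle $\{g_{ij}\}$ by the lifts produces a cohomologous cocycle taking values in $G_0$ on $V$. Thus $P|_V$ inherits the structure of a holomorphic principal $G_0$-bundle.

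Applying Theorem \ref{teo1.7} to the connected complex Banach Lie group $G_0$ and the open set $V\subset M(\mathscr A_Z)$, the natural map from isomorphism classes of holomorphic principal $G_0$-bundles on $V$ to isomorphism classes of topological ones is bijective. It therefore suffices to show that $P|_V$ is topologically trivial over some open subset of $V$ containing $K$. For this I would invoke Theorem \ref{teo1.10} to represent $P|_V$ as the pullback of a holomorphic principal $G_0$-bundle $\tilde P$ on a Stein domain $X$ under one of the inverse limit projections $\sigma\colon M(\mathscr A_Z)\to X$; since $\sigma(K)\subset X$ is compact in a Stein space, it admits a basis of Stein open neighbourhoods having the homotopy type of finite CW complexes, and on such a neighbourhood $\tilde P$ is classified by homotopy data. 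Combining Grauert's classical Oka principle on $X$ with the covering-dimension and cohomological constraints for $M(\mathscr A_Z)$ from Proposition \ref{prop1.1} (namely $\dim M(\mathscr A_Z)=2$ and $H^2(M(\mathscr A_Z),\mathbb Z)=0$) yields topological triviality of $\tilde P$ on a neighbourhood of $\sigma(K)$, and pulling back gives the topological trivialization of $P|_V$ near $K$.

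The main obstacle is the last step: while the reduction of structure group and the appeal to the intrinsic Oka principle on $M(\mathscr A_Z)$ are essentially cocycle manipulations, genuinely establishing topological triviality for a connected Banach Lie group bundle over a compact subset of $M(\mathscr A_Z)$ requires carefully coupling the inverse limit presentation of $M(\mathscr A_Z)$ with both the finiteness of its covering dimension and the vanishing of its second integral \v{C}ech cohomology; one must verify that the CW-type neighbourhood approximation is compatible with the bundle structure, so that the cohomological data transfer correctly from the Stein domain down to the compact set $K$.
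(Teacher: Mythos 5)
Your ``only if'' direction and the reduction of the structure group to $G_0$ are both essentially the paper's: one lifts the $C(G)$-valued trivializing chain $\{h_l\}$ (continuous, discrete target, so with finite image after shrinking $V$ a bit) to locally constant maps $\tilde h_l:V_l\to G$ and conjugates the cocycle to get a cohomologous $G_0$-valued one. Up to this point you are on the paper's track.

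The gap is in the last step. The paper does \emph{not} reduce to a topological triviality statement here; it invokes a direct holomorphic triviality result (Theorem~\ref{teo6.1}): any holomorphic principal $G_0$-bundle over a relatively compact open subset of $M(\mathscr A_Z)$ is trivial when $G_0$ is connected. The proof of Theorem~\ref{teo6.1} is an intrinsic Cartan-style argument on $M(\mathscr A_Z)$: one first lifts the $G_0$-cocycle to a $G_u$-cocycle using $\dim M(\mathscr A_Z)=2$ and $H^2(M(\mathscr A_Z),\Z)=0$ (Lemma~\ref{lem6.3}), then trivializes in steps over an open neighbourhood of the totally disconnected set $Q_Z(M_s)\cup\{z\}$, over finitely many Blaschke sets, and glues via the Cartan-type lemma (Corollary~\ref{cor5.13}) and Runge approximation. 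In contrast, your proposal is to go through Theorem~\ref{teo1.7} (comparison of holomorphic and topological isomorphism classes) and then establish topological triviality by way of Theorem~\ref{teo1.10} and Grauert's classical theorem on a Stein domain. That last plan does not close the gap: Theorem~\ref{teo1.10} applies only to holomorphic bundles defined on the whole of $M(\mathscr A_Z)$, not to $P|_V$ with $V$ a proper open subset; and even if one could extend $P|_V$, Grauert's Oka principle on the Stein domain $X$ does not yield \emph{topological} triviality of $\tilde P$ on a neighbourhood of $\sigma(K)$---one needs a vanishing argument, and the relevant constraints $\dim=2$ and $H^2=0$ hold for $M(\mathscr A_Z)$, not for $X$ (which sits in $\Co^n$ for large $n$ and has no such bound). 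Your proposed ``combination'' of the constraints on $M(\mathscr A_Z)$ with Grauert's principle on $X$ is exactly the step that is missing, and your own closing paragraph concedes as much. There is also a logical-ordering issue: Theorem~\ref{teo1.7} and Theorem~\ref{teo1.10} are proved in the paper \emph{after} Theorem~\ref{te1.4}, and Theorem~\ref{teo1.7} as stated concerns bundles on $M(\mathscr A_Z)$ rather than on an arbitrary open $V$; you would need the more general form proved as Theorem~\ref{teo8.1}, which requires one of the two bundles to be defined on all of $M(\mathscr A_Z)$. The clean way to finish is the paper's: cite the direct triviality theorem for connected-fibre bundles (Theorem~\ref{teo6.1}), which is what the reduction to $G_0$ is set up to feed into.
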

\begin{C}\label{cor1.5}
$P$ is trivial over a neighbourhood of $K$ in one of the following cases:
\begin{itemize}
\item[(1)]
Group $G$ is connected;
\item[(2)]
$U= M(H^\infty)$ and images of all maps $q(g_{ij})$ belong to a finite subgroup of  $C(G)$ (e.g., this is true if $G$ has finitely many connected components);
\item[(3)]
$P$ is trivial over $K$ in the category of topological principal $G$-bundles.
\end{itemize}
\end{C}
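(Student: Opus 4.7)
The plan is to apply Theorem \ref{te1.4} in each case, reducing holomorphic triviality of $P$ over a neighbourhood of $K$ to topological triviality of the associated discrete-fibre bundle $P_{C(G)}$ over $K$. Case~(1) is then immediate: connectedness of $G$ gives $G_0=G$, so $C(G)$ is the trivial group and $P_{C(G)}=U\times\{1\}$ is tautologically trivial. Case~(3) is nearly as direct: any topological trivialization of $P$ pushes forward, via the continuous quotient homomorphism $q\colon G\to C(G)$, to a topological trivialization of the associated bundle $P_{C(G)}$, after which Theorem \ref{te1.4} closes the argument.

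The substance lies in case~(2). Here the cocycle $\{q(g_{ij})\}$ defining $P_{C(G)}$ takes values in a fixed finite subgroup $F\subset C(G)$, and $P_{C(G)}$ is the bundle associated to a principal $F$-bundle $P_F$ on $M(H^\infty)$ via the inclusion $F\hookrightarrow C(G)$; in particular, topological triviality of $P_F$ forces that of $P_{C(G)}$. Thus it suffices to prove that every principal bundle with finite discrete fibre over $M(H^\infty)$ is topologically trivial (and hence trivial over~$K$). The total space of such a bundle is a finite regular covering of $M(H^\infty)$, so the question is equivalent to the vanishing of the Čech set $\check H^1(M(H^\infty),F)$ for every finite group $F$.

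For abelian $F=\mathbb Z/n$ the natural route is cohomological: Proposition \ref{prop1.1}(c) gives $\check H^2(M(H^\infty),\mathbb Z)=0$, so the Bockstein sequence coming from $0\to\mathbb Z\to\mathbb Z\to\mathbb Z/n\to 0$ identifies $\check H^1(M(H^\infty),\mathbb Z/n)$ with $\check H^1(M(H^\infty),\mathbb Z)/n\cdot\check H^1(M(H^\infty),\mathbb Z)$. Vanishing then amounts to $n$-divisibility of $\check H^1(M(H^\infty),\mathbb Z)$, which via the exponential sheaf sequence translates into a factorization statement for invertible elements of $H^\infty$ modulo exponentials. A non-abelian $F$ is handled by induction along a composition series, reducing to the cyclic abelian case at each step.

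The main obstacle is the divisibility/factorization step: it is not a formal consequence of Proposition \ref{prop1.1} and requires invoking the analytic structure of $H^\infty$, presumably through factorization results in the spirit of \cite{Br1,Br2,Br3} and of Treil's work cited later in the paper. Once this is in hand, $P_F$ is topologically trivial on all of $M(H^\infty)$, hence so is $P_{C(G)}$ over $K$, and Theorem \ref{te1.4} completes case~(2).
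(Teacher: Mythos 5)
Your treatment of (1) and (3) matches the paper exactly: in (1) $C(G)$ is trivial so $P_{C(G)}$ is tautologically trivial, and in (3) a topological trivialization $\{g_i\}$ of $P$ pushes forward via $q$ to trivialize $P_{C(G)}$; in both cases Theorem \ref{te1.4} finishes.

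For (2) the paper simply invokes \cite[Lm.\,8.1]{Br3}, which asserts that every principal bundle on $M(H^\infty)$ with finite (discrete) fibre is trivial, and then applies Theorem \ref{te1.4}. You instead try to reprove that lemma, and this is where the gaps lie. First, your abelian base case is in fact easy to close, so you needn't defer it: by the Arens--Royden theorem $H^1(M(H^\infty),\Z)\cong H^\infty_{\rm inv}/\exp(H^\infty)$, and every invertible $f\in H^\infty$ has an invertible $n$-th root $g:=e^{L/n}$ where $L$ is a holomorphic logarithm of $f$ on $\Di$ (which has bounded real part since $\log|f|$ is bounded both ways), whence $[f]=n[g]$ and $H^1(M(H^\infty),\Z)$ is divisible. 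Together with $H^2(M(H^\infty),\Z)=0$ this does give $H^1(M(H^\infty),\Z/n)=0$ by the Bockstein sequence, as you outline. Second, and more seriously, your reduction for non-abelian $F$ ``along a composition series, reducing to the cyclic abelian case at each step'' is not correct as stated: composition factors are \emph{simple}, not cyclic, so this only handles \emph{solvable} $F$. Since the hypothesis of (2) allows any finite subgroup of $C(G)$ (and indeed any finite group occurs as $C(G)$ for a suitable $G$), you would still need to show $\check H^1(M(H^\infty),S)$ is trivial for non-abelian simple $S$ such as $A_5$, and the cohomological machinery you set up (Bockstein, divisibility of $H^1(\cdot,\Z)$) says nothing about that case. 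This is a genuine gap; the paper sidesteps it entirely by citing the already-established \cite[Lm.\,8.1]{Br3}.
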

In turn, not all principal bundles
on $M(\mathscr A_Z)$ are trivial:
\begin{Proposition}\label{prop1.6}
Let $G$ be a complex Banach Lie group such that $C(G)$ has a nontorsion element. Then there exists a nontrivial holomorphic principal $G$-bundle on $M(\mathscr A_Z)$. 
\end{Proposition}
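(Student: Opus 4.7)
The plan is to reduce to the nonvanishing of $\check H^1(M(\mathscr A_Z),\mathbb Z)$ and then to produce a nonzero class there by exhibiting an invertible element of $H_{I(Z)}^\infty$ without a logarithm.

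Fix a nontorsion $\gamma\in C(G)$ and lift it to $g\in G$, so that $\langle g\rangle\cong\mathbb Z$ is discrete in $G$ and $q$ restricts to an injection $\langle g\rangle\hookrightarrow C(G)$. Given a nontrivial topological principal $\mathbb Z$-bundle $P_\mathbb Z$ on $M(\mathscr A_Z)$, defined by a locally constant $\mathbb Z$-valued cocycle $\{n_{ij}\}$ on an open cover $\{U_i\}$, the cocycle $\{g^{n_{ij}}\}$ is locally constant and hence trivially holomorphic, so defines a holomorphic principal $G$-bundle $P$ on $M(\mathscr A_Z)$. Its associated $C(G)$-bundle $P_{C(G)}$ is the one obtained from $P_\mathbb Z$ by extension of structure group along $\mathbb Z\hookrightarrow C(G)$. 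Since $M(\mathscr A_Z)$ is connected (being the continuous image of the connected space $M(H^\infty)$ under $Q_Z$), the long exact cohomology sequence of the constant sheaves $0\to\mathbb Z\to C(G)\to C(G)/\mathbb Z\to 0$ yields a surjection $H^0(M(\mathscr A_Z),C(G))=C(G)\twoheadrightarrow C(G)/\mathbb Z=H^0(M(\mathscr A_Z),C(G)/\mathbb Z)$, so the connecting map to $\check H^1(M(\mathscr A_Z),\mathbb Z)$ vanishes and the induced map $\check H^1(M(\mathscr A_Z),\mathbb Z)\to\check H^1(M(\mathscr A_Z),C(G))$ is injective. Hence nontriviality of $P_\mathbb Z$ implies nontriviality of $P_{C(G)}$, which by Theorem \ref{te1.4} applied with $K=M(\mathscr A_Z)$ forces $P$ to be nontrivial.

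It remains to show $\check H^1(M(\mathscr A_Z),\mathbb Z)\ne 0$. By the Arens--Royden theorem for the uniform Banach algebra $H_{I(Z)}^\infty$, this group is isomorphic to $(H_{I(Z)}^\infty)^{-1}/\exp(H_{I(Z)}^\infty)$, so it suffices to find an invertible $f\in H_{I(Z)}^\infty$ without an $H_{I(Z)}^\infty$-logarithm. A natural candidate is $f=e^v$ with $v$ a holomorphic function on $\Di$ having bounded real part but unbounded imaginary part---for example the principal branch of $v(z)=i\log\bigl((1+z)/(1-z)\bigr)$, which maps $\Di$ into the strip $\{-\pi/2<\operatorname{Re}w<\pi/2\}$ while its imaginary part is unbounded. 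Then $f\in(H^\infty)^{-1}$, and any $H^\infty$-logarithm of $f$ would differ from $v$ by a multiple of $2\pi i$, hence be unbounded---contradiction. The main technical obstacle is to arrange that $\hat f|_Z$ be constant so that $f$ actually belongs to $H_{I(Z)}^\infty$; this is where the argument must be adapted to the specific hull $Z$, presumably by replacing the boundary points $\pm 1$ in the construction of $v$ by points chosen suitably with respect to the projection of $Z$ to $\partial\Di$.
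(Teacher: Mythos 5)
Your reduction of the Proposition to the nontriviality of $\check H^1(M(\mathscr A_Z),\Z)$ is essentially the same as the paper's (take a nontorsion $\gamma\in C(G)$, lift to $b\in G$, and build the bundle from a $\Z$-valued locally constant cocycle via $g_{ij}=b^{n_{ij}}$, then apply Theorem~\ref{te1.4}). Where you differ is in verifying that a nontrivial $\Z$-cocycle stays nontrivial after pushing to $C(G)$: you argue via the cohomology exact sequence of $\Z\hookrightarrow C(G)\twoheadrightarrow C(G)/\Z$ and the connectedness of $M(\mathscr A_Z)$, whereas the paper's Lemma~\ref{lem7.5} works by pulling the cocycle back to the contractible dense disk $\Di\subset M(H^\infty)$ and using a density/continuity argument. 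Your route is, modulo two small cautions, shorter and cleaner: one should note that when $\langle\gamma\rangle$ is not normal in $C(G)$ the ``short exact sequence'' is only a sequence of pointed sets, and what one deduces is not injectivity of $\check H^1(\cdot,\Z)\to\check H^1(\cdot,C(G))$ but merely that its kernel (preimage of the base point) is trivial---which is exactly what is needed. Phrased as a coset argument (if $\{\gamma^{n_{ij}}\}$ bounds, the resulting $C(G)/\langle\gamma\rangle$-valued section is globally constant on the connected $M(\mathscr A_Z)$, so after a constant left translation the trivializing maps land in $\langle\gamma\rangle\cong\Z$, splitting $\{n_{ij}\}$), this is tight.

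The genuine gap is in the final step, and you have flagged it yourself but not closed it: you must exhibit an element of $(H^\infty_{I(Z)})^{-1}$ without an $H^\infty_{I(Z)}$-logarithm. The function $e^v$ with $v(z)=i\log\bigl((1+z)/(1-z)\bigr)$ is the standard witness for $H^\infty$, but there is no reason for its Gelfand transform to be constant on an arbitrary hull $Z\subset M(H^\infty)$, so it does not lie in $H^\infty_{I(Z)}$. Your proposed fix---choosing the boundary singularities of $v$ ``with respect to the projection of $Z$ to $\partial\Di$''---is not a workable plan: a general hull does not project to a single boundary point (e.g.\ $Z=p^{-1}(F)$ over any closed $F\subset\partial\Di$ of measure zero), and even when it did, fixing the boundary singularity of $v$ would not force $\widehat{e^v}$ to be constant on $Z$. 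The decisive idea in the paper's Lemma~\ref{lem7.4}, which your argument is missing, is to \emph{compose} the conformal map $h(z):=i\,\mathrm{Log}\bigl((1-z)/(z+1)\bigr)$ with an arbitrary $f\in I(Z)$ of norm one: then $\hat f$ vanishes identically on $Z$, so $g:=h\circ f$ extends to a neighbourhood of $Z$ in $M(H^\infty)$ with $\hat g|_Z=h(0)=0$ (hence $\widehat{e^{g}}\equiv 1$ on $Z$ and descends to $M(\mathscr A_Z)$); meanwhile $g$ remains unbounded because $\hat f$ attains the value $1$ somewhere on $M(H^\infty)$ (so $f(\Di)$ accumulates at the singularity of $h$), and $e^{\pm g}\in H^\infty$ because $\mathrm{Re}\,h$ is bounded. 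Without this composition device the construction does not go through, so as written your proof has a hole precisely where it needs to produce the nontrivial class in $\check H^1(M(\mathscr A_Z),\Z)$.
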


Let $\mathscr P_G^{\mathcal O}$ and $\mathscr P_G^C$ be the sets of isomorphism classes of holomorphic and topological principal $G$-bundles on $M(\mathscr A_Z)$, respectively. 

The following two results (analogous to the classical results of Grauert \cite{Gra} and Bungart \cite{Bu}) show that the natural map
$i:\mathscr P_G^{\mathcal O}\rightarrow \mathscr P_G^C$ induced by inclusion of the category of holomorphic principal bundles on $M(\mathscr A_Z)$ to the category of topological ones is a bijection. These constitute the Oka Principle for holomorphic principal bundles on $M(\mathscr A_Z)$.
\begin{Th}[Injectivity of $i$]\label{teo1.7}
If two holomorphic principal $G$-bundles on $M(\mathscr A_Z)$ are isomorphic as  topological bundles, they are holomorphically isomorphic.
\end{Th}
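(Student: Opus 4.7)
The plan is to promote the given topological isomorphism to a holomorphic one by a Cartan-type splitting argument in the spirit of the proof of Theorem~\ref{te1.4}. First I would trivialize $P_1$ and $P_2$ holomorphically over a common finite open cover $(U_i)_{i\in I}$ of $M(\mathscr A_Z)$ (available by compactness), obtaining holomorphic cocycles $\{g_{ij}\}$ and $\{h_{ij}\}$ with values in $G$. The given topological isomorphism is then encoded by continuous maps $f_i:U_i\to G$ satisfying $h_{ij}f_j=f_i g_{ij}$ on $U_i\cap U_j$, and the task reduces to producing holomorphic $\tilde f_i$ satisfying the same system.

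The central step is a reduction to a small nonlinear problem near $1_G\in G$. Using the inverse-limit realization of $M(\mathscr A_Z)$ from Theorem~\ref{teo1.10} together with Runge-type approximation in the maximal ideal spaces of the finitely generated subalgebras, I would approximate each $f_i$ uniformly by a holomorphic $c_i:U_i\to G$ so closely that $\phi_i:=c_i^{-1}f_i$ takes values in a neighbourhood of $1_G$ on which the exponential $\exp:\mathfrak g\to G$ is biholomorphic. Setting $\tilde g_{ij}:=c_i^{-1}h_{ij}c_j$ (which is a new holomorphic cocycle), the identity rewrites as $\phi_i g_{ij}\phi_j^{-1}=\tilde g_{ij}$, so it suffices to produce holomorphic $\tilde\phi_i$ close to $1_G$ with $\tilde\phi_i g_{ij}\tilde\phi_j^{-1}=\tilde g_{ij}$, after which $\tilde f_i:=c_i\tilde\phi_i$ provides the desired holomorphic isomorphism. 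Writing $\tilde\phi_i=\exp(\xi_i)$ and expanding via the Baker-Campbell-Hausdorff formula linearizes this into
\[
\xi_i-\mathrm{Ad}(g_{ij})\xi_j=\eta_{ij}+R_{ij}(\xi),
\]
where $\eta_{ij}:=\log(\tilde g_{ij}g_{ij}^{-1})$ is (to leading order) a holomorphic $1$-cocycle with values in the adjoint bundle $P_1\times_G\mathfrak g$, and $R_{ij}$ is a remainder quadratic in $\xi$. Solving the linear Cousin equation $\xi_i-\mathrm{Ad}(g_{ij})\xi_j=\eta_{ij}$ in the holomorphic category and then running a Newton-type iteration (or appealing to the holomorphic implicit function theorem in the Banach category) to absorb $R_{ij}$ would deliver the exact $\tilde\phi_i$.

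The hard part will be the linear Cousin step: establishing the vanishing of the first cohomology of holomorphic sections of the adjoint bundle $P_1\times_G\mathfrak g$ over the non-metrizable space $M(\mathscr A_Z)$. This is precisely the cohomological engine behind Theorem~\ref{te1.4}, and rests on the Stein-like structure of $M(\mathscr A_Z)$ (finite covering dimension and vanishing \v{C}ech $H^2$ of Proposition~\ref{prop1.1}) together with its inverse-limit description; uniform norm control across the Newton iteration must also be effected through the finitely generated subalgebras. This is where the geometry of $M(\mathscr A_Z)$ imposes genuinely new difficulties compared with the classical Grauert-Bungart setting on Stein manifolds.
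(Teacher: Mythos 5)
Your sketch follows the classical Grauert--Cartan template (trivialize, reduce to a small cocycle near the identity, linearize with $\exp$ and Baker--Campbell--Hausdorff, solve the additive Cousin problem for the adjoint bundle, absorb the error by Newton iteration). The paper uses some of the same local ingredients (an implicit-function-theorem lemma, Runge approximation, a Cousin-type lemma), but its global architecture is different in ways that matter, and your outline has gaps that the paper's route is specifically designed to avoid.

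First, the step ``approximate each $f_i$ uniformly by a holomorphic $c_i:U_i\to G$'' is not available. The $f_i$ are arbitrary continuous $G$-valued maps on open subsets $U_i$ of $M(\mathscr A_Z)$; neither the Runge theorem (Theorem~\ref{runge}, which approximates \emph{holomorphic} sections on holomorphically convex compacta) nor the inverse-limit machinery gives uniform approximation of a continuous $G$-valued map on an open set by a holomorphic one. Such an approximation statement would already be an Oka-type theorem of roughly the strength you are trying to prove. The paper never approximates the $f_i$ at all: it forms the holomorphic ``difference'' cocycle $s_{ij}(x)=(x,h_{ij}(x)g_{ji}(x))$ with values in the $\mathrm{Ad}(g)$-twisted bundle $A(P_1)$ with fibre $G_0$ (Lemma~\ref{lem8.1}), uses the continuous $f_i$ only to show that $s$ lifts to the universal cover bundle $A_u(P_1)$ with fibre $G_u$ (Proposition~\ref{prop8.2}), and then splits the lifted cocycle $\tilde s_{ij}$ holomorphically (Proposition~\ref{prop8.3}).

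Second, your linear Cousin step presupposes a global $H^1$-vanishing theorem for holomorphic sections of the adjoint bundle over $M(\mathscr A_Z)$, which you correctly flag as ``the hard part.'' The paper does not prove and does not use such a global vanishing theorem. The Cousin-type lemma (Theorem~\ref{cousin}) only solves an additive splitting over \emph{pairs} of open sets $U_1,U_2$ satisfying very specific constraints ($z\notin\bar U_i\setminus U_i$, $Q_Z^{-1}(\bar U_1\cap\bar U_2)\subset M_a\setminus Z$), and the nonlinear splitting in Proposition~\ref{prop8.3} is achieved by an explicit finite induction over a cover of $\bar V$ by one piece around $Q_Z(M_s)\cup\{z\}$ (where total disconnectedness makes the bundle trivial) plus finitely many Blaschke sets, invoking at each step Corollary~\ref{cor5.13}, which in turn rests on the Cartan-type Theorem~\ref{cartan} together with the connectedness result Theorem~\ref{compactif} for $\mathcal A(K,G)$ over Blaschke sets. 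A Newton iteration over the whole cover at once would require uniform control that this geometry does not give you.

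Third, your plan does not address disconnectedness of $G$. Since $\exp_G$ reaches only $G_0$, the reduction to a small problem near $1_G$ already presupposes that $f_i$ and $c_i$ land in the same connected component pointwise, and the cocycle $h_{ij}g_{ji}$ need not lie in $G_0$ a priori. The paper's first substantive move is to apply $q:G\to C(G)$, deduce that the associated $C(G)$-bundles are isomorphic, and correct by locally constant holomorphic maps $c_i$ so that $q(g_{ij})=q(h_{ij})$; only after this normalization is $h_{ij}g_{ji}\in G_0$ and the adjoint-bundle / universal-cover machinery applicable. Your proposal needs this reduction before anything involving $\exp$ or BCH makes sense.

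In short: the general flavour (small-perturbation splitting via implicit function theorem) is present in the paper's Proposition~\ref{small}, but your outline collapses into that local lemma what in the paper is a carefully staged global argument, and the two steps you treat as routine — uniform holomorphic approximation of the $f_i$, and global linear cohomology vanishing — are precisely the points where the geometry of $M(\mathscr A_Z)$ does not cooperate and the paper goes a different way.
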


\begin{Th}[Surjectivity of $i$]\label{teo2.9}
Each topological principal $G$-bundle on $M(\mathscr A_Z)$ is  isomorphic to a holomorphic one.
\end{Th}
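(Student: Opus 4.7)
The strategy is to reduce the theorem to the classical Bungart theorem \cite{Bu} on a finite-dimensional Stein manifold, using the inverse limit representation of $M(\mathscr A_Z)$. Classical Gelfand theory (see \cite{Ro}) writes $M(\mathscr A_Z) = M(H_{I(Z)}^\infty)$ as the inverse limit $\varprojlim X_\alpha$, where $X_\alpha := M(A_\alpha)$ ranges over maximal ideal spaces of finitely generated unital subalgebras $A_\alpha \subset H_{I(Z)}^\infty$. Each $X_\alpha$ is a polynomially convex compact subset of some $\Co^{n_\alpha}$ and therefore admits a neighbourhood basis of Stein open sets $D_\alpha \subset \Co^{n_\alpha}$. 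Let $\pi_\alpha : M(\mathscr A_Z) \to X_\alpha$ denote the canonical projection; its restriction to $\Di \setminus Z$ is holomorphic in the ordinary sense, so pullback by $\pi_\alpha$ converts a holomorphic object on a neighbourhood of $X_\alpha$ into a holomorphic object on $M(\mathscr A_Z)$ in the sense of the paper.

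Given a topological principal $G$-bundle $P$ on $M(\mathscr A_Z)$, compactness furnishes a finite trivializing open cover $(U_k)_{k=1}^N$ and a continuous cocycle $\{g_{kl} : U_k \cap U_l \to G\}$. The first step is to approximate this cocycle by one whose transitions factor through a common projection $\pi_\alpha$. Pointwise approximation comes from the standard fact that, for an inverse limit of compact Hausdorff spaces, every continuous map into a metric target is a uniform limit of maps factoring through some stage of the system; combined with the local Banach chart of $G$ given by the exponential map of its Lie algebra $\mathfrak g$, this produces uniform approximants $\tilde g_{kl}$ of $g_{kl}$ each factoring through a common sufficiently large $X_\alpha$ (cofinality of the inverse system allows a single $\alpha$ to serve all pairs $k,l$). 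The system $\{\tilde g_{kl}\}$ satisfies the cocycle identity only approximately, but Bungart's Cartan-type patching lemma for complex Banach Lie groups \cite{Bu} allows a further small correction yielding an exact cocycle $\{\hat g_{kl}\}$ still factoring through $\pi_\alpha$, and for sufficient smallness the bundle $\hat P$ defined by $\{\hat g_{kl}\}$ is topologically isomorphic to $P$.

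By construction $\hat P = \pi_\alpha^* Q$ for a topological principal $G$-bundle $Q$ defined on the image of the cover in $X_\alpha$; compactness of $X_\alpha$ together with local contractibility of $\Co^{n_\alpha}$ lets one extend $Q$ to a topological principal $G$-bundle $\bar Q$ on a Stein neighbourhood $D_\alpha \supset X_\alpha$. The classical Bungart theorem applied to the Stein manifold $D_\alpha$ then yields a holomorphic principal $G$-bundle $\tilde Q$ on $D_\alpha$ topologically isomorphic to $\bar Q$, and $\pi_\alpha^* \tilde Q$ is a holomorphic principal $G$-bundle on $M(\mathscr A_Z)$ topologically isomorphic to $P$, proving the theorem. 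The main technical difficulty is the patching step of the second paragraph: the pointwise uniform approximation of $g_{kl}$ is immediate from the inverse limit structure, but correcting the approximate cocycle into an exact one while preserving the factorization through $\pi_\alpha$ requires the iterative Cartan-type construction in the complex Banach Lie group setting that is the heart of Bungart's original method.
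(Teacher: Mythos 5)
Your overall strategy — use the inverse limit $M(\mathscr A_Z)=\varprojlim M(A_\alpha)$ to push the bundle down to a finite-dimensional stage and then invoke Bungart's Oka principle on a Stein neighbourhood of the polynomially convex compact $M(A_\alpha)$ — agrees in spirit with the paper's proof, and the final Bungart step is identical. However, the central step of your second paragraph is where the genuine difficulty lives, and as written it does not go through.

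There are two concrete gaps. First, approximating each transition function $g_{kl}$ by $\tilde g_{kl}$ factoring through $\pi_\alpha$ does not by itself produce a bundle downstairs: the trivializing cover $(U_k)$ is an arbitrary finite open cover of $M(\mathscr A_Z)$ and need not be a pullback of an open cover of a neighbourhood of $M(A_\alpha)$. So even if you achieve $\tilde g_{kl}=h_{kl}\circ\pi_\alpha$, the family $\{h_{kl}\}$ is defined only on the sets $\pi_\alpha(U_k\cap U_l)$, which are neither open in $\Co^{n_\alpha}$ nor equal to $\pi_\alpha(U_k)\cap\pi_\alpha(U_l)$ (as $\pi_\alpha$ is not injective). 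There is no cover of $M(A_\alpha)$ on which $\{h_{kl}\}$ is a cocycle, so writing $\hat P=\pi_\alpha^*Q$ has no meaning at this stage. Second, the ``small correction'' of an approximate cocycle to an exact one by ``Bungart's Cartan-type patching'' is misattributed: Bungart's iterative Cartan technique requires Steinness (vanishing of $H^1$ with coherent coefficients, solvability of Cousin problems), and here you would be trying to patch on $M(\mathscr A_Z)$ itself or on the compact set $M(A_\alpha)$, neither of which is a Stein manifold. The paper explicitly flags the deeper obstruction in its remark after Proposition 9.1: if the classifying space $BG$ were known to be an ANR, the factorization through a stage would be a routine homotopy fact, but for a general complex Banach Lie group $G$ this is unknown, which is precisely why the cocycle-level argument cannot be waved through.

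What the paper does instead, and what your outline is missing, is a controlled finite-dimensional reduction that exploits the dimension bound $\dim M(\mathscr A_Z)=2$ from Proposition 2.1. Proposition 9.1 uses Marde\v{s}i\'c's theorem and Freudenthal's theorem to present $M(\mathscr A_Z)$ as an inverse limit of compact polyhedra of dimension $\le 2$, and Ostrand's theorem on colored dimension to refine the cover into three families with pairwise-disjoint closures, so that the resulting cover of the polyhedron $Q$ has only a single type of triple intersection $\bar W_1\cap\bar W_2\cap\bar W_3$. This makes the cocycle correction a one-step Dugundji extension (Lemma 9.3) rather than an open-ended iteration. The bundle $E$ on $Q$ is built explicitly this way, a homotopy from $P$ to $f^*E$ is produced using the Arens extension theorem (Lemma 9.5), and only afterwards is the map $f:M(\mathscr A_Z)\to Q$ factored up to homotopy through some $M(A_\alpha)$ (Proposition 9.6), at which point Tietze--Urysohn and Bungart finish the job. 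In short, your reduction-to-a-stage idea is correct, but the step you describe as ``technical'' is not a black box invocation of Bungart; it is the heart of the proof, and it requires the dimension-two structure of $M(\mathscr A_Z)$ in an essential way that your outline does not engage with.
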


Let $A\in\mathscr A_Z$, i.e., $A=H_I^\infty$ for some closed ideal $I\subset H^\infty$ such that ${\rm hull}(I)=Z$. Recall that we assumed that $M(A)$ coincides with $M(\mathscr A_Z)$.

Let $D$ be the set of all finite subsets of $A$ directed by inclusion. If $\alpha=\{f_1,\dots,f_n\}\in D$   we let $A_\alpha$ be the unital closed subalgebra of $A$ generated by $\alpha$. By $M(A_\alpha)$ we denote the maximal ideal space of $A_\alpha$. It is naturally identified with the polynomially convex hull of the image of $F_\alpha: M(\mathscr A_Z)\rightarrow \Co^{n}$, $F_\alpha(x):=(\hat f_1(x),\dots, \hat f_n(x))$ (here $\hat{\,}$ is the Gelfand transform for algebra $H_{I(Z)}^\infty$). If $\alpha,\beta\in D$ with $\alpha\supseteq\beta$, then linear map $F_\beta^\alpha:\Co^{\#\alpha}\rightarrow\Co^{\#\beta}$, $\Co^{\#\alpha}\ni (z_1,\dots, z_{\#\alpha})\mapsto (z_1,\dots, z_{\#\beta})\in\Co^{\#\beta}$, sends $M(A_\alpha)$ to $M(A_\beta)$. Thus we obtain the inverse system of compacta $\{M(A_\alpha), F_\beta^\alpha\}$ whose limit is naturally identified with $M(\mathscr A_Z)$ and the limit projections coincide with maps $F_\alpha$
(see, e.g., \cite{Ro} for details). 

Let $P_i$ be holomorphic principal $G$-bundles defined on open neighbourhoods $O_i\Subset\Co^{\#\alpha}$ of $M(A_\alpha)$, $i=1,2$. We say that $P_1$ and $P_2$ are isomorphic if they are holomorphically isomorphic on an open neighbourhood $O\subset O_1\cap O_2$ of $M(A_\alpha)$.
By $(\mathscr P^{\mathcal O}_G)_\alpha$ we denote the set of isomorphism classes of holomorphic principal $G$-bundles defined on neighbourhoods of $M(A_\alpha)$. Projections $F_\beta ^\alpha$ induce maps $(\mathscr F_\beta^\alpha)^*:  (\mathscr P^{\mathcal O}_G)_\beta\rightarrow (\mathscr P^{\mathcal O}_G)_\alpha$ assigning to the isomorphism class of a bundle $P$ the isomorphism class of its pullback $(F_\beta^\alpha)^*P$. Thus we obtain the direct system of sets $\{(\mathscr P^{\mathcal O}_G)_\beta,(\mathscr F_\beta^\alpha)^*\}$. Similarly, limit projection $F_\alpha$ induces a map $\mathscr F_\alpha^*:  (\mathscr P^{\mathcal O}_G)_\alpha\rightarrow \mathscr P^{\mathcal O}_G$ assigning to the isomorphism class of a bundle $P$ the isomorphism class of its pullback $F_\alpha^*P$. Since $\mathscr F_\beta^*=\mathscr F_\alpha^*\circ (\mathscr F_\beta^\alpha)^*$ for all $\alpha\supseteq\beta$ in $D$, the family of maps $\{\mathscr F_\alpha^*\}_{\alpha\in D}$ induces a map $\mathscr F_A$ of the direct limit $\displaystyle\lim_{\longrightarrow}(\mathscr P^{\mathcal O}_G)_\alpha$ to $\mathscr P^{\mathcal O}_G$. 
\begin{Th}\label{teo1.10}
Map $\mathscr F_A: \displaystyle \lim_{\longrightarrow}(\mathscr P^{\mathcal O}_G)_\alpha\rightarrow \mathscr P^{\mathcal O}_G$ is a bijection.
\end{Th}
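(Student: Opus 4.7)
My plan is to pivot between three incarnations of bundles: holomorphic on $M(\mathscr A_Z)$, topological on $M(\mathscr A_Z)$, and holomorphic on Stein neighbourhoods of $M(A_\alpha)$ in $\Co^{\#\alpha}$. The passage between the first two will be handled by Theorem \ref{teo1.7} (injectivity of the Oka principle on $M(\mathscr A_Z)$). The passage between the second and third will use (a) continuity of non-abelian $\check H^1(\,\cdot\,,\underline G)$ (where $\underline G$ is the sheaf of germs of continuous $G$-valued maps) under the compact Hausdorff inverse limit $M(\mathscr A_Z)=\varprojlim M(A_\alpha)$, and (b) the fact that each $M(A_\alpha)$, being polynomially convex in $\Co^{\#\alpha}$, admits a neighbourhood basis of Stein open sets on which Bungart's Oka principle \cite{Bu} for holomorphic principal $G$-bundles with complex Banach Lie group fibre applies.

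\textbf{Surjectivity of $\mathscr F_A$.} Starting from a holomorphic principal $G$-bundle $P$ on $M(\mathscr A_Z)$, I would forget the complex structure, write $P^{\mathrm{top}}$ for the underlying topological bundle, and use the stated continuity of $\check H^1(\,\cdot\,,\underline G)$ to produce $\alpha\in D$ and a topological principal $G$-bundle $Q^{\mathrm{top}}$ on $M(A_\alpha)$ with $F_\alpha^*Q^{\mathrm{top}}\cong P^{\mathrm{top}}$. I would then extend $Q^{\mathrm{top}}$ to a topological principal $G$-bundle on an open neighbourhood $W\subset\Co^{\#\alpha}$ of $M(A_\alpha)$ by extending a finite trivializing cocycle via Tietze in a chart of $G$ around $1_G$ and shrinking $W$ enough that the cocycle condition is preserved up to a small gauge correction. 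Shrinking $W$ further I may assume it is Stein, apply Bungart's theorem to obtain a holomorphic principal $G$-bundle $\tilde Q$ on $W$ topologically isomorphic to this extension, and finally invoke Theorem \ref{teo1.7} on $M(\mathscr A_Z)$ to upgrade the induced topological isomorphism $F_\alpha^*\tilde Q\cong P$ to a holomorphic one, showing $\mathscr F_\alpha^*[\tilde Q]=[P]$.

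\textbf{Injectivity of $\mathscr F_A$.} Given two holomorphic principal $G$-bundles $P_i$ $(i=1,2)$ on open neighbourhoods of $M(A_{\alpha_i})$ in $\Co^{\#\alpha_i}$ with $F_{\alpha_1}^*P_1\cong F_{\alpha_2}^*P_2$ holomorphically on $M(\mathscr A_Z)$, I would first pull back to $\gamma_0:=\alpha_1\cup\alpha_2$ and thereby reduce to $\alpha_1=\alpha_2=\alpha$. The holomorphic isomorphism is a fortiori topological, so applying the same continuity of $\check H^1$ (now to the cocycle encoding the isomorphism, or equivalently to the bundle $\mathrm{Iso}(P_1,P_2)$) I would find $\gamma\supseteq\alpha$ in $D$ with $(F_\alpha^\gamma)^*P_1\cong(F_\alpha^\gamma)^*P_2$ topologically on $M(A_\gamma)$. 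As in surjectivity, I would extend this to a Stein open neighbourhood of $M(A_\gamma)$ and apply Bungart to promote the topological isomorphism to a holomorphic one, giving $[P_1]=[P_2]$ already at level $\gamma$ in the direct system $\{(\mathscr P_G^{\mathcal O})_\alpha,(\mathscr F_\beta^\alpha)^*\}$ and hence injectivity of $\mathscr F_A$.

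\textbf{Main obstacle.} The hard part is the continuity of $\check H^1(-,\underline G)$ under the compact Hausdorff inverse limit $M(\mathscr A_Z)=\varprojlim M(A_\alpha)$ for non-abelian Banach Lie group $G$, i.e.\ that every cocycle and every gauge-equivalence of cocycles on $M(\mathscr A_Z)$ is realized at some finite level $M(A_\alpha)$. I would prove this in two standard steps: (i) using that preimages $F_\alpha^{-1}(V^\alpha)$ of open subsets of the various $M(A_\alpha)$ form a basis of the inverse limit topology and that $M(\mathscr A_Z)$ is compact, refine any given finite trivializing cover to one of the form $\{F_\alpha^{-1}(V_i^\alpha)\}$ for a single $\alpha\in D$; (ii) refining further so that every transition function has image in a coordinate chart of $G$ around $1_G$, reduce the problem to uniform approximation of Banach-space-valued continuous functions on $M(\mathscr A_Z)$ by pullbacks from the $M(A_\alpha)$'s, which is immediate from the inverse limit description. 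The extension of the resulting cocycle on $M(A_\alpha)$ to a Stein neighbourhood in $\Co^{\#\alpha}$ is then routine, and all remaining steps are direct applications of Bungart's Oka principle and of Theorem \ref{teo1.7}.
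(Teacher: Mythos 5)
Your high-level plan is the same one the paper follows: descend to finite level, extend to a Stein neighbourhood of $M(A_\alpha)$, invoke Bungart's Oka principle \cite{Bu} on that neighbourhood, and upgrade topological to holomorphic isomorphism on $M(\mathscr A_Z)$ via Theorem~\ref{teo1.7}. But you have correctly located the crux in the item you label \emph{Main obstacle}, and your two-step sketch for it does not close the gap.

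The issue is that approximating each transition function $g_{ij}^{-1}c_{ij}$ uniformly by pullbacks from the various $M(A_\alpha)$ produces, per pair $(i,j)$, a nearby function, but these nearby functions no longer satisfy the cocycle identity $d_{ij}d_{jk}d_{ki}=1_G$: they only satisfy it up to a small error in $G$. One must then \emph{correct} the approximate cocycle to a genuine one on a finite level, and this is where the real work lies. The paper's Proposition~\ref{prop9.1} does this correction, but not by working directly with the inverse system $\{M(A_\alpha)\}$: it first presents $M(\mathscr A_Z)$ as an inverse limit of $2$-dimensional compact metric spaces (Marde\v{s}i\'c) and then of $2$-dimensional compact polyhedra (Freudenthal), and uses Ostrand's theorem to refine the trivializing cover of the polyhedron $Q$ to a closed cover by \emph{three} sets $\bar W_1,\bar W_2,\bar W_3$. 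Only because there is then a single triple intersection can the error be absorbed by one application of the Dugundji/Arens extension theorem (Lemmas~\ref{lem9.3}--\ref{lem9.4}). This step relies essentially on $\dim M(\mathscr A_Z)=2$, and does not follow from ``uniform approximation plus refinement''. The paper then passes from the auxiliary polyhedron $Q$ back to the $\{M(A_\alpha)\}$ system only at the level of the classifying map, via Proposition~\ref{prop9.6}, and uses that $Q$ is an ANR to make the homotopy $f\simeq g\circ F_\beta$ work. Likewise, your injectivity argument requires realizing a \emph{gauge transformation}, rather than just a cocycle, at a finite level; this is the content of Proposition~\ref{prop10.1}, whose proof is again a careful finite induction over the cover with controlled balls $V_s$ in a chart of $G$, together with an Arens extension at each step, and is applied not to $\{M(A_\alpha)\}$ but to closed neighbourhoods $X_\alpha$ of $M(A_\alpha)$ in $\Co^{\#\alpha}$. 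So the statement you call ``continuity of non-abelian $\check H^1$'' is true here, but it is precisely the theorem the paper proves across Propositions~\ref{prop9.1}, \ref{prop9.6} and \ref{prop10.1}; the paper even remarks that if the classifying space $BG$ were known to be an ANR the argument would collapse to the kind of sketch you give. As written, your last paragraph assumes the result rather than proving it.

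Two smaller remarks. First, the bundle you descend lives on the image $K_\alpha=F_\alpha(M(\mathscr A_Z))$, which is generally a proper compact subset of $M(A_\alpha)$; you still need the observation (used in the paper via \eqref{eq9.8}) that for any open $O\supset K_\beta$ one can find $\alpha\supseteq\beta$ with $M(A_\alpha)\subset(F_\beta^\alpha)^{-1}(O)$, so that the extended bundle reaches a Stein neighbourhood of the full polynomial hull. Second, ``refining so that every transition function has image in a chart of $G$ around $1_G$'' is not literally achievable; one has to factor $c_{ij}=g_{ij}\tilde c_{ij}$ with $g_{ij}\in G$ constant and $\tilde c_{ij}$ near $1_G$, as the paper does; this is easy but needs to be said.
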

In particular, 
\[
\mathscr P^{\mathcal O}_G=\bigcup_{\alpha\in D}\mathscr F_\alpha^*((\mathscr P^{\mathcal O}_G)_\alpha).
\]
The statement consists of two parts:\smallskip

(1) ({\em Surjectivity of $\mathscr F_A$}). For each holomorphic principal $G$-bundle  $P$ on $M(\mathscr A_Z)$ there exist $\alpha\in D$ and a holomorphic principal $G$-bundle $\tilde P$ defined on a neighbourhood of $M(A_\alpha)$ such that bundles $P$ and $F_\alpha^*\tilde P$ are holomorphically isomorphic. \smallskip

(2)  ({\em Injectivity of $\mathscr F_A$}). If holomorphic principal $G$-bundles $P_1,P_2$ defined on a neighbourhood of $M(A_\beta)$ are such that  $F_\beta^*P_1$ and $F_\beta^*P_2$ are holomorphically isomorphic bundles, then there exist $\alpha\supseteq\beta$ and a neighbourhood $U$ of $M(A_\alpha)$ such that  bundles $(F_\beta^\alpha)^*P_1$ and $(F_\beta^\alpha)^*P_2$ are defined on  $U$ and  holomorphically isomorphic.

\sect{Oka Principle for Maps to Complex Homogeneous Spaces}

\subsect{Holomorphic Maps from ${\mathbf M(\mathscr A_Z)}$ to Banach Homogeneous Spaces}
Let $K\subset M(\mathscr A_Z)$ be a compact subset and $X$ a complex Banach manifold. By $\mathcal O(K,X)\subset C(K,X)$ we denote the set of continuous maps holomorphic on neighbourhoods of $K$ and by $\mathcal A(K,X)$ the closure of $\mathcal O(K,X)$ in the topology of uniform convergence of $C(K,X)$.
If $G$ is a complex Banach Lie group with Lie algebra $\mathfrak g$ and exponential map $\exp_G:\mathfrak g\rightarrow G$, set $\mathcal A(K, G)$ is a complex Banach Lie group with respect to pointwise product of maps with Lie algebra  $\mathcal A(K, \mathfrak g)$  and the exponential map being the composition of $\exp_G$ with elements of $\mathcal A(K, \mathfrak g)$.

The following result interesting in its own right will be used in the subsequent proofs.
\begin{Th}\label{teo3.4}
If $G$ is simply connected, then $\mathcal A(K, G)$ is path-connected.
\end{Th}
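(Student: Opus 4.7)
The plan is to prove that the identity component $\mathcal{A}(K,G)_0$ of the complex Banach Lie group $\mathcal{A}(K,G)$ coincides with all of $\mathcal{A}(K,G)$. This identity component is clopen (the exponential map sends a neighbourhood of $0 \in \mathcal{A}(K,\mathfrak g)$ diffeomorphically onto an open neighbourhood of $1_G$) and is automatically path-connected, since every one of its elements is a finite product of exponentials and the path $t\mapsto \exp(t\xi_1)\cdots\exp(t\xi_n)$ joins any such product to $1_G$. As $\mathcal{O}(K,G)$ is dense in $\mathcal{A}(K,G)$ by definition and $\mathcal{A}(K,G)_0$ is clopen, it suffices to show $\mathcal{O}(K,G) \subset \mathcal{A}(K,G)_0$: every holomorphic $f$ on a neighbourhood of $K$ with values in $G$ must be realized as a finite product $\exp(\eta_1)\cdots\exp(\eta_M)$ with $\eta_j\in\mathcal{A}(K,\mathfrak g)$.

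Given $f \in \mathcal{O}(K,G)$ extending to $\tilde f: U \to G$ holomorphic, I would exploit compactness of $\tilde f(K) \Subset G$ to cover it by finitely many exp-charts $V_i = g_i\exp(B_i)$, where $B_i \subset \mathfrak g$ is a ball on which $\exp$ is a biholomorphism onto $V_i$. Setting $U_i := \tilde f^{-1}(V_i)$ produces a finite open cover of $K$ together with holomorphic logarithms $\xi_i \in \mathcal{O}(U_i, \mathfrak g)$ satisfying $\tilde f|_{U_i} = g_i\exp(\xi_i)$. The constants $g_{ji} := g_j^{-1}g_i \in G$ form a locally constant (hence holomorphic) cocycle on $\{U_i \cap U_j\}$ which defines a holomorphic principal $G$-bundle $P \to U$ with $\{\exp(\xi_i)\}$ as a distinguished global holomorphic section.

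The heart of the argument is to convert this local data into the required global product decomposition. Since $G$ is connected, Corollary \ref{cor1.5}(1) yields a holomorphic trivialization of $P$ on a neighbourhood $U' \supset K$, producing holomorphic trivializing maps $h_i: U_i \cap U' \to G$; simple-connectedness of $G$ is then used to refine this trivialization so that each $h_i$ itself lies in an exponential chart at $1_G$, i.e., $h_i = \exp(\zeta_i)$ with $\zeta_i$ holomorphic. From such a refined trivialization one extracts a global factorization of $\tilde f|_{U'}$ as a finite product $\exp(\eta_1)\cdots\exp(\eta_M)$, and the path $t\mapsto\exp(t\eta_1)\cdots\exp(t\eta_M)$ joins $1_G$ to $f$ in $\mathcal{A}(K,G)$. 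The main obstacle is precisely this refinement step: it amounts to showing that the \v{C}ech differential of the locally constant cocycle $\{g_{ji}\}$ is unobstructed through $\exp$, which is exactly what $\pi_1(G)=0$ furnishes; without simple connectedness the differential could represent a non-trivial loop class in $G$ and the $h_i$ would not be expressible via $\exp$.
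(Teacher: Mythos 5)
Your reduction to showing $\mathcal O(K,G)\subset\mathcal A(K,G)_0$ is correct, and opening with a cover of $\tilde f(K)$ by translated exp-charts to obtain local presentations $\tilde f|_{U_i}=g_i\exp(\xi_i)$ matches the paper. But the "heart of the argument" has a real gap. The principal $G$-bundle $P$ built from the constant cocycle $g_{ji}=g_j^{-1}g_i$ is the \v{C}ech coboundary of the constant $0$-cochain $\{g_i\}$, hence trivially trivial; applying Corollary \ref{cor1.5}(1) to it gives you back exactly what you started with. More fatally, the refinement step cannot work as stated: trivializing maps satisfy $h_i^{-1}h_j=g_{ij}$ on overlaps, so whenever some $g_{ij}$ is far from $1_G$ it is impossible for both $h_i$ and $h_j$ to take values in a small exp-chart at $1_G$ — and no refinement of the cover changes the constants $g_i$, which are fixed elements scattered across $\tilde f(K)$. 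Simple connectedness of $G$ by itself does not let you take holomorphic logarithms of the trivializing maps; that is precisely the sort of Oka-type statement one is trying to prove, not a freebie.

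The paper sidesteps the logarithm issue by working with the whole path at once. After the same local presentations $f|_{V_l}=g_l f_l$ (with $g_l$ constant and $f_l$ valued in $\exp_G(U)$), one chooses local homotopies $z_l(t)=u_l(t)v_l(t)$ deforming the constant $1_G$ to $f|_{V_l}$. The discrepancy cocycle $c_{lm}=z_l^{-1}z_m$ takes values in the based loop group $C_0([0,1],G)$, whose set of connected components is $\pi_1(G)$; thus $\pi_1(G)=0$ makes $C_0([0,1],G)$ a \emph{connected} complex Banach Lie group. This is where simple connectedness actually enters — not to take a logarithm, but to make the fiber of a holomorphic $C_0([0,1],G)$-bundle connected, so that Corollary \ref{cor1.5}(1) applies to \emph{that} bundle and trivializes it over a neighbourhood of $K$. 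The resulting $c_l$ with $c_l^{-1}c_m=c_{lm}$ let the maps $z_l c_l^{-1}$ glue into a single $H\in\mathcal O(W',C([0,1],G))$ satisfying $H(\cdot,0)=1_G$ and $H(\cdot,1)=f$, which is the desired path. Your product-of-exponentials picture can be recovered a posteriori by slicing this path into small steps, but trying to build the factors $\exp(\eta_j)$ one at a time, as you propose, runs directly into the gluing obstruction that the loop-group trick is designed to remove.
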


To formulate further results we invoke the definition of a complex Banach homogeneous space (see, e.g., \cite[Sec.\,1]{R}).\smallskip

Suppose a complex Banach Lie group $G$ with unit $e$ acts {\em holomorphically} and {\em transitively} on a complex Banach manifold $X$, i.e., there is a holomorphic map $(g,p)\mapsto g\cdot p$ of $G\times X$ onto $X$ satisfying
$g_1\cdot (g_2\cdot p)=(g_1g_2)\cdot p$ and $e\cdot p=p$ for all $g_1,g_2\in G$, $p\in X$,
and  for each pair $p,q\in X$ there is some $g\in G$ with $g\cdot p=q$.  For $p\in X$ consider holomorphic surjective map $\pi^p: G\rightarrow X$, 
$\pi^p(g):=g\cdot p$, $g\in G$. Let
$G_{e}\, (\cong\mathfrak g)$ and $X_p$ be tangent spaces of $G$ at $e$ and $X$ at $p$. By $d\pi_e^p: G_e\rightarrow X_p$ we denote the differential of $\pi^p$ at $e$.\smallskip

 $X$ is called a {\em Banach homogeneous space} under the action of $G$ if
there is some $p\in X$ such that
${\rm ker}\, d\pi_e^p$ is a complemented subspace of $G_e$ and $d\pi_e^p: G_e\rightarrow X_p$ is surjective.\smallskip

By $[M(\mathscr A_Z),X]$  we denote the set of homotopy classes of continuous maps from $M(\mathscr A_Z)$ to $X$. Maps $f_0,f_1\in  \mathcal O(M(\mathscr A_Z),X)$ are said to be homotopic in $\mathcal O(M(\mathscr A_Z),X)$ if there is a homotopy $F: M(\mathscr A_Z)\times [0,1]\rightarrow X$ connecting them such that $f_t:=F(t,\cdot)\in \mathcal O(M(\mathscr A_Z),X)$ for all $t\in [0,1]$. This homotopy relation is an equivalence relation with the set of equivalence classes $[M(\mathscr A_Z),X]_{\mathcal O}$
consisting of path-connected components of $\mathcal O(M(\mathscr A_Z),X)$. Next, embedding $\mathcal O(M(\mathscr A_Z),X)\hookrightarrow C(M(\mathscr A_Z),X)$ induces a map \penalty-10000 $\mathscr O: [M(\mathscr A_Z),X]_{\mathcal O}\rightarrow [M(\mathscr A_Z),X]$.

\begin{Th}\label{teo3.5}
Let $X$ be a complex Banach homogeneous space. Then $\mathscr O$ is a bijection.
\end{Th}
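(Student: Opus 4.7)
The plan is to realize $X$ as a homogeneous quotient $G/H$ and translate the problem into one about principal $H$-bundles on $M := M(\mathscr A_Z)$ equipped with equivariant lifts to $G$, then invoke the Oka principle for principal bundles (Theorems~\ref{teo1.7} and~\ref{teo2.9}) together with the path-connectedness result (Theorem~\ref{teo3.4}). Fix a base point $p_0 \in X$ and let $H := \{g \in G : g \cdot p_0 = p_0\}$. The Banach homogeneous space hypothesis makes $H$ a closed complex Banach Lie subgroup of $G$ with complemented Lie algebra in $\mathfrak g$, and $\pi := \pi^{p_0}\colon G \to X$ is a holomorphic principal $H$-bundle (right $H$-action on $G$). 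A continuous (resp.\ holomorphic) map $f\colon M \to X$ is equivalent to the data of a continuous (resp.\ holomorphic) principal $H$-bundle $P_f := f^*G$ on $M$ together with the tautological right-$H$-equivariant lift $\hat f\colon P_f \to G$ covering $f$.

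For the surjectivity of $\mathscr O$, I start with $f \in C(M, X)$ and apply Theorem~\ref{teo2.9} for the group $H$ to obtain a holomorphic principal $H$-bundle $\tilde P$ and a topological isomorphism $\psi\colon P_f \to \tilde P$. Setting $\sigma_0 := \hat f \circ \psi^{-1}\colon \tilde P \to G$, this is a continuous right-$H$-equivariant map, equivalently a continuous section of the holomorphic principal $G$-bundle $E := \tilde P \times_H G$ (with $H$ acting on $G$ by left translation). The existence of $\sigma_0$ shows $E$ is topologically trivial; by Theorem~\ref{teo1.7} applied to $G$, it is also holomorphically trivial. A holomorphic section $\tilde\sigma$ of $E$ then corresponds to a holomorphic right-$H$-equivariant map $\tilde P \to G$ descending to a holomorphic map $\tilde f \in \mathcal O(M, X)$.

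The delicate step is to produce a continuous homotopy from $f$ to $\tilde f$. Under a holomorphic trivialization of $E$ the sections $\sigma_0$ and $\tilde\sigma$ correspond to a continuous and a holomorphic map $M \to G$ respectively, differing by pointwise left multiplication by some continuous $g\colon M \to G$ satisfying $g(y) \cdot f(y) = \tilde f(y)$. Since $\tilde\sigma$ is unique only up to left multiplication by an element of $\mathcal O(M, G)$, the class of $g$ in $\pi_0\bigl(C(M, G)\bigr)$ is only defined modulo the image of $\pi_0\bigl(\mathcal O(M, G)\bigr)$; I argue this class vanishes by reducing to the case $G$ connected (replacing $G$ by the component containing the image of $g$) and then passing to the simply connected universal cover $\widetilde G \to G$ with discrete central kernel $\Gamma = \pi_1(G)$. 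Theorem~\ref{teo3.4} applied to $\widetilde G$ makes $\mathcal A(M, \widetilde G)$ path-connected, while the bounds $\dim M = 2$ and $H^2(M, \mathbb Z) = 0$ of Proposition~\ref{prop1.1} let me control the residual monodromy class in $H^1(M, \Gamma)$ by suitably modifying $\tilde\sigma$ with an element of $\mathcal O(M, G)$. Once $g$ is placed in the path-component of the constant map $e$ in $C(M, G)$, any continuous path joining them, acting pointwise on $f$, provides the desired $C$-homotopy $f \simeq \tilde f$.

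For the injectivity of $\mathscr O$, I run the same bundle reduction parametrically: a continuous homotopy $F\colon M \times [0, 1] \to X$ between holomorphic maps $f_0, f_1$ defines continuous bundle data on $M \times [0, 1]$ restricting to holomorphic data at $t \in \{0, 1\}$. Extending Theorems~\ref{teo2.9} and~\ref{teo1.7} to this setting (holding the holomorphic structure fixed at the endpoints) produces a holomorphic homotopy, exhibiting $f_0$ and $f_1$ in the same class of $[M, X]_{\mathcal O}$. The main obstacle throughout is the normalization described in the preceding paragraph: combining Theorem~\ref{teo3.4} on the universal cover with the low cohomological dimension of $M(\mathscr A_Z)$ to eliminate the gauge obstruction for general (not necessarily connected or simply connected) $G$, and ensuring in the injectivity case that the gauge adjustment does not disturb the boundary data at $t \in \{0, 1\}$.
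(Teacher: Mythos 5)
Your bundle-theoretic setup (reducing via the stabilizer $H=G(p_0)$ and the associated $G$-bundle $E=\tilde P\times_H G$) is close in spirit to the paper's injectivity argument, but both halves of your proposal have gaps that the paper's proof avoids by different means.

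\textbf{Injectivity.} You propose to run the reduction parametrically over $M\times[0,1]$ and then ``extend Theorems~\ref{teo1.7} and~\ref{teo2.9} to this setting, holding the holomorphic structure fixed at the endpoints.'' No such parametric Oka principle is available: Theorems~\ref{teo1.7} and~\ref{teo2.9} are proved specifically for $M(\mathscr A_Z)$, and their proofs lean heavily on the structure of that space (Blaschke sets, Gleason parts, the Cousin- and Cartan-type lemmas of Section~6). Extending them to a relative statement over $M(\mathscr A_Z)\times[0,1]$ with prescribed boundary data is a separate, unproved assertion and is not a routine consequence. The paper instead stays on $M(\mathscr A_Z)$ throughout: after reducing to $G$ simply connected and pulling back the principal $G(p)$-bundle $\pi^p\colon G\to X$ by $f_1$ and $f_2$, it invokes the (non-parametric) Oka principle for bundles to obtain a \emph{holomorphic} bundle isomorphism, reads off from it an explicit gauge $g\in\mathcal O(M(\mathscr A_Z),G)$ with $f_2=g\cdot f_1$, and then applies Theorem~\ref{teo3.4} to contract $g$ to $1_G$ inside $\mathcal O(M(\mathscr A_Z),G)$. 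That last step is where the homotopy in the holomorphic category comes from; no relative version of the bundle theorems is needed.

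\textbf{Surjectivity.} Your ``delicate step'' — showing that the continuous gauge discrepancy $g\in C(M,G)$ between $\sigma_0$ and a holomorphic section $\tilde\sigma$ can be moved into the path component of the constant — is, after unwinding, precisely the surjectivity of $\mathscr O$ for the special target $X=G$. You sketch a route via the universal cover, Theorem~\ref{teo3.4}, and ``controlling the residual monodromy class in $H^1(M,\Gamma)$ by modifying $\tilde\sigma$,'' but the crucial point — that every class in $H^1(M(\mathscr A_Z),\pi_1(G_0))$ obstructing the lift of $g$ is realized by a \emph{holomorphic} map $M(\mathscr A_Z)\to G_0$ — is asserted, not proved, and is itself a nontrivial Arens--Royden-type statement. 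The paper sidesteps all of this: it writes $M(\mathscr A_Z)$ as the inverse limit of the polynomially convex sets $M(A_\alpha)$, uses Stone--Weierstrass and the fact that $X$ is an ANR to push $f$ down (up to homotopy) to a continuous map on a Stein neighbourhood of some $M(A_\alpha)$, and then applies Ramspott's classical Oka principle on Stein domains to replace it by a holomorphic map, which is then pulled back to $M(\mathscr A_Z)$. This completely avoids the gauge manipulation and the potential circularity.

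In short: your reduction to a gauge-comparison problem is a legitimate idea and mirrors part of the paper's injectivity step, but your surjectivity argument leaves the essential special case $X=G$ unproved, and your injectivity argument relies on an unavailable parametric Oka principle. Consulting Theorem~\ref{teo3.4} and the explicit gauge construction in the paper's injectivity proof, and the inverse-limit-plus-Ramspott route for surjectivity, would fill both gaps.
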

The statement consists of two parts:

\begin{itemize}
 \item[(1)] ({\em Injectivity of $\mathscr O$}).  If two  maps in $  \mathcal O(M(\mathscr A_Z),X)$ are homotopic in $C(M(\mathscr A_Z),X)$,  
they are homotopic in $\mathcal O(M(\mathscr A_Z),X)$.
\item[(2)] ({\em Surjectivity of $\mathscr O$}).
Every map in $C(M(\mathscr A_Z),X)$ is homotopic to a map in \penalty-10000 $\mathcal O(M(\mathscr A_Z),X)$.
\end{itemize}

Next, recall that a path-connected topological space $X$ is  $n$-{\em simple} if for each $x\in X$ the fundamental group $\pi_1(X,x_0)$ acts trivially on the $n$-homotopy group $\pi_n(X,x)$ (see, e.g., \cite[Ch.\,IV.16]{Hu1} for the corresponding definitions and results).  For instance, $X$ is $n$-simple if group $\pi_n(X)=0$ and
$1$-simple if and only if group $\pi_1(X)$ is abelian. Also, it is worth noting that every path-connected topological group is $n$-simple for all $n$.
\begin{C}\label{cor3.6}
Let $X$ be a complex Banach homogeneous space $n$-simple for all $n\le 2$. Then there is a natural one-to-one correspondence between elements of $[M(\mathscr A_Z),X]_{\mathcal O}$ and the \v{C}ech cohomology group $H^1(M(\mathscr A_Z),\pi_1(X))$. In particular, 
if $X$ is simply connected, space $\mathcal O(M(\mathscr A_Z),X)$ is path-connected.
\end{C}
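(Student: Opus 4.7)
The plan is to combine Theorem~\ref{teo3.5} with a classical Postnikov-tower / obstruction-theoretic classification. Set $Y := M(\mathscr A_Z)$. By Theorem~\ref{teo3.5} the canonical map $\mathscr O\colon [Y,X]_{\mathcal O}\to [Y,X]$ is a bijection, so it suffices to produce a natural bijection $[Y,X]\cong H^1(Y,\pi_1(X))$. The first technical input I need is that $\check H^2(Y,G)=0$ for every abelian group $G$. By Proposition~\ref{prop1.1}, $\dim Y=2$ and $\check H^2(Y,\mathbb Z)=0$; the dimension bound gives $\check H^k(Y,G)=0$ for all $k\geq 3$ and every $G$, and given $G$, choosing a presentation $0\to R\to F\to G\to 0$ with $F$ free abelian and applying the associated coefficient long exact sequence reduces the claim to $\check H^2(Y,F)=0$. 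The latter is immediate for finitely generated $F$ and in general follows from the compatibility of \v Cech cohomology of a compact Hausdorff space with direct limits of coefficient groups.

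Next I would replace $X$ by a CW model of the same weak homotopy type and let $X_2$ denote its second Postnikov stage, sitting in a principal fibration $K(\pi_2(X),2)\to X_2\to K(\pi_1(X),1)$; the $1$- and $2$-simplicity of $X$ is exactly what is needed for the $k$-invariant of this fibration to be an untwisted integral cohomology class. Since $\dim Y=2$, the forgetful map $[Y,X]\to[Y,X_2]$ is a bijection, and homotopy classes $Y\to K(\pi_1(X),1)$ are in natural bijection with $H^1(Y,\pi_1(X))$ by Eilenberg--MacLane classification. The obstruction to lifting such a class to $X_2$ lies in $H^3(Y,\pi_2(X))=0$, so lifts always exist; the set of lifts of a given base map modulo fibre homotopy is a torsor over $H^2(Y,\pi_2(X))=0$, so the lift is unique. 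These bijections combine to give $[Y,X]\cong H^1(Y,\pi_1(X))$. When $X$ is simply connected, the right-hand side vanishes, whence $\mathcal O(Y,X)$ is path-connected.

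The principal obstacle I expect is justifying obstruction theory in this non-CW setting: $Y$ is compact Hausdorff but generally lacks a CW structure, so the classical statements do not apply verbatim. A clean workaround is to present $Y$ as the inverse limit of the compacta $M(A_\alpha)$ from Theorem~\ref{teo1.10}, carry out the Postnikov-theoretic classification on polyhedral nerves of finite open covers of each $M(A_\alpha)$, and pass to the appropriate limit; both \v Cech cohomology and homotopy classes of maps into an absolute neighbourhood retract behave well with respect to inverse limits of compacta, and $X$ is an ANR as a Banach manifold, so the limiting argument goes through.
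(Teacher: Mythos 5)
Your overall strategy matches the paper's: reduce via Theorem~\ref{teo3.5} to the purely topological bijection $[M(\mathscr A_Z),X]\cong H^1(M(\mathscr A_Z),\pi_1(X))$, and prove that bijection by obstruction theory, using $\dim M(\mathscr A_Z)=2$ and the vanishing of $H^2(M(\mathscr A_Z),\pi_2(X))$. Your derivation of $\check H^2(M(\mathscr A_Z),G)=0$ for arbitrary abelian $G$ (free presentation plus continuity of \v Cech cohomology) is essentially what the paper refers to via Lemma~\ref{lem6.3}. The decisive difference is in how the obstruction-theoretic classification is justified for the non-CW space $M(\mathscr A_Z)$: the paper simply cites Hu's theorem \cite[Th.~(11.4)]{Hu2}, which is formulated precisely for normal spaces of finite covering dimension mapping into an ANR, so there is nothing left to check; you instead propose to re-derive it via CW Postnikov towers and an inverse-limit argument.

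Your proposed workaround has a concrete gap. First, the compacta $M(A_\alpha)$ from Theorem~\ref{teo1.10} do \emph{not} have covering dimension $\le 2$ in general (they are polynomially convex compacta in $\Co^{\#\alpha}$), so the step ``$[N,X]\to[N,X_2]$ is a bijection because $\dim N\le 2$'' is unavailable on the nerves of covers of $M(A_\alpha)$. To get dimension-$\le 2$ approximants you would need the Marde\v{s}i\'c inverse system of $2$-dimensional polyhedra used in the proof of Proposition~\ref{prop9.1}, not the $M(A_\alpha)$. Second, and more seriously, even for a $2$-dimensional polyhedron $N$ the set $[N,X_2]$ is \emph{not} naturally bijective with $H^1(N,\pi_1(X))$: the fibre of $[N,X_2]\to H^1(N,\pi_1(X))$ is a quotient of $H^2(N,\pi_2(X))$, which need not vanish for the finite approximants. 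Only the limit $H^2(M(\mathscr A_Z),\pi_2(X))$ vanishes. So one cannot establish the desired bijection approximant-by-approximant and then ``pass to the limit''; one has to track how the $H^2(N_\alpha,\pi_2(X))$-torsor ambiguities become trivial after pulling back far enough along the inverse system, using continuity of \v Cech cohomology and the ANR property of $X$ to identify $[M(\mathscr A_Z),X]$ with the direct limit of the $[N_\alpha,X]$. This can be made to work, but it is an additional (and somewhat delicate) argument which your sketch elides, whereas the paper's citation of Hu's theorem makes it unnecessary.
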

\begin{R}\label{rem3.7}
{\rm (1) The correspondence of the corollary is described in Remark \ref{rem10.1}.\smallskip

\noindent (2) Group $H^1(M(\mathscr A_Z),\Z)$ is always nontrivial, see  Lemma \ref{lem7.4}.}
\end{R}

Recall that for each $A\in\mathscr A_Z$ space $M(\mathscr A_Z)$ can be presented as the inverse limit of the inverse limit system $\{M(A_\alpha),F_\beta^\alpha\}$, $\alpha\in D$, see Section~2.2. Two maps $M(A_\alpha)\rightarrow X$   holomorphic in a neighbourhood $U$ of $M(A_\alpha)$ are said to be {\em homotopic in} $\mathcal O(M(A_\alpha),X)$ if their restrictions to 
a neighbourhood $V\subset U$  of $M(A_\alpha)$ can be joined by a  path in $\mathcal O(V,X)$. This homotopy relation is an equivalence relation with the set of equivalence classes denoted by $[M(A_\alpha), X]_{\mathcal O}$.

Projections $F_\beta ^\alpha$ induce maps $(\mathfrak F_\beta^\alpha)^*:  [A_\beta, X]_{\mathcal O}\rightarrow [A_\alpha, X]_{\mathcal O}$ assigning to the homotopy class of a  map $f\in\mathcal O(M(A_\beta), X)$ the homotopy class of its pullback $(F_\beta^\alpha)^*f$. Thus we obtain the direct system of sets $\bigl\{[M(A_\beta),X]_{\mathcal O}, (\mathfrak F_\beta^\alpha)^*\bigr\}$. Similarly, limit projection $F_\alpha$ induces a map $\mathfrak F_\alpha^*:  [M(A_\alpha), X]_{\mathcal O}\rightarrow [M(\mathscr A_Z),X]$ assigning to the homotopy class of $f\in\mathcal O(M(A_\alpha), X)$ the homotopy class of its pullback $F_\alpha^*f$. Since $\mathfrak F_\beta^*=\mathfrak F_\alpha^*\circ (\mathfrak F_\beta^\alpha)^*$ for all $\alpha\supseteq\beta$ in $D$, the family of maps $\{\mathfrak F_\alpha^*\}_{\alpha\in D}$ induces a map $\mathfrak F_A$ of the direct limit $\displaystyle\lim_{\longrightarrow}\,[M(A_\alpha), X]_{\mathcal O}$ to $[M(\mathscr A_Z),X]_{\mathcal O}$. 
\begin{Th}\label{teo3.6}
Let $X$ be a complex Banach homogeneous space. Then
$\mathfrak F_A$ is a bijection.
\end{Th}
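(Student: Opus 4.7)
The plan is to factor $\mathfrak F_A$ as the composition of three natural bijections:
\[
\displaystyle\lim_{\longrightarrow}\,[M(A_\alpha), X]_{\mathcal O} \xrightarrow{(1)} \displaystyle\lim_{\longrightarrow}\,[M(A_\alpha), X]_C \xrightarrow{(2)} [M(\mathscr A_Z), X] \xrightarrow{(3)} [M(\mathscr A_Z),X]_{\mathcal O},
\]
where $[M(A_\alpha), X]_C$ denotes continuous homotopy classes of germs of continuous maps on neighbourhoods of $M(A_\alpha)$, and $[M(\mathscr A_Z),X]$ is the set of continuous homotopy classes as in Theorem \ref{teo3.5}. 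After checking that each arrow commutes with the bonding maps $F_\beta^\alpha$ and the limit projections $F_\alpha$, so that the composite genuinely agrees with $\mathfrak F_A$, it suffices to establish (1), (2), and (3) individually.

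For (1), I would exploit that each $M(A_\alpha)$ is polynomially convex in $\Co^{\#\alpha}$ and therefore admits a fundamental system of Runge--Stein open neighbourhoods. On each such neighbourhood the classical Ramspott theorem for holomorphic maps into complex Banach homogeneous spaces (see, e.g., \cite{R}) applies and yields a bijection $[M(A_\alpha), X]_{\mathcal O} \cong [M(A_\alpha), X]_C$; passing to the direct limit over $\alpha \in D$ then delivers (1). Step (3) is exactly Theorem \ref{teo3.5}, so nothing further needs to be done there.

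The remaining step (2) is the topological inverse-limit statement. I would use that $M(\mathscr A_Z)$ is the inverse limit of the compact Hausdorff spaces $M(A_\alpha)$ (see, e.g., \cite{Ro}) and that $X$ is a Banach manifold, hence a metrizable ANR. A standard argument from the shape theory of compacta then yields the bijection $\displaystyle\lim_{\longrightarrow}\,[M(A_\alpha), X]_C \cong [M(\mathscr A_Z),X]$: surjectivity follows because the compact image $f(M(\mathscr A_Z))$ of any continuous $f: M(\mathscr A_Z) \to X$ admits a finite coordinate atlas in $X$, and the topology of $M(\mathscr A_Z)$ has a basis of sets $F_\alpha^{-1}(U)$, so $f$ can be uniformly approximated inside the charts by a pullback $F_\alpha^* g$ for some sufficiently large $\alpha$; close enough continuous maps into an ANR are canonically homotopic. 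Injectivity is the same argument applied to the homotopy viewed as a continuous map out of $M(\mathscr A_Z) \times [0,1]$, which is itself an inverse limit of the compact Hausdorff spaces $M(A_\alpha) \times [0,1]$ with limit projections $F_\alpha \times \mathrm{id}$.

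The main obstacle is the injectivity direction of step (2): given continuous germs $g_1, g_2$ near $M(A_\alpha)$ whose pullbacks $F_\alpha^* g_1, F_\alpha^* g_2$ are homotopic on $M(\mathscr A_Z)$, one must produce $\gamma \supseteq \alpha$ so that already $(F_\alpha^\gamma)^* g_1, (F_\alpha^\gamma)^* g_2$ are homotopic on a neighbourhood of $M(A_\gamma)$. Carrying this out requires a Lebesgue-number refinement on the product $M(\mathscr A_Z) \times [0,1]$ together with the ANR property of $X$ to convert approximate factorizations into genuine homotopies, and one must keep track of the two fixed boundary levels $t = 0, 1$ while descending to a single finite stage; this boundary-relative refinement is where the technical care is needed.
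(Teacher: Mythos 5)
Your proposal is correct and follows essentially the same route as the paper. The paper's proof of surjectivity uses (verbatim) the Stone--Weierstrass/ANR argument from Theorem \ref{teo3.5}(2) to descend to a continuous germ near some $M(A_\alpha)$, applies Ramspott's Oka principle on a Stein neighbourhood of the polynomially convex set $M(A_\alpha)$ to make that germ holomorphic, and then invokes Theorem \ref{teo3.5}(1) to upgrade the resulting continuous homotopy on $M(\mathscr A_Z)$ to a holomorphic one; the proof of injectivity uses the inverse-limit/ANR descent (citing Lin \cite{Li}) to obtain a continuous homotopy on a neighbourhood of some $M(A_\alpha)$ with $\alpha\supseteq\beta$ and then Ramspott again on a Stein neighbourhood. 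Your factorization through $[M(A_\alpha),X]_C$ and $[M(\mathscr A_Z),X]$ is just a clean repackaging of these same three ingredients (Ramspott near $M(A_\alpha)$, inverse limit + ANR, and Theorem \ref{teo3.5}), and your identification of the boundary-relative refinement in the injectivity direction of step (2) as the technical crux is exactly the point the paper addresses by citing \cite{Li}.
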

In particular, 
\[
[M(\mathscr A_Z),X]_{\mathcal O}=\bigcup_{\alpha\in D}\mathfrak F_\alpha^*\bigl([M(A_\alpha), X]_{\mathcal O}\bigr).
\]
To prove the result we establish the following:\smallskip

(1) ({\em Surjectivity of $\mathfrak F_A$}). For each holomorphic map $f: M(\mathscr A_Z)\rightarrow X$ there exist $\alpha\in D$ and a holomorphic map into $X$  defined on a neighbourhood of $M(A_\alpha)$ such that maps $f$ and $F_\alpha^*f$ are homotopic in $\mathcal O(M(\mathscr A_Z),X)$. \smallskip

(2)  ({\em Injectivity of $\mathfrak F_A$}). If holomorphic maps $f_1,f_2$ into $X$ defined on a neighbourhood of $M(A_\beta)$ are such that  $F_\beta^*f_1$ and $F_\beta^*f_2$ are homotopic in $\mathcal O(M(\mathscr A_Z),X)$, then there exist $\alpha\supseteq\beta$ and a neighbourhood $U$ of $M(A_\alpha)$ such that  maps $(F_\beta^\alpha)^*f_1$ and $(F_\beta^\alpha)^*f_2$ are defined on  $U$ and  homotopic in $\mathcal O(U,X)$.

Let $X$ be a complex Banach homogeneous manifold under the action of a complex Banach Lie group $G$. It is known, see, e.g., \cite[Prop.\,1.4]{R}, that the stabilizer of a point  $p\in X$,
$G(p):=\{g\in G\, :\, \pi^p(g)=p\}$, is a closed complex Banach Lie subgroup of $G$ and stabilizers of different points are conjugate in $G$ by inner automorphisms.

The following result will be used in  applications.
\begin{Th}\label{teo3.7}
Let $X$ be a complex Banach homogeneous space under the action of a complex Banach Lie group $G$. Assume that group $G(p)\subset G$, $p\in X$, is connected. Then for every $f\in\mathcal O(M(\mathscr A_Z),X)$ and each $p\in X$ there is a map $\tilde f_p\in\mathcal O(M(\mathscr A_Z),G)$ such that $f(x)=\tilde f_p(x)\cdot p$ for all $x\in M(\mathscr A_Z)$.
\end{Th}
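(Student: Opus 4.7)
The plan is to realize the lifting problem as the triviality of a pulled-back holomorphic principal bundle on $M(\mathscr A_Z)$ and then apply Corollary \ref{cor1.5}(1).

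First I would recall that the orbit map $\pi^p:G\rightarrow X$, $g\mapsto g\cdot p$, is a holomorphic principal $G(p)$-bundle over $X$, with $G(p)$ acting on $G$ by right multiplication. This follows from the Banach homogeneous space hypothesis (cf.\ [R, Prop.\,1.4]) via the Banach implicit function theorem, which supplies local holomorphic sections of $\pi^p$ in a neighbourhood of every point of $X$; these produce local trivializations with a holomorphic $G(p)$-valued cocycle $\{h_{ij}\}$ subordinate to some open cover $(V_i)_{i\in I}$ of $X$. Given $f\in\mathcal O(M(\mathscr A_Z),X)$, I would then form the pullback
\[
P:=f^{*}G=\bigl\{(x,g)\in M(\mathscr A_Z)\times G\,:\,g\cdot p=f(x)\bigr\},
\]
trivialized over the open cover $(f^{-1}(V_i))_{i\in I}$ of $M(\mathscr A_Z)$ by the cocycle $\{h_{ij}\circ f\}$.

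Next I would verify that $P$ is a holomorphic principal $G(p)$-bundle on $M(\mathscr A_Z)$ in the sense of Section~2.2. Since $f$ is continuous on $M(\mathscr A_Z)$ and its restriction to $\Di\setminus Z$ is holomorphic, and each $h_{ij}$ is a holomorphic map of complex Banach manifolds, each composition $h_{ij}\circ f$ is continuous on $f^{-1}(V_i)\cap f^{-1}(V_j)$ and holomorphic on its intersection with $\Di\setminus Z$. Hence $h_{ij}\circ f\in\mathcal O(f^{-1}(V_i)\cap f^{-1}(V_j),G(p))$, so $P$ qualifies as a holomorphic principal bundle in the intrinsic sense used in the paper. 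The key observation is that a holomorphic global section $s$ of $P$ has the form $s(x)=(x,\tilde f_p(x))$ with $\tilde f_p\in\mathcal O(M(\mathscr A_Z),G)$ satisfying $\tilde f_p(x)\cdot p=f(x)$, and conversely any such $\tilde f_p$ defines a section. Thus the theorem is equivalent to the statement that $P$ is holomorphically trivial on $M(\mathscr A_Z)$.

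Finally I would invoke the Oka principle established earlier. Because $G(p)$ is assumed connected, the discrete quotient $C(G(p))$ reduces to a point and so the associated bundle $P_{C(G(p))}$ is topologically trivial. Applying Theorem \ref{te1.4}, or equivalently Corollary \ref{cor1.5}(1), with $U=K=M(\mathscr A_Z)$ yields that $P$ is holomorphically trivial over $M(\mathscr A_Z)$. A global holomorphic section of $P$ then delivers the desired $\tilde f_p$.

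The main obstacle in this program is the first paragraph: one must be careful to check that the pullback construction actually produces an object fitting the intrinsic definition of holomorphic bundle on $M(\mathscr A_Z)$ (holomorphy governed by the restriction to $\Di\setminus Z$ plus continuity across $Z$), and that $\pi^p$ itself is a genuine holomorphic principal $G(p)$-bundle on the Banach manifold $X$. Once these structural matters are settled, the triviality statement is an immediate consequence of the Grauert--Bungart-type Oka principle for principal bundles established in Section~2.
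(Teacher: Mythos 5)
Your argument is correct and coincides in essence with the paper's own proof: both realize the assertion as triviality of the pullback under $f$ of the holomorphic principal $G(p)$-bundle $\pi^p:G\to X$, invoke Corollary~\ref{cor1.5}(1) using connectedness of $G(p)$ to obtain a global holomorphic section, and read off $\tilde f_p$ from that section. The only cosmetic difference is that the paper builds the $G(p)$-bundle on $X$ abstractly from the cocycle $\{s_i^{-1}s_j\}$ of local sections and refers back to the mechanics of Theorem~\ref{teo3.5} to produce $\tilde f_p$, whereas you identify the bundle concretely as $G\to X$ and spell out the section-to-lift correspondence directly.
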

\subsection{Nonlinear Approximation and Interpolation Problems}
A compact subset $K\subset M(\mathscr A_Z)$ is called {\em holomorphically convex} if for any $x\notin K$ there is $f\in \mathcal O(M(\mathscr A_Z))$ such that
\[
\max_{K}|f|<|f(x)|.
\]
Note that for a natural number $l$ any subset of $M(\mathscr A_Z)$ of the form
\[
\{x\in M(\mathscr A_Z)\, :\, \max_{1\le j\le l}|f_j(x)|\le 1,\ f_j\in\mathcal O(M(\mathscr A_Z)),\ 1\le j\le l\}
\]
is holomorphically convex and each holomorphically convex  $K\subset M(\mathscr A_Z)$ is intersection of such subsets.

Let $U$ be an open neighbourhood of a holomorphically convex set $K\subset M(\mathscr A_Z)$ and $X$ be a complex Banach homogeneous space.
The following result is a nonlinear analog of the Runge approximation theorem.
\begin{Th}\label{nonlinrunge}
\begin{itemize}
\item[(1)]
Suppose a  map in $\mathcal O(U,X)$ can be uniformly approximated on  $K$ by maps in $C(M(\mathscr A_Z),X)$. Then
it can be uniformly approximated on  $K$ by maps in $\mathcal O(M(\mathscr A_Z),X)$.
\item[(2)]
If $X$ is simply connected, then each map in $\mathcal O(U,X)$ can be uniformly approximated on  $K$ by maps in $\mathcal O(M(\mathscr A_Z),X)$.
\end{itemize}
\end{Th}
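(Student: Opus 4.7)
The plan is to linearize the problem using the transitive $G$-action on $X$, reducing the approximation of $X$-valued holomorphic maps to linear Runge approximation of Lie-algebra-valued holomorphic functions on $M(\mathscr{A}_Z)$, while invoking the Oka principles of Sections~2 and~3.1 for global patching.

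For Part~(1), given $f \in \mathcal{O}(U, X)$, $\varepsilon > 0$, and a continuous $g : M(\mathscr{A}_Z) \to X$ with $\sup_K d(f, g) < \varepsilon$, I would argue in three steps. \emph{First}, since $d\pi_e^p$ is surjective with complemented kernel, the implicit function theorem provides holomorphic local sections of $\pi^p : G \to X$. Cover $f(K)$ by finitely many chart-neighborhoods $W_j \subset X$ equipped with such sections $\sigma_j : W_j \to G$, and set $\varphi_j := \sigma_j \circ f$ on the corresponding open cover $(V_j)$ of a neighborhood of $K$; the transition cocycle $\varphi_i \varphi_j^{-1}$ is a holomorphic $G(p)$-valued cocycle, defining a holomorphic principal $G(p)$-bundle $f^*G$ over $\cup V_j$. \emph{Second}, use the continuous approximant $g$ to obtain a topological principal $G(p)$-bundle $g^*G \to M(\mathscr{A}_Z)$ whose restriction to $K$ is topologically isomorphic to $f^*G|_K$; by Theorems~\ref{teo2.9} and~\ref{teo1.7}, this topological type is realized by a holomorphic principal $G(p)$-bundle $P \to M(\mathscr{A}_Z)$ which can be arranged to agree with $f^*G$ near $K$ up to a $G(p)$-valued cocycle close to the identity. \emph{Third}, resolve this residual cocycle by the exponential map and linear Runge approximation for holomorphic $\mathfrak{g}(p)$-valued functions on $M(\mathscr{A}_Z)$; this is obtained from the inverse-limit description $M(\mathscr{A}_Z) = \varprojlim M(A_\alpha)$ of Section~2.2 combined with Oka--Weil on the polynomially convex compacta $M(A_\alpha) \subset \Co^{\#\alpha}$. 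Pushing the resulting global holomorphic $G$-valued approximation forward through $\pi^p$ yields the required map in $\mathcal{O}(M(\mathscr{A}_Z), X)$ close to $f$ on $K$.

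For Part~(2), the continuous-approximation hypothesis of Part~(1) is automatic when $X$ is simply connected. The obstructions to extending $f|_K$ (after an arbitrarily small perturbation) to a continuous global map $M(\mathscr{A}_Z) \to X$ lie in $H^k(M(\mathscr{A}_Z), K; \pi_k(X))$, and these vanish in the relevant range because $\pi_1(X) = 0$, $\dim M(\mathscr{A}_Z) = 2$, and $H^2(M(\mathscr{A}_Z), \Z) = 0$ by Proposition~\ref{prop1.1}; $n$-simplicity of $X$ (Corollary~\ref{cor3.6}'s hypothesis) covers the remaining coefficient issues. Part~(1) then supplies the desired holomorphic approximation.

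The principal obstacle is the Step 2--3 patching in Part~(1): carrying out the nonlinear Cartan-type splitting of the $G(p)$-valued cocycle over $M(\mathscr{A}_Z)$ with quantitative control. Since $M(\mathscr{A}_Z)$ is not a Stein manifold in the classical sense, the usual Stein-theoretic splitting must be replaced by the intrinsic Grauert--Bungart machinery of Section~2 together with the inverse-limit reduction to finite-dimensional polynomially convex compacta, and coupling this patching with uniform closeness to $f$ on $K$ is the main technical hurdle.
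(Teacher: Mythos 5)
Your overall strategy — use the transitive $G$-action to linearize, invoke the Oka principles of Sections~2 and~3.1, then Runge-approximate — is the right one, and you correctly identify the Step~2--3 patching as the crux. But the way you try to carry out that patching diverges from the paper's argument in a way that leaves a genuine gap, and the paper's alternative is worth comparing to because it sidesteps the difficulty you flag.

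The issue in your Part~(1) is that you attempt to compare $f^*P$ directly with the pullback of $P$ along the \emph{continuous} approximant $g$, and then produce a holomorphic bundle of that topological type by Theorems~\ref{teo1.7} and~\ref{teo2.9}, claiming the resulting bundle ``can be arranged to agree with $f^*G$ near $K$ up to a $G(p)$-valued cocycle close to the identity.'' That closeness does not follow: the Grauert--Bungart theorems yield existence of a holomorphic isomorphism class, not any quantitative closeness of cocycles, and your Step~3 (splitting a ``small'' cocycle over $M(\mathscr{A}_Z)$ by exponential plus linear Runge) has no input to feed it. The paper avoids cocycle-level patching entirely: it first applies Theorem~\ref{teo3.5} to upgrade the continuous approximant to a genuine global holomorphic map $g\in\mathcal{O}(M(\mathscr{A}_Z),X)$ homotopic to $f$ near $K$; then $f^*P$ and $g^*P$ are topologically, hence (by Theorem~\ref{teo8.1}) holomorphically, isomorphic on a neighbourhood $V\Subset U$ of $K$, and this isomorphism produces a \emph{single} map $h\in\mathcal{O}(V,G)$ with $f=h\cdot g$ on $V$, not a cocycle. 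Since $G$ is simply connected, $h$ is path-connected to the constant $1_G$ in $\mathcal{O}(W,G)$ for a slightly smaller $W$ (the argument of Theorem~\ref{teo3.4}), and then the already established Runge theorem for Banach bundle sections (Theorem~\ref{runge}), applied through the exponential along that path, approximates $h|_K$ by global $h_n\in\mathcal{O}(M(\mathscr{A}_Z),G)$; the maps $h_n\cdot g$ converge to $f$ on $K$. The missing idea in your outline is exactly this reduction: produce the global holomorphic $g$ \emph{first} via Theorem~\ref{teo3.5}, so that the residual object to approximate is a function valued in the simply connected group $G$, not a \v{C}ech cocycle over $M(\mathscr{A}_Z)$.

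Your treatment of Part~(2) takes a genuinely different route from the paper, and it is a reasonable one: you verify the hypothesis of Part~(1) by an obstruction-theory argument, using $\pi_1(X)=0$ (hence $n$-simplicity for all $n$), $\dim M(\mathscr{A}_Z)=2$, and $H^2(M(\mathscr{A}_Z),\Z)=0$ from Proposition~\ref{prop1.1}. This works, but it is indirect. The paper instead notes that simple connectedness of $X$ forces the stabilizer $G(p)$ to be \emph{connected} (long exact homotopy sequence of $G\to X$), so the holomorphic principal $G(p)$-bundle $f^*P$ over $U$ is trivial near $K$ by Corollary~\ref{cor1.5}(1). Triviality gives a holomorphic lift $\tilde f\in\mathcal{O}(V,G)$ with $f=\tilde f\cdot p$, and then the same Runge argument as in Part~(1) applies to $\tilde f$ directly, bypassing Part~(1) and the continuous-extension problem altogether.
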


Let $Z\subset M(H^\infty)$ be a hull and $U$ be an open neighbourhood of $Z$.
\begin{Th}\label{te2.2}
Let $X$ be a complex Banach homogeneous space and $f\in \mathcal O(U, X)$. 
\begin{itemize}
\item[(1)]
Restriction $f|_Z\in C(Z,X)$ extends to a map in
$\mathcal O(M(H^\infty), X)$  iff it extends to a map in $C(M(H^\infty),X)$.
\item[(2)]
$f|_Z$ extends to a map in $\mathcal O(M(H^\infty), X)$ if $X$ is $n$-simple for all $n\le 2$.
\end{itemize}
\end{Th}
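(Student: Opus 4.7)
The ``if'' direction of Part (1) is immediate. For the converse, let $g\in C(M(H^\infty),X)$ extend $f|_Z$. I first observe that $Z=\mathrm{hull}(I)$ is holomorphically convex in $M(H^\infty)$: for any $x\notin Z$ there is $h\in I$ with $\hat h(x)\ne 0$, and $\hat h|_Z\equiv 0$, so $\max_Z|\hat h|=0<|\hat h(x)|$. Taking $\mathscr A_\emptyset$ (so that $M(\mathscr A_\emptyset)=M(H^\infty)$) and $K=Z$ in Theorem \ref{nonlinrunge}(1), the hypothesis of approximability by continuous maps on $M(H^\infty)$ is witnessed by $g$ itself, so there exist $h_n\in\mathcal O(M(H^\infty),X)$ with $h_n|_Z\to f|_Z$ uniformly.

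To upgrade approximation to exact interpolation, I iterate using the Banach Lie group $G$ that acts transitively on $X$. Lifting via Theorem \ref{teo3.7} (when $G(p)$ is connected) so that $h_1$ and $f$ become $G$-valued, if $\tilde h_1$ is taken sufficiently close to a lift of $f$ on $Z$, then $\tilde h_1^{-1}\cdot f$ takes values in an exponential chart of $e\in G$ on a neighbourhood of $Z$; I extract its $\mathfrak g$-valued logarithm $\xi_1$. Since $\mathfrak g$ is a simply connected Banach space, Theorem \ref{nonlinrunge}(2) (requiring no continuous-extension hypothesis) yields $\tilde\xi_1\in\mathcal O(M(H^\infty),\mathfrak g)$ uniformly close to $\xi_1$ on $Z$. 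Setting $\tilde h_2:=\tilde h_1\cdot\exp(\tilde\xi_1)$ and iterating with geometrically shrinking Runge errors, the infinite product $\tilde h:=\lim\tilde h_n$ converges in $\mathcal O(M(H^\infty),G)$ and satisfies $\tilde h|_Z$ equal to the lift of $f|_Z$; pushing forward by the $G$-action produces $\tilde f\in\mathcal O(M(H^\infty),X)$ with $\tilde f|_Z=f|_Z$. The main obstacle is controlling convergence of the iteration: the successive corrections must remain in exponential charts of controlled radius while the partial products stay Cauchy in the sup-norm, which forces a careful balance between Runge-approximation errors and chart radii. The case of disconnected $G(p)$ is handled by covering the compact image $f(Z)\subset X$ by finitely many exponential charts and carrying out the iteration piecewise, patching via holomorphic local sections of the principal $G(p)$-bundle $G\to X$.

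For Part (2), it suffices by Part (1) to produce a continuous extension of $f|_Z$ to $M(H^\infty)$. Under the $n$-simplicity hypothesis for $n\le 2$, the obstructions to extending the continuous map $f|_{\bar V}$---defined on a small closed neighbourhood $\bar V\Subset U$ of $Z$---across $M(H^\infty)$ are well-defined classes in $H^{k+1}(M(H^\infty),\bar V;\pi_k(X))$. They vanish for $k\ge 2$ by the covering dimension $\dim M(H^\infty)=2$ (Proposition \ref{prop1.1}(b) with $Z=\emptyset$), for $k=0$ by path-connectedness of $X$, and for $k=1$ by the long exact sequence of the pair combined with $H^2(M(H^\infty),\mathbb Z)=0$ (Proposition \ref{prop1.1}(c)) and universal coefficients, noting that $\pi_1(X)$ is abelian by $1$-simplicity. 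Part (1) then supplies the holomorphic extension.
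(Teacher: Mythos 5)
There is a genuine gap in your proof of Part (1). The Newton-type iteration in exponential charts does not converge: the Runge approximant $\tilde\xi_n\in\mathcal O(M(H^\infty),\mathfrak g)$ of Theorem \ref{nonlinrunge}(2) is uniformly close to $\xi_n$ on the compact set $Z$ (or a small holomorphically convex neighbourhood of it), but carries no a priori bound on the rest of $M(H^\infty)$; hence the partial products $\tilde h_n=\tilde h_1\exp(\tilde\xi_1)\cdots\exp(\tilde\xi_{n-1})$ need not be Cauchy in $\mathcal O(M(H^\infty),G)$. You flag this ("the main obstacle is controlling convergence...") but do not resolve it, and with the Runge theorem alone it cannot be resolved. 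The right tool for your plan would have been the \emph{bounded exact interpolation operator} $L:\Gamma_\mathcal O(U,E)\to\Gamma_\mathcal O(M(\mathscr A_Z),E)$ with $Lf|_Z=f|_Z$ from Theorem \ref{teo5.2} (applied to the trivial bundle with fibre $\mathfrak g$); setting $\tilde\xi_1:=L\xi_1$ kills the error on $Z$ in a single step and no infinite product is needed. The paper, however, proceeds along an entirely different route: it passes to $M(\mathscr A_Z)$, where $Q_Z$ collapses $Z$ to a single point $z$, views the descent of $f$ on $Q_Z(U)\setminus\{z\}$ as the $1$-cocycle of a holomorphic principal $G$-bundle on the two-element cover $\{M(\mathscr A_Z)\setminus\{z\},\,Q_Z(U)\}$, and invokes Corollary \ref{cor1.5}(1) (connected $G$ implies triviality) to split the cocycle; glueing the two sections after normalizing at $z$ yields the exact interpolant with no approximation or iteration, and the homogeneous-space case is then reduced to this group case via Lemma \ref{lem11.1} and the factorization $f|_W=h\cdot g|_W$.

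Your Part (2) is closer in spirit to the paper's Lemma \ref{lem11.1}, but the obstruction classes should be formed relative to $Z$, not to a closed neighbourhood $\bar V$. For $k=1$ the group is $H^2(M(H^\infty),Z;\pi_1(X))$, and its vanishing rests on the surjectivity of $H^1(M(H^\infty),\Z)\to H^1(Z,\Z)$, which is precisely Treil's stable-rank theorem (via Su\'arez's cohomological interpretation); the analogous surjectivity onto $H^1(\bar V,\Z)$ is \emph{not} available for a general closed neighbourhood $\bar V$, so your relative group $H^2(M(H^\infty),\bar V;\pi_1(X))$ need not vanish. One also needs to pass from $\Z$-coefficients (where $H^2(M(H^\infty),\Z)=0$ and the Treil surjectivity are proved) to $\pi_1(X)$-coefficients; this is a real, if tractable, universal-coefficients step that the paper circumvents by routing through Hu's classification theorem and Corollary \ref{cor3.6}.
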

\begin{R}\label{rem3.2}
{\rm 
(1) The quantitative version of the second part of Theorem \ref{te2.2} for $X =\Co$ and $Z$ the zero locus of the Gelfand transform of
a Blaschke product was proved by Carleson \cite{C}. For $X=\Co^*$ the result of part (2) follows from Treil's theorem \cite{T6} via its cohomological interpretation due to Su\'{a}rez \cite[Th.\,1.3]{S1} and the Arens-Royden theorem \cite{A2}, \cite{Ro}. For $X$ a complex Banach space the results of second parts of Theorems \ref{nonlinrunge}, \ref{te2.2} were established by the author \cite[Th.\,1.7,\,1.9]{Br1}, see also Theorems \ref{teo5.2}, \ref{runge} in Section~6 below.\smallskip

\noindent (2) Relation between spaces $\mathcal O(U, X)$ and $\mathcal O(U\cap\Di, X)$  is given by Proposition \ref{prop1.4}: \smallskip

 \noindent  \, ($\circ$) {\em a map $f\in\mathcal O(U\cap\Di,X)$ extends to a map in $\mathcal O(U,X)$ iff there exist open covers $(U_\alpha)_{\alpha\in A}$ of $U$ and  $(V_\beta)_{\beta\in B}$ of $X$ and a map $\tau: A\rightarrow B$   such that for each $\beta \in B$ holomorphic functions on $V_\beta$ separate points and $f(U_\alpha\cap \Di)\Subset V_{\tau(\alpha)}$ for all $\alpha\in A$.}
 
{\em If $X$ is a complex submanifold of a complex Banach space, then it suffices to assume that $f(U\cap\Di)\Subset X$, see \cite[Prop.\,1.3]{Br1}.}
 }
\end{R}
\begin{E}\label{eq3.3}
{\rm Let $F\subset \{z\in\mathbb C\, :\, |z|=1\}$ be a closed subset of Lebesgue measure zero. By $I_F\subset H^\infty$ we denote the ideal generated by all functions of the {\em disk-algebra} $A(\Di)\, (=H^\infty\cap C(\bar{\Di}))$ equal zero on $F$. According to the Rudin-Carleson theorem  for each $f\in C(F)$ there exists a function $\tilde f\in A(\Di)$ such that $\tilde f|_F=f$ and $\|\tilde f\|_{A(\Di)}=\|f\|_{C(F)}$. The map transposed to embedding $A(\Di)\hookrightarrow H^\infty$ induces a continuous surjection of the maximal ideal spaces $p:M(H^\infty)\rightarrow\bar{\Di}\,(=M(A(\Di))$ such that $Z:={\rm hull}(I_F)=p^{-1}(F)$. Let $O\subset\bar{\Di}$ be a relatively open subset containing $F$. Then $U:=p^{-1}(O)$ is an open neighbourhood of $Z$ and $U\cap\Di=O\cap\Di$. Theorem \ref{te2.2} implies the following nonlinear version of the Rudin-Carleson theorem: {\em Let $G$ be a connected complex Banach Lie group and $f\in \mathcal O(U\cap\Di, G)$ satisfy $(\circ)$. Then
there exists a map $\tilde f\in \mathcal O(\Di,G)$ satisfying $(\circ)$ such that $\displaystyle \lim_{z\rightarrow x}\tilde f(z) f(z)^{-1}=1_G$ for all $x\in F$; here $1_G\in G$ is the unit.}
}
\end{E}

\sect{Applications and Examples}
\subsection{Holomorphic Maps of ${\mathbf M(\mathscr A_Z)}$ into Flag Manifolds and Complex Tori}
For basic results of the Lie group theory, see, e.g., \cite{OV}.

Recall that the maximal connected solvable Lie subgroup $B$ of a connected (finite-dimensional) complex Lie group $G$ is called  a {\em Borel subgroup}. A Lie subgroup $P\subset G$ containing some Borel subgroup is called {\em parabolic}. The set of orbits $X=G/P$ under the action of a parabolic subgroup $P\subset G$ on $G$ by right multiplications has the natural structure of a complex homogeneous space and is called the {\em flag manifold}. Since $P$ contains the radical of $G$, one may assume that in the definition of $X$ group $G$ is semisimple.
Each flag manifold is simply connected and thus by Corollary \ref{cor3.6} and  Theorem \ref{teo3.7}  space $\mathcal O(M(\mathscr A_Z),
X)$ {\em is path-connected and consists of all maps of the form $f\cdot p_{0}$, where $f\in\mathcal O(M(\mathscr A_Z),G)$ and $p_0$ is the orbit of the unit of $G$.}
Since $G$ admits a faithful linear representation, we also obtain (see Remark \ref{rem2.5}(2)) that {\em $g\in\mathcal O(\Di\setminus Z,X)$ extends to a map in $\mathcal O(M(\mathscr A_Z),X)$ iff $g=f\cdot p_{0}$ for some $f\in \mathcal O(\Di\setminus Z,G)$ such that $f(\Di\setminus Z)\Subset G$ and for every $h\in\mathcal O(G)$ the Gelfand transform $\widehat{h\circ f}$ is constant on $Z$.}
\begin{E}\label{eq3.9}
{\rm The complex {\em flag  manifold} ${\mathbf F}(d_1,\dots, d_k)$ is the space of all flags of type $(d_1, \dots, d_k)$ in $\Co^{n}$, $n:=d_k$, i.e., of increasing sequences of subspaces
\[
\{0\}=V_{0}\subset V_{1}\subset V_{2}\subset \cdots \subset V_{k}=\Co^n\qquad {\rm with}\qquad {\rm dim}_\Co\, V_i = d_i.
\]
It has the natural structure of a simply connected compact complex homogeneous space under the action of group $GL_n(\Co)$. The stabilizer of a flag is the connected subgroup of $GL_n(C)$ isomorphic to the group of nonsingular block upper triangular matrices with dimensions of blocks $d_{i}-d_{i-1}$ with $d_0:=0$. Thus space $\mathcal O(M(\mathscr A_Z),
{\mathbf F}(d_1,\dots, d_k))$ is path-connected and  consists of all maps of the form $f\cdot p_{0}$, where $f\in\mathcal O(M(\mathscr A_Z),GL_n(\Co))$ and $p_0$ is the flag of subspaces $V_i^0:=\{z=(z_1,\dots, z_n)\in\Co^n\, :\, z_1=z_2=\cdots =z_{n-d_i}=0\}$, $0\le i\le k-1$, $V_k:=\Co^n$. Here the image of $V_i^0$ under $f(x)$, $x\in M(\mathscr A_Z)$, is the subspace of $\Co^n$ generated by the last $d_i$ column vectors of matrix $f(x)$. 

For example, in the case of ${\mathbf F}(1,n)$, the $(n-1)$-dimensional complex projective space,  we obtain that space $O(M(\mathscr A_Z),{\mathbf F}(1,n))$ consists of all maps $f$ of the form \penalty-10000 $f(x)=[f_1(x):f_2(x):\dots : f_n(x)]$, $x\in M(\mathscr A_Z)$,  where $f_i\in\mathcal O(M(\mathscr A_Z))$ and satisfy the corona condition  on $M(\mathscr A_Z)$ (cf. \eqref{e1.1}). (Note that the column vector composed of such $f_i$  extends automatically to an invertible matrix with entries in $\mathcal O(M(\mathscr A_Z))$ because of  projective freeness of this algebra, see Remark \ref{rem1.2}(2).)}
\end{E}

Let $\Co\mathbb T^n$ be the complex torus obtained as the quotient of $\Co^n$ by a lattice $\Gamma\, (\cong\Z^{2n})$ (acting on $\Co^n$ by translations). By $\pi:\Co^n\rightarrow \Co\mathbb T^n$ we denote the holomorphic projection map.
In the next result we describe the structure of space $\mathcal O(M(\mathscr A_Z),\Co\mathbb T^n)$. 

Recall that a holomorphic function $f$ on $\Di$ belongs to the BMOA space if it has boundary values a.e. on $\mathbb T:=\{z\in\Co\, :\, |z|=1\}$ satisfying
\begin{equation}\label{bmoa}
\sup_{I}\frac{1}{|I|}\int_I |f(\theta)-f_I|\,d\theta<\infty,
\end{equation}
where $I\Subset \mathbb T$ is an open arc of arclength $|I|$  and
$f_I:=\frac{1}{|I|}\int_I f(\theta)d\theta$.
\begin{Th}\label{bmoa1}
A  map $f\in \mathcal O(\Di,\Co\mathbb T^n)$ extends continuously to $M(H^\infty)$ iff it is  factorized as $f=\pi\circ \tilde f$ for some $\tilde f=(\tilde f_1,\dots, \tilde f_n)\in\mathcal O(\Di,\Co^n)$ with all $\tilde f_i\in BMOA$. 
\end{Th}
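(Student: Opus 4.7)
The plan is to verify both directions of the equivalence by applying the extension criterion of Proposition~\ref{prop1.4} with $Z=\emptyset$ and $X=\Co\mathbb T^n$. I pick a cover $(V_\beta)_{\beta\in B}$ of $\Co\mathbb T^n$ by small open balls, each admitting a biholomorphic local inverse $\sigma_\beta:V_\beta\to\widetilde V_\beta\subset\Co^n$ of $\pi$, so that coordinate functions on $\widetilde V_\beta$ give separating holomorphic functions on $V_\beta$. Then $f\in\mathcal O(\Di,\Co\mathbb T^n)$ extends continuously to $M(H^\infty)$ iff there is an open cover $(U_\alpha)_{\alpha\in A}$ of $M(H^\infty)$ together with a map $\tau:A\to B$ such that $f(U_\alpha\cap\Di)\Subset V_{\tau(\alpha)}$. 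Since $\Di$ is simply connected, $f$ always lifts to a holomorphic $\tilde f=(\tilde f_1,\dots,\tilde f_n):\Di\to\Co^n$ with $f=\pi\circ\tilde f$, and this lift is unique up to addition of an element of $\Gamma$; since BMO is invariant under additive constants, the BMOA condition on the components of $\tilde f$ is independent of this ambiguity.

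For the ``only if'' direction, suppose $f$ extends continuously and extract a finite subcover (by compactness of $M(H^\infty)$). On each $U_\alpha\cap\Di$ the composition $\tilde f_\alpha:=\sigma_{\tau(\alpha)}\circ f\in H^\infty(U_\alpha\cap\Di,\widetilde V_{\tau(\alpha)})$ is a bounded holomorphic local lift; after refinement so that each $U_\alpha\cap\Di$ is connected, $\tilde f-\tilde f_\alpha$ is a constant $\gamma_\alpha\in\Gamma$. Finiteness of the cover leaves only finitely many lattice translates $\gamma_\alpha$ in play. Translating the Gelfand-topology neighbourhoods $U_\alpha$ of corona points into their standard description by Carleson-box-type regions near $\mathbb T$, the uniform $L^\infty$-control of each $\tilde f_\alpha$ together with the finite set of lattice offsets yields a uniform bound on the Carleson-box mean oscillation of every $\tilde f_i$, which by the Fefferman--Stein characterization is exactly the BMOA condition.

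For the ``if'' direction, assume $\tilde f_i\in BMOA$ for every $i$. The Fefferman--Garnett decomposition of BMOA produces, for each $i$, holomorphic functions $\Phi_i,\Psi_i$ on $\Di$ with $\mathrm{Re}\,\Phi_i,\mathrm{Re}\,\Psi_i\in L^\infty(\mathbb T)$ and a constant $c_i$ satisfying $\tilde f_i=\Phi_i-i\Psi_i+c_i$. In particular, the exponentials $e^{\pm\Phi_i},e^{\pm\Psi_i}$ all lie in $H^\infty$ and extend continuously to $M(H^\infty)$ via the Gelfand transform. I then cover $M(H^\infty)$ by finitely many open sets $U_\alpha$ on each of which these $H^\infty$ exponentials have small Euclidean oscillation; this forces both the real parts and the imaginary parts modulo $2\pi\Z$ of $\Phi_i,\Psi_i$ to oscillate little on $U_\alpha\cap\Di$, so $\tilde f(z)$ has small oscillation modulo a specific sublattice of $\Co^n$. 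Using that $\Gamma$ is full-rank in $\Co^n$, one combines the $n$ component decompositions (possibly after replacing $(\Phi_i,\Psi_i)$ by complex-linear combinations adapted to a basis of $\Gamma$) so that the induced ambiguity lattice sits inside $\Gamma$; this produces the cover needed for Proposition~\ref{prop1.4} and yields the continuous extension.

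The principal obstacle is the ``if'' direction: matching the $2\pi\Z[i]$-ambiguity intrinsic to the logarithms of $e^{\pm\Phi_i},e^{\pm\Psi_i}$ with the lattice $\Gamma$ so that the $H^\infty$ exponentials genuinely localize $\pi\circ\tilde f$ into small Euclidean balls of $\Co\mathbb T^n$ rather than into tubular neighbourhoods containing several $\Gamma$-orbits. Resolving this requires adapting the Fefferman--Garnett decomposition to a real basis of $\Gamma$ and exploiting the full-rank condition, which is the delicate analytic input of the proof.
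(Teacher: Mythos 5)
Your proposal diverges substantially from the paper's argument, and both directions as sketched contain genuine gaps.

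In the ``only if'' direction, the claim that one can ``refine so that each $U_\alpha\cap\Di$ is connected'' is false: the sets $U_\alpha$ are Gelfand-open neighbourhoods in $M(H^\infty)$, and for any $U_\alpha$ meeting the corona $M(H^\infty)\setminus\Di$ the trace $U_\alpha\cap\Di$ will typically have infinitely many components. The paper handles precisely this difficulty by working with \emph{locally constant} integer-valued functions $n_i^k\in C(V_i\cap\Di,\Z)$ rather than constants. More seriously, there is no ``standard description'' of Gelfand-topology neighbourhoods by Carleson-box-type regions; the Gelfand topology is generated by sets $\{|\hat b|<\varepsilon\}$ with $b\in H^\infty$, and translating these into uniform Carleson-box mean-oscillation bounds is not a routine step. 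The paper instead avoids any direct Carleson estimate: it pulls back the $\Gamma$-valued cocycle $c_{ij}=\varphi_i-\varphi_j$ of the covering $\pi$, splits $\Gamma\cong\Z^{2n}$, invokes triviality of holomorphic principal $\Co$-bundles on $M(H^\infty)$ (Corollary~\ref{cor1.5}(1)) to produce $g_i^k\in\mathcal O(V_i)$ solving the cocycle, trivializes separately over $\Di$ by integer functions $n_i^k$, and shows that the resulting global $h^k:=g_i^k-n_i^k\in\mathcal O(\Di)$ satisfies $e^{2\pi\sqrt{-1}\,h^k}\in\mathcal O(M(H^\infty),\Co^*)$, so ${\rm Im}\,h^k$ is bounded and $h^k\in\mathrm{BMOA}$ follows from the classical conjugation theorem. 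The Oka principle does the geometric work your Carleson estimate was asked to do.

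In the ``if'' direction, you correctly identify the obstruction — matching the $2\pi\sqrt{-1}\,\Z$-ambiguity inherent in logarithms of $e^{\pm\Phi_i},e^{\pm\Psi_i}$ with the lattice $\Gamma$ — but you do not resolve it, and I do not see how ``adapting the Fefferman--Garnett decomposition to a real basis of $\Gamma$'' repairs it: the decomposition is componentwise in $\Co^n$ and carries no information about $\Gamma$, which need not split compatibly with your coordinates. The paper circumvents this entirely by quoting the structure theorem \cite[Th.~1.11]{Br4}, which already packages the Fefferman--Garnett analysis into a finite Gelfand-open cover $(U_i)$ of $M(H^\infty)$ together with locally constant $c_i\in C(U_i\cap\Di,\Co^n)$ for which $\sup_i\sup_{U_i\cap\Di}\|\tilde f-c_i\|<\infty$; one then snaps the $c_i$ to locally constant $\Gamma$-valued $d_i$ by moving within a fundamental domain of $\Gamma$, applies Su\'{a}rez's extension theorem \cite[Th.~3.2]{S1} to the bounded maps $\tilde f-d_i$, and glues the images under $\pi$. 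Without that input, the step from ``$H^\infty$ exponentials have small oscillation on a finite cover'' to ``$\tilde f$ has small oscillation modulo $\Gamma$'' remains the unproved heart of your argument.
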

\begin{R}
{\rm (1) Let $Q_Z:M(H^\infty)\rightarrow M(\mathscr A_Z)$ be the continuous map transposed to embedding $H_I^\infty\hookrightarrow H^\infty$. Clearly, $f\in\mathcal O(\Di\setminus Z,\Co\mathbb T^n)$ extends continuously to $M(\mathscr A_Z)$ iff $Q_Z^*f$ extends to a map $C(M(H^\infty),\Co\mathbb T^n)$ constant on $Z$.\smallskip

\noindent (2) It follows from the proof of Theorem \ref{teo3.5} that if $f,g\in \mathcal O(M(\mathscr A_Z),\Co\mathbb T^n)$ are homotopic, then there is a map $h\in\mathcal O(M(\mathscr A_Z),\Co^n)$ such that $f(x)=g(x)+(\pi\circ h)(x)$ for all $x\in M(\mathscr A_Z)$. (Here $+$ stands for addition on $\Co\mathbb T^n$ induced from that on $\Co^n$.)
Space $\mathcal O(M(\mathscr A_Z),\Co\mathbb T^n)$ is an abelian complex Banach Lie group under the pointwise addition of maps.  Thus, according to Corollary \ref{cor3.6}, $H^1(M(\mathscr A_Z),\Z^{2n})$ is naturally isomorphic to the quotient of group $\mathcal O(M(\mathscr A_Z),\Co\mathbb T^n)$ by the connected component of its unit $\pi\bigl(\mathcal O(M(\mathscr A_Z),\Co^n)\bigr)$.
}
\end{R}

\subsection{Structure of Spaces of ${\mathbf H^\infty}$ Idempotents}
Let ${\mathfrak A}$ be a complex Banach algebra with unit $1_{\mathfrak A}$.
By ${\rm id}\, \mathfrak A=\{a\in \mathfrak A\, :\, a^2=a\}$ we denote the set of idempotents of $\mathfrak A$. It is a closed complex Banach submanifold of $\mathfrak A$ which is a discrete union of connected complex Banach homogeneous spaces, see \cite[Cor.\,1.7]{R}. Specifically,  let $\mathfrak A^{-1}_0$ be the connected component 
containing the unit $1_{\mathfrak A}$ of the complex Banach Lie group $\mathfrak A^{-1}$ of invertible elements of $\mathfrak A$. Then each connected component of ${\rm id}\, \mathfrak A$ is a complex Banach homogeneous space under the action $\mathfrak A^{-1}_0\times {\rm id}\, \mathfrak A\rightarrow {\rm id}\, \mathfrak A$ by similarity transformations $(a,p)\mapsto apa^{-1}$.

Analogously for unital complex  Banach algebras $C(M(\mathscr A_Z), \mathfrak A)$ and $\mathcal O(M(\mathscr A_Z), \mathfrak A)$ their sets of idempotents ${\rm id}\, C(M(\mathscr A_Z), \mathfrak A)$ and  ${\rm id}\, \mathcal O(M(\mathscr A_Z), \mathfrak A)$ coincide with $C(M(\mathscr A_Z),{\rm id}\, \mathfrak A)$ and  $\mathcal O(M(\mathscr A_Z),{\rm id}\, \mathfrak A)$ (because $M(\mathscr A_Z)$ is connected) and are discrete unions of connected complex Banach homogeneous spaces.
 Each connected component of $C (M(\mathscr A_Z),{\rm id}\, \mathfrak A)$ or $\mathcal O(M(\mathscr A_Z),{\rm id}\, \mathfrak A)$ is  the complex homogeneous space under the action of the complex Banach Lie group 
$C(M(\mathscr A_Z), \mathfrak A)_0^{-1}$ or 
 $\mathcal O(M(\mathscr A_Z), \mathfrak A)_{0}^{-1}$ by similarity transformations. 
\begin{Th}\label{teo4.1}
\begin{itemize}
\item[(a)]
Embedding $\mathcal O(M(\mathscr A_Z),{\rm id}\, \mathfrak A)\hookrightarrow C(M(\mathscr A_Z),{\rm id}\, \mathfrak A)$ induces bijection between connected components of these complex Banach manifolds.\smallskip
\item[(b)]
If the stabilizer  $\mathfrak A_0^{-1}(p)\subset \mathfrak A_0^{-1}$ of a point $p\in {\rm id}\, \mathfrak A$ is connected, then for each $f\in \mathcal O(M(\mathscr A_Z),{\rm id}\, \mathfrak A)$ whose image belongs to the connected component  
containing $p$ there is $g\in \mathcal O(M(\mathscr A_Z),\mathfrak A_0^{-1})$ such that $f=gpg^{-1}$. \smallskip

\item[(c)]
If $A\in\mathscr A_Z$, then for each $f\in\mathcal O(M(\mathscr A_Z),{\rm id}\,\mathfrak A)$ 
there are $\alpha\in D$, a map $h\in \mathcal O(M(\mathscr A_Z), {\rm id}\,\mathfrak A)$ uniformly approximated by maps of the algebraic tensor product $\widehat{A}_\alpha\otimes\mathfrak A$  and a map $g\in \mathcal O(M(\mathscr A_Z),\mathfrak A_0^{-1})$ such that
$gfg^{-1}=h$.

\noindent Here $\widehat{A}_\alpha$ is the image of $A_\alpha$ under the Gelfand transform $\,\hat\, : A\rightarrow C(M(\mathscr A_Z))$.
\end{itemize}
\end{Th}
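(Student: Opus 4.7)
My plan is to reduce each part to the corresponding Oka-principle result already proved in the paper, exploiting the fact (recalled just before the theorem statement) that every connected component of $\mathrm{id}\,\mathfrak A$ is a complex Banach homogeneous space under the similarity action of $\mathfrak A_0^{-1}$, and that the analogous statement holds for $C(M(\mathscr A_Z),\mathrm{id}\,\mathfrak A)$ and $\mathcal O(M(\mathscr A_Z),\mathrm{id}\,\mathfrak A)$ under the groups $C(M(\mathscr A_Z),\mathfrak A)_0^{-1}$ and $\mathcal O(M(\mathscr A_Z),\mathfrak A)_0^{-1}$.

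Part (a) is obtained component by component. Since $M(\mathscr A_Z)$ is connected, every continuous or holomorphic map $M(\mathscr A_Z)\to\mathrm{id}\,\mathfrak A$ factors through a single connected component $X$ of $\mathrm{id}\,\mathfrak A$; the connected components of $\mathcal O(M(\mathscr A_Z),X)$ and $C(M(\mathscr A_Z),X)$ are then the path-components $[M(\mathscr A_Z),X]_{\mathcal O}$ and $[M(\mathscr A_Z),X]$, and Theorem \ref{teo3.5} produces the required bijection. For part (b), let $X$ be the connected component of $\mathrm{id}\,\mathfrak A$ containing $p$; it is homogeneous under $\mathfrak A_0^{-1}$ with stabilizer $\mathfrak A_0^{-1}(p)$, which is connected by hypothesis. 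Theorem \ref{teo3.7} then yields $\tilde f_p\in\mathcal O(M(\mathscr A_Z),\mathfrak A_0^{-1})$ with $f(x)=\tilde f_p(x)\cdot p=\tilde f_p(x)\,p\,\tilde f_p(x)^{-1}$, so $g:=\tilde f_p$ is the desired element.

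For part (c), apply Theorem \ref{teo3.6} to the connected component $X$ of $\mathrm{id}\,\mathfrak A$ containing the image of $f$: this produces $\alpha\in D$ and a holomorphic map $\tilde h$ on a neighbourhood $V\subset\Co^{\#\alpha}$ of $M(A_\alpha)$ with values in $X$ such that $f$ and $h:=F_\alpha^*\tilde h$ lie in the same path-component of $\mathcal O(M(\mathscr A_Z),X)$. Because $M(A_\alpha)$ is polynomially convex in $\Co^{\#\alpha}$ (it is the polynomially convex hull of the image of $F_\alpha$), the Oka--Weil approximation theorem for Banach-space-valued holomorphic maps (with target the ambient Banach space $\mathfrak A\supset X$) lets me approximate $\tilde h$ uniformly on $M(A_\alpha)$ by $\mathfrak A$-valued polynomials in $\#\alpha$ variables; pulled back by $F_\alpha$, these become elements of $\widehat A_\alpha\otimes\mathfrak A$ that approximate $h$ uniformly on $M(\mathscr A_Z)$. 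Since $f$ and $h$ lie in the same path-component, which is homogeneous under $\mathcal O(M(\mathscr A_Z),\mathfrak A)_0^{-1}$ by similarity, there is $g$ in this group with $gfg^{-1}=h$; evaluating pointwise shows such a $g$ automatically takes values in $\mathfrak A_0^{-1}$, placing it in $\mathcal O(M(\mathscr A_Z),\mathfrak A_0^{-1})$ as required.

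The subtlest point I anticipate is in (c): the polynomial approximants of $\tilde h$ furnished by Oka--Weil need not themselves land in the submanifold $X\subset\mathfrak A$, so I must be careful that it is $h$ itself (not the approximants) which serves as the idempotent-valued map in the conclusion, while the approximation is a Banach-valued statement on the polynomially convex set $M(A_\alpha)\subset\Co^{\#\alpha}$. A secondary but purely bookkeeping matter is the verification that $\mathcal O(M(\mathscr A_Z),\mathfrak A)_0^{-1}\subseteq\mathcal O(M(\mathscr A_Z),\mathfrak A_0^{-1})$, which is immediate from continuity of the evaluation homomorphism. Once these points are articulated, the three parts follow directly from Theorems \ref{teo3.5}, \ref{teo3.6}, and \ref{teo3.7} applied to the homogeneous-space structure on components of $\mathrm{id}\,\mathfrak A$.
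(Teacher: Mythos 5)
Your proposal is correct and follows essentially the same route as the paper: parts (a) and (b) are immediate from Theorems \ref{teo3.5} and \ref{teo3.7}, and part (c) uses Theorem \ref{teo3.6} plus polynomial (Oka--Weil/Weil-integral) approximation on the polynomially convex set $M(A_\alpha)$ to furnish the approximating maps in $\widehat A_\alpha\otimes\mathfrak A$. The only cosmetic difference is that you invoke the homogeneity of the component of $\mathcal O(M(\mathscr A_Z),{\rm id}\,\mathfrak A)$ under $\mathcal O(M(\mathscr A_Z),\mathfrak A)_0^{-1}$ to produce $g$, whereas the paper extracts $g$ directly from the construction in the proofs of Theorems \ref{teo3.5}/\ref{teo3.6}; both are valid, and since $\mathcal O(M(\mathscr A_Z),\mathfrak A)_0^{-1}=\mathcal O(M(\mathscr A_Z),\mathfrak A_0^{-1})_0$ the resulting $g$ lies in the same group.
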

\begin{R}\label{rem4.2}
{\rm (1) If $\mathfrak A_0^{-1}$ is simply connected and
the hypothesis of part (b) holds for all $p\in {\rm id}\, \mathfrak A$, then, since in this case $\mathcal O(M(\mathscr A_Z),\mathfrak A_0^{-1})$ coincides with 
$\mathcal O(M(\mathscr A_Z), \mathfrak A)_{0}^{-1}$ (cf. Theorem \ref{teo3.4}),
embedding ${\rm id}\, \mathfrak A\hookrightarrow \mathcal O(M(\mathscr A_Z),{\rm id}\, \mathfrak A)$ assigning to each $p$ the map of the constant value $p$
induces bijection between sets of connected components of ${\rm id}\, \mathfrak A$ and $\mathcal O(M(\mathscr A_Z),{\rm id}\, \mathfrak A)$.\smallskip

\noindent (2) Part (c) follows from Theorem \ref{teo3.6} and the fact that  $F_\alpha^*\bigl(\mathcal O(M(A_\alpha),\mathfrak A)\bigr)$ coincides with $A_\alpha\otimes_{\varepsilon}\mathfrak A$, the injective tensor product of these algebras (cf. the arguments in Section~12). \smallskip
 
 \noindent  (3) Let $H_Z^\infty({\mathfrak A})$  be the Banach algebra of bounded maps $F\in \mathcal O(\Di,\mathfrak A)$ equipped with  pointwise product and addition  with norm $\|F\|:=\sup_{z\in\Di}\|F(z)\|_{\mathfrak A}$ such that for each $\varphi\in\mathfrak A^*$ the Gelfand transform of $\varphi\circ F\in H^\infty$ is constant on $Z$. Let $H_{Z\, {\rm comp}}^\infty({\mathfrak A})\subset H_Z^\infty({\mathfrak A})$ be the closed subalgebra of maps with relatively compact images.  (If $Z$ is empty or a single point we omit $_Z$ in the indices.) Each map in $H_{Z\, {\rm comp}}^\infty({\mathfrak A})$ admits a continuous extension to $ M(H^\infty)$ constant on $Z$, see, e.g., \cite[Prop.\,1.3]{Br1}. This implies that algebras $H_{Z\, {\rm comp}}^\infty({\mathfrak A})$ and $\mathcal O(M(\mathscr A_Z),\mathfrak A)$ are isometrically isomorphic. Thus Theorem \ref{teo4.1} 
describes the structure of set ${\rm id}\,H_{Z\, {\rm comp}}^\infty({\mathfrak A})$. 
}
\end{R}
\begin{E}\label{ex4.3}
{\rm Let $L(X)$ be the Banach algebra of bounded linear operators on a complex Banach space $X$ equipped with the operator norm. By $I_X\in L(X)$ we denote the identity operator and by $GL(X)\subset L(X)$ the set of invertible bounded linear operators on $X$. Clearly, $L(X)^{-1}:=GL(X)$.  By $GL_0(X)\subset GL(X)$ we denote the connected component of $I_X$.
Each $P\in {\rm id}\, L(X)$ determines a direct sum decomposition $X=X_0\oplus X_1$, where $X_0:={\rm ker}\,(P)$ and $X_1:={\rm ker}\,(I_X-P)$. It is easily seen that the stabilizer
$GL_0(X)(P)\subset GL_0(X)$ consists of all operators $B\in GL_0(X)$ such that $B(X_k)\subset X_k$, $k=1,2$. In particular, restrictions of operators in $GL_0(X)(P)$ to $X_k$ determine a monomorphism of complex Banach Lie groups $S_P: GL_0(X)(P)\rightarrow GL(X_1)\oplus GL(X_2)$. Moreover, $S_P$ is an isomorphism if $GL(X_i)$, $i=1,2$ are connected. Now, Theorem \ref{teo4.1} leads to the following statement:}\smallskip

\noindent $(1)$ Suppose $P\in {\rm id}\, L(X)$ is such that groups $GL(X_1)$ and $GL(X_2)$ are connected. Then for each $F\in \mathcal O(M(\mathscr A_Z),{\rm id}\, L(X))$ whose image belongs to the connected component containing $P$ there is $G\in \mathcal O(M(\mathscr A_Z), GL_0(X))$ such that $GFG^{-1}=P$.\smallskip

{\rm In particular, the result is valid for $X$ being one of the spaces: a finite-dimensional space, a Hilbert space, $c_0$ or $\ell^p$, $1\le p\le\infty$. Indeed, $GL(X)$ is connected if ${\rm dim}_{\Co}X<\infty$ and contractible for  other listed above spaces, see, e.g., \cite{Mi} and references therein. Moreover, each subspace of a Hilbert space is Hilbert, and each
infinite-dimensional complemented subspace of $X$ being either $c_0$ or $\ell^p$, $1\le p\le\infty$, is isomorphic to $X$, see  \cite{Pe}, \cite{Lin}. This gives the required conditions. 

It is worth noting that there are complex Banach spaces $X$ for which groups $GL(X)$ are not connected, see, e.g., \cite{D}. \smallskip

In turn, part (c) of the theorem implies in this case:}\smallskip

\noindent $(2)$ Let $A\in\mathscr A_Z$. For each $F\in \mathcal O(M(\mathscr A_Z),{\rm id}\, L(X))$ there exist a finitely generated unital subalgebra  $B\subset A$ and maps $H\in {\rm id}\, \widehat B\otimes_{\varepsilon}L(X)$, $G\in\mathcal O(M(\mathscr A_Z),GL_0(X))$ such that $GFG^{-1}=H$.

\end{E}

\subsection{Extension of Operator-valued ${\mathbf H^\infty}$ Functions}
Let $\mathfrak A$ be a complex Banach algebra with unit $1_{\mathfrak A}$. Let
$\mathfrak A^{-1}_l=\{a\in\mathfrak A\, :\ \exists\, b\in\mathfrak A\ {\rm such\ that}\ ba=1_{\mathfrak A}\}$ be the set of left-invertible elements of $\mathfrak A$. Clearly, $\mathfrak A^{-1}_l$ is an open subset of $\mathfrak A$ and complex Banach Lie group $\mathfrak A^{-1}_0$ acts holomorphically on each connected component of $\mathfrak A^{-1}_l$ by left multiplications: $(g,a)\mapsto ga$.
 In fact, we have
\begin{Proposition}\label{prop4.4}
Each connected component of $\mathfrak A^{-1}_l$ is a complex Banach homogeneous space under the action of $\mathfrak A_0^{-1}$.
\end{Proposition}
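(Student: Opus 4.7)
The plan is to verify the two defining conditions of a complex Banach homogeneous space (in the sense of Section 3.1) for the left-multiplication action of $\mathfrak A_0^{-1}$ on a connected component of $\mathfrak A_l^{-1}$: transitivity on the component, and surjectivity of the differential $d\pi_e^p$ with topologically complemented kernel at some base point $p$. Holomorphicity of the action is automatic since multiplication in $\mathfrak A$ is holomorphic.

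First I would fix $p\in \mathfrak A_l^{-1}$, choose a left inverse $q\in\mathfrak A$ with $qp=1_{\mathfrak A}$, and observe that $e:=pq$ is an idempotent of $\mathfrak A$ since $(pq)^2=p(qp)q=pq$. Because $\mathfrak A_l^{-1}$ and $\mathfrak A_0^{-1}$ are open in $\mathfrak A$, both relevant tangent spaces identify canonically with $\mathfrak A$, and the differential of the orbit map $\pi^p(g)=gp$ at the unit is $d\pi_e^p:\mathfrak A\to\mathfrak A$, $X\mapsto Xp$. Surjectivity is immediate from $Y=(Yq)p$ for every $Y\in\mathfrak A$. A direct computation shows $Xp=0\iff Xpq=0\iff Xe=0$, so $\ker d\pi_e^p=\mathfrak A(1_{\mathfrak A}-e)$; this kernel is topologically complemented by $\mathfrak A e$, with continuous projection $X\mapsto Xe$ furnished by the idempotent $e$.

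Next I would invoke the submersion form of the implicit function theorem in Banach spaces to conclude that $\pi^p$ is open near the unit of $\mathfrak A_0^{-1}$, so its image contains a neighbourhood of $p$. Translating by left multiplications with elements of $\mathfrak A_0^{-1}$ then shows that the whole orbit $\mathfrak A_0^{-1}\cdot p$ is open in $\mathfrak A_l^{-1}$. Since the orbits partition $\mathfrak A_l^{-1}$ into disjoint open sets, each orbit is clopen; being the continuous image of the connected group $\mathfrak A_0^{-1}$ it is also connected, so each orbit coincides with the connected component of its base point. This yields transitivity on each connected component. Together with the differential computation above this is exactly what the definition demands.

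The only delicate ingredient is the complementation of the kernel, which is supplied by the idempotent $e=pq$ built from the left inverse; without this trick one would be left with an abstract closed subspace whose complementability is not automatic in the Banach setting. I do not anticipate any further obstacle.
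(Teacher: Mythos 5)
Your proof is correct and follows essentially the same route as the paper: fix $p$, take a left inverse $q$, compute the differential of $\pi^p(g)=gp$ as $a\mapsto ap$, observe surjectivity and complemented kernel using $q$, and then pass from local surjectivity (implicit function theorem) to transitivity on each connected component. The only minor differences are cosmetic: you make the complement explicit via the idempotent $e=pq$ and the splitting $\mathfrak A=\mathfrak A e\oplus\mathfrak A(1_{\mathfrak A}-e)$, whereas the paper simply notes that $a\mapsto aq$ is a continuous right inverse of $d\pi^p_{1_{\mathfrak A}}$ (from which complementation follows by the standard projector $a\mapsto(aq)p$), and you deduce transitivity from openness and connectedness of orbits rather than the paper's explicit finite chain along a path; one small notational caution is that you overload $e$ both for $pq$ and, implicitly, for the group unit appearing in $d\pi_e^p$.
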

Next, similarly to Theorem \ref{teo4.1} the following result holds.
\begin{Th}\label{teo4.5}
\begin{itemize}
\smallskip
\item[(a)]
If the stabilizer  $\mathfrak A_0^{-1}(p)\subset \mathfrak A_0^{-1}$ of a point $p\in \mathfrak A_l^{-1}$ is connected, then for each $f\in \mathcal O(M(\mathscr A_Z), \mathfrak A_l^{-1})$ with image in the connected component containing $p$ there is $g\in \mathcal O(M(\mathscr A_Z),\mathfrak A_0^{-1})$ such that $f=gp$. \smallskip
 \item[(b)]
If $A\in\mathscr A_Z$, then for each $f\in\mathcal O(M(\mathscr A_Z),\mathfrak A_l^{-1})$ 
there are $\alpha\in D$, a map $h\in \mathcal O(M(\mathscr A_Z), \mathfrak A_l^{-1})$  uniformly approximated by maps of the algebraic tensor product $\widehat{A}_\alpha\otimes\mathfrak A$  and a map $g\in \mathcal O(M(\mathscr A_Z),\mathfrak A_0^{-1})$ such that
$gf=h$.
\end{itemize}
\end{Th}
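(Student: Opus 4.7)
\textbf{Part (a)} should follow by direct appeal to the machinery already set up. By Proposition \ref{prop4.4}, the connected component $X \subset \mathfrak A_l^{-1}$ containing $p$ is a complex Banach homogeneous space under the left-multiplication action of $G := \mathfrak A_0^{-1}$, and the stabilizer of $p$ under this action is precisely $\mathfrak A_0^{-1}(p)$. Since $M(\mathscr A_Z)$ is connected and $f$ continuous, $f(M(\mathscr A_Z))$ lies in a single connected component of $\mathfrak A_l^{-1}$; by hypothesis that component equals $X$, so $f \in \mathcal O(M(\mathscr A_Z), X)$. The connectedness assumption on $\mathfrak A_0^{-1}(p)$ is exactly the hypothesis of Theorem \ref{teo3.7}, which furnishes $\tilde f_p \in \mathcal O(M(\mathscr A_Z), G)$ with $f(x) = \tilde f_p(x) \cdot p = \tilde f_p(x) p$; set $g := \tilde f_p$.

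\textbf{Part (b)} I plan to handle in three steps. \emph{Step 1 (reduction to finitely generated subalgebras).} Let $X$ be the connected component of $\mathfrak A_l^{-1}$ meeting $f(M(\mathscr A_Z))$, again a Banach homogeneous space under $G := \mathfrak A_0^{-1}$ by Proposition \ref{prop4.4}. Apply Theorem \ref{teo3.6} to $f \in \mathcal O(M(\mathscr A_Z), X)$ to obtain $\alpha \in D$ and a holomorphic map $\tilde h$ on an open neighborhood $U \Subset \Co^{\#\alpha}$ of $M(A_\alpha)$ with $\tilde h(U) \subset X$, such that $h := F_\alpha^* \tilde h$ is homotopic to $f$ in $\mathcal O(M(\mathscr A_Z), X)$. \emph{Step 2 (polynomial approximation).} Since $M(A_\alpha) \subset \Co^{\#\alpha}$ is polynomially convex and $\tilde h$ is an $\mathfrak A$-valued holomorphic map on a neighborhood of it, the Banach-valued Oka--Weil theorem uniformly approximates $\tilde h|_{M(A_\alpha)}$ by polynomials in $(z_1,\dots,z_{\#\alpha})$ with coefficients in $\mathfrak A$. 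Pulling back via $F_\alpha$ sends these polynomials into $\widehat{A}_\alpha \otimes \mathfrak A$, which gives the claimed uniform approximation of $h$ on $M(\mathscr A_Z)$.

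\emph{Step 3 (production of $g$).} It remains to find $g \in \mathcal O(M(\mathscr A_Z), G)$ with $g f = h$. My plan is to show that the group $\mathcal O(M(\mathscr A_Z), G)$, acting on $\mathcal O(M(\mathscr A_Z), X)$ by pointwise left multiplication, acts transitively on each path component $\mathcal C$. Granting this and using Step 1, $h$ lies in the orbit of $f$, so the required $g$ exists. To prove transitivity on $\mathcal C$, it suffices to show each orbit is open in $\mathcal C$: then orbits are a partition of $\mathcal C$ by open sets, and connectedness of $\mathcal C$ forces a single orbit. Openness of the orbit of $f$ at the neutral element reduces via the implicit function theorem in Banach manifolds to surjectivity of the differential $\mathcal O(M(\mathscr A_Z), \mathfrak g) \to \mathcal O(M(\mathscr A_Z), f^*TX)$ induced by the pointwise submersions $d\pi_e^{f(x)}$ supplied by Proposition \ref{prop4.4}. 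Pointwise surjectivity with complemented kernel holds on the nose; the global lift of a section of $f^*TX$ to a section of $\mathfrak g$ over $M(\mathscr A_Z)$ is an Oka-type statement for the Banach vector bundle $f^*TX \to M(\mathscr A_Z)$, obtainable from the Grauert--Bungart machinery of Section~2 (Theorems \ref{teo1.7}, \ref{teo2.9}) together with a Cartan-splitting/partition-of-unity argument on the fibered structure of $M(\mathscr A_Z)$. \textbf{The main obstacle} is precisely this global surjectivity: while Steps 1 and 2 are formal consequences of earlier results, Step 3 is where the detailed geometry of $M(\mathscr A_Z)$ must be exploited to convert pointwise data into a single holomorphic choice of $g$.
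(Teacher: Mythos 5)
Part (a) of your proposal is exactly the paper's argument: reduce to a connected component via Proposition \ref{prop4.4} and apply Theorem \ref{teo3.7}. Steps 1 and 2 of part (b) also agree with the paper: Theorem \ref{teo3.6} produces $\alpha$ and $\tilde h$ on a Stein neighbourhood of the polynomially convex set $M(A_\alpha)$, and the Oka--Weil / Weil integral representation argument (the paper uses a Weil polynomial polyhedron $\Pi$ with $M(A_\alpha)\Subset\Pi\Subset U$) gives uniform approximation of $h=F_\alpha^*\tilde h$ by elements of $\widehat A_\alpha\otimes\mathfrak A$.

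Your ``main obstacle'' in Step 3, however, is not an obstacle at all: the existence of $g\in\mathcal O(M(\mathscr A_Z),\mathfrak A_0^{-1})$ with $gf=h$ is already delivered by the construction in the proof of Theorem \ref{teo3.5}(1). There, after reducing to the simply connected universal cover $G_u$ of $\mathfrak A_0^{-1}$, one takes the $G(p)$-bundle $P$ on $X$ built from local sections of $\pi^p$; since $f$ and $h$ are homotopic in $\mathcal O(M(\mathscr A_Z),X)$, the bundles $f^*P$ and $h^*P$ are topologically, hence (by Theorem \ref{teo2.9}, and the ``injectivity'' Theorem \ref{teo1.7}) holomorphically, isomorphic, and the isomorphism patches to a single $g\in\mathcal O(M(\mathscr A_Z),G_u)$ with $h=g\cdot f$; projecting along $G_u\rightarrow\mathfrak A_0^{-1}$ gives the desired map. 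So the statement that two holomorphic maps into $X$ homotopic in $\mathcal O$ differ by a global holomorphic $\mathfrak A_0^{-1}$-valued factor is already proved in the paper; you should cite that proof rather than recreate it.

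Your proposed alternative route (open orbits plus implicit function theorem) is a coherent idea but rests on a step you have not justified and the paper does not supply: the surjectivity of $\mathcal O(M(\mathscr A_Z),\mathfrak g)\rightarrow\Gamma_{\mathcal O}(M(\mathscr A_Z),f^*TX)$ together with a complemented kernel at the section level. This is an Oka/Cartan-B--type statement for the short exact sequence of holomorphic Banach vector bundles $0\to K\to\mathfrak g\times M(\mathscr A_Z)\to f^*TX\to 0$, and while it is plausible given Theorems \ref{teo5.1}, \ref{teo5.2}, and \ref{runge}, it is not an immediate consequence of the results you cite and would require a separate argument (e.g.\ producing a holomorphic right inverse of the bundle epimorphism). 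As written, your Step 3 therefore contains a genuine gap, and it is also redundant: the bundle-theoretic argument embedded in the proof of Theorem \ref{teo3.5}(1) accomplishes precisely what you need without reproving the homogeneous-space Oka principle from scratch.
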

The analog of Theorem \ref{teo4.1}(a) for Banach algebras $C(M(\mathscr A_Z), \mathfrak A)$ and $\mathcal O(M(\mathscr A_Z), \mathfrak A)$ is also true.

\begin{Th}\label{teo4.6}
\[
C(M(\mathscr A_Z), \mathfrak A)_l^{-1}=C(M(\mathscr A_Z), \mathfrak A_l^{-1})\quad {\rm and }\quad \mathcal O(M(\mathscr A_Z), \mathfrak A)_l^{-1}=\mathcal O(M(\mathscr A_Z), \mathfrak A_l^{-1}).
\]
Thus every connected component of 
$C(M(\mathscr A_Z), \mathfrak A_l^{-1})$ or
$\mathcal O(M(\mathscr A_Z), \mathfrak A_l^{-1})$ is a complex Banach homogeneous space under the group action of 
$C(M(\mathscr A_Z), \mathfrak A)^{-1}_0$
or   $\mathcal O(M(\mathscr A_Z), \mathfrak A)^{-1}_0$ by left multiplications, and embedding 
\[
\mathcal O(M(\mathscr A_Z), \mathfrak A_l^{-1})\hookrightarrow C(M(\mathscr A_Z),\mathfrak A_l^{-1})
\]
induces bijection between connected components of these complex Banach manifolds.
\end{Th}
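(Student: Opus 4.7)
I would split the proof into the set-theoretic equalities, the Banach homogeneous structure on connected components, and the bijectivity between components. The inclusions $C(M(\mathscr A_Z),\mathfrak A)_l^{-1}\subseteq C(M(\mathscr A_Z),\mathfrak A_l^{-1})$ and its holomorphic analogue are tautological, since a factorization $gf=1_{\mathfrak A}$ in either algebra forces $g(x)f(x)=1_{\mathfrak A}$ pointwise. For the reverse continuous inclusion, given $f\in C(M(\mathscr A_Z),\mathfrak A_l^{-1})$ I would fix for each $x_0\in M(\mathscr A_Z)$ some left inverse $q_{x_0}\in\mathfrak A$ of $f(x_0)$; openness of $\mathfrak A^{-1}$ makes $q_{x_0}f(x)$ invertible on a neighbourhood $U_{x_0}$ of $x_0$, so $g_{x_0}(x):=(q_{x_0}f(x))^{-1}q_{x_0}$ is a continuous local left inverse of $f$ on $U_{x_0}$. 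Extracting a finite subcover by compactness and choosing a subordinate continuous partition of unity $\{\varphi_i\}$, I patch via $g:=\sum_i\varphi_i g_{x_i}$; the key point is that the set of left inverses of a fixed left-invertible element is an affine subspace of $\mathfrak A$ (a translate of its right annihilator), so convex combinations remain left inverses, giving $gf=\sum_i\varphi_i\cdot 1_{\mathfrak A}=1_{\mathfrak A}$.

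For the reverse holomorphic inclusion I would invoke Theorem \ref{teo4.5}(b): given $f\in\mathcal O(M(\mathscr A_Z),\mathfrak A_l^{-1})$ and any $A\in\mathscr A_Z$, there exist $\alpha\in D$, a map $h\in\mathcal O(M(\mathscr A_Z),\mathfrak A_l^{-1})$ uniformly approximable by elements of $\widehat A_\alpha\otimes\mathfrak A$, and $g\in\mathcal O(M(\mathscr A_Z),\mathfrak A_0^{-1})$ with $gf=h$; constructing a holomorphic left inverse $k$ of $h$ is then enough, since $kg\cdot f=k\cdot h=1_{\mathfrak A}$. The approximability forces $h$ to factor through the limit projection as $h=\tilde h\circ F_\alpha$ for a map $\tilde h$ lying in the injective tensor product $\widehat A_\alpha\otimes_\varepsilon\mathfrak A$, viewed as a continuous $\mathfrak A_l^{-1}$-valued map on the compact polynomially convex set $M(A_\alpha)\subset\Co^{\#\alpha}$. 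On a sufficiently small Stein neighbourhood $U\supset M(A_\alpha)$ in $\Co^{\#\alpha}$, the classical Oka principle of Bungart applies to the pulled-back holomorphic principal $\mathfrak A_0^{-1}(p)$-bundle $\tilde h^*\mathfrak A_0^{-1}$ associated to $\pi^p:\mathfrak A_0^{-1}\to X^p$, $g\mapsto g\cdot p$ (with $X^p\subseteq\mathfrak A_l^{-1}$ the connected component containing the image of $\tilde h$): holomorphic triviality is equivalent to topological triviality, the latter ensured by applying the continuous case of the first paragraph locally on $U$ to construct a continuous lift of $\tilde h$ through $\pi^p$. A holomorphic section then gives a lift $H:U\to\mathfrak A_0^{-1}$ with $H\cdot p=\tilde h$, whence $\tilde k:=qH^{-1}$ (for any fixed left inverse $q\in\mathfrak A$ of $p$) is a holomorphic left inverse of $\tilde h$; pullback along $F_\alpha$ yields the required $k\in\mathcal O(M(\mathscr A_Z),\mathfrak A)$ with $kh=1_{\mathfrak A}$.

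The remaining structural claims follow formally. Both $C(M(\mathscr A_Z),\mathfrak A)$ and $\mathcal O(M(\mathscr A_Z),\mathfrak A)$ are unital complex Banach algebras under pointwise operations and the supremum norm, whose sets of left-invertibles have just been identified with $C(M(\mathscr A_Z),\mathfrak A_l^{-1})$ and $\mathcal O(M(\mathscr A_Z),\mathfrak A_l^{-1})$; applying Proposition \ref{prop4.4} to each of these algebras identifies every connected component as a complex Banach homogeneous space under left multiplication by the corresponding identity component. Connectedness of $M(\mathscr A_Z)$ forces each continuous or holomorphic map into $\mathfrak A_l^{-1}$ to land in a single component $X^p\subset\mathfrak A_l^{-1}$, giving disjoint union decompositions into $\mathcal O(M(\mathscr A_Z),X^p)$ and $C(M(\mathscr A_Z),X^p)$ over the components $X^p$; each $X^p$ is itself a complex Banach homogeneous space under $\mathfrak A_0^{-1}$ by Proposition \ref{prop4.4}, so Theorem \ref{teo3.5} gives the required bijection between path-connected components component by component. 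I expect the main obstacle to be the verification of topological triviality on the Stein neighbourhood in the holomorphic case—specifically, promoting the pointwise left invertibility to a continuous section of the principal bundle $\tilde h^*\mathfrak A_0^{-1}$ when the stabilizer $\mathfrak A_0^{-1}(p)$ is disconnected—which should proceed along lines analogous to the argument for idempotents underlying Theorem \ref{teo4.1}(a), exploiting the cohomological properties of $M(\mathscr A_Z)$ provided by Proposition \ref{prop1.1}.
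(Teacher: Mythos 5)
Your argument for the continuous identity is a correct, self-contained replacement for what the paper does by citing Vitse \cite[Th.\,2.2]{V} (the Bochner--Phillips--Allan--Markus--Sementsul theory): convexity of the set of left inverses of a fixed left-invertible element (a coset of its left annihilator $\{r:rf=0\}$, not its right annihilator as you wrote, though this is cosmetic) lets local continuous left inverses patch via a partition of unity. This is a genuinely different, more elementary route, and it is correct.

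Your holomorphic argument, however, has a real gap where you acknowledge uncertainty. After reducing via Theorem~\ref{teo4.5}(b) — the same reduction the paper uses — you need topological triviality of $\tilde h^*\mathfrak A_0^{-1}$ on a Stein neighbourhood $U\subset\Co^{\#\alpha}$ of $M(A_\alpha)$. But ``applying the continuous case locally on $U$'' does not establish this: a continuous left inverse of $\tilde h$ on $U$ is not the same thing as a continuous section of the principal $\mathfrak A_0^{-1}(p)$-bundle $\tilde h^*\mathfrak A_0^{-1}$, i.e.\ a continuous lift $H:U\to\mathfrak A_0^{-1}$ with $H\cdot p=\tilde h$. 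The cohomological vanishing ($H^2(\cdot,\Z)=0$, ${\rm dim}\le 2$) that trivializes such bundles on $M(\mathscr A_Z)$ is not available for a Stein domain in $\Co^{\#\alpha}$. Also, mere uniform approximability of $h$ by $\widehat{A}_\alpha\otimes\mathfrak A$ does not by itself give a holomorphic $\mathfrak A_l^{-1}$-valued extension $\tilde h$ past $M(A_\alpha)$, nor left-invertibility of $\tilde h$ off $F_\alpha(M(\mathscr A_Z))$; one must go into the \emph{proof} of Theorem~\ref{teo4.5}(b), which produces $\tilde h$ holomorphic and $\mathfrak A_l^{-1}$-valued on a neighbourhood of $M(A_\alpha)$. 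The repair would be an inverse-limit descent à la Proposition~\ref{prop10.1}: triviality of $h^*\mathfrak A_0^{-1}$ on $M(\mathscr A_Z)$ (which your continuous argument does give) descends to a finite level $\alpha'\supseteq\alpha$, after which Bungart applies on a Stein neighbourhood of $M(A_{\alpha'})$. By contrast the paper simply invokes Vitse's Theorem 2.2 a second time for the finitely generated tensor factor $\widehat{A}_\alpha$, avoiding the bundle argument entirely. Your treatment of the structural claims via Proposition~\ref{prop4.4} and Theorem~\ref{teo3.5} matches the paper's.
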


\begin{R}\label{rem4.7}
{\rm (1) If $\mathfrak A_0^{-1}$ is simply connected and the hypothesis of part (b) of Theorem \ref{teo4.5} holds for all $p\in \mathfrak A_l^{-1}$, then embedding $\mathfrak A_l^{-1}\hookrightarrow \mathcal O(M(\mathscr A_Z),\mathfrak A_l^{-1})$ assigning to each $p$ the map of the constant value $p$ induces bijection between
sets of connected components of $\mathfrak A_l^{-1}$ and $ \mathcal O(M(\mathscr A_Z),\mathfrak A_l^{-1})$, (cf. Remark \ref{rem4.2}(1)).\smallskip

\noindent (2) While the first identity of Theorem \ref{teo4.6} for algebra $C(M(\mathscr A_Z),\mathfrak A)$ is true in general with a compact Hausdorff space substituted instead of $M(\mathscr A_Z)$, the second  one was previously unknown even for $H^\infty$, see, e.g., \cite{V}.
For $H_{Z\, {\rm comp}}^\infty$, see Remark \ref{rem4.2}(3), the second identity of Theorem \ref{teo4.6} can be reformulated as follows:\smallskip

\noindent ($\star$) {\em A map $F\in H_{Z\,\rm comp}^\infty(\mathfrak A)$ has a left inverse $G\in  H_{Z\,\rm comp}^\infty(\mathfrak A)$ if and only if for every $z\in\Di$ there exists a left inverse $G_z$ of $F(z)$ such that $\sup_{z\in\Di}\|G_z\|_{\mathfrak A}<\infty$.}\smallskip

This result is related to the classical
Sz.-Nagy operator corona problem \cite{SN}:
\begin{Problem}
Let $H_1, H_2$ be separable Hilbert spaces and $F\in H^\infty(L(H_1,H_2))$ be such that for some $\delta>0$ and all $x\in H_1, z\in\Di$, $\|F(z)x\|\ge\delta\|x\|$. Does there exist $G\in H^\infty (L(H_2,H_1))$ such that $G(z)F(z)=I_{H_1}$ for all $z\in\Di$?
\end{Problem}
This problem is of great interest in operator theory (angles between invariant
subspaces, unconditionally convergent spectral decompositions), as well as in control theory. It is also related
to the study of submodules of $H^\infty$ and to many other subjects of analysis, see \cite{N1}, \cite{N2}, \cite{T1}, \cite{T2}, \cite{Vi} and references therein.   In general, the answer is known to be negative (see \cite{T3}, \cite{T4}, \cite{TW}); 
it is positive as soon as ${\rm dim}\, H_1<\infty$ or 
$F$ is a ``small'' perturbation of a left invertible function $F_0\in H^\infty(L(H_1,H_2))$ (e.g., if $F -F_0$ belongs to $H^\infty(L(H_1,H_2))$ with values in the class 
of Hilbert Schmidt operators), see \cite{T5}, or 
$F\in H_{\rm comp}^\infty(L(H_1,H_2))$, see \cite[Th.\,1.5]{Br2} and $(\star)$ above.

It is worth noting that the proof of statement $(\star)$ for $H_{\rm comp}^\infty(\mathfrak A)$ would be shorter if we knew that $H^\infty$ has the {\em Grothendieck approximation property}, cf. \cite[Th.\,2.2]{V}. However, this long-standing problem remains unsolved (for  some developments see, e.g., \cite[Th.\,9]{BR} and \cite[Th.\,1.21]{Br1}).
}
\end{R}

\begin{E}\label{ex4.11}
{\rm Let $L(X)$ be the Banach algebra of bounded linear operators on a complex Banach space $X$.  
Each $A\in L(X)_l^{-1}$ determines 
complemented subspace $X_1:={\rm ran}\,A\subset X$ isomorphic to $X$.
Then the stabilizer
$GL_0(X)(A)\subset GL_0(X)$ of $A$ consists of all operators $B\in GL_0(X)$ such that $B|_{X_1}=I_{X_1}$. If $X_2\subset X$ is a complemented subspace to $X_1$, then each $B\in GL_0(X)(A)$ has a form
\begin{equation}\label{equ4.1}
B=\left(
\begin{array}{ccc}
I_{X_1}&C\\
0&D
\end{array}
\right),\quad {\rm where}\quad D\in GL(X_2)\ \, {\rm and}\  \, C\in L(X_2,X_1).
\end{equation}
Thus $GL_0(X)(A)$ is homotopy equivalent to the subgroup of $GL(X_2)\, (\cong GL(X/X_1))$ consisting of all operators $D$ such that ${\rm diag}(I_{X_1}, D)\in GL_0(X)$. In particular, this subgroup coincides with $GL(X_2)$ if the latter is connected.

Now, Theorem \ref{teo4.5} leads to the following statement:\smallskip

\noindent $(1)$ {\em Suppose $A\in L(X)_l^{-1}$ is such that group $GL(X/X_1)$ is connected. Then for each $F\in \mathcal O(M(\mathscr A_Z), L(X)_l^{-1})$ whose image belongs to the connected component containing $A$ there is $G\in \mathcal O(M(\mathscr A_Z), GL_0(X))$ such that $F=GA$.}\smallskip

Let us identify $X$ with $X_1$  by means of $A$ and regard $F(x)$, $x\in M(\mathscr A_Z)$,  and $A$ as operators in $L(X_1,X_1\oplus X_2)$.  Then we obtain
\[
F=
\left(
\begin{array}{ccc}
F_1\\
F_2
\end{array}
\right),\qquad
G=
\left(
\begin{array}{ccc}
G_{11}&G_{12}\\
G_{21}&G_{22}
\end{array}
\right)\quad {\rm and}\quad
A=
\left(
\begin{array}{cc}
I_{X_{1}}\\
0
\end{array}
\right),
\]
where $F_i\in \mathcal O(M(\mathscr A_Z), L(X_1,X_i))$, $i=1,2$, and  $G_{ii}\in \mathcal O(M(\mathscr A_Z), L(X_i))$, $i=1,2$,
$G_{ij}\in \mathcal O(M(\mathscr A_Z), L(X_j,X_i))$, $i,j\in\{1,2\}$, $i\ne j$.
Now identity $F=GA$ implies that
\[
F_1=G_{11},\quad F_2=G_{21},
\]
that is, $G$ extends $F$ to an invertible holomorphic operator-valued function.

Thus  from (1) we obtain the following generalization of \cite[Th.\,1.4]{Br2}.\smallskip

\noindent (1$'$)  Suppose $F\in \mathcal O(M(\mathscr A_Z),L(Y_1,Y_2))$, where $Y_i$, $i=1,2$, are complex Banach spaces is such that for each $z\in \Di\setminus Z$ there exists a left inverse $G_z$ of $F(z)$ satisfying 
\[
\sup_{z\in\Di\setminus Z}\|G_z\|<\infty.
\]
Let $Y:={\rm ker}\, G_0$.
 Assume that $GL(Y)$ is connected. Then {\em
there exist maps $H\in \mathcal O(M(\mathscr A_Z),L(Y_1\oplus Y, Y_2))$, $G\in \mathcal O(M(\mathscr A_Z), L(Y_2, Y_1\oplus Y))$ such that for all $x\in M(\mathscr A_Z)$,}
\[
H(x)G(x)=I_{Y_2},\qquad G(x)H(x)=I_{Y_1\oplus Y}\qquad {\rm and}\qquad H(x)|_{Y_1}=F(x).
\]

Note that group $GL(Y)$ is connected in the following cases (see, e.g.,  \cite[Cor.\,]{Br3} for the references): (1) ${\rm dim}_{\Co}Y<\infty$; (2) $Y_2$ is isomorphic to a Hilbert space or $c_0$ or one of the spaces $\ell^p$, $1\le p\le\infty$; (3) $Y_2$ is isomorphic to one of the spaces $L^p[0,1]$, $1< p<\infty$, or $C[0,1]$ and $Y_1$ is not isomorphic to $Y_2$.


\smallskip

Statement  (1$'$) is related to the   following
\begin{P2}
Let $F\in H^\infty(L(H_1,H_2))$ satisfy the hypotheses of the Sz.-Nagy problem and $H_3:=\bigl(F(0)(H_1)\bigr)^{\perp}$.  Do there exist functions $D\in H^\infty(L(H_1\oplus H_3, H_2))$ and $E\in H^\infty(L(H_2, H_1\oplus H_3))$ such that 
\[
D(z)E(z)=I_{H_2},\quad E(z)D(z)=I_{H_1\oplus H_3}\quad\text{ and}\quad D(z)|_{H_1}=F(z)\quad\text{ for all}\quad z\in\Di.
\]
\end{P2}

Seemingly much stronger, this problem is equivalent to the Sz.-Nagy problem. This result, known as the Tolokonnikov lemma, is proved in full generality in \cite{T5}. \smallskip

Finally, part (b) of Theorem \ref{teo4.5} implies in our case:}\smallskip

\noindent $(2)$ Let $A\in\mathscr A_Z$. For each $F\in \mathcal O(M(\mathscr A_Z), L(X)_l^{-1})$ there exist a finitely generated unital subalgebra  $B\subset A$ and maps $H\in \widehat B\otimes L(X)$,   $G\in\mathcal O(M(\mathscr A_Z),GL_0(X))$ such that $GF=H$.
\end{E}

\sect{Proofs of Propositions \ref{prop1.1} and \ref{prop1.4}}
\begin{proof}[Proof of Proposition \ref{prop1.1}]
(a) By $Q_Z: M(H^\infty)\rightarrow M(H_I^\infty)$ we denote the continuous map transposed to the embedding homomorphism $H_I^\infty\hookrightarrow H^\infty$. Each $f\in H_I^\infty$ has a form $f=c+h$ for some $c\in\Co$ and $h\in I$ so that $\hat f(\xi):=\xi(f)=c$ for all $\xi\in  Z:={\rm hull}(I)\subset M(H^\infty)$. (Here and below $\hat{\,}$ is the Gelfand transform for $H^\infty$.) Hence, $Q_Z$ maps $Z$ to a point. Further, if $\xi_1,\xi_2\in M(H^\infty)\setminus Z$ are distinct, there exists a function $f\in H^\infty$ such that $\hat f(\xi_1)=1$ and $\hat f(\xi_2)=0$,  Also, by the definition of a hull, there exists $g\in I$ such that $\hat g(\xi_1)=1$. Function $fg\in H_I^\infty$ and $Q_Z(\xi_i)(fg):=\widehat{fg}(\xi_i)=\hat f(\xi_i)\hat g(\xi_i)=2-i$. Thus functions in $H_I^\infty$ separate points of $M(H^\infty)\setminus Z$, i.e., $Q_Z$ is one-to-one outside $Z$.
Finally, the corona theorem for $H_I^\infty$ implies that the image of $\Di$ under $Q_Z$ is dense in $M(H_I^\infty)$ (cf. \eqref{e1.1}, \eqref{bezout}). Hence, $Q_Z$ is a surjection.\smallskip

(b) Due to Suarez's theorem \cite[Th.\,4.5]{S1}, ${\rm dim}\, M(H^\infty)=2$; hence, ${\rm dim}\, K\le 2$ for all compact $K\subset M(H^\infty)\setminus Z$. Since $Q_Z|_K$ is a homeomorphism, ${\rm dim}\, Q_Z(K)={\rm dim}\, K\le 2$ as well.
Note that each compact subset of $M(H_I^\infty)\setminus \{z\}$, $z:=Q_Z(Z)$, has a form $Q_Z(K)$ for some $K$ as above. Thus ${\rm dim}\, K'\le 2$ for all compact $K'\subset M(H_I^\infty)\setminus \{z\}$. Moreover, ${\rm dim}\, \{z\}=0$. These two facts and the standard result of the dimension theory, see, e.g., 
\cite[Ch.\,2, Th.\,9--11]{N}, imply that ${\rm dim}\, M(H_I^\infty)=2$.\smallskip

(c) According to the cohomology interpretation due to Su\'{a}rez \cite[Th.\,1.3]{S1} of 
Treil's theorem \cite{T6} stating that the Bass stable rank of algebra $H^\infty$ is one, the (\v{C}ech) cohomology exact sequence of the pair $(M(H^\infty),Z)$ acquires the form
\[
\!\rightarrow H^1(M(H^\infty),\mathbb Z)\rightarrow H^1(Z,\mathbb Z)\stackrel{0\ }{\rightarrow}H^2(M(H^\infty),Z,\,\mathbb Z)
\rightarrow H^2(M(H^\infty),\mathbb Z)\rightarrow H^2(Z,\mathbb Z)\rightarrow 0.
\]
From here by another Su\'{a}rez's result \cite[Cor.\,3.9]{S1} asserting that $H^2(M(H^\infty),\mathbb Z)$=0 we obtain that
$H^2\bigl(M(H^\infty),Z,\,\mathbb Z\bigr)=0$. Since $Q_Z: M(H^\infty)\rightarrow M(H_I^\infty)$ maps $Z$ to a point and is one-to-one outside of $Z$,
due to the {\em strong excision property} for cohomology, see, e.g., \cite[Ch.\,6, Th.\,5]{Sp}, the pullback map $Q_Z^*$ induces isomorphism of the \v{C}ech cohomology groups $H^2(M(H_I^\infty),\Z)\cong H^2(M(H^\infty),Z,\,\Z)$.  In particular, $H^2(M(H_I^\infty),\mathbb Z)=0$.
\end{proof}
\begin{proof}[Proof of Proposition \ref{prop1.4}]
Suppose  for $f\in\mathcal O(U\cap (\Di\setminus Z),X)$ there exist open covers $(U_\alpha)_{\alpha\in A}$ of $U\subset M(\mathscr A_Z)$ and $(V_\beta)_{\beta\in B}$ of $X$ and a map $\tau: A\rightarrow B$ such that 
$\mathcal O(V_\beta)$ (algebras of holomorphic functions on $V_\beta$, $\beta\in B$) separate points,   $f(U_\alpha\cap (\Di\setminus Z))\Subset V_{\tau(\alpha)}$, $\alpha\in A$, and
\begin{equation}\label{equ5.0}
\bigcap_{W\subset Q_Z^{-1}(U)\,:\, \mathring{W}\cap Z\ne\emptyset}\overline{f\bigl(W\cap (\Di\setminus Z))}\ne\emptyset.
\end{equation}
We require to show that $f$ extends to a map in $\mathcal O(U,X)$.
\begin{Lm}\label{lem5.1}
For each $\alpha\in A$ map $Q_Z^*f: Q_Z^{-1}(U_\alpha)\cap( \Di\setminus Z)\rightarrow V_{\tau(\alpha)}$ extends to  a map in  $C(Q_Z^{-1}(U_\alpha), V_{\tau(\alpha)})$.
\end{Lm}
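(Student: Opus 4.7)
The plan is to reduce the extension problem to the scalar case handled by \cite[Prop.\,1.3]{Br1}, exploiting that $\mathcal O(V_{\tau(\alpha)})$ separates points and that $K:=\overline{f(U_\alpha\cap(\Di\setminus Z))}\Subset V_{\tau(\alpha)}$ is compact.

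First I would note that for each $h\in\mathcal O(V_{\tau(\alpha)})$ the composition $h\circ f$ is a bounded holomorphic function on $U_\alpha\cap(\Di\setminus Z)\subset\Di$ with $\sup|h\circ f|\le\sup_K|h|$. Since $Z\cap\Di$ is the common zero set in $\Di$ of a nonzero ideal of $H^\infty$ (the trivial ideal case being vacuous), it is at worst a discrete analytic subset of $\Di$, so Riemann's removable singularity theorem extends $h\circ f$ to a bounded holomorphic function on the open subset $Q_Z^{-1}(U_\alpha)\cap\Di\subset\Di$. Then \cite[Prop.\,1.3]{Br1} applied to this scalar function (using that $Q_Z^{-1}(U_\alpha)$ is open in $M(H^\infty)$ and $\Di$ is dense there) produces a continuous extension $\widehat{h\circ f}\in C(Q_Z^{-1}(U_\alpha))$.

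Next, for every $\xi\in Q_Z^{-1}(U_\alpha)$ I would define the multiplicative linear functional $\Phi_\xi:\mathcal O(V_{\tau(\alpha)})\to\Co$ by $\Phi_\xi(h):=\widehat{h\circ f}(\xi)$; multiplicativity holds by density of $Q_Z^{-1}(U_\alpha)\cap(\Di\setminus Z)$ in $Q_Z^{-1}(U_\alpha)$ and the identity $\Phi_\xi(h)=h(f(\xi))$ on this dense subset. The estimate $|\Phi_\xi(h)|\le\sup_K|h|$ together with the fact that $\mathcal O(V_{\tau(\alpha)})$ separates points of the compact $K$ would give, by a Gelfand-type argument, a unique $p_\xi\in K$ with $\Phi_\xi(h)=h(p_\xi)$ for all $h$; the existence of $p_\xi$ for $\xi\in Z\cap Q_Z^{-1}(U_\alpha)$ is exactly what hypothesis (b) of Proposition \ref{prop1.4} supplies, namely a common accumulation point of $f$ over arbitrarily small neighbourhoods of $Z$. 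Setting $\widetilde{Q_Z^*f}(\xi):=p_\xi$ defines the candidate extension.

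Continuity of $\xi\mapsto p_\xi$ into $V_{\tau(\alpha)}$ then follows from the continuity of each $\xi\mapsto h(p_\xi)=\widehat{h\circ f}(\xi)$ and from the fact that the subspace topology on the compact Hausdorff set $K$ is determined by the separating family $\{h|_K\}_{h\in\mathcal O(V_{\tau(\alpha)})}$. The main obstacle will be establishing that $\Phi_\xi$ is point evaluation at an element of $K$ when $\xi\in Z$: the roles of hypothesis (b) (existence of the candidate $p_\xi$) and of separation by holomorphic functions (uniqueness and continuous dependence) must be combined carefully, since without (b) distinct accumulation points of $f$ near $Z$ could make $\Phi_\xi$ fail to correspond to any single element of $V_{\tau(\alpha)}$.
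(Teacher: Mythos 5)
Your reduction to scalar extension via the separating family $\mathcal O(V_{\tau(\alpha)})$ is essentially the paper's approach (the paper packages it as an embedding $\Psi_\alpha$ of $V_{\tau(\alpha)}$ into the product $\prod_{g}C_g$, then pushes the coordinate-wise extensions through $\Psi_\alpha^{-1}$ on the compact $K$); the use of the Riemann removable singularity theorem to get past $Z\cap\Di$ is also consistent with what the author notes in Remark~\ref{rem2.5}(2). However, there is a genuine conceptual error in your final paragraph: hypothesis~(b) of Proposition~\ref{prop1.4} plays \emph{no role} in Lemma~\ref{lem5.1}. The existence of the point $p_\xi\in K$ does not require~(b). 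Take any net $\xi_\lambda\to\xi$ with $\xi_\lambda\in Q_Z^{-1}(U_\alpha)\cap(\Di\setminus Z)$ (such nets exist because $\Di\setminus Z$ is dense in $M(H^\infty)$, $Z$ having empty interior). Then $f(\xi_\lambda)\in K$, and compactness of $K$ gives a convergent subnet with limit $p_\xi\in K$ satisfying $h(p_\xi)=\lim h(f(\xi_\mu))=\widehat{h\circ f}(\xi)$ for every $h$. Separation of points then makes $p_\xi$ unique \emph{and} independent of the chosen net, since any other accumulation point $q$ would give $h(p_\xi)\ne h(q)$ for some $h$, contradicting $h(p_\xi)=\widehat{h\circ f}(\xi)=h(q)$. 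Your worry that ``distinct accumulation points of $f$ near $Z$ could make $\Phi_\xi$ fail to correspond to any single element'' does not arise precisely because each coordinate function $\widehat{h\circ f}$ is single-valued and continuous.

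The actual place where condition~(b) enters the proof of Proposition~\ref{prop1.4} is after Lemma~\ref{lem5.1}: once the extensions on the various $Q_Z^{-1}(U_\alpha)$ have been glued to a global $\tilde F\in\mathcal O(Q_Z^{-1}(U),X)$, condition~(b) is used to show that $\tilde F$ is \emph{constant} on $Z$ (so that it descends through $Q_Z$ to a map on $U\subset M(\mathscr A_Z)$). Importing~(b) into the lemma both misplaces the hypothesis and obscures the point of separating the two steps. A minor further note: for the scalar extension step you cite \cite[Prop.\,1.3]{Br1}, which concerns extension from all of $\Di$ to all of $M(H^\infty)$; here the extension is over the open set $Q_Z^{-1}(U_\alpha)$, and the paper accordingly invokes the local version \cite[Th.\,3.2]{S1} instead.
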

Recall that  $Q_Z|_{M(H^\infty)\setminus Z}={\rm id}$, see Convention in Section~2.1.
\begin{proof}
According to the hypotheses  for each $g\in \mathcal O(V_{\tau(\alpha)})$  function $g\circ f\in H^\infty(U_\alpha\cap ( \Di\setminus Z))$. Hence, due to \cite[Th.\,3.2]{S1},\smallskip
 
\noindent ($\circ$)\quad
{\em function $g\circ Q_Z^*f$ extends continuously to  $Q_Z^{-1}(U_\alpha)$.}\smallskip 

By $C_g\subset \mathbb C$ we denote the range of $g\in \mathcal O(V_{\tau(\alpha)})$. Let us consider space
\[
T_\alpha:=\prod_{g\in\mathcal O(V_{\tau(\alpha)})}C_g
\] 
equipped with the product topology and
continuous map $\Psi_\alpha: V_{\tau(\alpha)}\rightarrow T_\alpha$, 
\[
\Psi_\alpha(z)(g):=g(z), \quad z\in V_{\tau(\alpha)},\ \, g\in \mathcal O(V_{\tau(\alpha)}).
\]
Since $\mathcal O(V_{\tau(\alpha)})$ separates points,
$\Psi_\alpha$ is one-to-one. Therefore it maps compact set $K:=\overline{f(U_\alpha\cap (\Di\setminus Z))}$ homeomorphically onto its image  $K_\alpha:=\Psi_\alpha(K)$. Let $\psi_\alpha^{-1}: K_\alpha\rightarrow K$ be the continuous inverse of $\Psi_\alpha|_K$.
Due to ($\circ$) map $\Psi_\alpha\circ f\circ Q_Z|_{Q_Z^{-1}(U_\alpha)\cap( \Di\setminus Z)}$ extends continuously to $Q_Z^{-1}(U_\alpha)$ and so has range in $K_\alpha$. Thus
$Q_Z^*f=\psi_\alpha^{-1}\bigl(\Psi_\alpha\circ f\circ Q_Z|_{Q_Z^{-1}(U_\alpha)\cap( \Di\setminus Z)}\bigr)$ extends continuously to $Q_Z^{-1}(U_\alpha)$ and takes its values in $K\subset V_{\tau(\alpha)}$, as required.
\end{proof}

Since $(U_\alpha)_{\alpha\in A}$ is an open cover of $U$, Lemma \ref{lem5.1} implies that $Q_Z^*f$ extends continuously to a map $\tilde F\in\mathcal O(Q_Z^{-1}(U),X)$.

Next, suppose $Z\subset Q_Z^{-1}(U)$. For points $x_1,x_2\in  Z$, let $\mathcal B(x_i)$ be local bases of relatively compact open neighbourhoods of $x_i$ in $Q_Z^{-1}(U)$, $i=1,2$. Then condition \eqref{equ5.0} and continuity of $\tilde F$ imply that
\[
\{\tilde F(x_1)\}\cap\{\tilde F(x_2)\}=\bigcap_{i=1,2}\,\bigcap_{O_i\in\mathcal B(x_i)} \tilde F(\bar O_i)\ne\emptyset .
\]
Hence, $\tilde F(x_1)=\tilde F(x_2)$ for all distinct $x_1,x_2\in Z$ and so $\tilde F|_Z$ is constant. In turn, there is a map $F\in \mathcal O(U,X)$ such that $\tilde F=Q_Z^*F$. By the definition of $Q_Z$, $F|_{U\cap (\Di\setminus Z)}=f$ as required.\smallskip

The converse statement saying that $f=F|_{U\cap (\Di\setminus Z)}$ with $F\in \mathcal O(U,X)$ satisfies   conditions (a) and (b) of the proposition follows easily by continuity of $F$.
\end{proof}

\sect{Auxiliary Results}
In this part we present some results used in the proofs of theorems of Section~2.2. 

\subsection{Maximal Ideal Space of ${\mathbf H^\infty}$}
Recall that the {\em pseudohyperbolic metric} on $\Di$ is defined by
\[
\rho(z,w):=\left|\frac{z-w}{1-\bar w z}\right|,\qquad z,w\in\Di.
\]
For $x,y\in  M(H^\infty)$ the formula
\[
\rho(x,y):=\sup\{|\hat f(y)|\, :\, f\in H^\infty,\, \hat f(x)=0,\, \|f\|_{H^\infty}\le 1\}
\]
gives an extension of $\rho$ to $M(H^\infty)$.
The {\em Gleason part} of $x\in M(H^\infty)$ is then defined by $\pi(x):=\{y\in M(H^\infty)\, :\, \rho(x,y)<1\}$. For $x,y\in M(H^\infty)$ we have
$\pi(x)=\pi(y)$ or $\pi(x)\cap\pi(y)=\emptyset$. Hoffman's classification of Gleason parts \cite{H} shows that there are only two cases: either $\pi(x)=\{x\}$ or $\pi(x)$ is an analytic disk. The former case means that there is a continuous one-to-one and onto map $L_x:\Di\rightarrow\pi(x)$ such that
$\hat f\circ L_x\in H^\infty$ for every $f\in H^\infty$. Moreover, any analytic disk is contained in a Gleason part and any maximal (i.e., not contained in any other) analytic disk is a Gleason part. By $M_a$ and $M_s$ we denote sets of all nontrivial (analytic disks) and trivial (one-pointed) Gleason parts. It is known that $M_a\subset M(H^\infty)$ is open. Hoffman proved that $\pi(x)\subset M_a$ if and only if $x$ belongs to the closure of an interpolating sequence in $\Di$. The base of topology on $M_a$ consists of sets of the form $\{x\in M_a\, :\, |\hat b(x)|<\varepsilon\}$, where $b$ is an interpolating Blaschke product. This is because for a sufficiently small  $\varepsilon$ set $b^{-1}(\Di_\varepsilon)\subset\Di$, $\Di_\varepsilon:=\{z\in\Co\, :\, |z|<\varepsilon\}$, is biholomorphic to $\Di_\varepsilon\times b^{-1}(0)$, see \cite[Ch.~X, Lm.~1.4]{Ga}. Hence,
$\{x\in M_a\, :\, |\hat b(x)|<\varepsilon\}$ is biholomorphic to $\Di_\varepsilon\times \hat b^{-1}(0)$.

Su\'{a}rez  \cite{S2} proved that the set of trivial Gleason parts is totally disconnected, i.e., ${\rm dim}\, M_s=0$ (as  $M_s$ is compact). 

\subsection{Holomorphic Banach vector bundles on $M(\mathscr A_Z)$} Let $E$ be a Banach vector bundle on $M(\mathscr A_Z)$ with fibre a complex Banach space $X$. We say that $E$ is {\em holomorphic} if it is defined on an open cover $(U_i)_{i\in I}$ of $M(\mathscr A_Z)$ by a holomorphic cocycle $g=\{g_{ij}\in \mathcal O(U_i\cap U_j, GL(X))\}_{i,j\in I}$.  In this case, $E|_{\Di\setminus Z}$ is a holomorphic Banach vector bundle on $\Di\setminus Z$ in the usual sense.
Recall that $E$ is defined as the quotient space of disjoint union $\sqcup_{i\in I}\,U_i\times X$ by the equivalence relation:
\begin{equation}\label{eq5.1}
U_j\times X\ni u\times x\sim u\times g_{ij}(u)x\in U_i\times X.
\end{equation}
The projection $p:E\rightarrow M(\mathscr A_Z)$ is induced by natural projections $U_i\times X\rightarrow U_i$, $i\in I$.

A morphism $\varphi : (E_1, X_1, p_1)\rightarrow (E_2, X_2, p_2)$ of holomorphic Banach vector bundles on $M(\mathscr A_Z)$ is a continuous map which sends each vector space $p_1^{-1}(w)\cong X_1$ linearly to vector space $p_2^{-1}(w)\cong X_2$, $w\in M(\mathscr A_Z)$, and such that $\varphi|_{\Di\setminus Z}: E_1|_{\Di\setminus Z}\rightarrow E_2|_{\Di\setminus Z}$ is a holomorphic map of complex Banach manifolds. If, in addition, $\varphi$ is bijective, it is called an  isomorphism. 

We say that a holomorphic Banach vector bundle on $M(\mathscr A_Z)$ is {\em trivial} if it is isomorphic to the  bundle
$E_X:=M(\mathscr A_Z)\times X$. 

The following result was established in \cite{Br3} by  techniques developed in \cite{Br1}, \cite{Br2}.
\begin{Th}[\cite{Br3}, Theorem 3.2]\label{teo5.1}
For a complex Banach space $X$ there exists a complex Banach space $Y$ such that for each
holomorphic Banach vector bundle $E$ on $M(H^\infty)$ with fibre $X$ the Whitney sum $E\oplus E_Y$ is trivial.  
\end{Th}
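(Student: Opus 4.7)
The plan is to combine an Oka principle for Banach vector bundles on $M(H^\infty)$ with a topological stabilization argument chosen so that the absorbing fibre $Y$ depends only on $X$, not on $E$. Specifically, take $Y:=\ell^2(\mathbb N,X)$, the $\ell^2$-sum of countably many copies of $X$, so that $X\oplus Y\cong Y$ and $Y\oplus Y\cong Y$ via canonical shift isomorphisms.

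The first step is to reduce the holomorphic statement to a topological one. The Grauert-Bungart type results for holomorphic principal bundles with complex Banach Lie group fibre on $M(\mathscr A_Z)$ — Theorems \ref{teo1.7} and \ref{teo2.9} above — specialize, in the case $Z=\emptyset$ (so $M(\mathscr A_Z)=M(H^\infty)$) and $G=GL(X\oplus Y)$, to an Oka principle for holomorphic Banach vector bundles: two such bundles are holomorphically isomorphic iff they are topologically isomorphic. Viewing $E\oplus E_Y$ as a holomorphic principal $GL(X\oplus Y)$-bundle, it therefore suffices to exhibit a \emph{topological} isomorphism with the trivial bundle $E_{X\oplus Y}$.

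Next, using $X\oplus Y\cong Y$, regard $E\oplus E_Y$ as a topological Banach vector bundle with fibre $Y$, equivalently as a principal $GL(Y)$-bundle over $M(H^\infty)$. By Proposition \ref{prop1.1}, $\dim M(H^\infty)=2$ and $H^2(M(H^\infty),\mathbb Z)=0$; hence obstructions to topological triviality of such a bundle, via standard obstruction theory, lie in
\[
H^1(M(H^\infty),\pi_0(GL(Y)))\quad\text{and}\quad H^2(M(H^\infty),\pi_1(GL(Y))).
\]
Since $Y\cong Y\oplus Y$, a Mingo-Kuiper-type Eilenberg swindle yields that $GL(Y)$ is path-connected and simply connected, so both coefficient groups vanish (the second obstruction actually vanishes already from $H^2(M(H^\infty),\mathbb Z)=0$ when $\pi_1(GL(Y))$ is a subquotient of $\mathbb Z$). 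Consequently $E\oplus E_Y$ is topologically trivial, and by Step~1 holomorphically trivial.

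The main obstacle is the Eilenberg-swindle step asserting that $GL(Y)$ is sufficiently connected for a \emph{general} Banach space $X$. Kuiper's theorem covers the case $X$ Hilbert, and Mityagin's extension handles $X=c_0$ or $\ell^p$, but for an arbitrary complex Banach space $X$ one must be more careful in choosing $Y$: rather than $\ell^2(\mathbb N,X)$ one may need a larger absorbing space (e.g.\ a countable $\ell^2$-sum of copies of $X\oplus c_0$) structured so that the Mingo-Wall swindle genuinely produces a contraction of $GL(Y)$. An alternative route, which avoids full contractibility, is to carry out the reduction finitistically: use the inverse-limit presentation $M(H^\infty)=\varprojlim M(A_\alpha)$ (cf.\ Theorem \ref{teo1.10}) to realize $E$ as the pullback of a bundle on a Stein compactum, apply Bungart's theorem there to find a bundle-dependent complement $E'$, and then absorb $E'$ into the fixed $Y$ using that $Y\cong Y\oplus Y'$ for every separable $Y'$ embedded via a fixed universal construction. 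Either way, the essential ingredients are the Oka principle and the cohomological smallness of $M(H^\infty)$ captured in Proposition \ref{prop1.1}.
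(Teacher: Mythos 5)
Your proposal is circular within the logic of this paper and therefore cannot serve as a proof of Theorem~\ref{teo5.1}. The passage from ``topologically trivial'' to ``holomorphically trivial'' in your Step~1 invokes the Oka principle of Theorems~\ref{teo1.7} and \ref{teo2.9} (and your alternative route invokes Corollary~\ref{cor1.5} and Theorem~\ref{teo1.10}); but these are proved in Sections~7--9 \emph{using} Theorem~\ref{teo5.1}. The dependence runs: Theorem~\ref{teo5.1} $\rightarrow$ Theorem~\ref{teo5.2} $\rightarrow$ the Runge- and Cousin-type lemmas (Theorems~\ref{runge},~\ref{cousin}) $\rightarrow$ Proposition~\ref{small} and the Cartan-type lemma (Theorem~\ref{cartan}) $\rightarrow$ Corollary~\ref{cor5.13}, Theorem~\ref{teo6.1}, Theorem~\ref{te1.4}, Corollary~\ref{cor1.5}, and, via Proposition~\ref{prop8.3}, Theorem~\ref{teo8.1}, which is Theorem~\ref{teo1.7}. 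Theorem~\ref{teo5.1} is quoted from \cite{Br3} precisely because it is an \emph{input} to the Grauert--Bungart theory built here, not a consequence of it; the argument in \cite{Br3} works directly with Cartan- and Cousin-type lemmas for \emph{trivial} Banach bundles (the prototypes of Theorems~\ref{cousin} and \ref{cartan}) over a cover of $M(H^\infty)$ by Blaschke sets and the totally disconnected set $M_s$, and does not presuppose an Oka principle for nontrivial bundles.

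There is a second, independent gap, which you yourself flag as the main obstacle: for $Y=\ell^2(\mathbb N,X)$ with $X$ an arbitrary complex Banach space you need $GL(Y)$ to be path-connected (and in fact $H^2(M(H^\infty),\pi_1(GL(Y)))=0$), but this is not established. Kuiper's theorem and the extensions surveyed in \cite{Mi} cover Hilbert space, $c_0$, $\ell^p$, $L^p[0,1]$, $C[0,1]$, but not general $\ell^2$-sums $\ell^2(\mathbb N,X)$. The bare isomorphism $Y\cong Y\oplus Y$ gives only a \emph{conjugacy} of $T\oplus I_Y$ with $I_Y\oplus I_Y$ inside $GL(Y\oplus Y)$, and Whitehead's lemma connects $T\oplus T^{-1}$ to the identity, but neither produces a path from $T$ to $I_Y$ in $GL(Y)$ itself. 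Without a genuine proof of the requisite connectedness, or a concrete replacement for the Eilenberg-swindle step, the topological triviality of $E\oplus E_Y$ is unproved; your sketched alternatives (enlarging $Y$, or a finitistic reduction via Theorem~\ref{teo1.10}) are too brief to close this, and the latter is, again, downstream of Theorem~\ref{teo5.1}.
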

Let $E$ be a holomorphic Banach vector bundle on $M(\mathscr A_Z)$.
For an open subset $U\subset M(\mathscr A_Z)$ by $\Gamma_{\mathcal O}(U,E)$  we denote the complex vector space of holomorphic sections $s$ of $E$ over $U$ (i.e., such that  $s_{U\cap (\Di\setminus Z)}$ are holomorphic sections of the bundle $E|_{U\cap (\Di\setminus Z)}$ in the usual sense).  We equip $\Gamma_\mathcal O(U,E)$ with the topology $\tau_c$ of uniform convergence on compact subsets of $U$. It is generated by all subsets of the form $O_{K,V}:=\{f\in \Gamma_\mathcal O(U,E)\, :\, f(K)\subset V\}$, where $K\subset U$ is compact and $V\subset E$ is open subsets.

Using Theorem \ref{teo5.1} we prove
\begin{Th}\label{teo5.2}
Let $E$ be a holomorphic Banach vector bundle on $M(H^\infty)$ and $U$ an open neighbourhood of $Z$. There is a continuous linear operator $L: \Gamma_\mathcal O(U,E)\rightarrow \Gamma_\mathcal O(M(\mathscr A_Z),E)$ such that 
\[
Lf|_{Z}=f|_Z\quad {\rm for\ all}\quad f\in \Gamma_\mathcal O(U,E).
\]
\end{Th}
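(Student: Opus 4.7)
The plan is to reduce the bundle-valued extension problem to the case of a trivial bundle by invoking the splitting Theorem \ref{teo5.1}, and then to appeal to the linear extension result for Banach-space-valued $H^\infty$-type functions from \cite[Th.\,1.9]{Br1}.

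First I would apply Theorem \ref{teo5.1} to choose a complex Banach space $Y$ for which the Whitney sum $E\oplus E_Y$ is holomorphically isomorphic to the trivial bundle $E_{X\oplus Y}$, where $X$ is the fibre of $E$. Fixing such an isomorphism yields a holomorphic bundle embedding $\iota:E\hookrightarrow E_{X\oplus Y}$ together with a complementary holomorphic bundle projection $\pi:E_{X\oplus Y}\to E$ satisfying $\pi\circ\iota=\mathrm{id}_E$. Fibrewise composition with $\iota$ and $\pi$ induces continuous linear maps on holomorphic sections (equipped with the topology $\tau_c$),
\[
\iota_*:\Gamma_{\mathcal O}(V,E)\to\mathcal O(V,X\oplus Y),\qquad \pi_*:\mathcal O(V,X\oplus Y)\to\Gamma_{\mathcal O}(V,E),
\]
over any open $V\subseteq M(H^\infty)$, and $\pi_*\circ\iota_*=\mathrm{id}$.

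Having transferred the problem to extension of Banach-space-valued holomorphic maps, I would next invoke the continuous linear extension operator
\[
\tilde L:\mathcal O(U,X\oplus Y)\to\mathcal O(M(\mathscr A_Z),X\oplus Y)
\]
provided by \cite[Th.\,1.9]{Br1} (the Banach-space-valued precursor of the second part of Theorem \ref{te2.2}, applicable since the target $X\oplus Y$ is $n$-simple for every $n$). By construction, $(\tilde Lg)|_Z=g|_Z$ for every $g\in\mathcal O(U,X\oplus Y)$. Then I would define
\[
Lf:=\pi_*\bigl(\tilde L(\iota_* f)\bigr),\qquad f\in\Gamma_{\mathcal O}(U,E).
\]
Linearity and $\tau_c$-continuity of $L$ follow automatically as a composition of continuous linear maps, and the identity
\[
Lf|_Z=\pi_*\bigl((\iota_* f)|_Z\bigr)=(\pi\circ\iota)_*\bigl(f|_Z\bigr)=f|_Z
\]
is a direct consequence of $\pi\circ\iota=\mathrm{id}_E$ together with the defining property of $\tilde L$.

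The main obstacle sits entirely in the auxiliary input \cite[Th.\,1.9]{Br1}: once a linear bounded extension operator for vector-valued $H^\infty$-maps is at hand, the bundle-valued case is reached by the formal manoeuvre above, because Theorem \ref{teo5.1} eliminates all genuine bundle-theoretic obstructions by realizing $E$ as a holomorphic direct summand of a trivial bundle. An alternative, more hands-on route would be to patch local trivializations of $E$ via a $\tau_c$-continuous partition-of-unity construction and correct the resulting almost-section by the linear solution operator for the associated $\bar\partial$-type equation underlying \cite[Th.\,1.9]{Br1}, but the splitting argument above avoids this bookkeeping altogether.
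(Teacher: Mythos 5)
Your proposal is correct and follows essentially the same route as the paper: invoke Theorem \ref{teo5.1} to realize $E$ as a holomorphic direct summand of the trivial bundle $E\oplus E_Y$, apply the continuous linear extension operator for the trivial (i.e.\ Banach-space-valued) case from \cite[Th.\,1.9]{Br1}, and conjugate by the induced section maps $i_*$ and $q_*$ to obtain $L=q_*L'i_*$. The verification that $Lf|_Z=f|_Z$ via $q\circ i=\mathrm{id}$ is exactly the paper's argument.
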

\begin{proof}
For $E$ a trivial bundle the result is established in \cite[Th.\,1.9]{Br1}. In this case, we may assume without loss of generality that $E=E_X$. Then $(\Gamma_\mathcal O(U,E),\tau_c)$ is isomorphic to the topological vector space $\mathcal O(U,X)$  
with the topology of uniform convergence on compact subsets of $U$. The corresponding continuous linear operator $L$ is constructed as in the proof of Theorem 1.7 in \cite[pp.\,213--214]{Br1}. The general case is reduced to that one by means of Theorem \ref{teo5.1}. 

Indeed, in the notation of the theorem, let $i: E\rightarrow E\oplus E_Y$  and $q: E\oplus E_Y\rightarrow E$,  $q\circ i={\rm id}$, be continuous embedding and projection. They induce continuous linear maps $i_*: \Gamma_\mathcal O(U,E)\rightarrow \Gamma_\mathcal O(U,E\oplus E_Y)$, $i_*f:= i\circ f$, and $q_*: \Gamma_\mathcal O(M(\mathscr A_Z),E\oplus E_Y)\rightarrow \Gamma_\mathcal O(M(\mathscr A_Z),E)$, $q_*g:=q\circ g$. Since $E\oplus E_Y$ is trivial, by the above result of \cite{Br1} there is a continuous linear operator $L': \Gamma_\mathcal O(U,E\oplus E_Y)\rightarrow  \Gamma_\mathcal O(M(\mathscr A_Z),E\oplus E_Y)$  such that $L'f|_Z=f|_Z$ for all $f\in \Gamma_\mathcal O(U,E\oplus E_Y)$. We set
\[
L:=q_*L' i_*: \Gamma_\mathcal O(U,E)\rightarrow \Gamma_\mathcal O(M(\mathscr A_Z),E).
\]
Then $L$ is a continuous linear operator and for all $x\in Z$
\[
(Lf)(x)=q((L'(i_*f))(x))=q((i_*f)(x))=(q\circ i)(f(x))=f(x)
\]
as required.
\end{proof}

\subsection{Runge-type approximation theorem}
Recall that a compact subset $K\subset M(\mathscr A_Z)$ is called {\em holomorphically convex} if for any $x\notin K$ there is $f\in \mathcal O(M(\mathscr A_Z))$ such that
\[
\max_{K}|f|<|f(x)|.
\]
\begin{Th}\label{runge}
Each holomorphic section
of a holomorphic Banach vector bundle $E$ on $M(\mathscr A_Z)$ defined on a neighbourhood of a holomorphically convex set $K\subset M(\mathscr A_Z)$ can be uniformly approximated on $K$ by sections from $\Gamma_\mathcal O(M(\mathscr A_Z),E)$. 
\end{Th}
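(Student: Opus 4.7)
My plan is to reduce the theorem to the trivial-bundle case via the Whitney-sum trick used in the proof of Theorem \ref{teo5.2}, and then to invoke a Banach-space-valued Runge-Oka--Weil argument combined with the inverse-limit description of $M(\mathscr A_Z)$ preceding Theorem \ref{teo1.10}. For the reduction, by Theorem \ref{teo5.1} there is a complex Banach space $Y$ with $E\oplus E_Y$ holomorphically trivial on $M(\mathscr A_Z)$ (the descent from $M(H^\infty)$ being handled as in the proof of Theorem \ref{teo5.2}); say $E\oplus E_Y\cong E_{X'}$. Let $i:E\to E\oplus E_Y$ be the canonical embedding and $q:E\oplus E_Y\to E$ the canonical projection with $q\circ i=\mathrm{id}_E$. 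They induce norm-nonincreasing continuous linear maps
\[
i_*:\Gamma_{\mathcal O}(U,E)\to\Gamma_{\mathcal O}(U,E\oplus E_Y),\qquad q_*:\Gamma_{\mathcal O}(M(\mathscr A_Z),E\oplus E_Y)\to\Gamma_{\mathcal O}(M(\mathscr A_Z),E).
\]
Given $s\in\Gamma_{\mathcal O}(U,E)$, if $i_*s$ can be uniformly approximated on $K$ by global sections $t_n$ of $E\oplus E_Y$, then $q_*t_n$ uniformly approximates $s=q_*i_*s$ on $K$. Hence it suffices to show that every $f\in\mathcal O(U,X')$ can be uniformly approximated on $K$ by maps in $\mathcal O(M(\mathscr A_Z),X')$.

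For the trivial-bundle case I would proceed by a standard Oka--Weil argument relative to the inverse-limit structure $M(\mathscr A_Z)=\displaystyle\lim_{\longleftarrow}M(A_\alpha)$, with $A:=H^\infty_{I(Z)}\in\mathscr A_Z$. Holomorphic convexity of $K$ and compactness of $M(\mathscr A_Z)\setminus U$ furnish finitely many $g_1,\dots,g_N\in\mathcal O(M(\mathscr A_Z))$ with
\[
K\subset K':=\{x\in M(\mathscr A_Z)\,:\,|g_i(x)|\le 1,\ i=1,\dots,N\}\subset U.
\]
Setting $\alpha:=\{g_1,\dots,g_N\}$, the limit projection $F_\alpha=(g_1,\dots,g_N):M(\mathscr A_Z)\to\Co^N$ sends $K'$ into the polynomially convex compact set $M(A_\alpha)\cap\{z\in\Co^N\,:\,|z_i|\le 1,\ i=1,\dots,N\}$. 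An Arens--Calder\'on-type functional-calculus argument (enlarging $\alpha$ if necessary) would represent $f$, up to arbitrarily small uniform error on $K$, as the pullback $F_\alpha^*\tilde f$ of an $X'$-valued holomorphic map $\tilde f$ defined on a neighbourhood of $F_\alpha(K')$ in $\Co^N$. Since $M(A_\alpha)$ is polynomially convex and hence admits a fundamental system of Stein neighbourhoods, the classical Banach-valued Oka--Weil theorem in $\Co^N$ produces holomorphic maps $\tilde f_n$ into $X'$ on a Stein neighbourhood of $M(A_\alpha)$ that converge to $\tilde f$ uniformly on $F_\alpha(K')$; the pullbacks $F_\alpha^*\tilde f_n\in\mathcal O(M(\mathscr A_Z),X')$ then approximate $f$ uniformly on $K$.

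I expect the main obstacle to lie in the Arens--Calder\'on step: representing the $X'$-valued holomorphic map $f$, defined only on the neighbourhood $U$ of $K$ in the ringed-space sense of Proposition \ref{prop1.4}, as the pullback via some $F_\alpha$ of a Banach-valued holomorphic map on a neighbourhood of a polynomially convex compactum in $\Co^{\#\alpha}$. The delicate points are that $F_\alpha$ need not be injective, that $X'$ is possibly infinite-dimensional (so the approximation property of $H^\infty$ cannot be invoked), and that local representations of $f$ as functions of finitely many generators from $H^\infty_{I(Z)}$ must be propagated coherently to a single global $\alpha$; handling this is likely where the continuous linear extension operators from the proof of Theorem \ref{teo5.2} enter, combined with a polyhedral exhaustion of $K'$.
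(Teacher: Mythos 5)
Your proposal starts from the same Whitney--sum reduction as the paper, but there are two genuine gaps.

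First, the triviality claim. You assert that Theorem~\ref{teo5.1} yields a Banach space $Y$ with $E\oplus E_Y$ holomorphically trivial \emph{on $M(\mathscr A_Z)$}, with the parenthetical that ``the descent from $M(H^\infty)$ is handled as in the proof of Theorem~\ref{teo5.2}.'' But Theorem~\ref{teo5.1} lives on $M(H^\infty)$, and a trivialization of $Q_Z^*(E\oplus E_Y)$ there need not descend: the trivializing $0$-cochain need not be constant on $Z$. Cohomologically, $Q_Z^*: H^1(M(\mathscr A_Z),C(GL(X\oplus Y)))\to H^1(M(H^\infty),C(GL(X\oplus Y)))$ can have nontrivial kernel coming from $H^0(Z,\cdot)$ in the exact sequence of the pair $(M(H^\infty),Z)$, so $E\oplus E_Y$ could be nontrivial on $M(\mathscr A_Z)$ even though its pullback is trivial. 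Moreover Theorem~\ref{teo5.2} does not descend bundles or trivializations at all; it produces a linear extension operator for sections over $M(H^\infty)$ preserving values on $Z$. The paper sidesteps the issue by \emph{not} trivializing over $M(\mathscr A_Z)$: it first enlarges $K$ to $K\cup\{z\}$ (Lemma~\ref{lem5.4}), pulls everything back by $Q_Z$ to the holomorphically convex set $\widetilde K=Q_Z^{-1}(K)\supset Z$ in $M(H^\infty)$, applies Theorem~\ref{teo5.1} there, and works on $M(H^\infty)$ throughout.

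Second, the trivial-bundle case. You propose a fresh Arens--Calder\'on/Oka--Weil argument on $M(\mathscr A_Z)$ but, as you yourself flag, do not carry out the crucial step of writing $f$, up to small uniform error on $K$, as $F_\alpha^*\tilde f$. That is the hard analytic content (it is precisely where the unknown approximation property of $H^\infty$ would otherwise enter), and the paper does not reprove it: it quotes the trivial-bundle Runge theorem on $M(H^\infty)$ from \cite[Th.\,1.7]{Br1} to obtain approximating global sections $s_j$, then uses the extension operator $L$ of Theorem~\ref{teo5.2} to correct them so that the corrected sections $u_j=s_j+L\bigl((i\circ Q_Z^*s-s_j)|_{\mathring V}\bigr)$ agree with $i\circ Q_Z^*s$ on $Z$ (hence are constant there), and only then pushes forward through $Q_Z$. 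Without the pullback to $M(H^\infty)$, the citation of \cite[Th.\,1.7]{Br1}, and the $L$-correction restoring constancy on $Z$, the approximating sections need not descend to $M(\mathscr A_Z)$, so the argument does not close.
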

For $E$ a trivial bundle on $M(H^\infty)$ the result was proved in \cite[Th.\,1.7]{Br1}. 
\begin{proof}
Let $Q_Z: M(H^\infty)\rightarrow M(\mathscr A_Z)$ be the continuous surjection transposed to the embedding $H_{I(Z)}^\infty\hookrightarrow H^\infty$ (cf. Proposition \ref{prop1.1}). We set $z:=Q_Z(Z)$.
\begin{Lm}\label{lem5.4}
If $K\subset M(\mathscr A_Z)$ is  holomorphically convex, $K\cup\{z\}$ is holomorphically convex. 
\end{Lm}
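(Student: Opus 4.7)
The plan is to show that any $x \in M(\mathscr A_Z) \setminus (K \cup \{z\})$ can be separated from $K \cup \{z\}$ by a function in $\mathcal{O}(M(\mathscr A_Z))$, by multiplying a separator coming from holomorphic convexity of $K$ by a function vanishing at $z$ but not at $x$.

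First, using holomorphic convexity of $K$ together with $x \notin K$, I would choose $f \in \mathcal{O}(M(\mathscr A_Z))$ with $\max_K |f| < |f(x)|$ and, after multiplying by a suitable scalar, normalize so that $\max_K |f| < 1 < |f(x)|$.

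Second, I would produce $g \in \mathcal{O}(M(\mathscr A_Z))$ with $g(z) = 0$ and $g(x) \neq 0$. Since $x \neq z$, any preimage in $Q_Z^{-1}(x)$ lies in $M(H^\infty) \setminus Z$, and by the very definition $Z = {\rm hull}(I(Z))$ there exists $g \in I(Z)$ with $\hat g(Q_Z^{-1}(x)) \neq 0$. Viewed as an element of $H_{I(Z)}^\infty = \mathbb{C} + I(Z) = \mathcal{O}(M(\mathscr A_Z))$, this $g$ has zero constant part, so it vanishes on all of $Z$; hence on $M(\mathscr A_Z)$ we have $\hat g(z) = 0$, while $\hat g(x) \neq 0$ by construction.

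Third, I would form $h_n := f^n g \in \mathcal{O}(M(\mathscr A_Z))$. On $K \cup \{z\}$ the bound
\[
\max_{K \cup \{z\}} |h_n| \;\leq\; \max\bigl((\max_K |f|)^n \cdot \max_K |g|,\;0\bigr) \;\leq\; \max_K |g|
\]
is independent of $n$, whereas $|h_n(x)| = |f(x)|^n \,|g(x)| \to \infty$ because $|f(x)| > 1$ and $|g(x)| > 0$. Choosing $n$ sufficiently large yields $\max_{K\cup\{z\}}|h_n| < |h_n(x)|$, which is exactly what holomorphic convexity of $K \cup \{z\}$ requires.

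There is no genuine obstacle here; the only step that requires any thought is the construction of $g$ in step two, and that reduces immediately to the definition of a hull.
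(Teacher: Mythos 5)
Your proof is correct and follows essentially the same approach as the paper: separate $K$ from $x$ by $f$, kill $z$ with a function $g \in I(Z)$ vanishing at $z$ but not at $x$, and take a high power $f^n g$. The only differences are cosmetic (you normalize $|f(x)|>1$ so the value at $x$ grows, the paper normalizes $|f(x)|=1$ so the sup over $K$ shrinks), plus you spell out the construction of $g$ via the definition of ${\rm hull}(I(Z))$, which the paper leaves implicit.
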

\begin{proof}
Without loss of generality we may assume that $z\not\in K$. Let $x\not\in K\cup\{z\}$. Then there exists a function $f\in\mathcal O(M(\mathscr A_Z))$ such that
\[
m_f:=\max_K |f|<|f(x)|=1.
\]
Let $g\in\mathcal O(M(\mathscr A_Z))$ be such that $|g(x)|=1$ and $g(z)=0$. We set
\[
m_g:=\max_K |g|.
\]
Replacing $f$ by $f^n$ with a sufficiently large $n\in\N$, if necessary, without loss of generality we may assume that
\[
m_f\, m_g<1.
\]
Then for the function $fg\in\mathcal O(M(\mathscr A_Z))$ we get
\[
\max_{K\cup\{z\}}|fg|\le m_f\, m_g<|f(x)g(x)|=1.
\]
This shows that set $K\cup\{z\}$ is holomorphically convex.
\end{proof}
Suppose that $s\in \Gamma_\mathcal O(U,E)$, where $U$ is an open neighbourhood of a holomorphically convex set $K\subset M(\mathscr A_Z)$. Due to Lemma \ref{lem5.4} without loss of generality we may assume that $z\in K$. (Indeed, for otherwise, there are open disjoint sets $U_1\subset U$ and $U_2\ni z$. Then we will prove Theorem \ref{runge} for $K$ replaced by $K\cup\{z\}$, $U$ by $U_1\cup U_2$ and $s$ by the section in $\Gamma_\mathcal O(U_1\cup U_2,E)$ equals $s$ on $U_1$ and $0$ on $U_2$.) Thus
$\widetilde K:=Q_Z^{-1}(K)$ is a holomorphically convex subset of $M(H^\infty)$ containing $Z$.
In turn, $\widetilde E:=Q_Z^*E$ is a holomorphic Banach vector bundle on $M(H^\infty)$ and $\tilde s:=Q_Z^*s\in \Gamma_\mathcal O(\widetilde U,E)$ with $\widetilde U:=Q_Z^{-1}(U)$ an open neighbourhood of $\widetilde K$. Note that $\tilde s$ is constant on $Z$.

Let $E_Y:=M(H^\infty)\times Y$ be such that bundle $\widetilde E\oplus E_Y$
is trivial (see Theorem \ref{teo5.1}). By $i: \widetilde E\rightarrow \widetilde E\oplus E_Y$  and $q: \widetilde E\oplus E_Y\rightarrow \widetilde E$, $q\circ i={\rm id}$, we denote  continuous embedding and projection. Then $i\circ \tilde s\in \Gamma_\mathcal O(\widetilde U, \widetilde E\oplus E_Y)$ and is constant on $Z$.
\begin{Lm}\label{lem5.5}
Section $i\circ \tilde s$ can be uniformly approximated on $\widetilde K$ by holomorphic sections from $\Gamma_\mathcal O(M(H^\infty),\widetilde E\oplus E_Y)$ constant on $Z$.
\end{Lm}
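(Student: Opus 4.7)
The plan is to reduce to a trivial-bundle problem via Theorem~\ref{teo5.1} and then combine the vector-valued Runge theorem with the extension operator of Theorem~\ref{teo5.2} to simultaneously approximate $i\circ\tilde s$ on $\widetilde K$ and preserve its constant value on $Z$. Fix a global trivialization $\Phi:\widetilde E\oplus E_Y\to M(H^\infty)\times W$ with $W:=X\oplus Y$; under $\Phi$ the section $i\circ\tilde s$ corresponds to a map $f_0\in\mathcal O(\widetilde U,W)$ whose restriction to $Z$ is constant in the bundle-theoretic sense, with common value $v_0\in E_z\oplus Y$ given by $(s(z),0)$. Note also that $\widetilde K=Q_Z^{-1}(K)$ is holomorphically convex in $M(H^\infty)$: pullbacks by $Q_Z$ of holomorphic functions on $M(\mathscr A_Z)$ separating $K$ from its complement also separate $\widetilde K$ from $M(H^\infty)\setminus\widetilde K$, and any enlargement of the algebra of available functions only preserves holomorphic convexity.

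Next apply \cite[Th.\,1.7]{Br1}, the vector-valued Runge theorem on $M(H^\infty)$, to obtain $\sigma\in\mathcal O(M(H^\infty),W)$ with $\|\sigma-f_0\|_{\widetilde K}$ arbitrarily small. Then correct $\sigma$ on $Z$ using the continuous linear extension operator $L$ of Theorem~\ref{teo5.2} (trivial-bundle case established in \cite[Th.\,1.9]{Br1}), which extends holomorphic data on a fixed open neighborhood of $Z$ to a global holomorphic map agreeing with the data on $Z$. Setting $\sigma':=\sigma+L(f_0-\sigma)$, one has $\sigma'|_Z=\sigma|_Z+(f_0-\sigma)|_Z=f_0|_Z$, so $\sigma'$ corresponds under $\Phi$ to a global holomorphic section of $\widetilde E\oplus E_Y$ constant on $Z$ with value $v_0$. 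The remaining estimate $\|\sigma'-f_0\|_{\widetilde K}\le\|\sigma-f_0\|_{\widetilde K}+\|L(f_0-\sigma)\|_{\widetilde K}$ must be kept small on $\widetilde K$, which is the quantitative heart of the argument.

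The main obstacle is this last quantitative control. By the $\tau_c$-continuity of $L$, the quantity $\|L(f_0-\sigma)\|_{\widetilde K}$ is majorized by $\|f_0-\sigma\|_{K^{\ast\ast}}$ for some compact $K^{\ast\ast}$ in the fixed neighborhood of $Z$ on which $L$ operates. To ensure $K^{\ast\ast}\subset\widetilde K$, so that the Runge bound on $\widetilde K$ actually controls the compact $K^{\ast\ast}$, one replaces $K$ in advance by a slightly larger holomorphically convex set whose preimage contains $Z$ in its interior; this uses Lemma~\ref{lem5.4} together with the fact that any holomorphically convex $K\subset U$ can be enlarged to a holomorphically convex set containing a prescribed compact neighborhood of $z$ while still lying inside $U$. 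After this preliminary reduction, choosing the Runge error sufficiently small on the enlarged holomorphically convex set yields the desired global approximating section $\sigma'$, proving the lemma.
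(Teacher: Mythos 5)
Your proof is correct and takes essentially the same approach as the paper's: choose a holomorphically convex set with $Z$ in its interior, apply the Banach-valued Runge theorem of \cite{Br1} there, and then correct the approximant on $Z$ via the $\tau_c$-continuous extension operator $L$ of Theorem~\ref{teo5.2}, using the fact that the Runge error controls the compact set on which $L$'s continuity estimate depends. The only organizational difference is that the paper introduces the auxiliary holomorphically convex set $V\subset\widetilde U$ with $\widetilde K\subset\mathring V$ directly in $M(H^\infty)$ via \cite[Lm.\,5.1]{Br1}, rather than enlarging $K$ in $M(\mathscr A_Z)$ and pulling back; both variants supply the same key estimate.
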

\begin{proof}
By \cite[Lm.\,5.1]{Br1} there exists a holomorphically convex set $V\subset\widetilde U$ whose interior
$\mathring{V}$
contains $\widetilde K$. Since $\widetilde E\oplus E_Y$ is  trivial on $M(H^\infty)$, \cite[Th.\,1.7]{Br1} implies that there exists a sequence $\{s_j\}_{j\in\N}\subset \Gamma_\mathcal O(M(H^\infty),\widetilde E\oplus E_Y)$ converging to $i\circ \tilde s$ uniformly on $V$. Hence, the sequence $t_j:=(i\circ\tilde s-s_j)|_{\mathring{V}}$, $j\in\N$, of sections in $\Gamma_\mathcal O(\mathring{V},\widetilde E\oplus E_Y)$ converges to the zero section uniformly on $\mathring{V}$. Let $L: \Gamma_\mathcal O(\mathring{V},\widetilde E\oplus E_Y)\rightarrow \Gamma_\mathcal O(M(H^\infty),\widetilde E\oplus E_Y)$ be a continuous linear operator such that $Lf|_Z=f|_Z$ (see Theorem \ref{teo5.2}). We set
$\tilde t_j:=Lt_i\in \Gamma_\mathcal O(M(H^\infty),\widetilde E\oplus E_Y)$, $i\in\N$. Then $\tilde t_j|_Z=t_j|_Z$, $j\in\N$, and $\{\tilde t_j\}_{j\in\N}$ converges to the zero section uniformly on $M(H^\infty)$. Finally, defining
$u_j:=s_j+\tilde t_j$, $j\in\N$, we obtain that $\{u_j\}_{j\in\N}\subset \Gamma_\mathcal O(M(H^\infty),\widetilde E\oplus E_Y)$ converges to $i\circ\tilde s$ uniformly on $\widetilde K$ and $u_j|_Z=(i\circ \tilde s)|_Z$ (constant) for all $j$. 

The proof of the lemma is complete.
\end{proof}
In the notation of the lemma  sequence $\{q\circ u_j\}_{j\in\N}\subset \Gamma_\mathcal O(M(H^\infty),\widetilde E)$ converges to $\tilde s$ uniformly on $\widetilde K$. Since $(q\circ u_j)|_Z=\tilde s|_Z$ for all $j$, there is a sequence $\{v_j\}_{j\in\N}\subset S_\mathcal O(M(\mathscr A_Z),E)$ such that $v_j\circ Q_Z=q\circ u_j$, $j\in\N$, converging to $s$ uniformly on $K$.

This concludes the proof of the theorem.
\end{proof}
\subsection{Cousin-type lemma}
Let $E$ be a holomorphic Banach vector bundle with fibre $X$ on $M(\mathscr A_Z)$ defined on a finite open cover $\mathfrak U=(U_i)_{i\in I}$ of $M(\mathscr A_Z)$ by a holomorphic cocycle $g=\{g_{ij}\in\mathcal O(U_i\cap U_j,GL(X))\}_{i,j\in I}$, see \eqref{eq5.1}.
A continuous section of $E$ defined on a compact subset $K\subset M(\mathscr A_Z)$ is called {\em holomorphic} if it is the restriction of a holomorphic section of $E$ defined in an open neighbourhood of $K$. The space of such sections is denoted by $\Gamma_\mathcal O(K,E)$.   
Let $\Gamma_C(K,E)$ be the topological vector space of continuous sections of $E$ on $K$ equipped with the topology of uniform convergence.  This space is normable,  a norm on $\Gamma_C(K,E)$ compatible with topology can be defined as follows. 

Let us fix a finite refinement $\mathfrak V=(V_l)_{l\in L}$ of $\mathfrak U$ consisting of compact subsets. Let $\tau: L\rightarrow I$ be the  refinement map, i.e., $V_l\Subset U_{\tau(l)}$ for all $l\in L$. Each $s\in \Gamma_C(K,E)$ in the local coordinates on $V_l$, $l\in L_K:=\{m\in L\, :\, V_m\cap K\ne\emptyset\}$, determined by \eqref{eq5.1} is represented by a pair $(v,s_l(v))$, $v\in K\cap V_l$, $s_l\in C(K\cap V_l,X)$, such that $s_l(v)=g_{\tau(l)\tau(m)}(v)s_m(v)$ for all $v\in K\cap V_l\cap V_m\ne\emptyset$, $l, m\in L_K$. We set
\begin{equation}\label{eq5.2}
\|s\|_{K;E}^{\mathfrak V}:=\max_{l\in L_K}\max_{v\in K\cap V_l}\|s_l(v)\|_X .
\end{equation}
It is readily seen that $(\Gamma_C(K,E),\|\cdot\|_{K;E}^{\mathfrak V})$ is a complex Banach space and the norm topology on $\Gamma_C(K,E)$ coincides with the topology of uniform convergence.\smallskip

By $\Gamma_\mathcal A(K,E)$ we denote the closure of $\Gamma_\mathcal O(K,E)$ in the Banach space $\Gamma_C(K,E)$. \smallskip

Suppose open $U_1, U_2\Subset M(\mathscr A_Z)$ are such that (a) $z\not\in \bar U_i\setminus U_i$, $i=1,2$; (b)  $Q_Z^{-1}(\bar U_1\cap \bar U_2)\subset M_a\setminus Z$. Let $W_i\Subset U_i$, $i=1,2$, be compact subsets  and $W:=W_1\cup W_2$.

\begin{Th}[Cousin-type Lemma]\label{cousin}
The bounded linear operator of complex Banach spaces $A:  \Gamma_\mathcal A(\bar U_1\cap W, E)\oplus \Gamma_\mathcal A(\bar U_2\cap W, E)\rightarrow  \Gamma_\mathcal A(\bar U_1\cap \bar U_2\cap W, E)$,
\[
A(f_1,f_2):=f_1|_{\bar U_1\cap \bar U_2\cap W}+f_2|_{\bar U_1\cap \bar U_2\cap W},\quad f_i\in \Gamma_\mathcal A(\bar U_i\cap W, E),\ i=1,2,
\]
is surjective.
\end{Th}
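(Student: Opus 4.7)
The plan is to prove surjectivity of $A$ via the standard two-step strategy for Cousin-type decompositions: first produce an approximate splitting with a uniform norm bound, then iterate geometrically so that the resulting series converges in the Banach spaces $\Gamma_{\mathcal A}(\bar U_i\cap W, E)$. Concretely, it suffices to exhibit a constant $C$ such that for every $f\in\Gamma_{\mathcal A}(\bar U_1\cap \bar U_2\cap W, E)$ there exist $g_i\in\Gamma_{\mathcal A}(\bar U_i\cap W, E)$ with $\|g_i\|\le C\|f\|$ and $\|f-g_1-g_2\|\le\tfrac12\|f\|$; then setting $f_i:=\sum_{n\ge 0}g_i^{(n)}$ for the successive iterates yields a pair with $A(f_1,f_2)=f$.

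For the approximate decomposition, by density of $\Gamma_{\mathcal O}$ in $\Gamma_{\mathcal A}$ I would first replace $f$ by a section $\tilde f$ holomorphic on an open neighbourhood $V$ of $\bar U_1\cap \bar U_2\cap W$. Since $M(\mathscr A_Z)$ is normal and the assumption $z\notin \bar U_i\setminus U_i$ keeps the singular point $z:=Q_Z(Z)$ away from the boundaries of the $U_i$, I can choose a continuous partition of unity $\chi_1,\chi_2$ on $M(\mathscr A_Z)$ with $\chi_1+\chi_2\equiv 1$ on a neighbourhood of $\bar U_1\cap\bar U_2\cap W$, $\operatorname{supp}(\chi_i)\subset U_i$, and the overlap $\operatorname{supp}(\chi_1)\cap\operatorname{supp}(\chi_2)$ contained in $V$. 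Setting $h_i:=\chi_{3-i}\tilde f$, extended by $0$ outside $V$, produces continuous sections of $E$ on $\bar U_i\cap W$ with $h_1+h_2=\tilde f$ on $\bar U_1\cap\bar U_2\cap W$.

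The essential step is then to promote the continuous $h_i$ to elements of $\Gamma_{\mathcal A}(\bar U_i\cap W, E)$. The obstruction to holomorphy of $h_i$ is concentrated on the cutoff region $\operatorname{supp}(\chi_1)\cap\operatorname{supp}(\chi_2)$, which lies in $\bar U_1\cap \bar U_2$ and hence, via $Q_Z^{-1}$, in the analytic-disks stratum $M_a\setminus Z$. Hoffman's local description of $M_a$ as products $\Di_\varepsilon\times \hat b^{-1}(0)$ for interpolating Blaschke products $b$ permits solving a fibrewise Cousin~I problem on the cutoff region, producing local holomorphic corrections $u_i$ with $h_i - u_i$ holomorphic near $\bar U_1\cap \bar U_2\cap W$ and $(h_1-u_1)+(h_2-u_2)=\tilde f$ on the intersection. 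The Runge-type approximation of Theorem~\ref{runge} together with the bounded linear extension operator of Theorem~\ref{teo5.2}, both applied to the bundle $E$, then globalise these local corrections to sections in $\Gamma_{\mathcal A}(\bar U_i\cap W, E)$ with norms controlled by a multiple of $\|f\|$, yielding the desired $g_i$.

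The main obstacle is precisely this globalisation step: obtaining a uniform quantitative substitute for the classical $\bar\partial$-solving that drives Cousin~I on Stein spaces, in the non-classical ambient space $M(\mathscr A_Z)$. The two hypotheses of the theorem are precisely what enable it: $z\notin \bar U_i\setminus U_i$ permits a continuous cutoff that never approaches the only point where $Q_Z$ fails to be a local homeomorphism, while $Q_Z^{-1}(\bar U_1\cap \bar U_2)\subset M_a\setminus Z$ confines the cutoff artefacts to the stratum where Hoffman's complex-analytic local model is available. Both features are essential in order for Theorems~\ref{teo5.2} and~\ref{runge} to deliver the holomorphic corrections with norm bounds independent of $f$, which in turn is what makes the geometric iteration converge.
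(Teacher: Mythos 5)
Your proposal diverges sharply from the paper's actual argument, and the divergence hides a real gap. The paper does not run a partition-of-unity/correction/iteration scheme for general $E$; instead it performs a clean reduction to the already-established trivial-bundle case: it pulls $E$ back to $\widetilde E:=Q_Z^*E$ on $M(H^\infty)$, invokes Theorem~\ref{teo5.1} to find $Y$ with $\widetilde E\oplus E_Y$ trivial, applies the trivial-bundle version of the Cousin lemma from \cite[Th.\,3.1]{Br2} to $i\circ\tilde f$, projects the resulting pieces by $q$, and finally repairs the constant-on-$Z$ normalisation by subtracting the global section supplied by the extension operator of Theorem~\ref{teo5.2}. All the hard analysis --- the Blaschke-set local models, the uniform bounds, the iteration --- lives inside the cited trivial-bundle theorem from \cite{Br2}, not in this proof.

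Your plan, by contrast, tries to build the decomposition from scratch for general $E$ and compresses the entire difficulty into one sentence: that Hoffman's local model $\Di_\varepsilon\times\hat b^{-1}(0)$ ``permits solving a fibrewise Cousin~I problem on the cutoff region'' and that Theorems~\ref{teo5.2} and~\ref{runge} ``globalise these local corrections.'' This is precisely the step that needs a proof, and as written it does not exist. The correction $u_i$ you ask for must solve a $\bar\partial$-type problem simultaneously and uniformly over the huge, non-metrizable, totally disconnected fibre $\hat b^{-1}(0)$, must vary continuously in that fibre variable so that the result is again a section over the Blaschke set, and must then be patched across finitely many overlapping Blaschke charts that cover $Q_Z^{-1}(\bar U_1\cap\bar U_2)$ --- none of which follows from Runge approximation or the extension operator alone, since those produce global sections of $E$ rather than solutions of an inhomogeneous $\bar\partial$-equation on a chart. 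In short, you have identified the correct hypotheses and some of the correct tools, but the central technical lemma that would make the partition-of-unity approach work is asserted, not proved, whereas the paper avoids having to reprove it by the complementation reduction via Theorem~\ref{teo5.1} to the known trivial-bundle statement.
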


\begin{proof}
For $E$ a trivial bundle on $M(H^\infty)$ the result is established in \cite[Th.\,3.1]{Br2}. The general case is reduced to that one. \smallskip

We must show that \smallskip

\noindent ($\circ$) For each $f\in \Gamma_\mathcal A(\bar U_1\cap \bar U_2\cap W, E)$ there exist
$f_i\in \Gamma_\mathcal A(\bar U_i\cap W, E),\ i=1,2$, such that $f_1+f_2=f$ on $\bar U_1\cap \bar U_2\cap W$. \smallskip

Let $\widetilde E:=Q_Z^*E$ be the pullback of $E$ to $M(H^\infty)$ and $E_Y:=M(H^\infty)\times Y$ be such that $\widetilde E\oplus E_Y$ is trivial. As before, by $i: \widetilde E\rightarrow \widetilde E\oplus E_Y$  and $q: \widetilde E\oplus E_Y\rightarrow \widetilde E$, $q\circ i={\rm id}$, we denote continuous embedding and projection.  We set 
\[
\begin{array}{l}
\displaystyle
\tilde f:=f\circ Q_Z\in \Gamma_\mathcal A(Q_Z^{-1}(\bar U_1\cap \bar U_2\cap W), \widetilde E),\quad V_i:=Q_Z^{-1}(U_i),\quad \widetilde W_i:=Q_Z^{-1}(W_i),\quad i=1,2,\medskip\\
\displaystyle
\widetilde W:=Q_Z^{-1}(W)\, (=\widetilde W_1\cup \widetilde W_2).
\end{array}
\] 
Since $Q_Z$ maps $Z$ to $z\in M(\mathscr A_Z)$ and is one-to-one on $M(H^\infty)\setminus S$,  assumptions of the theorem imply that  $\bar V_i= Q_Z^{-1}(\bar U_i)$, $i=1,2$. Thus, $i\circ \tilde f\in \Gamma_\mathcal A(\bar V_1\cap\bar V_2\cap\widetilde W,\widetilde E\oplus E_Y)$. 
Since $\widetilde E\oplus E_Y$ is trivial, \cite[Th.\,3.1]{Br2} yields  sections
$f_i'\in \Gamma_\mathcal A(\bar V_i\cap \widetilde W, \widetilde E\oplus E_Y)$  such that
\begin{equation}\label{eq5.4}
\begin{array}{c}
f_1'+f_2'=i\circ \tilde f\quad\text{on}\quad \bar V_1\cap \bar V_2\cap \widetilde W. 
\end{array}
\end{equation}
Hence, for $f_i'':=q\circ f_i'\in \Gamma_\mathcal A(\bar V_i\cap \widetilde W, \widetilde E)$, $i=1,2$,  
\begin{equation}\label{eq5.5}
\begin{array}{c}
f_1''+ f_2''=\tilde f\quad\text{on}\quad \bar V_1\cap \bar V_2\cap \widetilde W.
\end{array}
\end{equation}
If $Z\cap  ((\bar V_1\cup \bar V_2)\cap\widetilde W)=\emptyset$, then there exist some $f_i\in \Gamma_\mathcal A(\bar U_i\cap W,E)$ such that $f_i\circ Q_Z=f_i''$, $i=1,2$. Due to \eqref{eq5.5} these sections satisfy ($\circ$). For otherwise, by the hypothesis of the theorem, $Z\subset V_{i_0}\cap \widetilde W$ for some $i_0\in\{1,2\}$. Without loss of generality we may assume that $i_0=1$. Then according to Theorem \ref{teo5.2} there is $g\in \Gamma_\mathcal O(M(H^\infty),\widetilde E)$ such that $g|_Z=f_1''|_Z$. We set 
\[
\tilde f_1:=f_1''-g,\qquad \tilde f_2:=f_2''+g.
\]
By the definition and due to \eqref{eq5.5},
\[
\tilde f_1|_Z=0\quad {\rm and}\quad \tilde f_2|_Z=\tilde f|_Z=const.
\]
Hence, there are $f_i\in \Gamma_\mathcal A(\bar U_i\cap W,E)$ such that $f_i\circ Q_Z=\tilde f_i$, $i=1,2$. In view of \eqref{eq5.5}, these sections satisfy ($\circ$).
 \end{proof}
 \subsection{Cartan-type Lemma}
 For basic facts of the Banach Lie group theory, see, e.g., \cite{M}.
 
Let $G$ be a complex Banach Lie group with Lie algebra $\mathfrak g$. The latter has the structure of a complex Banach space  naturally identified with the tangent space of $G$ at unit $1_G$. 
 By $\exp_G: \mathfrak g\rightarrow G$ we denote the corresponding holomorphic exponential map which maps a neighbourhood of $0\in\mathfrak g$ biholomorphically onto a neighbourhood of $1_G\in G$.
 
Let $E$ be a topological bundle on $M(\mathscr A_Z)$ with fibre $G$ defined on a finite open cover $\mathfrak U=(U_i)_{i\in I}$ of $M(\mathscr A_Z)$ by holomorphic transition functions
$f_{ij}\in\mathcal O(U_i\cap U_j\times G,G)$, $i,j\in I$ (i.e., restrictions of $f_{ij}$ to $\bigl((U_i\cap U_j)\cap (\Di\setminus Z)\bigr)\times G$
are holomorphic in the usual sense), such that\smallskip
\begin{itemize}
\item[(1)]
$f_{ij}(x,f_{jk}(x,g))=f_{ik}(x,g),\quad x\in U_i\cap U_j\cap U_k,\ g\in G;$\medskip
\item[(2)] $f_{ij}(x,\cdot)$ is an automorphism of $G$ for each $x\in U_i\cap U_j$, $i,j\in I$.
\end{itemize}
Thus, $E$ is the quotient of disjoint union $\sqcup_{i\in I} U_i\times G$ by the equivalence relation:
\[
U_j\times G\ni (x,g)\sim (x,f_{ij}(x,g))\in U_i\times G.
\]
By $\Gamma_C(Y,E)$ we denote the set of continuous sections of $E$ over $Y\subset M(\mathscr A_Z)$. It has the natural group structure induced by the product on $G$ with the unit the restriction to $K$ of the section $1_{E}$ assigning to each $x\in M(\mathscr A_Z)$ the unit $1_G$ of the fibre of $E$ over $x$.

A section in $\Gamma_C(W,E)$,  $W\subset M(\mathscr A_Z)$ is open, is called holomorphic if its restriction to $W\cap (\Di\setminus Z)$ is the holomorphic section of the holomorphic fibre bundle $E|_{\Di\setminus Z}$. The set of such sections is denoted by $\Gamma_\mathcal O(W,E)$. For a compact set $K\subset M(\mathscr A_Z)$ a section of $E$ over $K$ is called holomorphic if it is the restriction of a holomorphic section of $E$ defined on an open neighbourhood of $K$. This set of sections is denoted by $\Gamma_\mathcal O(K,E)$. \smallskip

Associated with $E$ is the holomorphic Banach vector bundle $T(E)$  with fibre $\mathfrak g$ of complex tangent spaces to fibres of $E$ at $1_G$. It is defined on the cover $\mathfrak U$ by cocycle
$\bigl\{({\rm d}_2f_{ij})_{1_G}\in\mathcal O(U_i\cap U_j,{\rm Aut}(\mathfrak g))\bigr\}_{i,j\in I}$, where ${\rm d}_2$ is the differential with respect to the variable $g\in G$.
Map $\exp_G$ induces a holomorphic map $\exp_E:T(E)\rightarrow E$ of holomorphic fibre bundles sending an open neighbourhood $U$ of the image of the zero section $0_{T(E)}$ of $T(E)$ biholomorphically onto a  neighbourhood $V$ of the image of section $1_E$ in $E$.  The complex vector space $\Gamma_C(K,T(E))$ of continuous sections of $T(E)$ on a compact set $K\subset M(H_{I(Z)}^\infty)$ equipped with norm \eqref{eq5.2} defined with respect to a fixed refinement $\mathfrak V$ of the cover $\mathfrak U$ is a complex Banach space. It has the structure of a complex (Banach) Lie algebra with Lie product induced by the Lie product on $\mathfrak g$.
In turn, map $\exp_E$ induces a map $(\exp_E)_*:\Gamma_C(K,T(E))\rightarrow \Gamma_{C}(K,E)$, $s\mapsto\exp_E\circ s$, sending 
$\Gamma_\mathcal O(K,T(E))$ to $\Gamma_{\mathcal O}(K,E)$. Since $1_E|_K\in \Gamma_\mathcal O(K,E)$, the latter is a subgroup of group $\Gamma_C(K,E)$. We equip $\Gamma_C(K,E)$ with the topology $\tau_u$ of uniform convergence.
Then $(\Gamma_C(K,E),\tau_u)$ has the structure of a complex Banach Lie group with  Lie algebra $\Gamma_C(K,T(E))$ and with  exponential map $(\exp_E)_*$.

Finally, by $\Gamma_\mathcal A(K,E)$ we denote the closure of $\Gamma_\mathcal O(K,E)$ in $\Gamma_C(K,E)$. It consists of continuous sections of $E$ on $K$ that can be uniformly approximated by sections of $\Gamma_\mathcal O(K,E)$. Space 
 $(\Gamma_\mathcal A(K,E),\tau_u)$ is a complex Banach Lie subgroup of $\Gamma_C(K,E)$  with Lie algebra $\Gamma_\mathcal A(K,T(E))$.\smallskip
 
By $\widetilde U_K\subset \Gamma_\mathcal A(K,T(E))$ and $\widetilde V_K\subset \Gamma_\mathcal A(K,E)$ we denote open neighbourhoods of $0_{T(E)}|_K$ and $1_E|_K$ consisting of sections $s$ such that $s(K)\subset U$ and $s(K)\subset V$, respectively. Thus, $(\exp_E)_*:\widetilde U_K\rightarrow \widetilde V_K$ is a biholomorphic map. 

It is readily seen (as $M(\mathscr A_Z)$  is compact and $\exp_E$ is continuous) that there is some
$r\in (0,\infty)$ such that 
for each compact $K\subset M(\mathscr A_Z)$  the open ball $B_K(r)\subset \Gamma_\mathcal A(K,T(E))$ of 
radius $r$ centred at $0_{T(E)}|_K$ is a subset of $\widetilde U_K$ and   
\begin{equation}\label{equ5.7}
(\exp_E)_*(s_1)\cdot(\exp_E)_*(s_2)\in \widetilde V_K\quad {\rm for\ all}\quad s_i\in  B_K(r),\ i=1,2.
\end{equation}

In the next two results we retain notation of Theorem \ref{cousin}, that is,\smallskip

\noindent ($\circ$) $U_1, U_2\Subset M(\mathscr A_Z)$ are open such that (a) $z\not\in \bar U_i\setminus U_i$, $i=1,2$; (b)   $Q_Z^{-1}(\bar U_1\cap \bar U_2)\subset M_a\setminus Z$, and $W_i\Subset U_i$, $i=1,2$, are compact, $W:=W_1\cup W_2$. 

Our next result solves the analog of the 
``probl\`{e}me fondamental'' of Cartan \cite{Ca}.

\begin{Proposition}[Cartan-type Lemma]\label{small}
There is $0<r_1\le r$ such that for every $F\in \Gamma_\mathcal A(\bar U_{1}\cap\bar U_{2}\cap W, E)$ with $(\exp_E)_*^{-1}(F)\in
B_{\bar U_{1}\cap \bar U_{2}\cap  W}(r_1)$ there exist $F_j\in \Gamma_\mathcal A(\bar U_{j}\cap W, E)$, $j=1,2$, such that 
\[
F_{1}\, F_{2}=F\quad {\rm on}\quad \bar U_{1}\cap\bar U_{2}\cap W.
\]
\end{Proposition}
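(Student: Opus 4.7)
The plan is to reduce this multiplicative Cousin problem to the additive one of Theorem~\ref{cousin} (applied to the associated holomorphic Banach vector bundle $T(E)$) via the exponential map $(\exp_E)_*$, and then to eliminate the ensuing quadratic error by a Newton-type iteration driven by the Baker-Campbell-Hausdorff (BCH) formula. This is the classical Cartan-Bungart scheme; the novelty here is only that the additive splitting lemma is available in the $\Gamma_\mathcal A$-category on $M(\mathscr A_Z)$ by virtue of Theorem~\ref{cousin}.

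Set $K_{12}:=\bar U_1\cap\bar U_2\cap W$ and $K_j:=\bar U_j\cap W$, $j=1,2$. First I would apply Theorem~\ref{cousin} to $T(E)$ to conclude that the bounded linear map
\[
A:\Gamma_\mathcal A(K_1,T(E))\oplus \Gamma_\mathcal A(K_2,T(E))\to \Gamma_\mathcal A(K_{12},T(E)),\quad A(s_1,s_2):=s_1|_{K_{12}}+s_2|_{K_{12}},
\]
is surjective. The Banach open mapping theorem then supplies a constant $C>0$ such that every $s\in \Gamma_\mathcal A(K_{12},T(E))$ admits a splitting $s=s_1+s_2$ with $\max(\|s_1\|,\|s_2\|)\le C\|s\|$.

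Given $F$ as in the statement, write $F=(\exp_E)_*(f)$ with $f\in B_{K_{12}}(r_1)$ for an $r_1\le r$ to be specified. Split $f=f_1+f_2$ as above and set $F_j^{(0)}:=(\exp_E)_*(f_j)|_{K_j}\in\Gamma_\mathcal A(K_j,E)$. The residual
\[
R^{(0)}:=\bigl(F_1^{(0)}\bigr)^{-1}\, F\, \bigl(F_2^{(0)}\bigr)^{-1}=\exp(-f_1)\exp(f_1+f_2)\exp(-f_2)\quad\text{on }K_{12}
\]
equals $(\exp_E)_*(r^{(0)})$ with $\|r^{(0)}\|\le C_1\|f\|^2$ by BCH, for a constant $C_1$ depending only on bounds for iterated brackets on a ball in $\Gamma_\mathcal A(K_{12},T(E))$. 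Inductively, once $R^{(n)}=(\exp_E)_*(r^{(n)})$ is defined, split $r^{(n)}=r_1^{(n)}+r_2^{(n)}$ with the same constant $C$ and set
\[
F_1^{(n+1)}:=F_1^{(n)}\cdot (\exp_E)_*(r_1^{(n)}),\qquad F_2^{(n+1)}:=(\exp_E)_*(r_2^{(n)})\cdot F_2^{(n)}.
\]
A direct computation gives $R^{(n+1)}=\exp(-r_1^{(n)})\exp(r_1^{(n)}+r_2^{(n)})\exp(-r_2^{(n)})$, so BCH again yields $\|r^{(n+1)}\|\le C_1\bigl(2C\|r^{(n)}\|\bigr)^2$. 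For $r_1$ so small that $4C^2C_1\|r^{(0)}\|<1$, the sequence $\|r^{(n)}\|$ decays super-geometrically; hence $\sum_n\|r_j^{(n)}\|<\infty$, the infinite products $F_j:=\lim_{n\to\infty}F_j^{(n)}$ converge in the Banach Lie group $\Gamma_\mathcal A(K_j,E)$, and passing to the limit in the definition of $R^{(n)}$ yields $F_1F_2=F$ on $K_{12}$.

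The main obstacle is the quantitative BCH bookkeeping: one needs a uniform estimate $\|\mathrm{BCH}(u,v)-(u+v)\|\le C_1\max(\|u\|,\|v\|)^2$ on a ball in the Banach Lie algebra $\Gamma_\mathcal A(K_{12},T(E))$, together with the guarantee that every $r_j^{(n)}$ stays inside $B_{K_j}(r)$ and every intermediate section $(\exp_E)_*(r_j^{(n)})$ stays inside $\widetilde V_{K_j}$, so that the products $F_j^{(n)}$ remain legitimate elements of the Banach Lie group $\Gamma_\mathcal A(K_j,E)$. This is achieved by choosing $r_1$ sufficiently small in terms of $C$, $C_1$, and the exponential radius $r$ of~\eqref{equ5.7}. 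Once these a priori bounds are in place, the Newton iteration converges automatically, yielding the desired factorization.
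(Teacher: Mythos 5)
Your proposal is correct but takes a genuinely different route from the paper. The paper's proof is a one-step application of Leiterer's Banach-space implicit function theorem \cite{L}: it introduces the holomorphic map
\[
H(s_1,s_2):=(\exp_E)_*^{-1}\bigl((\exp_E)_*(s_1|_{Y_0})\cdot(\exp_E)_*(s_2|_{Y_0})\bigr)
\]
on $B_{Y_1}(r)\times B_{Y_2}(r)$, observes that its differential at the origin is precisely the additive operator $A$ of Theorem~\ref{cousin}, and concludes from surjectivity of $A$ that $H$ has a continuous local right inverse on a ball $B_{Y_0}(r_1)$. Applying this right inverse to $(\exp_E)_*^{-1}(F)$ immediately produces the factors $F_j$. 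Your route is the classical Cartan--Bungart Newton iteration: split $f=(\exp_E)_*^{-1}(F)$ additively, exponentiate, measure the multiplicative residual via BCH, re-split, and iterate, with super-geometric convergence once $r_1$ is small enough. Both arguments rest on exactly the same input --- the additive splitting of Theorem~\ref{cousin} for the tangent bundle $T(E)$, together with the Banach Lie group structure of $\Gamma_\mathcal A(\cdot,E)$ and the exponential chart of radius $r$ from~\eqref{equ5.7}. The paper's version is shorter and hides the iteration inside the implicit function theorem; yours is constructive and only needs the open-mapping bound on $A$ (no canonical or continuous right inverse is required, since one may choose the splitting afresh at each step), at the cost of the quantitative BCH estimate and the a priori bounds keeping each $r_j^{(n)}$ inside the exponential chart. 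One small point worth making explicit in your write-up: the limit sections $F_j=\lim_n F_j^{(n)}$ land in $\Gamma_\mathcal A(K_j,E)$ because each factor $(\exp_E)_*(r_j^{(n)})$ lies there and $\Gamma_\mathcal A(K_j,E)$ is by construction a closed subgroup of $\Gamma_C(K_j,E)$.
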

\begin{proof}
We set for brevity
\[
Y_0:=\bar U_{1}\cap \bar U_{2}\cap  W\quad {\rm and} \quad Y_j:=\bar U_{j}\cap W,\quad j=1,2.
\]
By \eqref{equ5.7} the holomorphic map
$H :B_{Y_1}(r)\times B_{Y_2}(r)\rightarrow \Gamma_\mathcal A(Y_0,T(E))$,
\[
H(s_1,s_2):=(\exp_E)_*^{-1}\bigl((\exp_E)_*(s_{1}|_{Y_0})\cdot (\exp_E)_*(s_{2}|_{Y_0})\bigr),
\]
is well-defined. Its differential at $0_{T(E)}|_{Y_1}\times 0_{T(E)}|_{Y_2}$ is the bounded linear operator
$A: \Gamma_\mathcal A(Y_1,T(E))\times \Gamma_\mathcal A(Y_2,T(E))\rightarrow \Gamma_\mathcal A(Y_0,T(E))$,
\[
A(s_1,s_2):=s_{1}|_{Y_0}+s_{2}|_{Y_0}.
\] 
Due to Theorem \ref{cousin}, $A$ is surjective; hence by the implicit function theorem (see, e.g., \cite{L}) there exists a continuous right inverse of $H$ defined on a ball $B_{Y_0}(r_1)$, $r_1\in (0,r)$.
\end{proof}

Recall that a path in the complex Banach Lie group $\Gamma_\mathcal A(K, E)$ is a continuous map $[0,1]\rightarrow \Gamma_\mathcal A(K, E)$. The set of all sections in $\Gamma_\mathcal A(K, E)$ that can be joined by paths in $\Gamma_\mathcal A(K, E)$ with  section $1_E|_{K}$ forms the {\em connected component} of the unit $1_E|_K$.

Our next result generalizes \cite[Th.4.1]{Br2}.

\begin{Th}\label{cartan}
Assume that $W_1$ is holomorphically convex and $W_1\cap W_2\ne\emptyset$.
Let $F\in \Gamma_\mathcal A(\bar U_1\cap \bar U_2,E)$ belong to the connected component of $1_E|_{\bar U_1\cap \bar U_2}$. Then there exist 
$F_i\in \Gamma_\mathcal A(W_i, E)$, $i=1,2$, such that $F_1\, F_2=F$ on $W_1\cap W_2$.
\end{Th}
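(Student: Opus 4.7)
The plan is to run the classical Cartan patching scheme in its Banach version, combining the path method with Proposition \ref{small} and the Runge-type Theorem \ref{runge}. First I would invoke the hypothesis that $F$ lies in the connected component of $1_E|_{\bar U_1\cap\bar U_2}$ to pick a continuous path $\gamma:[0,1]\to \Gamma_\mathcal{A}(\bar U_1\cap\bar U_2,E)$ with $\gamma(0)=1_E$ and $\gamma(1)=F$. By compactness of $[0,1]$, uniform continuity of $\gamma$, and continuity of the exponential map, I would choose a partition $0=t_0<t_1<\cdots<t_N=1$ so that each increment $h_k:=\gamma(t_{k-1})^{-1}\gamma(t_k)$ lies in the image under $(\exp_E)_*$ of a ball of radius significantly smaller than the $r_1$ supplied by Proposition \ref{small}; the precise smallness required is dictated by the induction below, and ultimately forces $N$ to be taken large.

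The core of the argument is an inductive construction of sections $F_1^{(k)}\in \Gamma_\mathcal{A}(W_1,E)$ and $F_2^{(k)}\in \Gamma_\mathcal{A}(W_2,E)$ satisfying $\gamma(t_k)=F_1^{(k)}F_2^{(k)}$ on $W_1\cap W_2$, starting from $F_i^{(0)}=1_E|_{W_i}$. For the step from $k{-}1$ to $k$, I would write $\gamma(t_k)=F_1^{(k-1)}F_2^{(k-1)}h_k$ and then replace $h_k$ by its conjugate $\tilde h_k:=F_2^{(k-1)} h_k (F_2^{(k-1)})^{-1}$, which remains close to $1_E$ provided $h_k$ is sufficiently small relative to the operator norm of conjugation by $F_2^{(k-1)}$. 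Proposition \ref{small}, applied with an auxiliary pair of opens nested between the $W_i$ and $\bar U_i$, then yields a factorization $\tilde h_k=g_1 g_2$ with $g_i\in \Gamma_\mathcal{A}(\bar U_i\cap W,E)$, and one sets $F_1^{(k)}:=F_1^{(k-1)}g_1$, $F_2^{(k)}:=g_2 F_2^{(k-1)}$. The holomorphic convexity of $W_1$ is what drives the bookkeeping: via Theorem \ref{runge}, the local piece $g_1$, originally living near $\bar U_1$, can be approximated on $W_1$ by sections that actually lie in $\Gamma_\mathcal{A}(W_1,E)$, with the tiny residue folded into the next increment. The transfer from $M(H^\infty)$ to $M(\mathscr A_Z)$ at the level of the underlying vector bundle $T(E)$ reduces to the Whitney-sum trick of Theorem \ref{teo5.1} together with $Q_Z^*$, exactly as in the proofs of Theorems \ref{cousin} and \ref{teo5.2}.

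The main difficulty I anticipate is controlling the accumulated error across the $N$ steps. Each conjugation by $F_2^{(k-1)}$ may amplify the increment, and every Runge substitution introduces its own perturbation; together they must not push the next $\tilde h_k$ outside the ball on which Proposition \ref{small} is valid. The resolution is a telescoping norm estimate for $\|\log F_2^{(k)}\|$ in the Banach Lie algebra $\Gamma_\mathcal{A}(W_2,T(E))$ that grows at worst geometrically in the initial step size; this a priori bound is then used to fix $N$ large enough that each $h_k$ (and hence each $\tilde h_k$) stays within the Proposition \ref{small} tolerance throughout the induction, yielding the desired factorization $F=\gamma(t_N)=F_1^{(N)}F_2^{(N)}$ on $W_1\cap W_2$.
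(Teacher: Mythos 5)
Your overall skeleton — path from $1_E$ to $F$, fine partition, induction using Proposition~\ref{small}, and Runge approximation via Theorem~\ref{runge} — matches the paper's strategy in outline, but the mechanism you propose at the inductive step is different from the paper's and, as written, it does not close.

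The problem is your conjugation step. You write $\gamma(t_k)=F_1^{(k-1)}\tilde h_k F_2^{(k-1)}$ with $\tilde h_k:=F_2^{(k-1)}h_k (F_2^{(k-1)})^{-1}$ and then factor $\tilde h_k$ by Proposition~\ref{small}. For this you need $\|(\exp_E)_*^{-1}\tilde h_k\|$ below the fixed tolerance $r_1$, which forces $\|(\exp_E)_*^{-1}h_k\|$ to be small relative to the conjugation norm of $F_2^{(k-1)}$. But $F_2^{(k-1)}$ is itself built from $k-1$ applications of Proposition~\ref{small}, each contributing a factor whose Lie-algebra norm is controlled only by the \emph{fixed} constant $r$ coming from the implicit function theorem; hence $\|\log F_2^{(k-1)}\|$ can grow linearly in $k$ and the conjugation norm exponentially. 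Your proposed telescoping estimate, even granting the heuristic $\|\log g_2^{(k)}\|\lesssim C_{\rm conj}(F_2^{(k-1)})^2\,\|\log h_k\|$, yields $\sum_k\|\log g_2^{(k)}\|\lesssim C\,e^{cM}L$ for the total ``length'' $L$ of the path and any putative uniform bound $M\ge\sum\|\log g_2^{(k)}\|$; this inequality $M\gtrsim e^{cM}L$ has no solution for $L$ bounded away from $0$. Making the partition finer does not help: the total $\sum\|\log h_k\|\approx L$ is invariant, so the accumulated conjugation factor is not reduced by taking $N$ larger. In short, there is no a priori bound that closes the induction by brute-force conjugation, and this is precisely why the paper does not conjugate.

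The paper's actual step avoids this entirely. The increments $\gamma(t_i)^{-1}\gamma(t_{i+1})$ are factored once and for all at the start (on the slightly shrunken domain $\bar U_{1,1}\cap\bar U_{2,1}\cap W^1$), \emph{before} anything needs to be pushed past $F_{2,j-1}$. To move the ``type~1'' factor $F_1^{j-1}$ to the left of $F_{2,j-1}$, the paper invokes Runge (Theorem~\ref{runge}) to produce a \emph{globally defined} section $F_\varepsilon\in\Gamma_\mathcal{O}(M(\mathscr A_Z),E)$ approximating $F_1^{j-1}$ on the holomorphically convex $U_{1,j}^c$, and then uses the identity
\[
F_{2,j-1}\,F_1^{j-1}\,F_2^{j-1}=\bigl[F_{2,j-1},\,F_1^{j-1}F_\varepsilon^{-1}\bigr]\,(F_1^{j-1}F_\varepsilon^{-1})\,F_{2,j-1}\,F_\varepsilon\,F_2^{j-1}.
\]
The global factor $F_\varepsilon$ is absorbed into the ``type~2'' side for free, while the commutator and the residue $F_1^{j-1}F_\varepsilon^{-1}$ are driven below $r_{1,j}$ by choosing $\varepsilon$ \emph{after} $F_{2,j-1}$ has already been constructed — a legitimate adaptive choice, since the partition was fixed at step one. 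Thus Runge's role here is to manufacture a globally defined proxy that can change sides, not (as in your proposal) to ``improve'' $g_1$ on $W_1$, which would be unnecessary anyway since $W_1\subset\bar U_1\cap W$. A second, smaller gap in your write-up is the domain management: $\tilde h_k$ is defined only on $W_2\cap\bar U_1\cap\bar U_2$, not on all of $\bar U_1\cap\bar U_2\cap W$ as Proposition~\ref{small} requires, and after each application of Proposition~\ref{small} or Theorem~\ref{runge} the ambient sets must shrink slightly. The paper handles this with the nested families of polyhedral neighbourhoods $U_{1,i}^c$ (produced by Lemma~\ref{lem5.9} using holomorphic convexity of $W_1$) and $U_{2,i}$, and the auxiliary sets $W^i=U_{1,i+1}^c\cup\bar U_{2,i+1}$; your ``auxiliary pair of opens'' would need to be set up this carefully for the induction to be well-posed.
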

\begin{proof}
An open {\em polyhedron} $\Pi\subset M(\mathscr A_Z)$ is the set of the form
\[
\Pi:=\{x\in M(\mathscr A_Z)\, :\, \max_{1\le j\le l}|f_j(x)|<1,\ f_j\in\mathcal O(M(\mathscr A_Z)),\ 1\le j\le l\}.
\]
(Here $l$ can be any natural number.)

In turn, the closed polyhedron in $M(\mathscr A_Z)$  corresponding to $\Pi$ is defined as
\[
\Pi^c:=\{x\in M(\mathscr A_Z)\, :\, \max_{1\le j\le l}|f_j(x)|\le 1,\ f_j\in\mathcal O(M(\mathscr A_Z)),\ 1\le j\le l\}.
\]
(Note that the closure of $\Pi$ is not necessarily $\Pi^c$.)

The proof of the following result repeats literally the proof of Lemma 5.1 in \cite{Br1}.
\begin{Lm}\label{lem5.9}
Let $N\subset M(\mathscr A_Z)$ be a neighbourhood of a holomorphically convex set $K$. Then there exists an open polyhedron $\Pi$ such that   $K\subset\Pi\subset N$.
\end{Lm}

Now, since $W_1$ is holomorphically convex, according to Lemma \ref{lem5.9}, there exists a sequence of open polyhedrons $\{U_{1,i}\}_{i\in\N}$ containing $W_1$ such that $U_{1,i}^c\Subset U_1$ and $U_{1,i+1}^c\Subset U_{1,i}$, $i\in\N$. (Observe that by definition all $U_{1, i}^c$ are holomorphically convex.) Let us choose a sequence of open 
sets $U_{2,i}\Subset U_2$ containing $W_2$ such that $U_{2,i+1}\Subset U_{2,i}$ for all $i\in\N$.
Then Proposition \ref{small} is also valid with $U_j$ replaced by $U_{j,i}$, $j=1,2$,  $W:=W_1\cup W_2$  by $W^i:=U_{1,i+1}^c\cup\bar U_{2,i+1}$ and $r_1$  by some $r_{1,i}\in (0,r)$.

Further, since $F$ in the statement of the theorem belongs to the connected component $\mathcal C_0$  of the group $\Gamma_\mathcal A(\bar U_1\cap \bar U_2, E)$ containing $1_E|_{\bar U_1\cap \bar U_2}$, there exists a path $\gamma : [0,1]\rightarrow \mathcal C_0$ such that $\gamma(0)=1_E|_{\bar U_1\cap \bar U_2}$ and $\gamma(1)=F$. Continuity of $\gamma$ implies that there is a partition $0=t_0<t_1<\cdots <t_k=1$ of $[0,1]$ such that 
\begin{equation}\label{eq4.1}
(\exp_E)_*^{-1}\bigl(\gamma(t_i)^{-1}\,\gamma(t_{i+1})\bigr)|_{\bar U_{1,1}\cap \bar U_{2,1}\cap W^1}\in B_{\bar U_{1,1}\cap \bar U_{2,1}\cap W^1}(r_{1,1})\ \text{ for all}\ \,i.
\end{equation}
Applying Proposition \ref{small} to each $\gamma(t_i)^{-1}\,\gamma(t_{i+1})$ we obtain
\begin{equation}\label{eq4.2}
\gamma(t_i)^{-1}\,\gamma(t_{i+1})=F_{1}^i\, F_{2}^i\quad\text{on}\quad \bar U_{1,1}\cap \bar U_{2,1}\cap W^1
\end{equation}
for some $F_{l}^i\in \Gamma_\mathcal A(\bar U_{l,1}\cap W^1, E)$ such that $(\exp_E)_*^{-1}(F_l^i)\in B_{\bar U_{l,1}\cap W^1}(r)$, $l=1,2$, cf. \eqref{equ5.7}.

Next, we have
\[
F=\prod_{i=0}^{k-1}\gamma(t_i)^{-1}\,\gamma(t_{i+1}).
\]

To prove the theorem we use induction on $j$ for $1\le j\le k$. The induction hypothesis is {\em If
\[
F^j:=\prod_{i=0}^{j-1}\gamma(t_i)^{-1}\,\gamma(t_{i+1}),
\]
then for some $F_{l,j}\in \Gamma_\mathcal A(\bar U_{l,j}\cap W^j, E)$, $l=1,2$,}
\[
F^j=F_{1,j}\, F_{2,j}\quad\text{on}\quad \bar U_{1,j}\cap \bar U_{2,j}\cap W^j.
\]

By \eqref{eq4.2} the statement is true for $j=1$. Assuming that it is true for $j-1$ let us prove it for $j$.

By the induction hypothesis, 
\[
F^j=F^{j-1}\, \gamma(t_j)^{-1}\,\gamma(t_{j+1})= F_{1,j-1}\, F_{2,j-1}\, F_{1}^{j-1}\, F_{2}^{j-1}\quad\text{on}\quad
\bar U_{1,j-1}\cap\bar U_{2,j-1}\cap W^{j-1}.
\]
Since $U_{1,j}^c\subset  U_{1,j-1}\cap W^{j-1}$ is holomorphically convex, by Theorem \ref{runge}, we can approximate
$(\exp_{E})_*^{-1}( F_{1}^{j-1})$ uniformly on $U_{1,j}^c$ by sections in $\Gamma_\mathcal O(M(\mathscr A_Z),T(E))$. Thus for every $\varepsilon>0$ there is
$F_{\varepsilon}\in \Gamma_\mathcal O(M(\mathscr A_Z),E)$ such that
\begin{equation}\label{eq4.3}
(\exp_E)_*^{-1}(F_{1}^{j-1}|_{U_{1,j}^c}\cdot (F_{\varepsilon}|_{U_{1,j}^c})^{-1})\in B_{U_{1,j}^c}(\varepsilon) .
\end{equation}
We write
\[
F_{2,j-1}\, F_{1}^{j-1}\, F_{2}^{j-1}=[F_{2,j-1},F_{1}^{j-1}\,F_{\varepsilon}^{-1}]\, (F_{1}^{j-1}\, F_{\varepsilon}^{-1})\, F_{2,j-1}\, F_{\varepsilon}\,  F_{2}^{j-1}
\]
on $\bar U_{1,j-1}\cap\bar U_{2,j-1}\cap W^{j-1}$. (Here $[A_1,A_2]:=A_1 A_2 A_1^{-1} A_2^{-1}$.)

According to \eqref{eq4.3} for a sufficiently small $\varepsilon$, 
\[
(\exp_E)_*^{-1}\bigl(([F_{2,j-1},F_{1}^{j-1}\, F_{\varepsilon}^{-1}]\, (F_{1}^{j-1}\, F_{\varepsilon}^{-1}))|_{\bar U_{1,j}\cap \bar U_{2,j}\cap W^j}\bigr)\in B_{\bar U_{1,j}\cap \bar U_{2,j}\cap W^j}(r_{1,j}) .
\]
Hence, by Proposition \ref{small}, there exist $H_{l}\in \Gamma_\mathcal A(\bar U_{l,j}\cap W^j, E)$, $l=1,2$, such that 
\[
[F_{2,j-1},F_{1}^{j-1}\, F_{\varepsilon}^{-1}]\, (F_{1}^{j-1}\, F_{\varepsilon}^{-1})=H_1\, H_2\quad\text{on}\quad \bar U_{1,j}\cap \bar U_{2,j}\cap W^j.
\]
In particular, we obtain
\[
F^j=F_{1,j-1}\, F_{2,j-1}\, F_{1}^{j-1}\, F_{2}^{j-1}=F_{1,j-1}\, H_1\, H_2\, F_{2,j-1}\, F_{\varepsilon}\,  F_{2}^{j-1}\quad\text{on}\quad \bar U_{1,j}\cap \bar U_{2,j}\cap W^j.
\]
We set
\[
F_{1,j}:=F_{1,j-1}\,H_1\quad\text{on}\quad \bar U_{1,j}\cap W^j,\qquad F_{2,j}:=H_2\, F_{2,j-1}\, F_{\varepsilon}\,  F_{2}^{j-1}\quad\text{on}\quad \bar U_{2,j}\cap W^j.
\]
Then 
\[
F^j=F_{1,j}\, F_{2,j}\quad\text{on}\quad \bar U_{1,j}\cap \bar U_{2,j}\cap W^j.
\]
This completes the proof of the induction step.

Using this result for $j:=k-1$ together with the fact $W_l\subset \bar U_{l,k+1}$, $l=1,2$, we obtain that there exist 
$F_i\in \Gamma_\mathcal A(W_i, E)$ such that $F_1\, F_2=F$ on $W_1\cap W_2$.
\end{proof}
\subsection{Blaschke Sets}

Any open set of the form
$$
O_{b,\varepsilon}:=\{x\in M(H^\infty)\, :\, |\hat b(x)|<\varepsilon\},
$$
where $b$ is an interpolating Blaschke product and
$\epsilon$ is so small that $O_{b,\varepsilon}\cap\Di$ is biholomorphic to $\Di_\varepsilon\times  b^{-1}(0)$, cf. Section~6.1, will be called an open {\em Blaschke set}. For such $\varepsilon$ the closure $\bar O_{b,\varepsilon}$ is given by $\{x\in M(H^\infty)\, :\, |\hat b(x)|\le\varepsilon\}$ and so it is holomorphically convex.

Note that by the normality argument the biholomorphism in the definition of  $O_{b,\varepsilon}$ extends (by means of the Gelfand transform) to a homeomorphism between $O_{b,\varepsilon}$ and $\Di_\varepsilon\times \hat b^{-1}(0)$.

Let $G$ be a complex Banach Lie group with Lie algebra $\frak g$ and exponential map $\exp_G$. By $\mathcal O(K,G)$, $K\subset M(\mathscr A_Z)$ compact, we denote the group with respect to the pointwise multiplication of restrictions to $K$ of holomorphic maps into $G$ defined on open neighbourhoods of $K$ equipped the topology of uniform  convergence. By $\mathcal A(K,G)$ we denote the closure of $\mathcal O(K,G)$ in the group of continuous maps $K\rightarrow G$ equipped with  the topology of uniform convergence. Then $\mathcal A(K,G)$ is a complex Banach Lie group with Lie algebra $\mathcal A(K,\frak g)$ and exponential map $(\exp_{G})_*(f):=\exp_G\circ f$, $f\in \mathcal A(K,\frak g)$.  
\begin{Th}\label{compactif}
Suppose $G$ is simply connected and $N$ is a  compact subset of the Blaschke set $O_{b,\varepsilon}$. Then the complex Banach Lie group $\mathcal A(K,G)$ is connected.
\end{Th}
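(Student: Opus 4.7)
The plan exploits the product structure of the Blaschke set. Via the extended homeomorphism $\Phi\colon O_{b,\varepsilon} \to \Di_\varepsilon \times F$ with $F := \hat b^{-1}(0)$ (restricting to a biholomorphism on $O_{b,\varepsilon}\cap\Di$), the compact set $K$ embeds into $\bar\Di_\delta \times F$ for some $\delta \in (0, \varepsilon)$, and because $b$ is an interpolating Blaschke product the fiber $F$ is compact and totally disconnected (it is the Stone-\v{C}ech-type compactification of the discrete interpolating sequence $b^{-1}(0)$). The strategy has three moves: (i) a radial contraction in the $\Di_\varepsilon$-factor deforming any $f \in \mathcal O(K, G)$ to a map depending only on the $F$-coordinate; (ii) uniform approximation of the latter by a locally constant map and path-connection to $1_G$ via finitely many paths in $G$; (iii) closing the small residual gap by an exponential in the Banach Lie algebra $\mathcal A(K, \mathfrak g)$.

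Concretely, since $\mathcal A(K, G)$ is a complex Banach Lie group its identity component is open, so it suffices to join each $f \in \mathcal O(K, G)$ to $1_G$ by a path. After shrinking the open neighborhood on which $f$ is defined, I first arrange a product-form neighborhood $V' = \bigcup_i \Phi^{-1}(\Di_{\delta'} \times U_i)$ of $K$, with disjoint clopen $U_i \subset F$ covering $\pi_2\Phi(K)$ and $\delta < \delta' < \varepsilon$. On $V'$ the radial family $f_t := f\circ\Phi^{-1}\circ M_t\circ\Phi$ with $M_t(z,x) := (tz, x)$ is well-defined for every $t\in[0,1]$ and lies in $\mathcal O(K, G)$, with $t\mapsto f_t$ continuous in uniform norm by uniform continuity of $f$. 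Its endpoint $f_0$ factors as $g\circ\pi_2\circ\Phi$ for a continuous $g\colon \pi_2\Phi(K) \to G$ on a totally disconnected compact space. Approximating $g$ uniformly by a locally constant $\tilde g$ taking values $g_1,\dots,g_n$ on clopen pieces $W_1,\dots,W_n$ of $\pi_2\Phi(K)$, and choosing paths $\gamma_i\colon[0,1]\to G$ from $1_G$ to $g_i$ (possible since $G$ is path-connected), one assembles $H_t(y) := \gamma_i(t)$ on $(\pi_2\circ\Phi)^{-1}(W_i)$; this gives a path in $\mathcal O(K, G)$ from $1_G$ to $\tilde g\circ\pi_2\circ\Phi$. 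The residual element $(\tilde g\circ\pi_2\circ\Phi)^{-1}\cdot f_0$ can be made so close to $1_G$ that it equals $(\exp_G)_*(h)$ for some small $h\in\mathcal A(K, \mathfrak g)$, and the $\exp$-path $s\mapsto (\tilde g\circ\pi_2\circ\Phi)\cdot(\exp_G)_*(sh)$ closes the gap.

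The main technical obstacle is in the first move: inside an arbitrary open neighborhood $V \supset K$, arranging a product-form subneighborhood on which the radial contraction is simultaneously well-defined and stays in $V$. This requires exploiting both the compactness of $K$ (so that $\sup_K|\hat b|<\varepsilon$, giving room in the radial direction) and the total disconnectedness of $F$ (to refine any pointwise local product structure $\Di_{\delta_0'} \times U_{x_0} \subset \Phi(V)$ at each $(z_0, x_0) \in \Phi(K)$ into a finite disjoint clopen cover of $\pi_2\Phi(K)$ with a uniform radial width). Once this geometric setup is in place, the three moves concatenate into a path in $\mathcal A(K, G)$ from $f$ to $1_G$, proving that $\mathcal A(K, G)$ is path-connected and hence connected.
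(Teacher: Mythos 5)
Your step (i), the radial contraction, contains a genuine gap that the rest of the argument cannot repair. You write $f_t := f\circ\Phi^{-1}\circ M_t\circ\Phi$ with $M_t(z,x)=(tz,x)$, and for this to lie in $\mathcal O(K,G)$ for every $t\in[0,1]$ you need the domain of $f$ (some open $V\supset K$) to contain $\Phi^{-1}$ of the entire radial cone $\{(tz,x): t\in[0,1],\ (z,x)\in\Phi(K)\}$. But $V$ is an arbitrary neighbourhood of $K$, and there is no reason for it to reach the centre $\{0\}\times\pi_2\Phi(K)$. Your ``technical obstacle'' paragraph asserts a pointwise inclusion $\Di_{\delta_0'}\times U_{x_0}\subset\Phi(V)$, but this is exactly what fails: take $K$ to be (the $\Phi$-preimage of) an annulus $\{\delta_1<|z|<\delta_2\}$ times a single point of $F$; then $f\in\mathcal O(K,G)$ may be defined only on a thin annular neighbourhood, and $M_t$ pushes $K$ straight through $z=0$, outside the domain of $f$, long before $t=0$.

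The red flag is that, were your radial contraction available, your argument would prove the theorem assuming only that $G$ is connected: step (ii) uses paths from $1_G$ to the values $g_i$, which requires path-connectedness but not $\pi_1(G)=0$. Yet the conclusion is false for a merely connected $G$: a holomorphic map from an annular $K$ to $\Co^*$ with nonzero winding number is not joinable to $1$ in $\mathcal A(K,\Co^*)$. So the missing hypothesis must re-enter, and it does exactly where your contraction fails. The paper's proof faces this head-on: Lemma~\ref{lem5.11} covers $N$ by products $S_j\times Y_j\Subset U$ where the $S_j$ are merely finitely connected planar domains (not star-shaped), Lemma~\ref{lem5.12} reduces to maps depending only on the first coordinate, and then, because each component of $S_j$ is homotopy equivalent to a finite $1$-complex and $G$ is \emph{simply connected}, each $f_j\in\mathcal O(S_j,G)$ is topologically nullhomotopic; the Banach-valued Ramspott theorem (\cite[Th.\,2.1(b)]{R}) then produces a nullhomotopy inside $\mathcal O(S_j,G)$. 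That replacement of ``contract the domain'' by ``the target is simply connected, so Ramspott applies to a non-contractible planar domain'' is the essential idea your proposal is missing. Your steps (ii) and (iii) — discretising the $\beta\N$-direction by uniform continuity and closing the small gap by $\exp$ — are fine and essentially parallel to Lemma~\ref{lem5.12} in the paper, but they cannot compensate for the failure of step (i).
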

\begin{proof}
Let ${\mathbf 1}_G$  be the constant map $M(H^\infty)\rightarrow G$ whose value is the unit $1_G$ of $G$. Then ${\mathbf 1}_G|_K$ is the unit of 
$\mathcal A(K,G)$. We must show that each $F\in \mathcal A(K,G)$ can be joined by a path with ${\mathbf 1}_G|_K$. Since $\mathcal O(K,G)$ is dense in the complex Banach manifold $\mathcal A(K,G)$, it suffices to prove the result for $F\in \mathcal O(K,G)$. So assume that $F$ is holomorphic in an open neighbourhood $U\Subset O_{b,\varepsilon}$ of $N$. Without loss of generality we may assume that $b$ is not finite and identify $O_{b,\varepsilon}$ with $\Di_\varepsilon\times \beta\N$. (Here $\beta\N$ is the Stone-\v{C}ech compactification of $\N$.)
\begin{Lm}\label{lem5.11}
 $N$ can be covered by open sets $S_j\times Y_j\Subset U$, $1\le j\le k$, such that $Y_j\subset \beta\N$ are clopen and pairwise disjoint, and each $S_j\Subset\Di_\varepsilon$  has finitely many connected components that are finitely connected domains.
\end{Lm} 
\begin{proof}
 Since $\beta\N$ is totally disconnected, it is homeomorphic to the inverse limit of a family of finite sets, see, e.g., \cite{N}. Hence, $O_{b,\varepsilon}$ can be obtained as the inverse limit of a family of sets of the form $\Di_{\varepsilon}\times {\mathcal F}$, where ${\mathcal F}$ is a finite set. In turn, $N$ can be obtained as the inverse limit of the family of compact subsets $K\times {\mathcal F}\subset \Di_{\varepsilon}\times {\mathcal F}$. By the definition of the inverse limit topology, there exists an open neighbourhood $V\Subset \Di_\varepsilon\times\mathcal F$ of one of such $K\times {\mathcal F}$ whose preimage under the continuous limit projection $\pi: \Di_\varepsilon\times\beta N\rightarrow \Di_\varepsilon\times\mathcal F$ (identical in the first coordinate) of the inverse limit construction is contained in $U$.
 Note that $V=\cup_{s\in\mathcal F}(V_s\times \{s\})$, where $V_s\Subset \Di_\varepsilon$ are open. Replacing each $V_s$ by its relatively compact subset $V_s'$ with analytic boundary so that $V':=\cup_{s\in\mathcal F}(V_s'\times \{s\})$ still contains $K$, we get an open neighbourhood $\pi^{-1}(V')\Subset U$ of $N$ of the required form.
 \end{proof}
 
Each set of the form $\cup_{ j=1}^k \,S_j\times Y_j$ with $S_j$ and $Y_j$ as in Lemma \ref{lem5.11} will be called {\em elementary}. For an elementary  set $X=\cup_{j=1}^k\, S_j\times Y_j$ its elementary subset of the form $\cup_{j=1}^k\!\cup_{l=1}^m S_j\times Y_{jl}$
will be called a {\em refinement} of $X$. By $\mathcal O_f(X,G)\subset \mathcal O(X,G)$ we denote the set of holomorphic maps $f$ such that each $f|_{S_j\times Y_j}$ is independent of the second coordinate, $1\le j\le k$.
\begin{Lm}\label{lem5.12}
Let $X=\cup_{j=1}^k\, S_j\times Y_j$ with $S_j\times Y_j$ as in Lemma \ref{lem5.11}. There exist refinements $X_n$ of $X$ and  holomorphic maps $F_n\in\mathcal O_f(X_n,G)$, $n\in\N$, such that sequence $\{F_n\}_{n\in\N}$ converges uniformly to $F|_X$.
\end{Lm}
\begin{proof}
Let us consider maps $F^j:=F|_{\bar S_j\times Y_j}$, $1\le j\le k$.
Each $F^j$ can be regarded as a continuous map of the clopen set $Y_j\subset\beta\N$ to the complex Banach Lie group $\mathcal A(\bar S_j,G)\subset C(\bar S_j, G)$ of continuous $G$-valued maps holomorphic on $S_j$ equipped with the topology of uniform convergence.  We endow $\mathcal A(\bar S_j,G)$ with a metric $d$ compatible with topology (existing by the Birkhoff-Kakutani theorem). Since $F^j$ is a continuous map of compact set $Y_j$ to metric space $(\mathcal A(\bar S_j,G),d)$, it is uniformly continuous. Hence for each $n\in\N$, $1\le j\le k$, there exists a partition $Y_j=\sqcup_{l=1}^{m_n} Y_{nl}^j$ into clopen subsets
such that
\begin{equation}\label{eq5.9}
d(F^j(x),F^j(y))\le\frac{1}{n}\quad {\rm for\ all}\quad x, y\in Y_{nl}^j,\ 1\le l\le m_n.
\end{equation}
Let us fix points $x_{nl}^j\in Y_{nl}^j$, $n\in\N$, $1\le j\le k$, $1\le l\le m_n$, and define 
\[
X_n:=\cup_{j=1}^k\!\cup_{l=1}^{m_n} S_j\times Y_{nl}^j
\]
and
\[
F_n(z,x):=F(z,x_{nl}^j),\qquad (z,x)\in S_j\times Y_{nl}^j,\quad 1\le j\le k,\quad 1\le l\le m_n.
\]
Due to \eqref{eq5.9}, sequences $\{X_n\}_{n\in\N}$ and $\{F_n\}_{n\in\N}$ satisfy the requirements of the lemma. 
\end{proof}
Lemma \ref{lem5.12} shows that in order to prove Theorem \ref{compactif} it suffices to assume that $F\in\mathcal O_f(X,G)$, where $X=\cup_{j=1}^k\, S_j\times Y_j$ is an elementary subset of $O_{b,\varepsilon}$.
In this case, $F|_{S_j\times Y_j}(s,y)=f_j(s)$, $(s,y)\in S_j\times Y_j$, for a holomorphic map $f_j:S_j\rightarrow G$, $1\le j\le k$.
Since each connected component of $S_j$ is homotopic to a finite one-dimensional $CW$-complex (cf. Lemma \ref{lem5.11}) and $G$ is simply connected, each $f_j$ is homotopic to the constant map $S_j\ni s\mapsto 1_G\in G$. Then by \cite[Th.\,2.1(b)]{R} (the Banach-valued version of the
Ramspott theorem) each $f_j$ 
is homotopic to this map inside  $\mathcal O(S_j,G)$. This implies that each $F|_{S_j\times Y_j}$ is homotopic inside $\mathcal O_f(S_j\times Y_j,G)$ to ${\mathbf 1}_G|_{S_j\times Y_j}$. Denote by $H_j: [0,1]\rightarrow  \mathcal O_f(S_j\times Y_j,G)$, $1\le j\le k$, the corresponding homotopies.
Then $H:[0,1]\rightarrow\mathcal A(K,G)$,
\[
H(t)(x):=H_j(t)(x)\quad {\rm if}\quad  x\in K\cap (S_j\times Y_j),\quad 1\le j\le k,
\]
is a path joining $F$ and $1_G|_K$. 

The proof of the theorem is complete.
\end{proof}

Let $\pi :P\rightarrow M(\mathscr A_Z)$ be a holomorphic principal bundle with fibre a simply connected complex Banach Lie group $G$. For a compact set $K\subset M(\mathscr A_Z))$ we say that $P|_K$ is trivial if it is trivial in an open neighbourhood of $K$ (cf. Section~2.2).
\begin{C}\label{cor5.13}
Suppose $V_i\Subset U_i\subset M(\mathscr A_Z)$, $i=1,2$, are open such that (a) $z\not\in \bar U_i\setminus U_i$, $i=1,2$; (b) $Q_Z^{-1}(\bar U_1\cap \bar U_2)\subset M_a\setminus Z$  is a subset of an open Blaschke set; (c) $\bar V_1$ is holomorphically convex. We set $V:=V_1\cup V_2$. If bundles $P|_{\bar U_i}$, $i=1,2$, are trivial, then $P|_V$ is trivial.
\end{C}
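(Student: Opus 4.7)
The plan is to reduce the corollary to the Cartan-type gluing Theorem \ref{cartan} applied to the transition cocycle between the two given trivializations of $P$. If $V_1\cap V_2=\emptyset$ the hypothesis already yields a trivialization of $P|_V$, so assume $V_1\cap V_2\ne\emptyset$ and pick holomorphic trivializations $\phi_i\colon P|_{N_i}\to N_i\times G$ on open neighbourhoods $N_i\supset\bar U_i$, with resulting holomorphic transition cocycle $h\colon N_1\cap N_2\to G$. To glue $\phi_1,\phi_2$ into a trivialization of $P|_V$ it suffices, in the standard principal-bundle fashion, to find holomorphic maps $\sigma_i\colon V_i\to G$ with $h=\sigma_2\sigma_1^{-1}$ on $V_1\cap V_2$: the gauge-modified trivializations $\phi_i\cdot\sigma_i$ are then holomorphic on $V_i$ and agree on the overlap, hence assemble into a holomorphic trivialization of $P|_V$.

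To produce such a factorization, view $h|_{\bar U_1\cap\bar U_2}$ as an element of $\Gamma_{\mathcal A}(\bar U_1\cap\bar U_2,E)=\mathcal A(\bar U_1\cap\bar U_2,G)$ for the trivial bundle $E=M(\mathscr A_Z)\times G$. By hypothesis (b) combined with the Convention $Q_Z|_{M(H^\infty)\setminus Z}=\mathrm{id}$ the compactum $\bar U_1\cap\bar U_2$ identifies canonically with a compact subset of an open Blaschke set in $M(H^\infty)$; as $G$ is simply connected, Theorem \ref{compactif} makes $\mathcal A(\bar U_1\cap\bar U_2,G)$ connected, so both $h$ and $h^{-1}$ lie in the connected component of the identity. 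With $W_i:=\bar V_i$ all hypotheses of Theorem \ref{cartan} hold: (a) and (b) of the preamble $(\circ)$ preceding Theorem \ref{cartan} are precisely (a) and (b) of the corollary, $\bar V_1$ is holomorphically convex by (c), $\bar V_i\Subset U_i$ because $V_i\Subset U_i$, and $\bar V_1\cap\bar V_2\ne\emptyset$. Applying Theorem \ref{cartan} to $h^{-1}$ yields $F_i\in\mathcal A(\bar V_i,G)$ with $F_1F_2=h^{-1}$ on $\bar V_1\cap\bar V_2$, i.e.\ $h=F_2^{-1}F_1^{-1}$; setting $\sigma_1:=F_1$ on $V_1$ and $\sigma_2:=F_2^{-1}$ on $V_2$ then gives the required $h=\sigma_2\sigma_1^{-1}$.

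The remaining point is that each $F_i$ is actually holomorphic on the open set $V_i$: by definition, $F_i$ is the uniform limit on $\bar V_i$ of a sequence in $\mathcal O(\bar V_i,G)$, so on any relatively compact open subset of $V_i$ one obtains a uniformly convergent sequence of $G$-valued holomorphic maps, and a routine Cauchy-integral argument in charts of $G$ provided by $\exp_G$ shows that $F_i|_{V_i}$ (and hence $F_i^{-1}|_{V_i}$) is holomorphic. This passage from $\mathcal A$-approximation on the compactum to holomorphy on its interior is the only delicate point in the argument; everything else is a direct application of Theorems \ref{compactif} and \ref{cartan}.
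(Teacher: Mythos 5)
Your proof is correct and follows essentially the same path as the paper's: identify $\bar U_1\cap\bar U_2$ with a compact subset of an open Blaschke set via the Convention $Q_Z|_{M(H^\infty)\setminus Z}=\mathrm{id}$, invoke Theorem \ref{compactif} to get connectedness of $\mathcal A(\bar U_1\cap\bar U_2,G)$, and apply the Cartan-type Theorem \ref{cartan} with $W_i=\bar V_i$ to split the transition cocycle. Whether one factors $h$ or $h^{-1}$ is cosmetic, and your explicit remark that uniform limits of holomorphic $G$-valued maps on $V_i\cap(\Di\setminus Z)$ remain holomorphic (so the $\mathcal A(\bar V_i,G)$-factors are genuinely holomorphic on $V_i$) fills in a step the paper leaves tacit.
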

\begin{proof}
Without loss of generality we may assume that $\bar V_1\cap\bar V_2\ne\emptyset$ and $\bar U_1\cap\bar U_2$ is the proper subset of each $\bar U_i$, $i=1,2$ (for otherwise the statement is trivial). Then, by the hypothesis, $P|_{\bar U_1\cup\bar U_2}$ is determined by a cocycle $c\in \mathcal O(\bar U_1\cap\bar U_2, G)$.  Since $Q_Z^{-1}(\bar U_1\cap \bar U_2)\subset M_a\setminus Z$, pullback  $Q_Z^*$ determines an isomorphism of complex Banach Lie groups $\mathcal A(\bar U_1\cap\bar U_2, G)$ and $\mathcal A(Q_Z^{-1}(\bar U_1\cap \bar U_2),G)$. As $\bar U_1\cap \bar U_2$ is a compact subset of an open Blaschke set, Theorem \ref{compactif} implies that complex Banach Lie group $\mathcal A(\bar U_1\cap\bar U_2, G)$ is connected.
Hence we can apply Theorem \ref{cartan} to $c$ (with  $E=M(\mathscr A_Z)\times G$ and $W_i:=\bar V_i$, $i=1,2$) to find some $c_i\in \mathcal A(\bar V_i, G)$, $i=1,2$, such that $c_1^{-1}\, c_2= c$ on $\bar V_1\cap \bar V_2$. This shows that $P|_{V}$ is trivial.
\end{proof}
In the sequel, we also require the following result.
\begin{Proposition}\label{prop5.14}
Suppose $K\subset M(\mathscr A_Z)\setminus\{z\}$ is compact such that $Q_Z^{-1}(K)$ is the closure of an open Blaschke product $O_{b,\varepsilon}$. Then $K$ is holomorphically convex. 
\end{Proposition}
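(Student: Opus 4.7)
The plan is to prove the holomorphic convexity of $K$ in two stages, proving first the stronger-looking statement that $K\cup\{z\}$ is holomorphically convex, and then using the Runge approximation Theorem \ref{runge} to separate $z$ from $K$ via a locally constant map.

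First I would establish that the lifted union $L:=\bar O_{b,\varepsilon}\cup Z\subset M(H^\infty)$ is $H^\infty$-convex, with the peaking family constructed inside the ideal $I(Z)$ (so that it descends to $M(\mathscr A_Z)$). Fix $x\in M(H^\infty)\setminus L$. Because $x\notin Z={\rm hull}(I(Z))$, there is $h\in I(Z)$ with $\hat h(x)\neq 0$; because $x\notin \bar O_{b,\varepsilon}=\{|\hat b|\le\varepsilon\}$, we have $|\hat b(x)|>\varepsilon$. Consider $f_n:=b^n h\in I(Z)$ for $n\in\mathbb N$. On $\bar O_{b,\varepsilon}$ one has $|\hat f_n|\le\varepsilon^n\|h\|_\infty$ and on $Z$ one has $\hat f_n\equiv 0$, so $\max_L|\hat f_n|\le\varepsilon^n\|h\|_\infty$, while $|\hat f_n(x)|=|\hat b(x)|^n|\hat h(x)|$. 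Since $|\hat b(x)|/\varepsilon>1$, for $n$ large enough $|\hat f_n(x)|>\max_L|\hat f_n|$. Because $f_n\in I(Z)\subset H_{I(Z)}^\infty$, its Gelfand transform descends through $Q_Z$ to a function $\tilde f_n\in\mathcal O(M(\mathscr A_Z))$ with $\tilde f_n(z)=0$, and the same estimate shows that for every $y\in M(\mathscr A_Z)\setminus (K\cup\{z\})$ the function $\tilde f_n$ peaks at $y$ over $K\cup\{z\}$. Hence $K\cup\{z\}$ is holomorphically convex in $M(\mathscr A_Z)$.

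Next I would apply Theorem \ref{runge} to manufacture a function separating $z$ from $K$. Since $z\notin K$, pick disjoint open neighbourhoods $V_1\supset K$ and $V_2\ni z$ in $M(\mathscr A_Z)$ and, on $U:=V_1\cup V_2$, define the locally constant map $F\in\mathcal O(U)$ by $F\equiv 0$ on $V_1$ and $F\equiv 1$ on $V_2$. Applying Theorem \ref{runge} to the trivial scalar bundle and to the holomorphically convex compact set $K\cup\{z\}$, one obtains $g\in\mathcal O(M(\mathscr A_Z))$ with $|g-F|<\tfrac{1}{2}$ on $K\cup\{z\}$; in particular $\max_K|g|<\tfrac{1}{2}<|g(z)|$.

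Combining the two steps, every $y\in M(\mathscr A_Z)\setminus K$ is separated from $K$ by a function in $\mathcal O(M(\mathscr A_Z))$: the case $y\neq z$ is covered directly by $\tilde f_n$ from the first step, and the case $y=z$ by $g$ from the second. The main obstacle is precisely the case $y=z$, because $\hat b$ itself is not constant on $Z$ and no single element of $I(Z)$ can have a nonzero limit at $z$ exceeding its modulus on $\bar O_{b,\varepsilon}$; the two-stage argument sidesteps this by first producing enough peaking functions to know $K\cup\{z\}$ is holomorphically convex, and only then invoking the Runge-type Theorem \ref{runge} to approximate a locally constant map that realizes the separation of $z$ from $K$.
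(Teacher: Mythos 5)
Your proof is correct and follows essentially the same two-stage strategy as the paper: first show that $K\cup\{z\}$ is holomorphically convex via a function of the form (peaking function)$^n\cdot$(function in $I(Z)$) that descends to $M(\mathscr A_Z)$, then use the Runge-type Theorem \ref{runge} to separate $z$ from $K$ by approximating a locally constant function. The only cosmetic difference is that you take $b$ itself as the peaking function on $\bar O_{b,\varepsilon}=\{|\hat b|\le\varepsilon\}$, whereas the paper invokes holomorphic convexity of $\bar O_{b,\varepsilon}$ abstractly to produce such a function; both serve the same purpose.
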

\begin{proof}
Assume that $x'\in M(H^\infty)$ is such that $x:=Q_Z(x')\not\in K\cup\{z\}$, $\{z\}:=Q_Z(Z)$. Since $Q_Z^{-1}(K)=\bar O_{b,\varepsilon}$ is holomorphically convex, there is a function $f\in \mathcal O(M(H^\infty))$ such that $f(x')=1$ and $\max_{Q_Z^{-1}(K)}|f|\le \frac12$. By definition $Z$ is the hull of ideal $I(Z)\ne (0)$. Hence, there exists a function $g\in\mathcal O(M(H^\infty))$ such that $g(x')=1$ and $g|_Z=0$. For a sufficiently large $n\in\N$ we obtain
\[
(f^n\, g)(x')=1\quad {\rm and}\quad \max_{Q_Z^{-1}(K)\cup Z}|f^n\, g|<1.
\]
Since $(f^n\, g)|_Z=0$, there exists a function $h\in \mathcal O(M(\mathscr A_Z))$ such that $Q_Z^*h=f^n\, g$. In particular,
\[
|h(x)|=1\quad {\rm and}\quad \max_{K\cup\{z\}} |h|<1.
\] 
The latter condition shows that $K\cup \{z\}\subset M(\mathscr A_Z)$ is a holomorphically convex set. Let $U_1$ and $U_2$ be open neighbourhoods of $K$ and $z$ such that $\bar U_1\cap\bar U_2=\emptyset$. Consider the holomorphic function on $U_1\sqcup U_2$ equals $0$ on $U_1$ and $1$ on $U_2$. By Theorem \ref{runge} (the Runge-type approximation theorem) it can be uniformly approximated on $K\cup \{z\}$ by holomorphic functions of $\mathcal O(M(\mathscr A_Z))$. In particular, there is a function $t\in\mathcal O(M(\mathscr A_Z))$ such that
$|t(z)|>\frac 12$ and $\max_K |t|<\frac 12$. This shows that $K$ is holomorphically convex.
\end{proof}

\sect{Proofs of Theorem \ref{te1.4}, Corollary \ref{cor1.5} and Proposition \ref{prop1.6}}
\subsection{Principal Bundles with Connected Fibres}
First, we prove a particular case of Theorem \ref{te1.4} for bundles with connected fibres. 

Let $V\Subset U\subset M(\mathscr A_Z)$ be open sets.
\begin{Th}\label{teo6.1}
Suppose $P$ is a  holomorphic principal bundle on $U$ with fibre a connected complex Banach Lie group $G$.
Then $P|_{V}$ is trivial.
\end{Th}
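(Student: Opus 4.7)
My plan is to construct a finite trivializing cover of $\bar V$ with tightly controlled intersections, then patch local trivializations inductively using the Cartan-type Lemma. Since $\bar V \subset U$ is compact and any holomorphic principal bundle is locally trivial by definition, I first choose a finite open cover $\{U_i\}_{i=1}^n$ of $\bar V$ with each $U_i \Subset U$, $P|_{U_i}$ trivial, and accompanying trivializing sections $s_i$. Using the Hoffman decomposition $M(H^\infty) = M_a \cup M_s$, Su\'{a}rez's total-disconnectedness of $M_s$, and Proposition \ref{prop1.1}, I then refine the cover so that every nonempty intersection $\bar U_i \cap \bar U_j$ pulls back under $Q_Z$ into a compact subset of an open Blaschke set contained in $M_a \setminus Z$. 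Patches near $M_s$ or the distinguished point $z := Q_Z(Z)$ are chosen clopen (possible since $M_s \cup \{z\}$ admits a neighbourhood base of clopen sets) so that their intersections with other patches lie entirely within $M_a \setminus Z$; in particular condition (a) of Corollary \ref{cor5.13} is preserved near $z$.

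Next, I pass to shrinkings $V_i \Subset U_i$ with $\bar V \subset \cup_i V_i$ and each $\bar V_i$ holomorphically convex, constructed via Lemma \ref{lem5.9} applied to suitable polyhedra. I then merge trivializations by induction on $i$: the base case is the given trivialization on $V_1$, and at step $k$ I combine the trivialization already built on a neighbourhood of $\bar V_1 \cup \cdots \cup \bar V_{k-1}$ with that on $V_k$ through Corollary \ref{cor5.13}, whose hypotheses (a)--(c) hold by the cover construction and by holomorphic convexity. The induction ultimately yields a trivialization of $P$ over an open set containing $V$.

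The main obstacle is that Corollary \ref{cor5.13} (and the underlying Theorem \ref{cartan}) requires the patching cocycle to sit in the identity component of $\mathcal A(\bar U_1 \cap \bar U_2, G)$, which Theorem \ref{compactif} supplies only when $G$ is simply connected, while here $G$ is merely connected. I circumvent this with an additional refinement at the first stage: for each pair $(i,j)$, I modify the trivialization $s_j$ by right-multiplication by a suitable locally constant element of $G$ (one per connected component of the intersection) and shrink the patches so that the resulting cocycle $g_{ij}$ takes values in an exponential neighbourhood of $1_G$ on which $\exp_G$ is a biholomorphism onto its image. Then $(\exp_E)_*^{-1}(g_{ij})$ is well-defined on $\bar U_i \cap \bar U_j$ with values in $\mathfrak g$ and, by a further uniform shrinking exploiting compactness of $\bar V$ and continuity of the transition maps, can be forced into the ball $B_{\bar U_i \cap \bar U_j}(r_1)$ supplied by Proposition \ref{small}. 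This places $g_{ij}$ in the identity component of $\mathcal A$ and allows direct invocation of Theorem \ref{cartan}, bypassing the need for $G$ to be simply connected and completing the inductive construction.
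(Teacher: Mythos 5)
Your overall architecture — a finite trivializing cover of $\bar V$ whose intersections lie in Blaschke sets, clopen patches near $Q_Z(M_s)\cup\{z\}$, holomorphically convex shrinkings via Lemma \ref{lem5.9}, and an induction driven by Corollary \ref{cor5.13} — matches the second half of the paper's proof quite closely. You also correctly locate the central difficulty: Corollary \ref{cor5.13} ultimately rests on Theorem \ref{compactif}, which needs simple connectivity of the fibre, while here $G$ is only assumed connected.

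The gap is in your circumvention. You claim that by refining the cover, right-multiplying the trivializations $s_j$ by locally constant $G$-valued maps, and shrinking, you can force the cocycle $g_{ij}=s_i^{-1}s_j$ to take values in a small exponential neighbourhood of $1_G$, and hence into the identity component of $\mathcal A(\bar U_i\cap\bar U_j,G)$. This step is not available for free: making the cocycle uniformly small is \emph{equivalent} to lifting the bundle to the universal cover $\psi\colon G_u\to G$. Indeed, if $g_{ij}$ takes values in a simply connected neighbourhood $V$ of $1_G$ where $\psi$ has a holomorphic local inverse $\psi^{-1}$, then $\tilde g_{ij}:=\psi^{-1}(g_{ij})$ is $G_u$-valued and the triple products $\tilde g_{ij}\tilde g_{jk}\tilde g_{ki}$ lie in the discrete kernel $Z_G$ of $\psi$ and near $1_{G_u}$, hence equal $1_{G_u}$ — so $\{\tilde g_{ij}\}$ is a $G_u$-valued cocycle lifting $\{g_{ij}\}$. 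Such a lift is obstructed by a class in $H^2$ of the base with coefficients in $\pi_1(G)$, and the obstruction is not killed by merely refining the cover and conjugating by locally constant maps. A concrete counterexample: the $\Co^*$-bundle associated with $\mathcal O(1)$ on $\mathbb{CP}^1$ has $g_{01}(z)=z$, and no refinement-plus-recentering can make its cocycle land in a simply connected neighbourhood of $1$, since the obstruction generates $H^2(\mathbb{CP}^1,\Z)\cong\Z$.

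In the present setting the obstruction does vanish, but this is a nontrivial input that your proposal never invokes: the paper proves it via Proposition \ref{prop6.2} and Lemma \ref{lem6.3}, which lean on $\dim\bar W=2$ and $H^2(\bar W,\Z)=0$ (Proposition \ref{prop1.1}). The paper then works with the lifted bundle $P_u$, whose structure group $G_u$ is simply connected, so Theorem \ref{compactif} applies directly, and finally pushes a trivializing section of $P_u|_V$ down to $P|_V$ via $\Psi$. To make your argument correct, you would either have to insert the lifting step (Proposition \ref{prop6.2}, using $H^2(\bar W,Z_G)=0$) before attempting to make the cocycle small, or explicitly derive from $H^2=0$ that your refinement-and-recentering procedure can be carried out; as written, the shrinking step silently assumes what is actually the heart of the matter.
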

\begin{proof}
By $\psi: G_u\rightarrow G$ we denote the universal covering of $G$. The complex Banach Lie group $G_u$ is simply connected and the kernel of $\psi$ is a discrete central subgroup $Z_G\subset G_u$. 

Let $W$ be a relatively compact open subset of $U$ such that $V\Subset W$.
\begin{Proposition}\label{prop6.2}
There exist a principal $G_u$-bundle $P_u$ on $\bar W$ and  a morphism of  principal bundles $\Psi: P_u\rightarrow P|_{\bar W}$.
\end{Proposition} 
\begin{proof}
Since $\bar W$ is compact, $P|_{\bar W}$ is defined
on a {\em finite} open cover $\mathfrak W=(W_i)_{i\in I}$ of $\bar W$ by a cocycle $c=\{c_{ij}\in C (W_i\cap W_j, G)\}_{i,j\in I}$ such that $c_{ij}|_{W\cap W_i\cap W_j}\in \mathcal O(W\cap W_i\cap W_j, G)$ for all $i, j\in I$.
 To prove the result one constructs a  principal $G_u$-bundle $P_u$ on $\bar W$  defined on a finite open cover $\mathfrak V=(V_l)_{l\in L}$ of $\bar W$, a refinement of $\mathfrak W$, by cocycle $\tilde c=\{\tilde c_{lm}\in  C(V_l\cap V_m, G_u)\}_{l,m\in L}$ such that cocycle $\psi(\tilde c)=\{\psi(\tilde c_{lm})\in C(V_l\cap V_m, G)\}_{l,m\in L}$ coincides with the pullback of $c$ to $\mathfrak V$ with respect to the refinement map $\tau: L\rightarrow I$. (Recall that $V_l\subset W_{\tau(l)}$ for all $l\in L$ so that $\psi(\tilde c_{lm})=c_{\tau(l)\tau(m)}|_{V_l\cap V_m}$, $l,m\in L$.)
 The required morphism $\Psi: P_u\rightarrow P|_{\bar W}$ in local coordinates on $\mathfrak V$ is defined as $\Psi(x, g):=(x,\psi(g))$, $(x,g)\in V_l\times G_u$, $l\in L$. Note that since $\psi$ is locally biholomorphic, {\em $P_u$ is holomorphic on $W$}.\smallskip

The construction of such bundle $P_u$  repeats word-for-word the one presented on pages 135--136 of the proof of  \cite[Th.\,6.4]{Br2}, where instead of Lemma~6.5 there one uses 
\begin{Lm}\label{lem6.3}
For each finitely generated subgroup $Z'\subset Z_G$
the \v{C}ech cohomology group $H^2(\bar W,Z')=0$.
\end{Lm}
The proof of this lemma is the same  as that of Lemma~6.5 in \cite{Br2} and relies upon the facts ${\rm dim}\, \bar W=2$ and $H^2(\bar W,\Z)=0$ which follow immediately from similar results for $M(\mathscr A_Z)$ established in Proposition \ref{prop1.1} above. We leave the details to the readers.
\end{proof}

Proposition \ref{prop6.2} implies that in order to prove the theorem it suffices to show that {\em bundle $P_u|_V$ is trivial}. (In this case there exists a holomorphic section $s: V\rightarrow P_u$ and so $\Psi(s)$ is a holomorphic section of $P|_V$, i.e., $P|_V$ is trivial as well.) Let us prove this statement.\smallskip

Since $\bar W$ is compact,  $P_u|_W$ is defined by a holomorphic $G_u$-valued cocycle on a {\em finite} open cover $\mathfrak  V=(V_l)_{l\in L}$ of $W$; in particular, $P_u|_{V_l}=V_l\times G_u$ for all $l\in L$. Since $M_s$ is totally disconnected and $Q_Z$ maps $Z$ to $z\in M(\mathscr A_Z)$ and is one-to-one outside, $Q_Z(M_s)\cup\{z\}$ is totally disconnected as well (see, e.g., \cite[Ch.\,2,\,Th.\,9--11]{N}). In particular, there exists a finite open cover of $\bigl(Q_Z(M_s)\cup\{z\}\bigr)\cap\bar V$ by pairwise disjoint open subsets of $W$ such that $P_u$ is trivial over each of them. Thus $P_u$ is trivial over the union of these sets forming an open neighbourhood of $(Q_Z(M_s)\cup\{z\})\cap\bar V$. Without loss of generality we may assume that this neighbourhood coincides with $V_1$. 

Let us choose some open neighbourhood $V_0\Subset V_1$ of $(Q_Z(M_s)\cup\{z\})\cap\bar V$ and a sequence of open sets $W_1^i\Subset V$ containing $\bar V_0$ such that $W_1^{i+1}\Subset W_1^i$, $i\in\N$. Since $Q_Z^{-1}(\bar V\setminus V_0)$ is a compact subset of $M_a\setminus Z$, it can be covered  by open Blaschke sets $O_{b_1,\varepsilon_1},\dots, O_{b_N,\varepsilon_N}$ so  that  all $O_{b_j,2\varepsilon_j}\Subset (M_a\setminus Z)\cap W$ and each $Q_Z(O_{b_j,2\varepsilon_j})$ is a subset of one of $V_l$, $l\in L$.  For every $i\in\N$ we set
\[
W_{j+1}^i:=Q_Z(O_{b_j,\lambda_i\varepsilon_j}),\quad \lambda_i:=1+\frac{1}{i},\quad 1\le j\le N.
\]
Then $(W_j^i)_{1\le j\le N+1}$, $i\in\N$, are open covers of $\bar V$ such that (a) $P_u|_{W_j^i}$ are trivial for all $i,  j$; (b) all $Q_Z^{-1}(W_{j}^i)$, $2\le j\le N+1$, $i\in\N$, are open Blaschke sets relatively compact in $M_a\setminus Z$; (c) $W_{j}^{i+1}\Subset W_{j}^i$ for all $i,j$;
(d) all $\bar W_j^i$, $2\le j\le N+1$, $i\in\N$, are holomorphically convex (see Proposition \ref{prop5.14}).

Next, for $1\le k\le N+1$ and $i\in\N$ we set
\[
Z_{k}^i:=\bigcup_{j=1}^k W_{j}^i.
\]
Using induction on $k$, $1\le k\le N+1$, we prove that  {\em each bundle $P_u|_{Z_{k}^{k}}$ is trivial}.

Indeed, $P_u|_{Z_{1}^1}$ is trivial by the definition of $V_1$. Assuming that the statement is valid for $k-1$ let us prove it for $k$.

To this end, we apply Corollary \ref{cor5.13} with $U_1$ being an open relatively compact subset of $W_{k-1}^k$ containing $\bar W_k^k$, $U_2$ an open relatively compact subset of $Z_{k-1}^{k-1}$ containing $Z_{k-1}^k$, $V_1:=W_{k}^k$, $V_2:=Z_{k-1}^k$. Since $V_1\cup V_2=Z_k^k$, the required statement follows from the corollary.

For $k=N+1$ we obtain $\bar V \subset Z_{N+1}^{N+1}$. Thus $P_u|_{V}$ is trivial as required.

The proof of the theorem is complete.
\end{proof}

\subsection{Proof of Theorem \ref{te1.4}}
Suppose $\pi: P\rightarrow U$ is a holomorphic principal $G$-bundle trivial on an open neighbourhood $V$ of a compact set $K\subset U$. If  $P|_V$ is defined on an open cover $(V_l)_{l\in L}$ of $V$ by cocycle $g=\{g_{lm}\in\mathcal O(V_l\cap V_m, G)\}_{l,m\in L}$,   the associated principal $C(G)$-bundle $P_{C(G)}|_V$  is defined by cocycle $q(g)=\{q(g_{lm})\in  C(V_l\cap V_m, C(G))\}_{l,m\in L}$. (Recall that $C(G):=G/G_0$ is the discrete group of connected components of $G$ and  $q: G\rightarrow C(G)$ is the corresponding quotient homomorphism.)

Since $P|_V$ is trivial, there exist $g_l\in\mathcal O(V_l,G)$, $l\in L$, such that
\[
g_l^{-1}\, g_m=g_{lm}\quad\text{on}\quad V_l\cap V_m.
\]
This implies that 
\[
q(g_l)^{-1}\, q(g_m)=q(g_{lm})\quad\text{on}\quad V_l\cap V_m,
\]
i.e., bundle $P_{C(G)}|_{V}$ is topologically trivial and so $P_{C(G)}|_{K}$ as well.

Conversely, suppose $P_{C(G)}|_K$ is trivial. Then there is an open neighbourhood $V\Subset U$ of $K$ such that
$P_{C(G)}|_V$ is trivial  as well  (see, e.g., \cite[Lm.\,4]{Li} for the proof of this well-known fact). Thus if $P|_V$ is defined on a finite open cover $\mathfrak V=(V_l)_{l\in L}$ of $V$ by a cocycle $g=\{g_{lm}\in\mathcal O(V_l\cap V_m, G)\}_{l,m\in L}$,  there exist
$h_l\in C(V_l, C(G))$, $l\in L$, such that
\begin{equation}\label{eq6.1}
h_l^{-1}\, h_m=q(g_{lm})\quad\text{on}\quad U_l\cap U_m.
\end{equation}
Replacing $V$ by a relatively compact open subset containing $K$, if necessary, without loss of generality we may assume that each $h_l$ admits a continuous extension to compact set $\bar V_l$, $l\in L$. Since $h_l:\bar V_l\rightarrow C(G)$ is continuous and $C(G)$ is discrete, the image of $h_l$ is finite. In particular, there exists a continuous locally constant function $\tilde h_l :V_l\rightarrow G$ such that $q\circ \tilde h_l=h_l$. By definition each $\tilde h_l\in\mathcal O(V_l,G)$. Let us define cocycle $\tilde g=\{\tilde g_{lm}\in  \mathcal O (V_l\cap V_m, G)\}_{l,m\in L}$ by the formulas
\begin{equation}\label{eq6.2}
\tilde g_{lm}:=\tilde h_l\, g_{lm}\,\tilde h_m^{-1}\quad\text{on}\quad V_l\cap V_m.
\end{equation}
Then $\tilde g$ determines a holomorphic principal $G$-bundle $\tilde P$ on $V$ isomorphic to $P|_V$. Also, \eqref{eq6.1} implies that for all $l,m\in L$
\[
q(\tilde g_{lm})=1_{C(G)}.
\]
Thus each $\tilde g_{lm}$ maps $V_l\cap V_m$ into $G_0$. In particular, $\tilde g$ determines also a subbundle of $\tilde P$ with fibre $G_0$. According to Theorem \ref{teo6.1} this subbundle is trivial over an open neighbourhood $W\Subset V$ of $K$
(because $G_0$ is connected). Thus there exist $\tilde g_l\in\mathcal O(W\cap V_l,G_0)$, $l\in L_W:=\{k\in L\, :\, W\cap V_k\ne\emptyset\}$, such that
\[
\tilde g_l^{-1}\, \tilde g_m=\tilde g_{lm}\quad\text{on}\quad (W\cap V_l)\cap (W\cap V_m) .
\]
From here and \eqref{eq6.2} we obtain that for all $l,m\in L_W$
\[
(\tilde g_l\, h_l)^{-1}\, (\tilde g_m\, h_m)=g_{lm}\quad\text{on}\quad  (W\cap V_l)\cap (W\cap V_m) .
\]
This shows that holomorphic principal $G$-bundle $P|_W$ is trivial.

The proof of the theorem is complete.

\subsection{Proof of Corollary \ref{cor1.5}}
(1) In this case $C(G)$ is trivial so the statement follows directly from Theorem \ref{te1.4}.

(2) By the hypothesis the associated bundle $P_{C(G)}$ on $M(H^\infty)$ has a finite fibre. Then it is trivial by 
\cite[Lm.\,8.1]{Br3} and the result follows from Theorem \ref{te1.4}.

(3) Let $P|_K$ be defined on a finite open cover $(U_i)_{i\in I}$ of $K$ by a cocycle $g=\{g_{ij}\in C (U_i\cap U_j, G\}_{i,j\in I}$. Since $P|_K$ is topologically trivial, there exist $g_i\in C(U_i,G)$, $i\in I$, such that
\[
g_i^{-1}\, g_j=g_{ij}\quad\text{on}\quad U_i\cap U_j .
\]
This implies that 
\[
q(g_i)^{-1}\, q(g_j)=q(g_{ij})\quad\text{on}\quad U_i\cap U_j ,
\]
i.e., $P_{C(G)}|_K$ is topologically trivial. Thus to get the result we apply Theorem \ref{te1.4}.
\subsection{Proof of Proposition \ref{prop1.6}}
We require
\begin{Lm}\label{lem7.4}
The \v{C}ech cohomology group $H^1(M(\mathscr A_Z),\Z)$ is nontrivial.
\end{Lm}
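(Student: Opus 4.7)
The plan is to transfer the nontriviality from $H^1(M(H^\infty), \mathbb Z)$ to $H^1(M(\mathscr A_Z), \mathbb Z)$ via the strong-excision identification already exploited in Proposition \ref{prop1.1}(c). Concretely, I would combine the isomorphism $\tilde H^n(M(\mathscr A_Z), \mathbb Z) \cong H^n(M(H^\infty), Z; \mathbb Z)$ with the long exact sequence of the pair $(M(H^\infty), Z)$ and Treil's theorem (which yielded the surjectivity $H^1(M(H^\infty), \mathbb Z) \to H^1(Z, \mathbb Z)$ in the proof of Proposition \ref{prop1.1}(c)). This reduces the relevant segment of the sequence to
\[
0 \to \tilde H^0(Z, \mathbb Z) \to H^1(M(\mathscr A_Z), \mathbb Z) \to \ker\bigl[H^1(M(H^\infty), \mathbb Z) \to H^1(Z, \mathbb Z)\bigr] \to 0.
\]
If $Z$ has more than one connected component, $\tilde H^0(Z, \mathbb Z) \neq 0$ and the claim follows at once.

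For connected $Z$, I would produce a nonzero element in the kernel on the right. Via the Arens-Royden isomorphism $H^1 \cong (\cdot)^{-1}/\exp(\cdot)$, the restriction map is identified with the natural map $(H^\infty)^{-1}/\exp(H^\infty) \to C(Z, \mathbb C^*)/\exp(C(Z))$, and the task is to exhibit $f \in (H^\infty)^{-1}$ with $f \notin \exp(H^\infty)$ but $\hat f|_Z \in \exp(C(Z))$. A natural starting point is
\[
f_0(z) = \left(\frac{1+z}{1-z}\right)^i,
\]
whose modulus lies in $[e^{-\pi/2}, e^{\pi/2}]$, so $f_0 \in (H^\infty)^{-1}$, while its only holomorphic logarithm $i\log((1+z)/(1-z))$ has unbounded imaginary part, so $f_0 \notin \exp(H^\infty)$. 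If $[\hat f_0|_Z] = 0$ in $H^1(Z, \mathbb Z)$ (automatic, for instance, when $H^1(Z, \mathbb Z) = 0$, e.g.\ for $Z$ a single point or $Z \subset M_s$), the class of $f_0$ is nonzero in the kernel and we are done.

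Otherwise I would modify $f_0$ by a factor $\exp(g)$ with $g \in H^\infty$ chosen so that the restriction of the product to $Z$ admits a continuous logarithm; such $g$ is obtained by first using the surjectivity of restriction on $H^1$ (Treil's theorem) to produce a holomorphic logarithm in a neighborhood $U \supset Z$, and then invoking Theorem \ref{teo5.2} (with $E$ trivial) to extend it globally, preserving its values on $Z$. The main obstacle is this last step: the correcting factor must be arranged so that the multiplication does not kill the Arens-Royden class of $f_0$, which requires a careful coupling of the Suárez--Treil surjectivity statement with the extension theorem, and amounts to showing that the restriction map on $H^1$ cannot be an isomorphism for any proper hull $Z$.
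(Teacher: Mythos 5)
You correctly reduce the problem to exhibiting a nonzero element in $\ker\bigl[H^1(M(H^\infty),\Z)\to H^1(Z,\Z)\bigr]$ when $Z$ is connected (and you correctly handle the disconnected case via $\tilde H^0(Z,\Z)$), and you correctly identify Arens--Royden as the right dictionary. But the step you flag as the ``main obstacle'' is in fact a genuine gap, and the fix you sketch cannot work as stated. Your starting function $f_0(z)=((1+z)/(1-z))^i$ has $[\hat f_0|_Z]$ uncontrolled, and multiplying by $\exp(g)$ with $g\in H^\infty$ changes \emph{neither} the class $[f_0]\in H^1(M(H^\infty),\Z)$ \emph{nor} the restricted class $[\hat f_0|_Z]\in H^1(Z,\Z)$ under Arens--Royden, since $\exp(g)\in\exp(H^\infty)$ and $\widehat{\exp(g)}|_Z\in\exp(C(Z))$. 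To change the restricted class one must multiply by some $u\in(H^\infty)^{-1}$ with $[\hat u|_Z]=-[\hat f_0|_Z]$ (which Treil--Su\'arez surjectivity provides), but then one must still argue $[uf_0]\ne 0$, i.e.\ $[u]\ne-[f_0]$ in $H^1(M(H^\infty),\Z)$ --- and surjectivity gives no control over $[u]$ itself. Ensuring this last inequality is equivalent to the nontriviality of the kernel, which is precisely what is to be proved; the argument is circular.

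The paper closes this gap with a composition trick that avoids the case analysis altogether. Instead of fixing $f_0$ in advance, take any $f\in I(Z)$ with $\|f\|_{H^\infty}=1$ (so $\hat f$ vanishes on $Z$ and its image in $\bar\Di$ accumulates at $1$), set $h(z)=i\,{\rm Log}\bigl(\frac{1-z}{1+z}\bigr)$, and put $g:=h\circ f$. Then $g$ is unbounded (because $h$ blows up at $z=1$ and $1$ is a limit point of $f(\Di)$) while $\mathrm{Re}\,g$ is bounded, so $e^{\pm g}\in H^\infty$ and $e^g\notin\exp(H^\infty)$. Crucially, since $h(0)=0$ and $\hat f|_Z=0$, the extension $\hat g$ near $Z$ satisfies $\hat g|_Z=0$, hence $\widehat{e^g}|_Z\equiv 1$, so the class of $e^g$ restricts to the trivial class on $Z$ \emph{by construction}, not by any auxiliary correction. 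This produces the desired nonzero element of the kernel directly, and it descends to a function in $\mathcal O(M(\mathscr A_Z),\Co^*)$ without a bounded logarithm. Your line of attack needs exactly this idea --- precomposing $h$ with an element of $I(Z)$ of unit norm --- to replace the correcting-factor step that does not close.
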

\begin{proof}
Due to the Arens-Royden theorem (see \cite{A2}, \cite{Ro}) it suffices to show that there exists a function in $\mathcal O(M(\mathscr A_Z),\mathbb C^*)$ whose pullback  to $M(H^\infty)$ by $Q_Z$ does not have a bounded logarithm.

To this end let us take a function $f\in I(Z)$ such that $\|f\|_{H^\infty}=1$. Then $\hat f(x)=1$ for some $x\in M(H^\infty)$.
By the definition $K:=\hat f(M(H^\infty))$ is a compact subset of the closed unit disk $\bar{\Di}$ containing $0$ and $1$, and $f(\Di)$ is the open dense subset of $K$. In particular, $z=1$ is the limit point of $f(\Di)$.
Consider holomorphic function
$h(z):=i\cdot{\rm Log}\bigl(\frac{1-z}{z+1}\bigr)$, $z\in\Di$; here ${\rm Log}$ is the principal value of the logarithmic function. It is unbounded on a neighbourhood of $z=1$ in $\Di$ and therefore $h|_{f(\Di)}$ is unbounded as well. Moreover, ${\rm Re}\ h=-{\rm Arg}\bigl(\frac{1-z}{z+1}\bigr)$ is bounded (taking values in $[-\frac{\pi}{2},\frac{\pi}{2}]$). Thus $g:=h\circ f$ is an unbounded holomorphic function on $\Di$ such that $e^{\pm g}\in H^\infty$.  Since $h$ is holomorphic  at $z=0$, function $g$ can be presented as a uniformly convergent series in variable $f$ on each set  $N_f(\varepsilon):=\{z\in\Di\, :\, |f(z)|<\varepsilon\}$, $\epsilon \in (0,1)$. Then due to \cite[Th.\,3.2]{S1}  $g|_{N_f(\varepsilon)}$ admits  a continuous extension $\hat g\in\mathcal O(N_{\hat f}(\varepsilon))$,  $N_{\hat f}(\varepsilon):=
\{z\in\Di\, :\, |\hat f(x)|<\varepsilon\}$, such that $\hat g|_{Z}=h(0)=0$. By uniqueness of the extension $\widehat{e^g}|_{N_{\hat f}(\varepsilon)}=e^{\hat g}$. Thus, $\widehat{e^g}\in \mathcal O(M(H^\infty),\mathbb C^*)$  and equals $1$ on $Z$. In particular, there exists $\tilde g\in\mathcal O(M(\mathscr A_Z),\mathbb C^*)$ such that $Q_Z^*\tilde g=\widehat{e^g}$.  Since by our construction function $\widehat{e^g}$ does not have a bounded logarithm, $\tilde g$ is as required.
\end{proof}

Let $G$ be a complex Banach Lie group such that group $C(G)$ has a nontorsion element. The construction of a nontrivial holomorphic principal $G$-bundle on $M(\mathscr A_Z)$ is similar to that  of \cite[Th.\,3.1]{Br3}.

Consider an integer-valued continuous cocycle $\{c_{ij}\}_{i,j\in I}$ defined on an open cover $(U_i)_{i\in I}$ of $M(\mathscr A_Z)$ representing a nonzero element of $H^1(M(\mathscr A_Z),\mathbb Z)$, see Lemma \ref{lem7.4}. Let $\langle a\rangle:=\{a^n\, :\, n\in\Z\}\subset C(G)$ be the subgroup generated  by a nontorsion element $a$. Then $\langle a\rangle\cong\mathbb Z$. Let $p: B\rightarrow M(\mathscr A_Z)$ be the principal bundle with (discrete) fibre $C(G)$
defined on the cover $(U_i)_{i\in I}$ by $C(G)$-valued cocycle $\{a^{c_{ij}}\}_{i,j\in I}$.
\begin{Lm}\label{lem7.5}
$B$ is a nontrivial  bundle on $M(\mathscr A_Z)$.
\end{Lm}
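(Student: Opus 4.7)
The plan is to argue by contradiction: suppose $B$ is trivial and show that this forces $\{c_{ij}\}$ to be a coboundary in integer cohomology, contradicting its choice as representative of a nonzero class in $H^1(M(\mathscr A_Z),\mathbb Z)$.

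First I would record that $M(\mathscr A_Z)$ is connected, since $M(H^\infty)$ is the closure of the connected set $\Di$ and $M(\mathscr A_Z)$ is a continuous image of it under $Q_Z$. Next, assuming $B$ is trivial, there exist continuous (hence locally constant, since $C(G)$ is discrete) maps $h_i:U_i\to C(G)$ satisfying $a^{c_{ij}}=h_ih_j^{-1}$ on $U_i\cap U_j$. The subtle point is that $C(G)$ need not be abelian, so one has to be careful about which cosets assemble globally. Working with the \emph{right} cosets of $\langle a\rangle$ one gets $\langle a\rangle h_i=\langle a\rangle a^{c_{ij}}h_j=\langle a\rangle h_j$, so the collection $\{\langle a\rangle h_i\}$ defines a locally constant map $M(\mathscr A_Z)\to\langle a\rangle\backslash C(G)$.

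By connectedness this map is constant, equal to some $\langle a\rangle g$. Hence $h_i=k_ig$ with $k_i:U_i\to\langle a\rangle$ locally constant. Writing $k_i=a^{n_i}$ for unique locally constant $n_i:U_i\to\mathbb Z$ (which is possible because $a$ is nontorsion, so $\langle a\rangle\cong\mathbb Z$), one computes
\[
a^{c_{ij}}=h_ih_j^{-1}=k_ik_j^{-1}=a^{n_i-n_j},
\]
so $c_{ij}=n_i-n_j$ on every overlap. This exhibits $\{c_{ij}\}$ as a coboundary in $\check C^1(\mathfrak U,\mathbb Z)$, contradicting the fact that it was chosen to represent a nonzero class in $H^1(M(\mathscr A_Z),\mathbb Z)$ (which is nontrivial by Lemma \ref{lem7.4}).

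The main obstacle is the potential non-abelianness of $C(G)$: one needs to pick the correct side of the cosets so that the locally constant global section exists. Once right cosets are used, the rest is a direct calculation using that $a$ generates an infinite cyclic subgroup.
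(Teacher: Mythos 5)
Your proof is correct, and it takes a genuinely cleaner route than the paper. The paper's own argument restricts the integer cocycle $\{Q_Z^*c_{ij}\}$ to $\Di$, splits it there using $H^1(\Di,\Z)=0$, constructs a global locally constant map $g$ on $\Di$ (constant by connectedness of $\Di$), and then invokes the density of $\Di$ in $M(H^\infty)$ together with the closedness of $\langle a\rangle$ in the discrete group $C(G)$ to propagate the conclusion from $\Di$ to all of $Q_Z^{-1}(U_i)$, before finally descending to $M(\mathscr A_Z)$. You bypass the detour through $\Di$ entirely: from the trivialization $a^{c_{ij}}=h_ih_j^{-1}$ you pass to right cosets in $\langle a\rangle\backslash C(G)$, where the $\langle a\rangle h_i$ visibly glue to a global locally constant section; connectedness of $M(\mathscr A_Z)$ forces it to be constant, and the rest is the same algebra the paper uses at the end. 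Your choice of right cosets is exactly the correct one given the convention $a^{c_{ij}}=h_ih_j^{-1}$ (with the paper's convention $g_j=g_ia^{c_{ij}}$ one would symmetrically use left cosets $g_i\langle a\rangle$), and your use of the injectivity of $n\mapsto a^n$ --- which is where the nontorsion hypothesis enters --- to recover $c_{ij}=n_i-n_j$ matches the paper's final step. The two proofs buy the same thing; yours is shorter and avoids the density/closedness argument, at the cost of making explicit the (easily checked) connectedness of $M(\mathscr A_Z)$ and the coset bookkeeping.
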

\begin{proof}
Suppose, on the contrary, that $B$ is trivial. Then there are $g_i\in C(U_i , C(G))$, $i\in I$, such that
\begin{equation}\label{equ7.3}
g_j(x)=g_i(x)\, a^{c_{ij}(x)}\quad\text{for all}\ x\in U_i\cap U_j,\quad i,j\in I.
\end{equation}
Next, the restriction of $\{Q_{Z}^*c_{ij}\}_{i,j\in I}$ to the open cover $(\Di\cap Q_Z^{-1}(U_i))_{i\in I}$ of $\Di$ represents an element of the cohomology group $H^1(\Di,\Z)$. Since $\Di$ is contractible, this group is trivial. In particular, there exist continuous functions $c_i: \Di\cap Q_Z^{-1}(U_i)\rightarrow \Z$ such that
\[
c_{j}(x)=c_i(x)+(Q_Z^*c_{ij})(x)\quad\text{for all}\ x\in (\Di\cap Q_Z^{-1}(U_i))\cap (\Di\cap Q_Z^{-1}(U_j)),\  i,j\in I.
\]
This and \eqref{equ7.3} imply that
\[
g(x):=(Q_Z^*g_i)(x)\,a^{-c_i(x)},\quad x\in \Di\cap Q_Z^{-1}(U_i),\quad i\in I,
\]
is a continuous map on $\Di$ with values in $C(G)$.  Since $C(G)$ is a discrete space and $\Di$ is connected, $g(x)=g(0)$ for all $x\in\Di$. Hence each map $g^{-1}(0)\, Q_Z^*g_i:\Di\cap Q_{Z}^{-1}(U_i)\rightarrow C(G)$ has range in $\langle a\rangle$. 
Further, since $\Di$ is an open dense subset of $M(H^\infty)$ and $Q_Z^{-1}(U_i)\subset M(H^\infty)$ is open, $\overline{\Di\cap Q_Z^{-1}(U_i)}$  contains $Q_Z^{-1}(U_i)$. Since $Q_Z^*g_i$ is continuous on $Q_Z^{-1}(U_i)$ and $\langle a\rangle$ is a closed subset of $C(G)$, extending  $(g^{-1}(0)\, Q_Z^*g_i)|_{\Di\cap Q_Z^{-1}(U_i)}$ by continuity we get that
each $g^{-1}(0)\, Q_Z^*g_i: Q_Z^{-1}(U_i)\rightarrow C(G)$ has range in $\langle a\rangle$. Therefore each $g^{-1}(0)\, g_i: U_i\rightarrow C(G)$ has range in $\langle a\rangle$ as well.

Then equations (cf. \eqref{equ7.3})
\[
g^{-1}(0)\, g_j(x)=g^{-1}(0)\, g_i(x)\, a^{c_{ij}(x)}\quad\text{for all}\ x\in U_i\cap U_j,\quad i,j\in I,
\] 
show that cocycle $\{a^{c_{ij}}\}_{i,j\in I}$ determines the trivial bundle in the category of principal bundles on $M(\mathscr A_Z)$ with fibre $\langle a\rangle\cong\Z$. Equivalently, cocycle
$\{c_{ij}\}_{i,j\in I}$ represents $0$ in $H^1(M(\mathscr A_Z),\Z)$, a contradiction. 
\end{proof}

Now, consider an element $b\in G$ such that $q(b)=a$. We set $g_{ij}:=b^{c_{ij}}$. Then $\{g_{ij}\}_{i,j\in I}$ is a holomorphic $G$-valued 1-cocycle on cover $(U_i)_{i\in I}$ of $M(\mathscr A_Z)$. It determines a holomorphic principal bundle $P$ on $M(\mathscr A_Z)$ with fibre $G$. By the definition the associated bundle $P_{C(G)}$ coincides with $B$ and therefore by Lemma \ref{lem7.5} it is nontrivial. Hence, $P$ is a nontrivial holomorphic $G$-bundle by Theorem \ref{te1.4}.

The proof of Proposition \ref{prop1.6} is complete.
\sect{Proof of Theorem \ref{teo1.7}}
We will establish a more general result required in the proofs of theorems of Section~3.2.\smallskip

Let $P_1$ be a holomorphic principal bundle on $M(\mathscr A_Z)$ with fibre a complex Banach Lie group $G$ and $P_2$ be a holomorphic principal $G$-bundle on an open set $U\subset M(\mathscr A_Z)$. Let $V\Subset U$ be an open subset.
\begin{Th}\label{teo8.1}
If $P_1|_U$ and $P_2$ are topologically isomorphic,  then $P_1|_V$ and $P_2|_V$ are holomorphically isomorphic.
\end{Th}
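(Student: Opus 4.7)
The strategy is to reduce the problem to producing a holomorphic section of the natural ``Iso bundle'', then to invoke, with minor modifications, the machinery developed for Theorem~\ref{te1.4} and Theorem~\ref{teo6.1}.

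First, I would form the holomorphic fibre bundle $E := {\rm Iso}(P_1|_U, P_2)$ over $U$, whose fibre at $x$ is the set of $G$-equivariant bijections $(P_1)_x \to (P_2)_x$. Holomorphic (respectively continuous) sections of $E$ over an open $W \subset U$ correspond bijectively to holomorphic (respectively continuous) isomorphisms $P_1|_W \to P_2|_W$; the hypothesis thus gives a continuous section of $E$ over $U$ and it suffices to produce a holomorphic section over $V$. Although $E$ is not literally a principal $G$-bundle --- its structure group is the gauge sheaf ${\rm Ad}(P_2)$ rather than $G$ itself --- the local model is still $G$ and the transitions act by inner automorphisms, so every cohomological and Oka-type obstruction entering below is governed by $G$.

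Second, I would choose a finite, connected, common holomorphically trivializing cover $(U_i)_{i\in I}$ of a neighbourhood of $\bar V$ in $U$ and record the defining cocycles $g^1_{ij}, g^2_{ij} \in \mathcal O(U_i\cap U_j, G)$ of $P_1, P_2$ together with continuous $h_i \in C(U_i, G)$ implementing the topological isomorphism, $h_i\, g^1_{ij} = g^2_{ij}\, h_j$. Since each $U_i$ is connected and $C(G)$ is discrete, $q(h_i)$ is constant with some value $a_i \in C(G)$; picking lifts $\tilde a_i \in G$, replacing the realization of $P_1$ by the holomorphically isomorphic one with cocycle $\tilde a_i\, g^1_{ij}\, \tilde a_j^{-1}$, and replacing $h_i$ by $h_i\, \tilde a_i^{-1}$, I would reduce to the case in which every $h_i$ takes values in $G_0$. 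Consistency of this reduction with the existence of a topological isomorphism is guaranteed by applying Theorem~\ref{te1.4} to the associated $C(G)$-bundle of $E$, which is topologically trivial in view of the given continuous section.

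Third, in the resulting connected-fibre situation, I would follow the proof of Theorem~\ref{teo6.1}. Using Proposition~\ref{prop6.2} (whose content depends only on ${\rm dim}\,\bar W = 2$ and $H^2(\bar W, \Z) = 0$) I would lift the $G_0$-valued data through the universal cover $G_u \to G_0$, reducing to the simply connected case. Exhausting $\bar V$ by a chain of unions of open Blaschke sets and iterating the Cartan-type splitting of Theorem~\ref{cartan} (via Proposition~\ref{small}) together with the Runge-type approximation of Theorem~\ref{runge}, I would produce holomorphic $\tilde h_i \in \mathcal O(V\cap U_i, G)$ satisfying the same cocycle relation as the $h_i$. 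These glue to the desired holomorphic isomorphism $P_1|_V \to P_2|_V$.

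The main obstacle I anticipate is the formal bookkeeping around the twisted Iso bundle $E$: its structure group is ${\rm Ad}(P_2)$, not $G$, so one must check that each ingredient of Theorems~\ref{te1.4} and \ref{teo6.1} --- the $G_u$-lift, the Cartan splitting over Blaschke covers, the Runge approximation --- adapts to this twisted setting. This is essentially automatic because $G$ acts on itself by conjugation preserving $G_0$ and lifting canonically to the universal cover, but making the bookkeeping precise is the main technical burden.
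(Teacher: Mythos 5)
Your proposal follows the paper's own route closely: reduce the transition data to $G_0$-valued, lift through the universal cover $G_u\to G_0$, and split the resulting cocycle by a Cartan-type decomposition over a Blaschke-set exhaustion. The paper packages the ``Iso bundle'' data as a cocycle $s_{ij}:=h_{ij}\,g_{ji}$ in the adjoint bundle $A(P_1)$ (fibre $G_0$, transitions ${\rm Ad}(g_{ij})$), lifts to $A_u(P_1)$ via Proposition~\ref{prop8.2}, and splits with Proposition~\ref{prop8.3}; your ${\rm Iso}(P_1,P_2)$ torsor is the same object in different packaging, and the ``twisted'' Cartan/Runge bookkeeping you flag as the main burden is exactly what those two propositions carry out.

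One concrete slip: you cannot in general choose a finite \emph{connected} trivializing cover of $M(\mathscr A_Z)$, since the space is far from locally connected, so the step ``each $U_i$ connected, hence $q(h_i)$ constant'' is not available. The paper's fix, which you should adopt: because each $f_i$ extends continuously to the compact $\bar U_i$ and $C(G)$ is discrete, $q(f_i)$ has finite image and is therefore \emph{locally} constant; any locally constant $C(G)$-valued map lifts to a locally constant $c_i\in\mathcal O(U_i,G)$, and conjugating the cocycle of $P_1$ by the $c_i$ achieves the same reduction with no connectedness assumption. With that substitution the rest of your plan matches the paper's proof.
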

\begin{proof}

Passing to suitable refinements of open covers of $M(\mathscr A_Z)$ and $U$ over which bundles $P_1$ and $P_2$ are trivialized and diminishing $U$, if necessary, without loss of generality we may assume that
$P_1$ is defined on a finite open cover $\mathfrak U_1=(U_i)_{i\in I_1}$ of $M(\mathscr A_Z)$ by a holomorphic $G$-valued cocycle $g=\{g_{ij}\in \mathcal O(U_i\cap U_j,G)\}_{i,j\in I_1}$ and there is a subset $I_2\subset I_1$ such that $P_2$ is defined on the finite open cover $\mathfrak U_2=(U_i)_{i\in I_2}$ of $U$ by a holomorphic $G$-valued cocycle $h=\{h_{ij}\in\mathcal O(U_i\cap U_j,G)\}_{i,j\in I_2}$. 
By the hypothesis of the theorem $P_1|_U$ and $P_2$ are topologically isomorphic, i.e., there exist $f_i\in C(U_i,G)$, $i\in I_2$, such that for all $i,j$
\begin{equation}\label{eq8.1}
f_i^{-1} g_{ij} f_j=h_{ij}\quad {\rm on}\quad  U_i\cap U_j.
\end{equation}
Applying to equations \eqref{eq8.1} 
the quotient homomorphism $q:G\rightarrow C(G):=G/G_0$ we get for all $i,j\in I_2$
\[
q(f_i)^{-1}\, q(g_{ij})\, q(f_j)=q(h_{ij})\quad {\rm on}\quad  U_i\cap U_j.
\]
This shows that the associated principal $C(G)$-bundles  $P_{1_{C(G)}}|_U$ and $P_{2_{C(G)}}$ are isomorphic.

Diminishing $U$, if necessary, without loss of generality we may assume that each $f_i$ admits a continuous extension to $\bar U_i$. Then the image of each $q(f_i)$ is finite.  In particular, there exist continuous locally constant maps $c_i: U_i\rightarrow G$ such that $q\circ c_i=q(f_i)$, $i\in I_2$. We set $c_i:=1_G$ for all $i\in I_1\setminus I_2$ and define a  holomorphic principal $G$-bundle $\widetilde P_1$ on $M(\mathscr A_Z)$ by cocycle $\tilde g=\{\tilde g_{ij}:=c_i^{-1}g_{ij}c_j\in\mathcal O(U_i\cap U_j,G)\}_{i,j\in I_1}$. Then $\widetilde P_1$ is holomorphically isomorphic to $P_1$ and $q(\tilde g)|_{\mathfrak U_2}=q(h)$.
To avoid abuse of notation in what follows we assume that cocycle $g$ initially has this property. 

Let $A(P_1)$ be a holomorphic fibre bundle on $M(\mathscr A_Z)$ with fibre $G_0$ defined on  the cover $\mathfrak U_1$ by holomorphic cocycle ${\rm Ad}(g)=\{{\rm Ad}(g_{ij})\in\mathcal O(U_i\cap U_j, {\rm Aut}(G_0))\}_{i,j\in I_1}$ (here ${\rm Ad}(u)(v):=u^{-1} v u$, $u\in G$, $v\in G_0$.) Consider the family of holomorphic sections $s=\{s_{ij}\in \Gamma_{\mathcal O}(U_i\cap U_j, A(P_1))\}_{i,j\in I_2}$ of $A(P_1)$ such that in the trivialization of $A(P_1)$ on $U_i$, 
\[
s_{ij}(x)=(x,h_{ij}(x)g_{ji}(x)),\quad x\in U_i.
\]
(Note that $s_{ij}$ is well-defined as $q(h_{ij}g_{ji})=1_{C(G)}$.)
\begin{Lm}\label{lem8.1}
Family $s$ forms a holomorphic $A(P_1)$-valued $1$-cocycle on  cover $\mathfrak U_2$.
\end{Lm}
\begin{proof}
We must show that $s_{ij} s_{jk} s_{ki}=1_{A(P_1)}$ on $U_i\cap U_j\cap U_k\ne\emptyset$, $i,j,k\in I_2$.  It suffices to check this in local coordinates of $A(P_1)$ on $U_i$. Then, for $x\in U_i\cap U_j\cap U_k$,
\[
\begin{array}{l}
\displaystyle
s_{ij}(x)=\bigl(x, h_{ij}(x)g_{ji}(x)\bigr),\quad s_{jk}(x)=\bigl(x,{\rm Ad}(g_{ij}^{-1}(x))(h_{jk}(x)g_{kj}(x))\bigr),\medskip\\
\displaystyle  s_{ki}(x)=\bigl(x,{\rm Ad}(g_{ik}^{-1}(x))(h_{ki}(x)g_{ik}(x))\bigr).
\end{array}
\]
Using these and that $\{g_{ij}\}_{i,j\in I_1}$ and $\{h_{ij}\}_{i,j\in I_2}$ are cocycles we obtain
\[
\begin{array}{l}
s_{ij}(x)s_{jk}(x)s_{ki}(x)\medskip\\
\qquad\qquad\qquad =\bigl(x,(h_{ij}(x)g_{ji}(x))\cdot (g_{ij}(x)h_{jk}(x)g_{kj}(x)g_{ji}(x))\cdot (g_{ik}(x)h_{ki}(x)g_{ik}(x)g_{ki}(x))\bigr)\medskip\\
\qquad\qquad\qquad=\bigl(x, h_{ij}(x)h_{jk}(x)h_{ki}(x)\bigr)=(x,1_G)=: 1_{A(P_1)}(x).
\end{array}
\]
\end{proof}
Let $\psi: G_u\rightarrow G_0$ be the universal covering of $G_0$. By the covering homotopy theorem, for each automorphism ${\rm Ad}(g)$, $g\in G$, of $G_0$ there is an automorphism $A(g)$ of $G_u$ such that $\psi\circ A(g)={\rm Ad}(g)\circ \psi$. Also, since the fundamental group of $G_0$ is naturally isomorphic to a subgroup of the center of $G_u$, it is readily seen that $A(g)={\rm Ad}(\tilde g)$ for all $\tilde g\in G_u$ such that $\psi(\tilde g)=g$. By definition, $A(g_1g_2)=A(g_1)  A(g_2)$ for all $g_1,g_2\in G$. Hence, $A: G\rightarrow {\rm Inn}(G_u)$, $g\mapsto A(g)$, is a homomorphism from $G$ to the group of inner automorphisms of $G_u$. Since $\psi$ is locally biholomorphic, the covering homotopy theorem implies that map $A(\cdot)(h): G\rightarrow G_u$ is holomorphic for each $h\in G_u$. 

 Let us consider a topological bundle $A_u(P_1)$ on $M(\mathscr A_Z)$ with fibre $G_u$ defined on open cover $\mathfrak U_1$ by  holomorphic transitions functions $\{A(g_{ij})\in\mathcal O(U_i\cap U_j\times G_u, G_u)\}_{i,j\in I_1}$ (cf. Section~6.5 above). The quotient map $\psi: G_u\rightarrow G_0$ induces a holomorphic bundle map
$\Psi: A_u(P_1)\rightarrow A(P_1)$ defined in local coordinates on $U_i$ as $\Psi(x,g):=(x,\psi(g))$, $(x,g)\in U_i\times G_u$, $i\in I$.
\begin{Proposition}\label{prop8.2}
There exists a holomorphic $A_u(P_1)$-valued $1$-cocycle $\tilde s=\{\tilde s_{ij}\}$ on $\mathfrak U_2$  such that $\Psi\circ \tilde s_{ij}=s_{ij}$ for all $i,j\in I_2$.
\end{Proposition}
\begin{proof}
Since $q(g_{ij})=q(h_{ij})$ for all $i,j\in I_2$, the hypothesis of the theorem implies that there exists $f_i\in C(U_i,G_0)$, $i\in I_2$, such that for all $i,j$
\[
f_i^{-1}g_{ij} f_j=h_{ij}\quad {\rm on}\quad  U_i\cap U_j.
\]
Let $s_k$ be a continuous section of $A(P_1)|_{U_k}$ such that $s_k(x)=(x,f_k(x))$, $x\in U_k$, in local coordinates on $U_k$. Then for $x\in U_i\cap U_j$ we obtain (using local coordinates on $U_i$)
\[
\begin{array}{l}
\displaystyle
s_i(x)s_{ij}(x)s_j^{-1}(x)=\bigl(x,f_i(x) (h_{ij}(x)g_{ji}(x)) ({\rm Ad}(g_{ij}^{-1}(x)))(f_j^{-1}(x))\bigr)\medskip\\
\displaystyle = \bigl(x,f_i(x)h_{ij}(x)f_j^{-1}(x)g_{ij}^{-1}(x)\bigr)=(x,1_{G})=1_{A(P_1)}(x).
\end{array}
\]
Thus for all $i,j\in I_2$
\[
s_{ij}=s_i^{-1} s_j\quad {\rm on}\quad U_i\cap U_j.
\]
Passing to a refinement of $\mathfrak U_1$ and then diminishing $U$, if necessary, without loss of generality we may assume that for each $i\in I_2$ the closure of $f_i(U_i)$ belongs to a simply connected open subset $V_i\subset G_0$. In particular, the holomorphic map $\psi^{-1}_i: V_i\rightarrow G_u$ inverse to $\psi|_{V_i}$ is defined.
Let $\tilde s_k$ be a continuous section of $A_u(P_1)|_{U_k}$ such that $\tilde s_ k(x)=(x, \psi^{-1}_k(f_k(x)))$, $x\in U_k$, in local coordinates on $U_k$. We set for all $i,j\in I_2$
\[
\tilde s_{ij}:=\tilde s_i^{-1}\tilde s_j\quad {\rm on}\quad U_i\cap U_j.
\]
Then
\[
\Psi\circ \tilde s_{ij}=s_{i}^{-1} s_{j}=s_{ij}\quad {\rm on}\quad U_i\cap U_j.
\]
Since map $\psi$ is locally biholomorphic, each $\tilde s_{ij}\in \Gamma_{\mathcal O}(U_i\cap U_j, A_u(P_1))$.
By definition,  family $\tilde s=\{\tilde s_{ij}\}_{i,j\in I_2}$ is an $A_u(P_1)$-valued $1$-cocycle on $\mathfrak U_2$.
\end{proof}

We set $I:=\{i\in I_2\, :\, V\cap U_i\ne\emptyset\}$  and $V_i:=V\cap U_i$, $i\in I$.

\begin{Proposition}\label{prop8.3}
 There exist sections $t_i\in \Gamma_\mathcal O (V_i,A_u(P_1))$, $i\in I$, such that for all $i,j\in I$
 \[
 t_i^{-1}t_j=\tilde s_{ij}\quad {\rm on}\quad V_i\cap V_j.
 \]
 \end{Proposition}
 \begin{proof}
 As in the proof of Theorem \ref{teo6.1} let us construct open covers
 $\mathfrak W^i=(W_j^i)_{1\le j\le N+1}$, $i\in\N$, of $\bar V$ consisting of relatively compact open subsets of sets of cover $\mathfrak U_2$ of $U$ satisfying 
 
 \noindent (a) all $Q_Z^{-1}(W_j^i)$, $2\le j\le N+1$, are open Blaschke sets  relatively compact in $M_a\setminus Z$; 
 
 \noindent (b) $W_{j}^{i+1}\Subset W_{j}^i$ for all $i,j$;
(c) all sets $\bar W_j^i$, $2\le j\le N+1$, $i\in\N$, are holomorphically convex.

For $1\le k\le N+1$ and $i\in\N$ we set
\[
Z_{k}^i:=\bigcup_{j=1}^k W_{j}^i.
\]
By $\{\tilde s_{ij}^1\}_{1\le i,j\le N+1}$ we denote the pullback of cocycle $\tilde s=\{\tilde s_{ij}\}_{i,j\in I_2}$ to cover $\mathfrak W^1$
by the refinement map. Using induction on $k\in\{1,\dots, N+1\}$ we prove the following statement:\smallskip

\noindent $(\circ)$ {\em There exist sections $t_i^k\in \Gamma_\mathcal O(W_i^k,A_u(P_1))$, $1\le i\le k$, such that for all $1\le i,j\le k$}
\begin{equation}\label{eq8.2}
(t_i^k)^{-1}\, t_j^k= \tilde s_{ij}^1\quad on\quad W_i^k\cap W_j^k.
\end{equation}

If $k=1$, the statement is trivial (we just set $t_1^1:=1_{A_u(P_1)}|_{W_1^1}$).

Assuming that the statement is valid for $k\in\{1,\dots, N\}$ let us prove it for $k+1$. 

We set $t_j^k:=1_{A_u(P_1)}|_{W_j^k}$ for $k+1\le j\le N+1$ and define a new holomorphic $A_u(P_1)$-valued $1$-cocycle $\bar s^k=\{\bar s_{ij}^k\}_{1\le i, j\le N+1}$ on $\mathfrak W^k$ by the formulas
\[
\bar s_{ij}^k:=t_i^k\, \tilde s_{ij}^1\, (t_j^k)^{-1}\quad {\rm on}\quad W_i^k\cap W_j^k,\quad 1\le i,j\le N+1.
\]
Then for all $1\le i,j\le k$ we have $\bar s_{ij}^k=1_{A_u(P_1)}$ on $W_i^k\cap W_j^k$. Moreover, restrictions of $\bar s_{i\,k+1}^k$, $1\le i\le k$, to $Z_k^k\cap W_{k+1}^k$ glue together to define a section $t_{k+1}\in \Gamma_\mathcal O(Z_k^k\cap W_{k+1}^k,A_u(P_1))$. Indeed, if $x\in (W_i^k\cap W_j^k)\cap W_{k+1}^k$, $1\le i,j\le k$,  then since $\bar s^k$ is a cocycle,
\[
\bar s_{i\,k+1}^k(x)\, (\bar s_{j\, k+1}^k(x))^{-1}=\bar s_{ij}^k(x)=1_{A_u(P_1)}(x).
\]
Hence,
\[
\bar s_{i\,k+1}^k=\bar s_{j\,k+1}^k\quad {\rm on}\quad (W_i^k\cap W_j^k)\cap W_{k+1}^k
\]
as required.

Next, consider the open cover $\{Z_k^{k+1}, W_{k+1}^{k+1}\}$ of $Z_{k+1}^{k+1}$.
By our definition, 
$\bar Z_k^{k+1}\cap\bar W_{k+1}^{k+1}\Subset Z_k^k\cap W_{k+1}^k$ and so $Q_Z^{-1}(\bar Z_k^{k+1}\cap\bar W_{k+1}^{k+1})$ is a compact subset of open Blaschke set $W_{k+1}^k$. Moreover, $\bar W_{k+1}^{k+1}$ is holomorphically convex. 
Choosing some open sets $U_i$, $i=1,2$, such that $Z_{k}^{k+1}\Subset U_1\Subset Z_{k}^k$ and $W_{k+1}^{k+1}\Subset U_2\Subset W_{k+1}^k$ and using that $t_{k+1}|_{\bar U_1\cap \bar U_2}$ belongs to the connected component of $1_{A_u(P_1)}|_{\bar U_1\cap\bar U_2}$ (this follows from Theorem \ref{compactif} if we write $t_{k+1}$ in local coordinates on $W_{k+1}^k$) we obtain by Theorem \ref{cartan} that there exist some sections $ t_{1\,k+1}\in \Gamma_\mathcal O (Z_{k}^{k+1}, A_u(P_1))$ and $t_{2\, k+1}\in \Gamma_\mathcal O (W_{k+1}^{k+1}, A_u(P_1))$ such that
\[
(t_{1\,k+1})^{-1}\,  t_{2\,k+1}= t_{k+1}\quad {\rm on}\quad Z_k^{k+1}\cap W_{k+1}^{k+1}.
\]
Let us define 
\[
t_i^{k+1}(x):=\left\{
\begin{array}{ccccc}
\displaystyle
t_{1\,k+1}(x)\, t_i^k(x),&x\in W_i^{k+1},&1\le i\le k,\medskip\\
t_{2\,k+1}(x),&x\in W_{k+1}^{k+1}.
\end{array}
\right.
\]
Then by the induction hypothesis for $x\in W_i^{k+1}\cap W_j^{k+1}$, $1\le i,j\le k$,
\[
(t_{i}^{k+1}(x))^{-1}\, t_j^{k+1}(x)=(t_{i}^k(x))^{-1}\, (t_{1\,k+1}(x))^{-1}\, t_{1\, k+1}(x)\,  t_j^k(x)=\tilde s_{ij}^1(x).
\]
Also, for $x\in W_i^{k+1}\cap W_{k+1}^{k+1}$, $1\le i\le k$,
\[
\begin{array}{l}
\displaystyle
(t_{i}^{k+1}(x))^{-1}\, t_j^{k+1}(x)=(t_{i}^k(x))^{-1}\, (t_{1\,k+1}(x))^{-1}\, t_{2\, k+1}(x)=(t_{i}^k(x))^{-1}\, t_{k+1}(x)\medskip
\\
\displaystyle \ \quad\qquad\qquad\qquad\quad\ =(t_{i}^k(x))^{-1}\,\bar s_{i\,k+1}^k(x)\, t_{k+1}^k(x)=\tilde s_{i\,k+1}^1.
\end{array}
\]

This completes the proof of the induction step and therefore  statement $(\circ)$  is valid for all $k\in\{1,\dots, N+1\}$.

Applying it to $k=N+1$ we obtain
\begin{equation}\label{eq8.3}
(t_i^{N+1})^{-1}\, t_{j}^{N+1}=\tilde s_{ij}^1\quad {\rm on}\quad W_i^{N+1}\cap W_j^{N+1}\quad {\rm for\ all}\quad 1\le i,j\le N+1.
\end{equation}

Let $\tau: \{1,\dots, N+1\}\rightarrow I_2$ be the refinement map of the refinement $\mathfrak W^{N+1}$ of $\mathfrak U_2$.
Suppose $\bar V_i:=\overline{V\cap U_i}$ is covered by sets $W_{i_s}^{N+1}$, $1\le s\le k_i$. We define $t_i\in \Gamma_\mathcal O(V_i,A_u(P_1))$ by the formulas
\begin{equation}\label{eq8.4}
t_i(x):=t_{i_s}^{N+1}(x)\, \tilde s_{\tau(i_s)i}(x)\quad {\rm for}\quad x\in W_{i_s}^{N+1}\cap V_i, \  s\in \{1,\dots,k_i\}.
\end{equation}
Let us show that $t_i$ is well-defined. Indeed, if $x\in V_i\cap (W_{i_{s_1}}^{N+1}\cap W_{i_{s_2}}^{N+1}) $, then by \eqref{eq8.3}
\[
\begin{array}{r}
\displaystyle
(\tilde s_{\tau(i_{s_1})i}(x))^{-1}\, (t_{i_{s_1}}^{N+1}(x))^{-1}\, t_{i_{s_2}}^{N+1}(x)\, \tilde s_{\tau(i_{s_2})i}(x)=\tilde s_{i\tau(i_{s_1})}(x)\,\tilde s_{\tau(i_{s_1})\tau(i_{s_2})}(x)\, s_{\tau(i_{s_2})i}(x)\medskip\\
\displaystyle =s_{ii}(x)=1_{A_u(P_1)}(x).\quad\qquad\qquad\quad\,
\end{array}
\]
Thus, the right-hand sides of \eqref{eq8.4} glue together over all nonempty intersections to define a  global section of $A_u(P_1)$ over $V_i$ as required. 

Further, for $x\in (V_i\cap W_{i_s}^{N+1})\cap (V_j\cap W_{j_l}^{N+1})$ we obtain using \eqref{eq8.3}
\[
\begin{array}{l}
\displaystyle
(t_i(x))^{-1}\, t_j(x)= (\tilde s_{\tau(i_s)i}(x))^{-1}\, (t_{i_s}^{N+1}(x))^{-1} \, t_{j_l}^{N+1}(x)\, \tilde s_{\tau(j_l)j}(x)\medskip\\
\displaystyle \ \quad\qquad\qquad\quad =\tilde s_{i\tau(i_s)}(x)\,  \tilde s_{\tau(i_s)\tau(j_l)}\, \tilde s_{\tau(j_l)j}(x)=\tilde s_{ij}(x).
\end{array}
\]

Since $V_i\cap V_j$ is the union off all nonempty sets of the form $(V_i\cap W_{i_s}^{N+1})\cap (V_j\cap W_{j_l}^{N+1})$, this completes the proof of the proposition.
 \end{proof}

To finish the proof of the theorem we set
\[
u_i:=\Psi\circ t_i\in \Gamma_\mathcal O(V_i, A(P_1)),\quad i\in I.
\]
Then Propositions \ref{prop8.2} and \ref{prop8.3} imply that for all $i,j\in I$
\begin{equation}\label{eq8.5}
u_i^{-1}u_j=s_{ij}\quad {\rm on}\quad V_i\cap V_j.
\end{equation}
Writing each $u_i$ in local coordinates on $V_i$,
\[
u_i(x)=(x, g_i(x)),\quad x\in U_i,\quad g_i\in \mathcal O(U_i,G),
\]
and arguing as in the proof of Proposition \ref{prop8.2} (but in the reverse order) for all $i,j\in I$ we obtain
\[
g_i^{-1}g_{ij}g_j=h_{ij}\quad {\rm on}\quad V_i\cap V_j.
\]
These identities show that bundles $P_1|_V$ and $P_2|_V$
are holomorphically isomorphic.

The proof of Theorem \ref{teo8.1} (and therefore of Theorem \ref{teo1.7}) is complete.
\end{proof}
\sect{Proofs of Theorems \ref{teo2.9} and \ref{teo1.10}}
\subsection{Auxiliary result}
Let $\pi: P\rightarrow M(\mathscr A_Z)$ be a topological principal bundle with fibre a complex Banach Lie group $G$. 
\begin{Proposition}\label{prop9.1}
There exist a compact polyhedron $Q$ of dimension $\le 2$, a surjective continuous map $M(\mathscr A_Z)\rightarrow Q$ and a topological principal $G$-bundle $p: E\rightarrow Q$ such that  pullback $f^*E$ is isomorphic to $P$.
\end{Proposition}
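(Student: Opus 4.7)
The proposition is a polyhedral approximation statement for principal bundles. My plan is to derive it by combining the dimension-two bound for $M(\mathscr A_Z)$ (Proposition \ref{prop1.1}(b)) with a nerve construction and the classification of topological principal $G$-bundles on paracompact spaces by homotopy classes of maps into Milnor's classifying space $BG$. Schematically, I will replace $M(\mathscr A_Z)$ by a finite $2$-dimensional polyhedron obtained as the nerve of a finite trivializing cover, and construct $E$ on that polyhedron as the pullback of the universal bundle along a suitable classifying map.

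\textbf{Setup via the nerve.} By compactness of $M(\mathscr A_Z)$ and local triviality of $P$, I would first choose a finite open cover $\mathfrak U=(U_i)_{i=1}^n$ over which $P$ is trivialized, with $G$-valued continuous cocycle $\{g_{ij}:U_i\cap U_j\to G\}$. Since $\dim M(\mathscr A_Z)=2$ by Proposition \ref{prop1.1}(b), the defining property of covering dimension allows one to refine $\mathfrak U$ (keeping it finite) to have order $\le 3$; the geometric realization $Q:=|N(\mathfrak U)|$ of the nerve of this refinement is then a compact polyhedron with $\dim Q\le 2$. A partition of unity $\{\phi_i\}$ subordinate to $\mathfrak U$ with $\phi_i^{-1}((0,1])=U_i$ gives the canonical nerve map
\[
f:M(\mathscr A_Z)\to Q,\qquad f(x):=\sum_{i=1}^n\phi_i(x)v_i,
\]
where $v_i$ is the vertex of $Q$ corresponding to $U_i$. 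Surjectivity onto $Q$ can be arranged, if needed, by replacing $Q$ by the smallest subcomplex of a barycentric subdivision containing $f(M(\mathscr A_Z))$; this stays within dimension $\le 2$.

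\textbf{Construction of $E$.} By Milnor's theorem, topological principal $G$-bundles on the paracompact space $M(\mathscr A_Z)$ are classified by homotopy classes of maps into the classifying space $BG$, and the data $(\{g_{ij}\},\{\phi_i\})$ explicitly define a continuous classifying map $c:M(\mathscr A_Z)\to BG$ with $P\cong c^*EG$. Because $BG$ has the homotopy type of a CW complex and $\dim M(\mathscr A_Z)\le 2$, the map $c$ factors up to homotopy through $f$, i.e.\ there exists a continuous $g:Q\to BG$ with $c\simeq g\circ f$. Setting $E:=g^*EG$ gives
\[
f^*E=(g\circ f)^*EG\cong c^*EG\cong P,
\]
using that homotopic classifying maps yield isomorphic bundles.

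\textbf{Main obstacle.} The technical heart is the factorization $c\simeq g\circ f$: although $f^{-1}(\operatorname{St}(v_i))=U_i$ by construction, the values $g_{ij}(x)$ generically depend on $x$ and not only on $f(x)$, so one cannot literally pull a cocycle back through $f$. I would construct $g:Q\to BG$ skeleton by skeleton — setting $g(v_i):=c(x_i)$ for chosen $x_i\in U_i$ on vertices, joining these by paths on $1$-simplices, and filling $2$-simplices by disks — and then produce the homotopy $c\simeq g\circ f$ by the standard relative obstruction argument on the mapping cylinder of $f$. The obstructions to extension across the $k$-skeleton of $Q$ live in $\check H^k(M(\mathscr A_Z),\pi_{k-1}(BG))=\check H^k(M(\mathscr A_Z),\pi_{k-2}(G))$, and the dimension bound $\dim M(\mathscr A_Z)\le 2$ cuts the induction off at $k=2$; the vanishing of the two surviving obstructions is automatic from the fact that $c$ itself is globally defined on $M(\mathscr A_Z)$, which is precisely the consistency needed for the filling. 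This is the step requiring care and is where $\dim M(\mathscr A_Z)=2$ enters decisively.
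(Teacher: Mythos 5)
Your proposal hinges on the classification of topological principal $G$-bundles by homotopy classes of maps into Milnor's classifying space $BG$, followed by an obstruction-theoretic factorization of the classifying map through the nerve map $f\colon M(\mathscr A_Z)\to Q$. This is precisely the shortcut the paper explicitly flags as unavailable: immediately after stating Proposition \ref{prop9.1} the author writes that the proof \emph{would} be much shorter if one knew that the classifying space of $G$ were homotopy equivalent to an absolute neighbourhood retract, \emph{but that this is unknown in general}. Your step ``Because $BG$ has the homotopy type of a CW complex and $\dim M(\mathscr A_Z)\le 2$, the map $c$ factors up to homotopy through $f$'' rests on exactly that unknown fact. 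For a complex Banach Lie group $G$, the group $G$ itself is an ANR (being a metrizable Banach manifold, Palais), but there is no general theorem guaranteeing that Milnor's infinite join $EG/G$ has the homotopy type of an ANR or CW complex. The obstruction-theoretic factorization you sketch in ``Main obstacle'' — skeleton-by-skeleton extension over the mapping cylinder of $f$ and the computation of obstructions in $\check H^k(M(\mathscr A_Z),\pi_{k-1}(BG))$ — is an application of Hu's theory of maps from normal spaces into ANRs (the very result cited as \cite[Th.\,(11.4)]{Hu2} elsewhere in the paper), and it requires the target to be an ANR. Without that hypothesis the argument does not go through.

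There are also two secondary soft spots, though they are dominated by the first: (i) you cannot literally cut $Q$ down to a subcomplex containing $f(M(\mathscr A_Z))$, since the image of $f$ is a compact subset that need not be a subcomplex of any subdivision (the paper instead uses Marde\v{s}i\'c's lemma \cite[Lm.\,1]{Ma} to produce a genuinely surjective $f_\delta$ onto a polyhedron $Q_\delta$ of dimension $\le 2$); and (ii) the claim that the two surviving obstructions ``vanish automatically from the fact that $c$ itself is globally defined'' is a statement that needs proof, not a tautology — that globally defined data can be pushed down through $f$ is the whole difficulty.

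The paper's actual route is cocycle-level and avoids $BG$ entirely: it writes the cocycle of $P$ as $c_{ij}=g_{ij}\tilde c_{ij}$ with $\tilde c_{ij}$ close to $1_G$, transfers $\tilde c_{ij}$ to the Lie algebra via $\exp_G^{-1}$, uses the Grothendieck approximation property of $C(X)$ and the Stone--Weierstrass theorem to approximate by functions pulled back from a $2$-dimensional polyhedron (Marde\v{s}i\'c plus Freudenthal inverse-limit decompositions), then applies Ostrand's colored-dimension theorem to reorganize the cover of $Q$ into three families of pairwise disjoint closed sets, corrects the almost-cocycle to an honest $G$-valued cocycle on $Q$ by a Dugundji-extension argument (Lemma \ref{lem9.3}), and finally identifies $f^*E$ with $P$ by exhibiting an explicit continuous family of cocycles interpolating between them, extended across the triple intersection via the Arens extension theorem (Lemma \ref{lem9.5}). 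It is considerably more laborious, but it does not presuppose anything about the homotopy type of $BG$.
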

The proof of this technical result would be much shorter if we knew that the classifying space of group $G$ is homotopy equivalent to an absolute neighbourhood retract. (In general, it is unknown.)
\begin{proof}
Let $\exp_G:\mathfrak g\rightarrow G_0$ be the exponential map of the Lie algebra $\mathfrak g$ of $G$ and $U'\subset\mathfrak g$ be an open ball centered at $0$ such that $\exp_G: U'\rightarrow V'=:\exp_G(U')$ is biholomorphic. We fix a smaller open ball $U\subset U'$ centered at $0$ such that $V\cdot V\subset V'$, where $V:=\exp_G(U)$.\smallskip

Passing to a refinement, if necessary, without loss of generality we may assume that $P$ is defined on a finite open cover $\mathfrak U=(U_i)_{i\in I}$, $I:=\{1,2,\dots,N\}\subset\N$, of $M(\mathscr A_Z)$ by a cocycle $c=\{c_{ij}\in C(\bar U_i\cap \bar U_j,G)\}_{i,j\in I}$ such that  each $c_{ij}$ with $i<j$  has a form $c_{ij}(x)=g_{ij}\, \tilde c_{ij}(x)$, $x\in \bar U_i\cap \bar U_j$, $g_{ij}\in G$, and $\tilde c_{ij}\in C(\bar U_i\cap\bar U_j, V)$. (Then for all $i>j$ we have $c_{ij}=c_{ji}^{-1}=\tilde c_{ji}^{-1}\, g_{ji}^{-1}$.)\smallskip

Next, since ${\rm dim}\, M(\mathscr A_Z)=2$, by the Marde\v{s}i\'c  theorem \cite[Th.\,1]{Ma} $M(\mathscr A_Z)$ can be presented as the inverse limit of the inverse system of metrizable compacta $\{Q_b, p_{bb'}\}$ with ${\rm dim}\, Q_b\le 2$; here $b$ ranges over  a directed set $B$ and $p^b: M(\mathscr A_Z)\rightarrow Q_b$ are the corresponding inverse limit projections. In turn, by the Freudenthal theorem, each $Q_b$ can be presented as the inverse limit of a sequence of compact polyhedra $\{Q_{bn}, p_{bnm}\}$ with ${\rm dim}\, Q_{bn}\le 2$; here $p^n_b: Q_b\rightarrow Q_{bn}$ are the corresponding inverse limit projections. Then by the Stone-Weierstrass theorem the union of pullback algebras $(p^b\circ  p_b^n)^*C(Q_{bn})$, $b\in B$, $n\in\N$, is dense in $C(M(\mathscr A_Z))$.\smallskip

Let us consider $\mathfrak g$-valued functions $c_{ij}':=\exp_G^{-1}(\tilde c_{ij})\in C(\bar U_i\cap\bar U_j,\mathfrak g)$, $i,j\in I$, $i<j$. Since the algebra of complex-valued  continuous functions on a compact Hausdorff spaces has the Grothendieck approximation property \cite{G}, each  $c_{ij}'$ can be uniformly approximated on $\bar U_i\cap\bar U_j$ by continuous functions from the algebraic tensor product $C(\bar U_i\cap\bar U_j)\otimes\mathfrak g$. In turn, extending functions in $C(\bar U_i\cap\bar U_j)\otimes\mathfrak g$ to continuous functions in $C(M(\mathscr A_Z))\otimes\mathfrak g$ by the Tietze-Urysohn extension theorem, we obtain that each $c_{ij}'$ can be uniformly approximated on $\bar U_i\cap\bar U_j$ by functions in algebras $\bigl((p^b\circ  p_b^n)^*C(Q_{bn})\bigr)\otimes\mathfrak g$, $b\in B$, $n\in\N$. From here, using that $\{c_{ij}\}_{i,j\in I}$ is a cocycle, we conclude  that there exist some $b_0\in B$, $n_0\in\N$ and functions $d_{ij}'\in \bigl((p^{b_0}\circ  p_{b_0}^{n_0})^*C(Q_{b_0n_0})\bigr)\otimes\mathfrak g$ sufficiently close to $c_{ij}'$ (with $i,j\in I$, $i<j$) such that for $x\in \bar U_i\cap\bar U_j$, $t\in [0,1]$ and 
\[
d_{ij}^{\,t}(x):=\left\{
\begin{array}{ccc}
g_{ij}\exp_G\bigl(td_{ij}'(x)+(1-t)c_{ij}'(x)\bigr)&{\rm if}&i<j\medskip\\ 
\exp_G\bigl(-td_{ji}'(x)-(1-t)c_{ji}'(x)\bigr) g_{ji}^{-1}&{\rm if}&i>j\medskip\\
1_G&{\rm if}&i=j,
\end{array}
\right.
\]
\begin{equation}\label{eq9.1}
\begin{array}{l}
c_{ij}(x) (d_{ij}^{\,t}(x))^{-1}\in V,\quad x\in U_i\cap U_j,\quad {\rm for\ all}\quad i, j\in I\quad {\rm and} \\ \\
\displaystyle
d_{ij}^{\,t} (x)\, d_{jk}^{\,t} (x)\, d_{ki}^{\,t} (x)\in V,\quad x\in  U_i\cap U_j\cap U_k,\quad
{\rm for\ all}\quad  i,j,k\in I.
\end{array}
\end{equation}

Further, by the definition of the inverse limit topology and since ${\rm dim}\,M(\mathscr A_Z)=2$, there exist $b_1\in B$, $b_1\ge b_0$ and $n_1\in\N$, $n_1\ge n_0$ and a finite open cover $\mathfrak V=(V_l)_{l\in L}$ of $Q_{b_1n_1}$  of order at most $3$ such that cover $\bigl((p^{b_0}\circ  p_{b_0}^{n_0})^{-1}(\bar V_l)\bigr)_{l\in L}$ is a refinement of $\mathfrak U$. Since by the definition of the inverse limit, 
$(p^{b_0}\circ  p_{b_0}^{n_0})^*C(Q_{b_0n_0})\subset (p^{b_1}\circ  p_{b_1}^{n_1})^*C(Q_{b_1n_1})$, there exist functions $e_{ij}'\in C(Q_{b_1n_1})\otimes \frak g$ such that $(p^{b_1}\circ  p_{b_1}^{n_1})^*(e_{ij}')=d_{ij}'$ for all $i,j\in I$ with $i<j$. 

We will assume without loss of generality that $Q_{b_1n_1}$ is a simplicial complex in some $\RR^d$. By $\|\cdot\|_2$ we denote the Euclidean norm on $\RR^d$. Then  uniform continuity of $e_{ij}'$ implies that for every $\varepsilon>0$ there exists $\delta>0$ such that  for all $x,y\in Q_{b_1n_1}$ with $\|x-y\|_2<\delta$ and all $i,j\in I$ with $i<j$,  
\begin{equation}\label{eq9.2}
\| e_{ij}'(x)- e_{ij}'(y)\|_{\mathfrak g}<\varepsilon.
\end{equation}
\begin{Lm}\label{lem9.2}
There exist a compact polyhedron $Q$ of dimension $\le 2$, a continuous surjective map $f: M(\mathscr A_Z)\rightarrow Q$ and a continuous map $g: Q\rightarrow Q_{b_1n_1}$ such that the finite open cover
$\bigl((g\circ f)^{-1}(V_l)\bigr)_{l\in L}$ of $M(\mathscr A_Z)$ is a refinement of $\mathfrak U$ and the analog of \eqref{eq9.1} is valid with $d_{ij}^{\, t}(x)$ replaced by 
\[
e_{ij}^{\, t}(x):=\left\{
\begin{array}{ccc}
g_{ij} \exp_G\bigl(t((g\circ f)^* e_{ij}')(x)+(1-t)c_{ij}'(x)\bigr)&{\rm if}&i<j\medskip\\ 
\exp_G\bigl(-t((g\circ f)^* e_{ji}')(x)-(1-t)c_{ji}'(x)\bigr) g_{ji}^{-1}&{\rm if}&i>j\medskip\\
1_G&{\rm if}&i=j,
\end{array}
\right.
\]
$x\in \bar U_i\cap\bar U_j$,  $t\in [0,1]$, $i, j\in I$.
\end{Lm}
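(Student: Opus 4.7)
The plan is to factor the map $h:=p^{b_1}\circ p_{b_1}^{n_1}$ through a two-dimensional polyhedron $Q$ by a simplicial-approximation construction whose quantitative control comes from \eqref{eq9.2}. First I will choose $\varepsilon>0$ so small that for any continuous $\tilde h:M(\mathscr A_Z)\to Q_{b_1 n_1}$ satisfying $\sup_{x}\|e_{ij}'(\tilde h(x))-e_{ij}'(h(x))\|_{\mathfrak g}<\varepsilon$ for all $i<j$, the analog of \eqref{eq9.1} holds with the $d_{ij}^{\,t}$ replaced by the corresponding $e_{ij}^{\,t}$ constructed from $\tilde h$; such $\varepsilon$ exists because both inclusions in \eqref{eq9.1} are strict (compact inside open), $\exp_G$ is uniformly continuous near $0\in\mathfrak g$, and the finite family $\{d_{ij}^{\,t}\}_{t\in[0,1],\,i<j}$ is equicontinuous on compacta. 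Let $\delta>0$ be the resulting modulus from \eqref{eq9.2}.

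Next I will subdivide $Q_{b_1 n_1}$ to a finite simplicial complex $K$ (of dimension $\leq 2$) in which every closed star $\overline{\mathrm{St}(v)}$, $v\in V(K)$, has $\RR^d$-diameter less than $\delta/2$ and is contained in some $V_{\lambda(v)}$, via a refinement map $\lambda:V(K)\to L$. Pulling the open star cover back through $h$ gives an open cover $\mathfrak W=(W_v)_{v\in V(K)}$ of $M(\mathscr A_Z)$ with $W_v:=h^{-1}(\mathrm{St}(v))$ that refines $\mathfrak U$. Since $W_{v_0}\cap\cdots\cap W_{v_k}\neq\emptyset$ forces $\mathrm{St}(v_0)\cap\cdots\cap\mathrm{St}(v_k)\neq\emptyset$ in $Q_{b_1 n_1}$ (witnessed by any image point under $h$), the nerve of $\mathfrak W$ embeds as a subcomplex $Q\subseteq|K|$ of dimension at most $2$. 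I will let $g:Q\hookrightarrow Q_{b_1 n_1}$ be the inclusion and define $f:M(\mathscr A_Z)\to Q$ by $f(x):=\sum_v\psi_v(x)\,v$ for a partition of unity $(\psi_v)$ subordinate to $\mathfrak W$. Both $h(x)$ and $g(f(x))=f(x)$ then lie in the set $\bigcap_{\psi_v(x)>0}\overline{\mathrm{St}(v)}$, whose diameter is less than $\delta/2$. Hence \eqref{eq9.2} gives $\|e_{ij}'(g(f(x)))-e_{ij}'(h(x))\|_{\mathfrak g}<\varepsilon$ for every $i<j$, and together with the choice of $\varepsilon$ this yields the analog of \eqref{eq9.1}; the refinement property of $((g\circ f)^{-1}(V_l))_l$ is immediate from the construction of $\lambda$.

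The main obstacle is to arrange simultaneously that $f$ is surjective onto $Q$. The canonical partition-of-unity map to the nerve typically misses the relative interiors of higher-dimensional simplices. I will deal with this by first trimming $Q$ to the subcomplex spanned only by simplices actually hit by $f$; this shrinking destroys neither the refinement property nor the diameter estimate, and keeps $Q$ a compact polyhedron of dimension at most $2$. If after trimming $f$ still fails to cover some open simplex $\sigma=\{v_0,\dots,v_k\}$ of $Q$, I will pass to a barycentric subdivision and adjust $(\psi_v)$ on small disjoint neighborhoods of well-chosen test points $x_\sigma\in W_{v_0}\cap\cdots\cap W_{v_k}\setminus\bigcup_{v\notin\sigma}W_v$ (which exist after refinement, using ${\rm dim}\,M(\mathscr A_Z)=2$ together with a general-position argument) so that the image of $f$ fills the missing open simplices; the diameter bound from the previous step is preserved throughout, and the remaining verifications reduce to standard simplicial-approximation bookkeeping.
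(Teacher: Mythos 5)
Your approach diverges from the paper's: the paper simply invokes Marde\v{s}i\'c's \cite[Lm.\,1]{Ma}, which hands over $Q$, a \emph{surjective} $f$ and $g$ with $\|(g\circ f)(x)-(p^{b_1}\circ p_{b_1}^{n_1})(x)\|_2<\delta$ in one stroke, and then deduces both the \eqref{eq9.1}-analog and the refinement property from a small choice of $\delta$. You instead try to rebuild Marde\v{s}i\'c's lemma by hand via a nerve-and-partition-of-unity construction. The nerve-dimension argument, the subdivision of $Q_{b_1n_1}$, and the diameter estimate giving $\|(g\circ f)(x)-h(x)\|_2<\delta/2$ are all sound.

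However, there are two genuine gaps. First, the surjectivity of $f$. This is precisely the nontrivial part of Marde\v{s}i\'c's lemma, and your sketch does not close it: trimming $Q$ to the smallest subcomplex meeting the image does not make $f$ onto (the image is a compactum, generically not a subcomplex), and the subsequent scheme --- barycentric subdivision plus adjusting $(\psi_v)$ near test points $x_\sigma\in W_{v_0}\cap\cdots\cap W_{v_k}\setminus\bigcup_{v\notin\sigma}W_v$ --- is not carried out; such test points need not exist, the ``general-position argument'' you invoke is usually a metric-space device and $M(\mathscr A_Z)$ is not metrizable, and you do not explain how the local modifications can simultaneously preserve subordination to $\mathfrak W$, preserve the diameter bound, and fill every open simplex. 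Second, the refinement property is not ``immediate from the construction of $\lambda$.'' The map $\lambda$ shows that $\mathfrak W=(h^{-1}(\mathrm{St}(v)))_v$ refines $\mathfrak U$; it says nothing about $((g\circ f)^{-1}(V_l))_l$. What is actually needed is the paper's compact-in-open argument: $h^{-1}(\bar V_l)\Subset U_{\tau(l)}$, so that for $\delta$ sufficiently small (quantified by the positive distance from $h(M(\mathscr A_Z)\setminus U_{\tau(l)})$ to $\bar V_l$), $\delta$-closeness of $g\circ f$ to $h$ forces $(g\circ f)^{-1}(V_l)\subset U_{\tau(l)}$. Your $\delta$ is chosen only to control $e_{ij}'$ via \eqref{eq9.2}, and may not be small enough for this second purpose; at minimum the two smallness requirements must be imposed jointly and the inclusion verified, not asserted.
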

\begin{proof}
By \cite[Lm.\,1]{Ma} for every $\delta >0$ there exist a compact polyhedron $Q_\delta$ of dimension $\le 2$,  a continuous surjective map $f_\delta:M(\mathscr A_Z)\rightarrow Q$ and a continuous map $g_\delta: Q\rightarrow Q_{b_1n_1}$ such that for all $x\in M(\mathscr A_Z)$
\begin{equation}\label{eq9.3}
\|(g_\delta\circ f_\delta)(x)-(p^{b_1}\circ p_{b_1}^{n_1})(x)\|_2< \delta.
\end{equation}
This and \eqref{eq9.2} imply that for all $x\in M(\mathscr A_Z)$ and $i,j\in I$ with $i<j$
\[
\| d_{ij}'(x)-((g_\delta\circ f_\delta )^*e_{ij}')(x)\|_{\mathfrak g}<\varepsilon.
\]
Choosing here sufficiently small $\varepsilon$ and $\delta$ we obtain 
by continuity of $\exp_G$ and operations on $G$ that the analog of \eqref{eq9.1} is valid with $d_{ij}^{\, t}$ replaced by $e_{ij}^{\, t}$ for all $i, j\in I$.

Next, let $\tau : L\rightarrow I$ be the refinement map for the refinement $\bigl((p^{b_0}\circ  p_{b_0}^{n_0})^{-1}(\bar V_l)\bigr)_{l\in L}$ of $\mathfrak U$. Since $(p^{b_0}\circ  p_{b_0}^{n_0})^{-1}(\bar V_l)$ is a compact subset of $U_{\tau(l)}$, inequality \eqref{eq9.3} implies easily that for a sufficiently small $\delta$ and all $l\in L$,
$(g_\delta\circ f_\delta)^{-1}(V_l)\Subset U_{\tau(l)}$ as well. For such  $\delta$ we set $Q:=Q_\delta$, $f:=f_\delta$ and $g:=g_\delta$.
\end{proof}

We set $\widetilde{\mathfrak V}:=(\tilde V_l)_{l\in L}$, $\tilde V_l:=g^{-1}(V_l)$.  Then $\widetilde{\mathfrak V}$ is a finite open cover of $Q$ of order $\le 3$.
Since ${\rm dim}\, Q\le 2$, according to the Ostrand theorem on colored dimension \cite[Th.\,3]{O} there exists a finite open cover $\mathfrak W$ of $Q$ which can be represented as the union of families $\mathcal W_1$, $\mathcal  W_2$, $\mathcal W_3$, where $\mathcal W_i=\{W_{i,l}\}_{l\in L}$ are such that closures of their subsets are pairwise disjoint  and $\bar W_{i,l}\subset \tilde V_l$ for each $i\in \{1,2,3\}$ and $l\in L$.

\noindent As before, $\tau : L\rightarrow I$ stands for the refinement map for the refinement $\bigl((p^{b_0}\circ  p_{b_0}^{n_0})^{-1}(\bar V_l)\bigr)_{l\in L}$ of $\mathfrak U$.
Lemma \ref{lem9.2} implies that for all $i,j\in I$ maps $\tilde e_{ij}\in C(Q,G)$, 
\[
\tilde e_{ij}:=\left\{
\begin{array}{ccc}
g_{ij}\bigl(\exp_G\circ (g^*e_{ij}')\bigr)&{\rm if}&i<j\medskip\\
\bigl(\exp_G\circ (-g^*e_{ji}')\bigr)g_{ji}^{-1}&{\rm if}&i>j\medskip\\
1_G&{\rm if}&i=j,
\end{array}
\right.
\] 
satisfy
\[
\begin{array}{r}
\tilde e_{\tau(l)\tau(m)}(x)\, \tilde e_{\tau(m)\tau(n)}(x)\, \tilde e_{\tau(n)\tau(l)}(x)\in V,\quad x\in  
\tilde V_l\cap \tilde V_m\cap \tilde V_n,
\quad {\rm for\ all}\quad  l,m,n\in L.
\end{array}
\]
We set $W_i=\cup_{l\in L}W_{i,l}$, $i=1,2,3$, and consider the closed cover $(\bar W_i)_{i=1}^3$ of $Q$. By definition,
\[
\bar W_i\cap \bar W_j=\bigsqcup_{k\in L}\left(\,\bigsqcup_{l\in L}\bar W_{i,k}\cap  \bar W_{j,l}\right);
\]
here each nonempty $\bar W_{i,k}\cap  \bar W_{j,l}$ is a compact subset of $\tilde V_k\cap \tilde V_l$. 

\noindent For  $i, j\in \{1,2,3\}$ we define maps $\bar e_{ij}\in C(\bar W_i\cap\bar W_j,G)$ by the formulas
\begin{equation}\label{eq9.4}
\bar e_{ij}(x):=\tilde e_{\tau(k)\tau(l)}(x),\quad x\in \bar W_{i,k}\cap  \bar W_{j,l},\quad k,l\in L.
\end{equation}
\begin{Lm}\label{lem9.3} 
Suppose $\bar W_1\cap \bar W_2\cap \bar W_3\ne\emptyset$. Then there exists a map $\bar e\in C(\bar W_1\cap\bar W_3,V)$ such that
\[
\bar e_{12}(x)\,\bar e_{23}(x)\, (\bar e_{31}(x)\, \bar e(x))=1_G\quad {\rm for\ all}\quad x\in \bar W_1\cap \bar W_2\cap \bar W_3.
\]
\end{Lm}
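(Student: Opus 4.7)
The plan is to observe that the product $\bar e_{12}\bar e_{23}\bar e_{31}$ already lies in $V$ on the triple intersection, define $\bar e$ there as its pointwise inverse, and then extend this map continuously from $\bar W_1\cap\bar W_2\cap\bar W_3$ to $\bar W_1\cap\bar W_3$ by a Tietze-type argument, keeping the values inside $V$.

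First I would verify the near-cocycle fact on the triple intersection. Given $x\in\bar W_1\cap\bar W_2\cap\bar W_3$, disjointness of closures within each family $\mathcal W_i$ yields unique indices $k,l,m\in L$ with $x\in\bar W_{1,k}\cap\bar W_{2,l}\cap\bar W_{3,m}$. Since $\bar W_{i,l}\subset\tilde V_l$, one has $x\in\tilde V_k\cap\tilde V_l\cap\tilde V_m$, and the near-cocycle estimate stated just before the lemma gives $\tilde e_{\tau(k)\tau(l)}(x)\,\tilde e_{\tau(l)\tau(m)}(x)\,\tilde e_{\tau(m)\tau(k)}(x)\in V$, which by \eqref{eq9.4} equals $\bar e_{12}(x)\bar e_{23}(x)\bar e_{31}(x)$. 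Continuity of each $\bar e_{ij}$ on $\bar W_i\cap\bar W_j$ follows because the pairwise disjoint closed pieces $\bar W_{i,k}\cap\bar W_{j,l}$ decompose this intersection, so locally only one piece is seen, on which $\bar e_{ij}$ agrees with the continuous map $\tilde e_{\tau(k)\tau(l)}$. Hence $\phi(x):=(\bar e_{12}(x)\bar e_{23}(x)\bar e_{31}(x))^{-1}$ is a continuous map from the triple intersection to $G$; since the ball $U\subset\mathfrak g$ is centered at $0$ we have $-U=U$ and therefore $V^{-1}=\exp_G(-U)=V$, so $\phi$ takes values in $V$.

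For the extension I would push everything into the Banach space $\mathfrak g$ via $\psi:=\exp_G^{-1}\circ\phi$, a continuous map from $\bar W_1\cap\bar W_2\cap\bar W_3$ into the open ball $U$. Its image is compact, hence contained in a smaller closed ball $\bar B\subset U$. Since $Q$ is a metrizable compact polyhedron, $\bar W_1\cap\bar W_3$ is a compact metric space and the triple intersection is closed in it, so the Dugundji extension theorem (in its convex-hull form) yields a continuous extension $\tilde\psi:\bar W_1\cap\bar W_3\to\bar B\subset U$. Setting $\bar e:=\exp_G\circ\tilde\psi$ gives a continuous map into $V$ restricting to $\phi$ on the triple intersection, which is exactly the identity asserted. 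The only mild obstacle is arranging the extension to land in the open set $V$ rather than some larger neighborhood of $1_G$, and this is circumvented precisely by first shrinking to $\bar B\subset U$ before invoking Dugundji.
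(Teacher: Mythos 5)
Your argument is essentially the paper's: both express the obstruction $\bar e_{12}\bar e_{23}\bar e_{31}$ on the triple intersection, pass to $\mathfrak g$ via $\exp_G^{-1}$, extend by Dugundji keeping values in the convex ball $U$, and return to $G$ via $\exp_G$; inverting in $G$ first (your $\phi$) versus negating in $\mathfrak g$ first (the paper's $e'$) give the same map. Your additional remarks --- the verification of the near-cocycle containment via the colored disjointness of the $\mathcal W_i$, and the explicit shrinking to a closed ball $\bar B\subset U$ so that the Dugundji extension stays inside the open set $U$ --- are correct and make the argument tighter but do not change the route.
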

\begin{proof}
We set $\bar e_{123}(x):=\bar e_{12}(x)\,\bar e_{23}(x)\, \bar e_{31}(x)$, $x\in \bar W_1\cap \bar W_2\cap \bar W_3$. By definition, $\bar e_{123}\in C(\bar W_1\cap \bar W_2\cap \bar W_3, V)$; hence map $\exp_G^{-1}\circ \bar e_{123}\in C(\bar W_1\cap \bar W_2\cap \bar W_3, U)$ is well defined. Since $U\subset\mathfrak g$ is a ball, by the Dugundji extension theorem there exists some $e'\in C(\bar W_1\cap\bar W_3,U)$ such that $e'=-\exp_G^{-1}\circ \bar e_{123}$ on $\bar W_1\cap \bar W_2\cap \bar W_3$. We set $\bar e:=\exp_G\circ e'$. Then the required identity is fulfilled.  
\end{proof}

For $\bar W_1\cap \bar W_2\cap \bar W_3=\emptyset$, we define $\bar e\in  C(\bar W_1\cap\bar W_3,V)$ to be the constant map with constant value $1_G$.

Next, we define $h_{ij}:=\bar e_{ij}$ for $i,j\in\{1,2,3\}$ with $(i,j)\not\in\{(1,3), (3,1)\}$ and $h_{31}:=\bar e_{31}\,\bar e$,  $h_{13}=\bar e^{-1}\, \bar e_{13}$.
\begin{Lm}\label{lem9.4}
Family $\bigl\{h_{ij}\in C( \bar W_{i}\cap   \bar W_{j}, G)\bigr\}_{1\le i,j\le 3}$ is a $G$-valued $1$-cocycle on the cover $(\bar W_i)_{i=1}^3$ of $Q$. 
\end{Lm}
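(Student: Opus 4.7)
The strategy is to verify directly that the family $h=\{h_{ij}\}$ satisfies the three defining conditions of a \v{C}ech $1$-cocycle on the closed cover $(\bar W_i)_{i=1}^{3}$: normalization $h_{ii}=1_G$, antisymmetry $h_{ij}=h_{ji}^{-1}$, and the cocycle identity $h_{ij}\,h_{jk}\,h_{ki}=1_G$ on each non-empty triple intersection. Since $h$ is built by patching the already-constructed maps $\tilde e_{\tau(k)\tau(l)}$ via \eqref{eq9.4} and twisting only the $(1,3)$-component by $\bar e$, all three axioms reduce to small bookkeeping computations that rest on Lemma~\ref{lem9.3}.

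For normalization, recall that by the Ostrand-theorem construction of $\mathfrak W$ the sets $\{\bar W_{i,k}\}_{k\in L}$ are pairwise disjoint for each fixed $i$. Hence on $\bar W_i$ the definition \eqref{eq9.4} forces $k=l$ and collapses to $\bar e_{ii}(x)=\tilde e_{\tau(k)\tau(k)}(x)=1_G$. For antisymmetry with $\{i,j\}\neq\{1,3\}$, the case-splitting in the definition of $\tilde e_{mn}$ is arranged precisely so that $\tilde e_{nm}=\tilde e_{mn}^{-1}$, so $\bar e_{ji}=\bar e_{ij}^{-1}$ on every $\bar W_{j,l}\cap\bar W_{i,k}$, hence $h_{ji}=h_{ij}^{-1}$. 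In the remaining case $\{i,j\}=\{1,3\}$ one simply computes
\[
h_{13}\,h_{31}=(\bar e^{-1}\bar e_{13})(\bar e_{31}\bar e)=\bar e^{-1}(\bar e_{13}\bar e_{31})\bar e=\bar e^{-1}\bar e=1_G,
\]
the middle equality using the already-established antisymmetry of $\bar e_{13}$ and $\bar e_{31}$.

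For the cocycle identity, the only triple intersection of genuinely distinct indices is $\bar W_1\cap\bar W_2\cap\bar W_3$. On it,
\[
h_{12}\,h_{23}\,h_{31}=\bar e_{12}\,\bar e_{23}\,(\bar e_{31}\,\bar e)=1_G
\]
by Lemma~\ref{lem9.3}. Every other ordered triple of three distinct indices is obtained from $(1,2,3)$ by a cyclic rotation (which does not change the product) or by reversal (which converts it into the inverse of the former), so the antisymmetry established above together with the identity on $(1,2,3)$ disposes of all remaining triple-intersection conditions. Triples in which an index is repeated reduce, via $h_{ii}=1_G$, to the already-verified antisymmetry identity.

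The only mildly subtle point is the matching between the two deformations built into $h$: the corrective factor $\bar e$ attached to the ordered pair $(3,1)$ (and inverted for $(1,3)$) is exactly what makes the unique non-trivial triple close via Lemma~\ref{lem9.3}, and the sign convention distinguishing $m<n$ from $m>n$ in the construction of $\tilde e_{mn}$ must be kept consistent with the pairwise disjointness of the $\bar W_{i,k}$ when formula \eqref{eq9.4} is evaluated on double intersections. Beyond this bookkeeping, the proof is a direct algebraic verification with no substantive obstacle.
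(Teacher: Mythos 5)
Your proof is correct and follows essentially the same route as the paper's: the paper also reduces the lemma to the antisymmetry $h_{ij}=h_{ji}^{-1}$ (which follows from the definition of $\bar e_{ij}$ via \eqref{eq9.4} and the construction of the $\tilde e_{ij}$) together with the single cocycle relation $h_{12}h_{23}h_{31}=1_G$ on $\bar W_1\cap\bar W_2\cap\bar W_3$, which is precisely the content of Lemma~\ref{lem9.3}. You supply more bookkeeping detail (the normalization $h_{ii}=1_G$ from the pairwise disjointness of the $\bar W_{i,k}$, and the explicit cancellation in $h_{13}h_{31}$), but there is no difference in substance.
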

\begin{proof}
By the definition of $\bar e_{ij}$, cf \eqref{eq9.4}, $h_{ij}=h_{ji}^{-1}$ for all $i,j\in\{1,2,3\}$. Clearly, it suffices to prove the result for $\bar W_1\cap\bar W_2\cap \bar W_3\ne\emptyset$. In this case it suffices to check only that
\[
h_{12}\,h_{23}\, h_{31}\equiv 1_G\quad {\rm on}\quad \bar W_1\cap\bar W_2\cap \bar W_3.
\]
This follows straightforwardly from Lemma \ref{lem9.3} and  the definition of $h_{ij}$.
\end{proof}

Let us define the required principal $G$-bundle $p:E\rightarrow Q$ of the proposition by cocycle $h=\{h_{ij}\}$ on the cover $(\bar W_i)_{i=1}^3$ of $Q$ (see \eqref{equ1.3} above).  Then we must prove the following result.
\begin{Lm}\label{lem9.5}
Topological principal $G$-bundles $f^*E$  and $P$ on $M(\mathscr A_Z)$ are isomorphic.
\end{Lm}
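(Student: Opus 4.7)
The plan is to construct the isomorphism at the cocycle level after passing to a common refinement of the two covers of $M(\mathscr A_Z)$. Set $R_{(i,l)} := f^{-1}(W_{i,l})$ for $i \in \{1,2,3\}$ and $l \in L$. By Lemma \ref{lem9.2} each $R_{(i,l)} \subset (g\circ f)^{-1}(V_l) \Subset U_{\tau(l)}$, and tautologically $R_{(i,l)} \subset f^{-1}(W_i)$, so the family $\mathcal R := (R_{(i,l)})$ simultaneously refines $\mathfrak U$ (via $(i,l)\mapsto \tau(l)$) and the cover $(f^{-1}(\bar W_i))_{i=1}^3$ on which $f^*E$ is defined (via $(i,l)\mapsto i$). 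Restricted to $\mathcal R$, the cocycle of $P$ becomes $\{c_{\tau(l_1)\tau(l_2)}\}$ while that of $f^*E$ becomes $\{f^*h_{i_1 i_2}\}$, both on the overlaps $R_{(i_1,l_1)}\cap R_{(i_2,l_2)}$.

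Next I compare these two cocycles. The extension of \eqref{eq9.1} to $e^t_{ij}$ in Lemma \ref{lem9.2}, evaluated at $t=1$, together with $e^1_{ij} = f^*\tilde e_{ij}$, shows that $v_{ij}(x) := c_{ij}(x)\cdot (f^*\tilde e_{ij}(x))^{-1}$ is a continuous $V$-valued map on $U_i\cap U_j$. On $R_{(i_1,l_1)}\cap R_{(i_2,l_2)}$ the definitions of $\bar e_{i_1 i_2}$, of the correction $\bar e$, and of $h_{i_1 i_2}$ in Lemma \ref{lem9.3} and Lemma \ref{lem9.4} together express $f^*h_{i_1 i_2}$ as $c_{\tau(l_1)\tau(l_2)}$ multiplied by a continuous map with values in a fixed neighbourhood of $1_G$ inside $V'$.

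To produce an isomorphism $P \to f^*E$ it suffices to find continuous $\alpha_{(i,l)}\colon R_{(i,l)}\to G$ satisfying
\[
\alpha_{(i_1,l_1)}\cdot f^*h_{i_1 i_2} = c_{\tau(l_1)\tau(l_2)}\cdot \alpha_{(i_2,l_2)} \quad\text{on}\quad R_{(i_1,l_1)}\cap R_{(i_2,l_2)}.
\]
I build these inductively on the color $i \in \{1,2,3\}$. Set $\alpha_{(1,l)}\equiv 1_G$. Because for fixed $i$ the closures $\overline{R_{(i,l)}}$ over $l\in L$ are pairwise disjoint (inherited from the coloured decomposition $\mathcal W_i$ via continuity of $f$), the relation on each $R_{(1,l)}\cap R_{(2,l')}$ uniquely prescribes $\alpha_{(2,l')}$ on a disjoint union of closed subsets of $R_{(2,l')}$, with values close to $1_G$; I extend to a $V'$-valued map on all of $R_{(2,l')}$ by pulling back via $\exp_G^{-1}$ to the Banach space $\mathfrak g$, applying the Dugundji extension theorem, and re-exponentiating. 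The same procedure, applied on $R_{(2,l')}\cap R_{(3,l'')}$, yields $\alpha_{(3,l'')}$.

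The main obstacle, as expected, is verifying the relation at the triple-color intersections $R_{(1,l)}\cap R_{(2,l')}\cap R_{(3,l'')}$: propagating the $\alpha$'s around this triangle produces a cocycle obstruction valued in $V$, which is exactly the discrepancy $\bar e_{12}\bar e_{23}\bar e_{31} \ne 1_G$ on $\bar W_1\cap\bar W_2\cap \bar W_3$. Lemma \ref{lem9.3} was formulated precisely so that the corrective element $\bar e \in V$, absorbed into $h_{31} = \bar e_{31}\bar e$ and $h_{13} = \bar e^{-1}\bar e_{13}$, cancels this discrepancy. Consequently the $\{\alpha_{(i,l)}\}$ automatically satisfy the required relation on every overlap, establishing $P \cong f^*E$ and completing the proof of Lemma \ref{lem9.5}.
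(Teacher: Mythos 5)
Your proposal takes a genuinely different route from the paper. The paper does not solve the coboundary equation directly; instead, it constructs a single $C([0,1],G)$-valued $1$-cocycle $\kappa=\{\kappa_{ij}\}$ on the cover $(\bar W_i^*)_{i=1}^3$ (using the interpolating transition data $e^{\,t}_{ij}$ of Lemma \ref{lem9.2}, plus an Arens extension of the triple-overlap defect valued in $C_0([0,1],\mathfrak g)$), checks that evaluation at $t=0$ reproduces the pullback of the cocycle $c$ of $P$ and evaluation at $t=1$ reproduces $f^*h$, and then invokes \cite[Ch.\,4, Cor.\,9.7]{Hus} (the covering homotopy theorem) to conclude that the two bundles are isomorphic. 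Your plan skips the interpolation parameter entirely and tries to construct the cobounding family $\{\alpha_{(i,l)}\}$ by hand via colour induction. Your consistency check at triple-colour overlaps is correct: it follows from the cocycle identities for $c$ and for $h$ (the latter being exactly what Lemmas \ref{lem9.3} and \ref{lem9.4} secure), so the prescriptions on $\alpha_{(3,l'')}$ coming from colour~$1$ and from colour~$2$ agree on $R_{(1,l)}\cap R_{(2,l')}\cap R_{(3,l'')}$.

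However, there is a genuine gap in the claim that the prescribed values of $\alpha_{(2,l')}$ are ``close to $1_G$''. From (the $t=1$ instance of) the analogue of \eqref{eq9.1} in Lemma~\ref{lem9.2} one gets $c_{ij}\,(f^*\tilde e_{ij})^{-1}\in V$, equivalently $f^*\tilde e_{ij}=v^{-1}\,c_{ij}$ with $v\in V$. Your formula then gives $\alpha_{(2,l')}=c_{ij}^{-1}\,f^*h_{12}=c_{ij}^{-1}\,v^{-1}\,c_{ij}$, which lies in $\mathrm{Ad}(c_{ij}^{-1})(V)$, not in $V$; since $G$ is in general noncommutative, this conjugate need not lie in $V'$ for the $U,V,V'$ the paper fixes at the start of Section~9.1. (It is tempting to read the estimate as $c_{ij}^{-1}f^*\tilde e_{ij}\in V$, but that is not what \eqref{eq9.1} asserts.) The same conjugation phenomenon propagates to $\alpha_{(3,l'')}$. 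This is repairable: because each $c_{ij}$ is continuous on the compact set $\bar U_i\cap\bar U_j$ and $I$ is finite, one can shrink $U$ (and hence $V$) at the outset so that $\bigcup_{i,j}\mathrm{Ad}\bigl(c_{ij}(\bar U_i\cap\bar U_j)^{-1}\bigr)(V\cdot V)\subset V'$; but this modification must be made before the constructions of Lemma~\ref{lem9.2} onward, and you did not do it. The paper's interpolation route avoids this issue entirely, because the object it extends is the cocycle defect $\ell_{12}\ell_{23}\ell_{31}\ell$, which lies in $V\cdot V\subset V'$ without any conjugation. One more point: $M(\mathscr A_Z)$ is not metrizable, so Dugundji's extension theorem does not apply directly; you should use the Arens extension theorem \cite[Th.\,4.1]{A1}, as the paper does, which is valid for fully normal (in particular compact Hausdorff) domains with values in a convex subset of a Banach space.
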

\begin{proof}
We set $\mathfrak W^*=( W_{i,l}^*)_{l\in  L, 1\le i\le 3}$, where  $ W_{i,l}^*:=f^{-1}(W_{i,l})$. The finite open cover $\mathfrak W^*$ of $M(\mathscr A_Z)$ is a refinement of the cover $((g\circ f)^{-1}(V_l))_{l\in L}$ which, in turn, is a refinement of the cover $\mathfrak U$ (see Lemma \ref{lem9.2}). Also,  we consider the closed cover $(\bar W_i^*)_{i=1}^3$ of $M(\mathscr A_Z)$ where $W_i^*:=f^{-1}(W_i)$, $i=1,2,3$. 

For $i, j\in \{1,2,3\}$ we define maps $\ell_{ij}\in C(\bar W_i^*\cap\bar W_j^*, C([0,1],G))$ by the formulas
\[
(\ell_{ij}(x))(t):=e_{\tau(k)\tau(l)}^{\, t}(x),\quad x\in \bar W_{i,k}^*\cap\bar W_{j,l}^*,\quad k, l\in L.
\]
By definition, see Lemma \ref{lem9.2} and the text before \eqref{eq9.4},
\begin{equation}\label{eq9.5}
(\ell_{ij}(x))(0)=c_{\tau(k)\tau(l)}(x),\quad 
(\ell_{ij}(x))(1)=(f^*\tilde e_{\tau(k)\tau(l)})(x),\quad x\in \bar W_{i,k}^*\cap\bar W_{j,l}^*,\ k,l\in L.
\end{equation}

Note that $C([0,1],G)$ is a complex Banach Lie group with respect to the pointwise multiplication of maps whose Lie algebra is $C([0,1],\frak g)$.

For $\bar e$ as in Lemma \ref{lem9.3} we define a map $\ell\in C(\bar W_1^*\cap\bar W_3^*, C([0,1],V))$ by the formula
\begin{equation}\label{eq9.6}
(\ell(x))(t):= \exp_G\bigl(t \exp_G^{-1}(\bar e(f(x)))\bigr), \quad x\in \bar W_1^*\cap\bar W_3^*,\quad
t\in [0,1].
\end{equation}
Next, we consider a continuous $C([0,1],G)$-valued map
\[
\ell_{123}(x):=\ell_{12}(x)\,\ell_{23}(x)\, (\ell_{31}(x)\, \ell(x)),\quad x\in \bar W_1^*\cap\bar W_2^*\cap\bar W_3^*.
\]
Due to Lemmas \ref{lem9.2}, \ref{lem9.3} and since $\{c_{ij}\}_{i,j\in I}$ is a cocycle on $\mathfrak U$, the image of $\ell_{123}$ consists of continuous maps  $F\in C([0,1],V')$ (recall that $V\cdot V\subset V'$) such that $F(0)=F(1)=1_G$. Therefore  map $\ell_{123}'\in C( \bar W_1^*\cap\bar W_2^*\cap\bar W_3^*, C([0,1],U')$,
\[
(\ell_{123}'(x))(t):=\exp_{G}^{-1}\bigl((\ell_{123}(x))(t)\bigr),\quad x\in  \bar W_1^*\cap\bar W_2^*\cap\bar W_3^*,\quad t\in [0,1],
\]
is well defined and its image consists of maps $F\in C([0,1],U')$ such that $F(0)=F(1)=0$. The space $C_0([0,1],\mathfrak g)$ of continuous maps $F: [0,1]\rightarrow \mathfrak g$ with $F(0)=F(1)=0$ is a closed subspace  of the Banach space $C([0,1],\mathfrak g)$ and for a closed ball $B\subset\mathfrak g$ centered at $0$ set
$C_0([0,1],\mathfrak g)\cap C([0,1], B)$ is a closed ball centered at zero map in $C_0([0,1],\mathfrak g)$. In particular, we can apply to $\ell_{123}'$ the Arens extension theorem, see, e.g.,  \cite[Th.\,4.1]{A1}, to find some  $\ell'\in C\bigl(\bar W_1^*\cap\bar W_3^*,C_0([0,1],\mathfrak g)\cap C([0,1],U')\bigr)$ which extends $\ell'_{123}$.
We define $\bar\ell\in C(\bar W_1^*\cap\bar W_3^*,C([0,1],V'))$ by the formula
\[
(\bar\ell(x))(t):=\exp_G\bigl((-(\ell'(x))(t)\bigr),\quad x\in \bar W_1^*\cap\bar W_3^*,\quad t\in [0,1].
\]
Then 
\begin{equation}\label{eq9.7}
\ell_{12}(x)\,\ell_{23}(x)\, (\ell_{31}(x)\, \ell(x)\,\bar \ell(x))={\bf 1}_G\quad {\rm for\ all}\quad x\in \bar W_1^*\cap\bar W_2^*\cap\bar W_3^*,
\end{equation}
where ${\bf 1}_{G}(t)=1_G$ for all $t\in [0,1]$.

Next, let us define $\kappa_{ij}:=\ell _{ij}$ for $i,j\in\{1,2,3\}$ with $(i,j)\not\in\{(1,3), (3,1)\}$ and $\kappa_{31}:=\ell_{31}\, \ell\,\bar \ell$,  $\kappa_{13}=\bar \ell^{-1}\,\ell^{-1}\,\ell_{13}$.
Then similarly to Lemma \ref{lem9.4} we obtain that
family $\kappa:=\bigl\{\kappa_{ij}\in C( \bar W_{i}\cap   \bar W_{j}, C([0,1],G))\bigr\}_{1\le i,j\le 3}$ is a continuous $C([0,1],G)$-valued $1$-cocycle on the cover $(\bar W_i^*)_{i=1}^3$ of $M(\mathscr A_Z)$. Since $\mathfrak W^*$ is the natural refinement of the cover $(\bar W_i^*)_{i=1}^3$, the pullback of $\kappa$ to $\mathfrak W^*$ by the refinement map determines a family of continuous $G$-valued $1$-cocycles $c(t)$ on $\mathfrak W^*$ depending continuously on $t\in [0,1]$. By definitions of $\kappa_{ij}$ and $\bar\ell$, see \eqref{eq9.5} \eqref{eq9.6}, $c(0)$ is the pullback to $\mathfrak W^*$ by means of the refinement map $\tau$ of the cocycle $c$ determining bundle $P$. On the other hand, $c(1)$ is the pullback to $\mathfrak W^*$ of the cocycle $f^*h=\{f^*h_{ij}\}$ determining $f^*E$. Thus, the general result of the fibre bundles theory \cite[Ch.\,4,\,Cor.\,9.7]{Hus} implies that $P$ and $f^*E$ are isomorphic.
\end{proof}

The proof of the proposition is complete.
\end{proof}
\subsect{Proof of Theorem \ref{teo2.9}}
Let $A\in\mathscr A_Z$ and
$D$ be the set of all finite subsets of $A$ directed by inclusion $\subseteq$. Then $M(\mathscr A_Z)$ is naturally identified with $M(A)$ and is presented as the inverse limit of the inverse system of compacta $\{M(A_\alpha), F_\beta^\alpha\}$, where $\alpha$ ranges over  $D$ and $A_\alpha$ is the unital closed subalgebra of $A$ generated by $\alpha$, see Section~2.2.
Recall that if $\alpha=\{f_1,\dots,f_n\}\in D$,  then $F_\alpha=(\hat f_1,\dots, \hat f_n) : M(\mathscr A_Z)\rightarrow \Co^{n}$ is the inverse limit projection and $M(A_\alpha)$ is the polynomially convex hull of its image.
Also, if $\alpha,\beta\in D$ with $\alpha\supseteq\beta$, then $F_\beta^\alpha:\Co^{\#\alpha}\rightarrow\Co^{\#\beta}$, $\Co^{\#\alpha}\ni (z_1,\dots, z_{\#\alpha})\mapsto (z_1,\dots, z_{\#\beta})\in\Co^{\#\beta}$. 

On the other hand, $M(\mathscr A_Z)$ is the inverse limit of inverse system $\{K_\alpha, F_\beta^\alpha\}$, where $K_\alpha:=F_\alpha (M(\mathscr A_Z))\, (\subset M(A_\alpha))$. Therefore by the definition of the inverse limit topology, for each $\beta\in D$ and an open neighbourhood $O\Subset\Co^{\#\beta}$ of $K_\beta$ there exists $\alpha\supseteq\beta$ such that
\begin{equation}\label{eq9.8}
M(A_\alpha)\subset (F_\beta^\alpha)^{-1}(O).
\end{equation}

Let  $\pi: P\rightarrow M(\mathscr A_Z)$, $Q$ and $f\in C(M(\mathscr A_Z),Q)$ be as in Proposition \ref{prop9.1}.
\begin{Proposition}\label{prop9.6}
There exists $\beta\in D$ and a continuous map $g: K_\beta\rightarrow Q$ such that maps $f,\, g\circ F_\beta\in C(M(\mathscr A_Z),Q)$  are homotopic.
\end{Proposition}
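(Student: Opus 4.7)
The plan is to use that $Q$, being a compact polyhedron of dimension at most two, is a compact absolute neighbourhood retract (ANR), and to approximate $f$ coordinate-wise by a map factoring through $F_\beta$ for a sufficiently large $\beta\in D$. The starting point is to embed $Q$ as a compact subset of some Euclidean space $\RR^N$; since $Q$ is a finite polyhedron, it admits an open neighbourhood $Q_\delta\subset \RR^N$ of radius $\delta>0$ together with a continuous retraction $r:Q_\delta\rightarrow Q$.

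The next step is an approximation argument. Writing $f=(f_1,\dots,f_N):M(\mathscr A_Z)\rightarrow Q\subset\RR^N$, I observe that the union $\bigcup_{\alpha\in D}F_\alpha^*C(K_\alpha)$ is a self-adjoint, directed subalgebra of $C(M(\mathscr A_Z))$ that contains the constants and separates points of $M(\mathscr A_Z)$ by the very definition of the inverse limit topology; hence by Stone--Weierstrass it is uniformly dense in $C(M(\mathscr A_Z))$. Given $\varepsilon\in(0,\delta)$, choose for each $i$ some $\alpha_i\in D$ and $\phi_i\in C(K_{\alpha_i})$ with $\|f_i-F_{\alpha_i}^*\phi_i\|_\infty<\varepsilon/\sqrt{N}$. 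Picking $\beta\supseteq\alpha_i$ for all $i$ (by directedness of $D$) and setting $h_i:=(F_{\alpha_i}^\beta)^*\phi_i\in C(K_\beta)$ yields a continuous map $h:=(h_1,\dots,h_N):K_\beta\rightarrow\RR^N$ satisfying $\|f(x)-h(F_\beta(x))\|<\varepsilon$ for every $x\in M(\mathscr A_Z)$. Since $F_\beta$ is surjective, this forces $h(K_\beta)\subset Q_\varepsilon\subset Q_\delta$.

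Define $g:=r\circ h:K_\beta\rightarrow Q$ and produce the required homotopy between $f$ and $g\circ F_\beta$ by the formula
\[
H(x,t):=r\bigl((1-t)f(x)+t\,h(F_\beta(x))\bigr),\qquad (x,t)\in M(\mathscr A_Z)\times[0,1].
\]
Because $f(x)\in Q$ and $\|f(x)-h(F_\beta(x))\|<\varepsilon<\delta$, every convex combination appearing above lies in $Q_\delta$, so $r$ may be applied; $H(\cdot,0)=r\circ f=f$ and $H(\cdot,1)=r\circ h\circ F_\beta=g\circ F_\beta$, yielding the required homotopy.

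The main obstacle is arranging the coordinate-wise uniform approximation to live within the retract neighbourhood: one must first verify the Stone--Weierstrass density of $\bigcup_\alpha F_\alpha^*C(K_\alpha)$ in $C(M(\mathscr A_Z))$, and then calibrate $\varepsilon$ small relative to the ANR constant $\delta$ so that the straight-line homotopy in $\RR^N$ never leaves $Q_\delta$. Both points are routine once the ANR nature of $Q$ is recognised as the correct structural ingredient; the argument is an instance of the general principle that a continuous map from an inverse limit of compact Hausdorff spaces into a compact ANR factors up to homotopy through some projection of the inverse system.
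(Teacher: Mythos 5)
Your proposal is correct and follows essentially the same route as the paper: embed the compact polyhedron $Q$ in a Euclidean space, choose a neighbourhood retract, approximate $f$ uniformly by a map factoring through some $F_\beta$ via Stone--Weierstrass density, and recover the homotopy by composing the straight-line homotopy in Euclidean space with the retraction. The only cosmetic difference is the description of the dense subalgebra — the paper works with the (non-closed) $*$-subalgebra generated by $A_\beta$ and conjugates while you use the directed union $\bigcup_\alpha F_\alpha^*C(K_\alpha)$ — but these yield the same factorization through $F_\beta$, so the two arguments coincide.
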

\begin{proof}
Without loss of generality we may assume that $Q$ is a (finite) simplicial complex in some $\RR^d$ (recall that $Q$ is a compact polyhedron). Then there exists an open neighbourhood $U\Subset\RR^d$ and a continuous retraction $r: U\rightarrow Q$.  Let $A_\beta^*\subset C(M(\mathscr A_Z))$ be the (nonclosed) subalgebra generated by functions in $A_\beta$ and their complex conjugate. By the Stone-Weierstrass theorem algebra $\cup_{\beta\in D}A_\beta^*$ is dense in $C(M(\mathscr A_Z))$. In particular, regarding $f$ as a map in $C(M(\mathscr A_Z), \RR^d)$ we can find $\beta\in D$ and a map $f_\beta\in C(M(\mathscr A_Z),\RR^d)$ with coordinates in $A_\beta^*$ such that 
\begin{equation}\label{eq9.9}
t f_\beta(x)+(1-t) f(x)\in U\quad {\rm for\ all}\quad x\in M(\mathscr A_Z),\ t\in [0,1].
\end{equation}
Next, by definitions of $K_\beta$ and $F_\beta$, there exists a map $g_\beta\in C(K_\beta,\RR^d)$ such that $f_\beta=g_\beta\circ F_\beta$. We set $g:=r\circ g_\beta\in C(K_\beta,Q)$. Then by \eqref{eq9.9} 
\[
H(x,t):=r(t f_\beta(x)+(1-t) f(x)),\quad  x\in M(\mathscr A_Z),\ t\in [0,1],
\]
is a homotopy between $f$ and $g\circ F_\beta$.
\end{proof}

Using the Tietze-Urysohn extension theorem composed with retraction $r\in C(U,Q)$ we  extend the map $g$ of the proposition to a map $\tilde g\in C(O,Q)$ for some open neighbourhood $O$ of $K_\beta$. Let $\alpha\supseteq\beta$ be such that $M(A_\alpha)\subset (F_\beta^\alpha)^{-1}(O)$ (cf. \eqref{eq9.8}). Since $M(A_\alpha)$ is polynomially convex, there is a Stein neighbourhood $N$ of $M(A_\alpha)$ contained in $(F_\beta^\alpha)^{-1}(O)$.

Next, let $p: E\rightarrow Q$ be the topological principal $G$-bundle of Proposition \ref{prop9.1}.  
Consider principal $G$-bundle $(\tilde g\circ F^\alpha_\beta|_N)^*E$ on $N$. Since $N$ is Stein,  by the Bungart theorem \cite[Th.\,8.1]{Bu} this bundle is isomorphic to a holomorphic principal $G$-bundle $\tilde E$ on $N$. Further, since by Proposition \ref{prop9.6} maps $f$ and $g\circ F_\beta$ are homotopic,  bundles $f^*E$ and $(g\circ F_\beta)^*E=F_\alpha^*((g\circ F^\alpha_\beta)^*E)$ on $M(\mathscr A_Z)$ are isomorphic (see, e.g., \cite[Ch.\,4]{Hus}). These imply that the topological principal $G$-bundle $P\,(\cong f^*E)$ is
isomorphic to the holomorphic principal $G$-bundle $F_\alpha^*\tilde E$.

The proof of the theorem is complete.
\subsection{Proof of Theorem \ref{teo1.10}}
First, we show that {\em for each holomorphic principal $G$-bundle  $P$ on $M(\mathscr A_Z)$ there exist $\alpha\in D$ and a holomorphic principal $G$-bundle $\tilde P$ defined on a neighbourhood of $M(A_\alpha)$ such that $P$ and $F_\alpha^*\tilde P$ are holomorphically isomorphic.}

Indeed, as in the proof of Theorem \ref{teo2.9} we  find such $\tilde P$ and $\alpha\in D$ that $F_\alpha^*\tilde P$  and $P$ are isomorphic as topological bundles. Then due to Theorem \ref{teo1.7} they are holomorphically isomorphic as required.\smallskip

Second, we show that  {\em if holomorphic principal $G$-bundles $P_1,P_2$ defined on a neighbourhood of $M(A_\beta)$ are such that $F_\beta^*P_1$ and $F_\beta^*P_2$ are holomorphically isomorphic, then there exists $\alpha\ge\beta$ such that the holomorphic principal $G$-bundles $(F_\beta^\alpha)^*P_1$ and $(F_\beta^\alpha)^*P_2$ defined on a neighbourhood of $M(A_\alpha)$ are isomorphic.}

This will complete the proof of the theorem.\smallskip

In the proof of the preceding statement with use the following general result.

Let a compact Hausdorff space $X$ be the inverse limit of the inverse limit system of compacta $\{X_\alpha, p_{\beta}^\alpha\}$ where $\alpha$ ranges over a directed set $\Lambda$ and $p_\alpha: X\rightarrow X_\alpha$ are the corresponding inverse limit projections. Let $E_1$ and $E_2$ be topological principal $G$-bundles on some $X_\beta$ with fibre $G$ a complex Banach Lie group.
\begin{Proposition}\label{prop10.1}
Suppose principal $G$-bundles $p_\beta^*E_1$ and $p_\beta^*E_2$ on $X$ are isomorphic. Then there exists some $\alpha\ge \beta$ such that principal $G$-bundles $(p_\beta^\alpha)^*E_1$ and $(p_\beta^\alpha)^*E_2$ on $X_\alpha$ are isomorphic.
\end{Proposition}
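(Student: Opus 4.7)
The plan is to reformulate the statement as a section-lifting problem for the isomorphism bundle and then reduce this lifting question to cocycle approximation. Form $F := \mathrm{Iso}(E_1, E_2) \to X_\beta$, the topological principal $G$-bundle whose fibre over $y$ is the set of $G$-equivariant isomorphisms $(E_1)_y \to (E_2)_y$; its continuous sections correspond to principal-$G$-bundle isomorphisms $E_1 \cong E_2$, and formation of $\mathrm{Iso}$ commutes with pullback. Hence our hypothesis becomes: $p_\beta^* F$ admits a continuous section on $X$; and the goal becomes: $(p_\beta^\alpha)^* F$ admits a continuous section on $X_\alpha$ for some $\alpha \geq \beta$. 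Trivialize $F$ over a finite open cover $(V_i)_{i \in I}$ of $X_\beta$ by a continuous cocycle $\{g_{ij}\}$; the given section of $p_\beta^* F$ over $X$ becomes a family $\{f_i \colon p_\beta^{-1}(V_i) \to G\}$ of continuous maps with $f_i = (g_{ij} \circ p_\beta) f_j$ on overlaps, while a desired section of $(p_\beta^\alpha)^* F$ over $X_\alpha$ is an analogous family with respect to $\{(p_\beta^\alpha)^* g_{ij}\}$.

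I would then approximate the $f_i$ by continuous $G$-valued maps factoring through some $X_\alpha$. To handle the nonlinearity of $G$, refine the cover of $X$ so that each piece $W \subset p_\beta^{-1}(V_i)$ has $f_i(W) \subset a_W \exp_G(U)$, where $\exp_G \colon U \to \exp_G(U)$ is a biholomorphism from a small ball $U \subset \mathfrak g$ to a neighbourhood of $1_G$; then $f_i|_W = a_W \cdot \exp_G \circ h_W$ with $h_W \colon W \to U$ a continuous $\mathfrak g$-valued map. By the standard refinement property of inverse limits of compact Hausdorff spaces, after enlarging $\alpha$ the refined cover descends to a finite open cover $(W_k^\alpha)$ of $X_\alpha$. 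Each $\mathfrak g$-valued $h_W$ is then uniformly approximated by pullbacks of continuous $\mathfrak g$-valued maps from $X_\alpha$, exploiting density of $\bigcup_\gamma p_\gamma^* C(X_\gamma, \mathfrak g)$ in $C(X, \mathfrak g)$---a consequence of Stone--Weierstrass combined with Grothendieck's approximation property for $C(X)$, precisely as in the proof of Proposition \ref{prop9.1}. Exponentiating produces $\tilde f_k \colon W_k^\alpha \to G$ with $\tilde f_k \circ p_\alpha$ uniformly close to $f_{i(k)}$ on the corresponding pieces of $X$.

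The approximations $\{\tilde f_k\}$ need not satisfy the cocycle identity exactly, but they determine a genuine continuous cocycle $\delta_{kl} := \tilde f_k^{-1} \cdot (g_{i(k)i(l)} \circ p_\beta^\alpha) \cdot \tilde f_l$ on $X_\alpha$, which is cohomologous to $\{(p_\beta^\alpha)^* g_{i(k)i(l)}\}$ via the coboundary $\{\tilde f_k\}$ and which takes values in a prescribed small neighbourhood of $1_G$. Its pullback $\delta_{kl} \circ p_\alpha$ on $X$ is, moreover, an exact coboundary $r_k^{-1} r_l$ with $r_k := (\tilde f_k \circ p_\alpha)^{-1} f_{i(k)}$ continuous and close to $1_G$. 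The main obstacle---and the technically hardest step---is to convert this near-triviality of $\{\delta_{kl}\}$ at level $\alpha$, combined with exact triviality after pullback to $X$, into exact triviality at some finite level $\alpha' \geq \alpha$. The plan is to iterate the approximation procedure on the $r_k$'s at further levels and correct the residual cocycle error via a continuous Cartan-type lemma analogous to Proposition \ref{small}, yielding geometric contraction of the cocycle error per iteration and hence termination at a finite $\alpha'$; establishing this termination, rather than only convergence in the limit as $\alpha \to \infty$, is the principal technical difficulty.
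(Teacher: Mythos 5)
Your setup is sound and matches the paper up to the point you flag: reformulating via the isomorphism bundle, trivializing over a finite cover of $X_\beta$, linearizing through $\exp_G$, and approximating the resulting $\mathfrak g$-valued data by pullbacks from some finite level $X_\alpha$ via Stone--Weierstrass and the Grothendieck approximation property. You correctly identify the residual problem: a cocycle $\{\delta_{kl}\}$ on $X_\alpha$ that takes values in a small neighbourhood of $1_G$ and is exactly a coboundary after pullback to $X$, which you need to trivialize at some finite level. But the plan you sketch for that step---iterating approximations over further levels $\alpha'$ with ``geometric contraction of the cocycle error'' and hoping for finite termination---does not work as stated and is in fact not what you need. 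Geometric contraction gives convergence only in the limit; there is no mechanism for such an iteration to terminate exactly in finitely many stages, and you acknowledge this tension yourself without resolving it.

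The idea you are missing is that near-triviality alone, at a \emph{single} level $\alpha$, already suffices: once the cocycle error lies in a small enough neighbourhood of $1_G$, it can be killed by a \emph{finite} inductive gluing over the cover elements of $X_\alpha$, with no further passage to higher levels and no use of the pullback-to-$X$ triviality. This is what the paper's Lemma~\ref{lem10.2} does. Concretely: (i) before approximating, one fixes a decreasing chain of balls $U_1\subset\cdots\subset U_{\bar k}\subset U\subset\mathfrak g$, one per cover element, chosen so that products of elements from $\exp_G(U_s)$ against the cocycle of the target bundle land in $\exp_G(U_{s+1})$ (inequality \eqref{eq10.6}); (ii) $\alpha$ is enlarged \emph{once} so that the smallness of the cocycle error, a priori known only on $p_\alpha(X)\subset X_\alpha$, propagates by continuity and compactness to all of $X_\alpha$ (passage from \eqref{eq10.7} to \eqref{eq10.8}); and (iii) one then runs an induction over $l=1,\dots,\bar k$ in which the partial correction, defined on an intersection, is extended to the full cover set $(p^\alpha_{\tilde\beta})^{-1}(\bar W_{l+1})$ via the Arens extension theorem, with the prearranged ball chain guaranteeing that the accumulated errors stay inside the domain where $\exp_G$ is biholomorphic. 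The key structural point is that the ``gluing'' is finite (over the cover, not over levels), so no convergence or termination argument is needed; the hypothesis that $p_\beta^* E_1 \cong p_\beta^* E_2$ on $X$ is used only once, to produce the initial sections $c_i$ whose local factorization gives the small error, not again in the trivialization step. You should replace your iteration scheme by this finite Arens-extension induction and record the ball chain \eqref{eq10.6} and the one-time enlargement of $\alpha$ as the essential preparatory steps.
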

As in the case of Proposition  \ref{prop9.1}, the proof would be  much shorter if we knew that the classifying space of group $G$ is homotopy equivalent to an absolute neighbourhood retract. 
\begin{proof}
Let $U\subset\mathfrak g$ be an open ball centered at $0$ of the Lie algebra $\mathfrak g$ of $G$ such that the exponential map $\exp_G:\mathfrak g\rightarrow G$ is biholomorphic on $U$. We set $V:=\exp_G(U)$.

Without loss of generality we may assume that $E_1$ and $E_2$ are defined on a finite open cover $\mathfrak U=(U_i)_{i\in I}$ of $X_\beta$ by cocycles  $\{c_{ij}^k\in C(U_i\cap U_j,G)\}_{i,j\in I}$, $k=1,2$. By the hypothesis, there exist some $c_i\in C(p_\beta^{-1}(U_i),G)$, $i\in I$, such that
\begin{equation}\label{eq10.1}
c_i^{-1}\cdot p_\beta^* c_{ij}^1\cdot c_j= p_\beta^* c_{ij}^2\quad {\rm on}\quad p_\beta^{-1}(U_i)\cap p_\beta^{-1}(U_j),\quad i,j\in I.
\end{equation}
Then there exist a finite open refinement $\mathfrak V=(V_l)_{l\in L}$ of the cover $(p_\beta^{-1}(U_i))_{i\in I}$ of $X$ with refinement map $\tau: L\rightarrow I$,  maps $e_l\in C(V_l,V)$ and elements $d_l\in G$ such that
\begin{equation}\label{eq10.2}
c_{\tau(l)}=d_{l} \cdot e_l\quad {\rm on}\quad V_l,\quad l\in L.
\end{equation}
By the definition of the inverse limit topology, there exist some $\tilde\beta\in\Lambda$  and a finite open cover $\mathfrak W=(W_k)_{k\in K}$ of $X_{\tilde \beta}$ such that $(p_{\tilde\beta}^{-1}(W_k))_{k\in K}$ is a refinement of $\mathfrak V$ with refinement map $\sigma: K\rightarrow L$ and $p_{\tilde\beta}^{-1}(\bar W_k)\subset V_{\sigma(k)}$ for all $k\in K$.

We set for all $k,m\in K$, $i=1,2$,
\begin{equation}\label{equ8.12}
f_{km}^i:=c^i_{(\tau\circ\sigma)(k)(\tau\circ\sigma)(m)},\quad f_k:=c_{(\tau\circ\sigma)(k)},\quad g_k:=d_{\sigma(k)},\quad h_k:=e_{\sigma(k)}.
\end{equation}
Then \eqref{eq10.1}, \eqref{eq10.2} imply
\begin{equation}\label{eq10.4}
f_k^{-1}\cdot p_\beta^* f_{km}^1\cdot f_m= p_\beta^* f_{km}^2\quad {\rm on}\quad p_{\tilde\beta}^{-1}(\bar W_k)\cap p_{\tilde \beta}^{-1}(\bar W_m),\quad k,m\in K.
\end{equation}
\begin{equation}\label{eq10.5}
f_{k}=g_k\cdot h_k\quad {\rm on}\quad p_{\tilde\beta}^{-1}(\bar W_k),\quad k\in K.
\end{equation}
Here $h_k\in C(p_{\tilde\beta}^{-1}(\bar W_k),V)$, $k\in K$.

Since by the Stone-Weierstrass theorem algebra $\cup_{\alpha\in\Lambda}\,p_\alpha^* C(X_\alpha)$ is dense in $C(X)$, by the Tietze-Urysohn extension theorem each $\exp_G^{-1}(h_k)$ can be uniformly approximated by maps in algebras $p_\alpha^* C(X_\alpha)\otimes\mathfrak g$.

Let us construct a sequence of open balls $U_s\subset U$, $1\le s\le \bar k:=\#K$, centered at $0$ such that for $V_s:=\exp_G(U_s)$ 
\begin{equation}\label{eq10.6}
V_{s}\cdot f_{km}^2(x)\cdot  V_{s}\cdot V_s\subset f_{km}^2(x)\cdot V_{s+1}\quad {\rm for\ all}\quad x\in p_{\tilde\beta}^{-1}(\bar W_k)\cap p_{\tilde \beta}^{-1}(\bar W_m),\ \, k,m\in K.
\end{equation}

We set $U_{\bar k}:=U$. Since each $f_{km}^{2}\cdot V$ is an open neighbourhood of $f_{km}^2$ in complex Banach Lie group $C\bigl(p_{\tilde\beta}^{-1}(\bar W_k)\cap p_{\tilde \beta}^{-1}(\bar W_m),G\bigr)$,  set $K$ is finite and each $p_{\tilde\beta}^{-1}(\bar W_k)\cap p_{\tilde \beta}^{-1}(\bar W_m)$ is  compact, due to  continuity of the product on $G$ there is a ball $U_{\bar k-1}\subset U_{\bar k}$ such that $V_{\bar k-1}\cdot f_{km}^2\cdot  V_{\bar k-1}\cdot  V_{\bar k-1}\subset f_{km}^2\cdot V_{\bar k}$ for all $k,m\in K$. Applying the same argument to $f_{km}^2\cdot V_{\bar k-1}$ we construct the required ball $U_{\bar k-2}$, etc.\smallskip

Next, let us choose $\tilde\alpha\ge\tilde\beta$ and elements $\ell_k'\in C(X_{\tilde\alpha})\otimes\mathfrak g$ such that $p_{\tilde\alpha}^*\ell_k|_{p_{\tilde\beta}^{-1}(\bar W_k)}$ with $\ell_k:=g_k\cdot\exp_{G}(\ell_k')$ are so close to $f_k$ that
(cf. \eqref{eq10.4})
\begin{equation}\label{eq10.7}
p_{\tilde\alpha}^*\ell_k^{-1}\cdot p_\beta^* f_{km}^1\cdot p_{\tilde\alpha}^*\ell_m\subset p_\beta^* f_{km}^2\cdot V_1\quad {\rm on}\quad p_{\tilde\beta}^{-1}(\bar W_k)\cap p_{\tilde \beta}^{-1}(\bar W_m),\ \, k,m\in K.
\end{equation}
\begin{Lm}\label{lem10.2}
There exist $\alpha\ge\tilde\alpha$ and maps $r_k\in C\bigl((p_{\tilde\beta}^\alpha)^{-1}(\bar W_k),V\bigr)$, $k\in K$, such that for all $k,m\in K$
\[
r_k^{-1}\cdot (p^\alpha_{\tilde\alpha})^*\ell_k^{-1}\, (p^\alpha_\beta)^* f_{km}^1\cdot (p^\alpha_{\tilde\alpha})^*\ell_m\cdot r_m=(p^\alpha_\beta)^* f_{km}^2\quad {\rm on}\quad (p^\alpha_{\tilde\beta})^{-1}(\bar W_k)\cap (p^\alpha_{\tilde\beta})^{-1}(\bar W_m).
\]
\end{Lm}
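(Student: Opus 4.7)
The plan is to reformulate the lemma as a twisted multiplicative Cousin problem on the finite closed cover $\bigl((p^\alpha_{\tilde\beta})^{-1}(\bar W_k)\bigr)_{k\in K}$ of $X_\alpha$ and then solve it by induction on the indexing set $K$, using the nested family $V_1\subset V_2\subset\cdots\subset V_{\bar k}=V$ of \eqref{eq10.6} to absorb the accumulation of errors. Setting
\[
a_{km}:=(p^\alpha_{\tilde\alpha})^*\ell_k^{-1}\cdot (p^\alpha_\beta)^* f_{km}^1\cdot (p^\alpha_{\tilde\alpha})^*\ell_m,\qquad b_{km}:=(p^\alpha_\beta)^* f^2_{km},
\]
a telescoping calculation (analogous to Lemma \ref{lem8.1}) shows that $a=\{a_{km}\}$ is a $G$-valued $1$-cocycle on the above cover of $X_\alpha$, while $b$ is a cocycle pulled back from $X_\beta$. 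The desired identity then reads $a_{km}=r_k\, b_{km}\, r_m^{-1}$, i.e., $a$ and $b$ should be cohomologous via a cochain taking values in $V$. By \eqref{eq10.7} and surjectivity of the limit projection $p_\alpha:X\rightarrow X_\alpha$, the defect $a_{km}\cdot b_{km}^{-1}$ takes values in $b_{km}\cdot V_1\cdot b_{km}^{-1}$ for every $\alpha\ge\tilde\alpha$, so the initial error is controlled by $V_1$.

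I would then induct on $s\in\{1,\ldots,\bar k\}$ with the hypothesis that there exist $r_k\in C\bigl((p^\alpha_{\tilde\beta})^{-1}(\bar W_k),V_s\bigr)$, $1\le k\le s$, satisfying $a_{km}=r_k\, b_{km}\, r_m^{-1}$ on every overlap $(p^\alpha_{\tilde\beta})^{-1}(\bar W_k\cap \bar W_m)$ with $k,m\le s$. The base case is immediate with $r_1\equiv 1_G$. For the induction step, the relations $a_{ks}=r_k\, b_{ks}\, r_s^{-1}$ with $k<s$ determine $r_s$ on the closed set $(p^\alpha_{\tilde\beta})^{-1}(\bar W_k\cap\bar W_s)$; the cocycle identities for $a$ and $b$ together with the inductive hypothesis force these partial definitions to agree on the triple overlaps $(p^\alpha_{\tilde\beta})^{-1}(\bar W_k\cap\bar W_l\cap\bar W_s)$, so they glue to a continuous $V$-valued map on the closed subset $\bigcup_{k<s}(p^\alpha_{\tilde\beta})^{-1}(\bar W_k\cap\bar W_s)$ of $(p^\alpha_{\tilde\beta})^{-1}(\bar W_s)$. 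Transporting this map into the Banach space $\mathfrak g$ via $\exp_G^{-1}$ and invoking the Arens extension theorem (as in the proof of Lemma \ref{lem9.5}) yields an extension with values in $U$, whose image under $\exp_G$ defines $r_s$ on all of $(p^\alpha_{\tilde\beta})^{-1}(\bar W_s)$. The inclusions \eqref{eq10.6}, of the form $V_{s-1}\cdot b_{km}\cdot V_{s-1}\cdot V_{s-1}\subset b_{km}\cdot V_s$, guarantee that $r_s$ can be chosen inside $V_s$ and that the updated cochain continues to satisfy the twisted coboundary relation.

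The main obstacle will be the bookkeeping of the nested inclusions during the induction: the equation is multiplicative and twisted by conjugation with $b_{km}$, so a naive extension of $r_s$ could escape $V$, and one must carefully force the extensions into $V_s\subset V$ at each stage, which is precisely what the hierarchy built in \eqref{eq10.6} is designed to allow. A secondary technical issue is that the Arens extension produces a continuous function on the compact Hausdorff space $X_\alpha$, which need not be a polyhedron; should continuity-based approximation (in the spirit of Proposition \ref{prop9.1}) be required to ensure the extended function factors through a higher level $\alpha'\ge\alpha$ of the inverse system, one enlarges $\alpha$ finitely many times, which is compatible with the finite length $\bar k$ of the induction and with the directedness of $\Lambda$.
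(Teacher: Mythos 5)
Your strategy is essentially the paper's: you pose the twisted Cousin problem $a_{km}=r_k\,b_{km}\,r_m^{-1}$ for $a_{km}:=(p^\alpha_{\tilde\alpha})^*\ell_k^{-1}\cdot (p^\alpha_\beta)^*f_{km}^1\cdot (p^\alpha_{\tilde\alpha})^*\ell_m$ and $b_{km}:=(p^\alpha_\beta)^*f^2_{km}$, induct over the pieces $\bar W_k$ of the finite cover, and at each step glue the forced partial definitions of $r_s$ on triple overlaps, transport to $\mathfrak g$ via $\exp_G^{-1}$, Arens-extend, and use \eqref{eq10.6} to keep the accumulated error inside the nested chain $V_1\subset\cdots\subset V_{\bar k}=V$. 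This is exactly the shape of the paper's argument.

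One step in your reduction is, however, not correct as stated. You claim that ``surjectivity of the limit projection $p_\alpha$'' transfers \eqref{eq10.7} from $X$ to $X_\alpha$ for every $\alpha\ge\tilde\alpha$. Proposition \ref{prop10.1} makes no surjectivity hypothesis, and in the application driving Theorem \ref{teo1.10} the projection is definitely not surjective: $X_\alpha$ is the $\frac{1}{\#\alpha}$-neighbourhood of $M(A_\alpha)$ in $\Co^{\#\alpha}$, while $p_\alpha(X)=F_\alpha(M(\mathscr A_Z))=K_\alpha\subsetneq X_\alpha$. Since \eqref{eq10.7} a priori controls the defect only on $p_\alpha(X)$, one must first observe, by compactness of $p_{\tilde\alpha}(X)$ and continuity, that the open inclusion persists on an open neighbourhood $N$ of $p_{\tilde\alpha}(X)$ in $X_{\tilde\alpha}$, and then choose $\alpha\ge\tilde\alpha$ with $p^\alpha_{\tilde\alpha}(X_\alpha)\subset N$, which produces \eqref{eq10.8}. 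This is where, and why, $\alpha$ gets enlarged — once, before the induction begins. By contrast, the concern you raise at the end, that the Arens extension might need to factor through a higher level of the system, does not arise: once $\alpha$ is fixed, the Arens extension is taken over the fixed compactum $X_\alpha$ itself, and no further increase of $\alpha$ occurs during the induction. With the surjectivity claim replaced by the continuity--compactness argument just described, your proposal matches the paper's proof.
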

\begin{proof}
Due to \eqref{eq10.7} for all $k,m\in K$,
\[
\ell_k^{-1}\cdot (p^{\tilde\alpha}_\beta)^* f_{km}^1\cdot \ell_m\subset  (p^{\tilde\alpha}_\beta)^* f_{km}^2\cdot V_1\quad {\rm on}\quad \bigl((p^{\tilde\alpha}_{\tilde\beta})^{-1}(\bar W_k)\cap (p^{\tilde\alpha}_{\tilde \beta})^{-1}(\bar W_m)\bigr)\cap p_{\tilde\alpha}(X).
\]
By continuity, there is an open neighbourhood $N\subset X_{\tilde\alpha}$ of $p_{\tilde\alpha}(X)$ such that analogous implications are still valid on all  $\bigl(p_{\tilde\beta}^{-1}(\bar W_k)\cap p_{\tilde \beta}^{-1}(\bar W_m)\bigr)\cap N$. By the definition of the inverse limit topology, there exists $\alpha\ge\tilde\alpha$ such that $X_\alpha=(p^\alpha_{\tilde\alpha})^{-1}(N)$. Hence, for all $k,m\in K$,
\begin{equation}\label{eq10.8}
(p^\alpha_{\tilde\alpha})^*\ell_k^{-1}\cdot (p^{\alpha}_\beta)^* f_{km}^1\cdot (p^\alpha_{\tilde\alpha})^*\ell_m\subset  (p^{\alpha}_\beta)^* f_{km}^2\cdot V_1\quad {\rm on}\quad \bigl((p^{\alpha}_{\tilde\beta})^{-1}(\bar W_k)\cap (p^{\alpha}_{\tilde \beta})^{-1}(\bar W_m)\bigr).
\end{equation}
In particular, there exist some $h_{km}\in C(\bigl((p^{\alpha}_{\tilde\beta})^{-1}(\bar W_k)\cap (p^{\alpha}_{\tilde \beta})^{-1}(\bar W_m)\bigr),V_1)$ such that
\[
(p^\alpha_{\tilde\alpha})^*\ell_k^{-1}\cdot (p^{\alpha}_\beta)^* f_{km}^1\cdot (p^\alpha_{\tilde\alpha})^*\ell_m=  (p^{\alpha}_\beta)^* f_{km}^2\cdot h_{km}.
\]
Without loss of generality we may assume that $K=\{1,\dots, \bar k\}\subset\N$. We set
\[
Z_l=\bigcup_{k=1}^l (p^{\alpha}_{\tilde\beta})^{-1}(\bar W_k).
\]
Using induction on $l\in \{1,\dots, \bar k\}$ we prove the following statement:\smallskip

\noindent ($\circ$) {\em There exist  maps $r_k^l\in C\bigl((p^{\alpha}_{\tilde\beta})^{-1}(\bar W_k),V_k\bigr)$, $1\le  l\le\bar k$, such that for all $1\le k,m\le l$}
\[
(r_k^l)^{-1}\cdot (p^{\alpha}_\beta)^* f_{km}^2\cdot h_{km}\cdot r_m^l= (p^\alpha_\beta)^* f_{km}^2 \quad {\rm on}\quad (p^\alpha_{\tilde\beta})^{-1}(\bar W_k)\cap (p^\alpha_{\tilde\beta})^{-1}(\bar W_m).
\]

If $l=1$, the statement  trivially holds with $r_1^1=1_G$.

Assuming that it is valid for $l\in\{1,\dots, \bar k-1\}$ let us prove it for $l+1$.

We set $r_k^l:=1_{G}$ for $l+1\le k\le \bar k$ and define a new  $G$-valued $1$-cocycle $c^l=\{c_{km}^l\}_{1\le k, l\le N+1}$ on $\widetilde{\mathfrak W}:=((p_{\tilde\beta}^\alpha)^{-1}(\bar W_k))_{k\in K}$ by the formulas
\[
c_{km}^l:=(r_k^l)^{-1}\cdot (p^{\alpha}_\beta)^* f_{km}^2\cdot h_{km}
\cdot r_m^l\quad {\rm on}\quad (p^\alpha_{\tilde\beta})^{-1}(\bar W_k)\cap (p^\alpha_{\tilde\beta})^{-1}(\bar W_m),\quad 1\le k,m\le \bar k.
\]
Then for all $1\le k,m\le l$ we have 
\[
c_{km}^l=(p^{\alpha}_\beta)^* f_{km}^2\quad {\rm on}\quad (p^\alpha_{\tilde\beta})^{-1}(\bar W_k)\cap (p^\alpha_{\tilde\beta})^{-1}(\bar W_m).
\]
Due to \eqref{eq10.6}, $c_{km}^l(x)\in f_{km}^2(x)\cdot V_{l+1}$ for all $x$ and $k, m\in K$.
In particular, restrictions of $c_{k\,l+1}^{l}$, $1\le k\le l$, to $(p^\alpha_{\tilde\beta})^{-1}(\bar W_k)\cap (p^\alpha_{\tilde\beta})^{-1}(\bar W_{l+1})$ have the form
\[
c_{k\,l+1}^k=(p^{\alpha}_\beta)^* f_{k\,l+1}^2\cdot t_{k\,l+1}\ \, {\rm for\ some}\ \, t_{k\,l+1}\in C\bigl((p^\alpha_{\tilde\beta})^{-1}(\bar W_k)\cap (p^\alpha_{\tilde\beta})^{-1}(\bar W_{l+1}), V_{l+1}\bigr).
\]
Note that maps $t_{k\,l+1}$ glue together over all nonempty intersections to give a map $t_{l+1}\in C(Z_l\cap (p^\alpha_{\tilde\beta})^{-1}(\bar W_{l+1}),V_{l+1})$. Indeed, if $x\in \bigl((p^\alpha_{\tilde\beta})^{-1}(\bar W_{k})\cap (p^\alpha_{\tilde\beta})^{-1}(\bar W_{m})\bigr)\cap (p^\alpha_{\tilde\beta})^{-1}(\bar W_{l+1})$, $1\le k,m\le l$, then since $c^l$ and $\{(p^{\alpha}_\beta)^*f_{km}^2\}_{k,m\in K}$ are cocycles on $\widetilde{\mathfrak W}$,
\[
\begin{array}{l}
\displaystyle
(p^{\alpha}_\beta)^*f_{k\,l+1}^2(x)\cdot ((p^{\alpha}_\beta)^* f_{k\,l+1}^2(x))^{-1}=
(p^{\alpha}_\beta)^*f_{km}^2(x)=c_{km}^l(x)=c_{k\,l+1}(x)\cdot (c_{m\,l+1}(x))^{-1}\medskip\\
\displaystyle =(p^{\alpha}_\beta)^* f_{k\,l+1}^2(x)\cdot t_{k\,l+1}(x)\cdot (t_{m\,l+1}(x))^{-1}\cdot ((p^{\alpha}_\beta)^* f_{k\,l+1}^2(x))^{-1}.
\end{array}
\]
This implies that $t_{k\,l+1}(x)= t_{m\,l+1}(x)$ as required.

Using  the Arens extension theorem we extend $t_{l+1}$ to a map  $\tilde t_{l+1}\in C((p^\alpha_{\tilde\beta})^{-1}(\bar W_{l+1}),V_{l+1})$ and define 
\[
r_{l+1}^{l+1}:=\tilde t_{l+1}^{-1}\quad {\rm and} \quad r_{k}^{l+1}:=r_k^l,\quad 1\le k\le l.
\]
Then for $1\le k\le l$,
\[
(r_k^{l+1})^{-1}\cdot (p^\alpha_{\tilde\beta})^*f_{k\,l+1}\cdot h_{k\,l+1}\cdot r_{l+1}^{l+1}=c_{k\,l+1}^l\cdot t_{k\,l+1}^{-1}= (p^\alpha_{\beta})^*f_{k\,l+1}
\]
on $(p^\alpha_{\tilde\beta})^{-1}(\bar W_k)\cap (p^\alpha_{\tilde\beta})^{-1}(\bar W_{l+1})$,
and for others $k,m\in \{1,\dots, l+1\}$ analogous identities on $(p^\alpha_{\tilde\beta})^{-1}(\bar W_k)\cap (p^\alpha_{\tilde\beta})^{-1}(\bar W_{m})$ are
valid by the induction hypothesis and  the definition of cocycle.

This completes the proof of the induction step.

Choosing in the statement $l=\bar k$ and defining $r_k:=r_k^{\bar k}$, $k\in K$, we obtain the assertion of the lemma.
\end{proof}

To finish the proof of the proposition observe that bundles $(p_{\beta}^\alpha)^*E_i$ can be defined by cocycles $\{(p^{\alpha}_\beta)^*f_{km}^i\}_{k,m\in K}$, $i=1,2$, on cover $\widetilde{\mathfrak W}$ , see \eqref{equ8.12}. Then Lemma \ref{lem10.2} shows that these cocycles determine the same element of cohomology set $H^1(X_\alpha,G)$. This implies that topological principal $G$-bundles $(p_{\beta}^\alpha)^*E_1$ and $(p_{\beta}^\alpha)^*E_2$ on $X_\alpha$ are isomorphic.

The proof of the proposition is complete.
\end{proof}

To complete the proof of Theorem \ref{teo1.10} we apply Proposition \ref{prop10.1} to
$X=M(\mathscr A_Z)$ and  $X_\alpha$ the closed $\frac{1}{\#\alpha}$-neighbourhood of $M(A_\alpha)$ in $\Co^{\#\alpha}$, i.e., $X_\alpha:=\cup_{z\in M(A_\alpha)}\bar{B}_z(\frac{1}{\#\alpha})\subset\Co^{\#\alpha}$, where $\bar{B}_z(\frac{1}{\#\alpha})\Subset\Co^{\#\alpha}$ is the closed Euclidean ball of radius $\frac{1}{\#\alpha}$ centered at $z$. Then the inverse limit of the the inverse limit system of compacta $\{X_\alpha,F_\beta^\alpha\}$, where $\alpha$ ranges over the directed set $D$, coincides with $M(\mathscr A_Z)$ as well. Suppose bundles $P_1,P_2$ of the second statement of Theorem \ref{teo1.10} are
defined on an open neighbourhood $N\Subset\Co^{\#\beta}$ of $M(A_\beta)$. By the definition of the inverse limit topology there exists some $\tilde\beta\supseteq\beta$ such that $X_{\tilde \beta}\Subset (F_\beta^{\tilde\beta})^{-1}(N)$. In particular, bundles $(F_\beta^{\tilde\beta})^*P_i$, $i=1,2$, are defined on an open neighbourhood of $X_{\tilde\beta}\Subset\Co^{\#\tilde\beta}$. Since pullbacks of these bundles to $M(\mathscr A_Z)$ are  isomorphic topological principal $G$-bundles, Proposition \ref{prop10.1} is applicable.
Due to the proposition, there is $\alpha\supseteq\tilde\beta$ such that bundles
$(F_\beta^\alpha)^*E_i$, $i=1,2$, on $X_\alpha$ are isomorphic. As $\mathring{X}_\alpha$ is an open neighbourhood of the polynomially convex set
$M(A_\alpha)$, there exists a Stein neighbourhood $O\Subset X_\alpha$ of $M(A_\alpha)$. Then holomorphic bundles $(F_\beta^\alpha)^*E_i$ restricted to $O$ are topologically isomorphic and so by Bungart's theorem \cite[Th.\,8.1]{Bu} they are holomorphically isomorphic as well. This completes the proof of the second statement of Theorem \ref{teo1.10} and therefore of the theorem.

\sect{Proofs of Theorems  \ref{teo3.4}--\ref{teo3.7} and Corollary \ref{cor3.6}}
 
\subsection{Proof of Theorem \ref{teo3.4}}
Since $\mathcal O(K,X)$ is dense in the complex Banach Lie group $\mathcal A(K,X)$, it suffices to show that if $f\in\mathcal O(K,X)$, then $f|_K$ can be joined by a path in $\mathcal O(K,X)$ with the constant map of the value $1_G$ (the unit of group $\mathcal A(K,X)$).

To this end, let  $U\subset\mathfrak g$ be an open ball centered at $0$ such that $\exp_G|_U: U\rightarrow G$ is biholomorphic.
Suppose $f$ is holomorphic on an open neighbourhood  $W$ of $K$. We choose a finite cover $\mathfrak V=(V_l)_{l\in L}$ of $K$ by relatively compact open subsets of $W$ so that 
\[
f|_{V_l}=g_lf_l\quad {\rm for\ some}\quad g_l\in G,\  \, f_l\in \mathcal O(V_l,\exp_G(U)),\ l\in L.
\]
By definitions of $g_l$ and $f_l$ there are paths $u_l: [0,1]\rightarrow G$
and $v_l: [0,1]\rightarrow \mathcal O(V_l,\exp_G(U))$
 such that $u_l(0)=v_l(0)(x)= 1_G$ for all $x\in V_l$, and $u_l(1)=g_l$, $v_l(1)=f_l$. We set
 \[
 z_l(t)=u_l(t)\, v_l(t),\quad t\in [0,1].
\]
Then $z_l$ can be regarded as a map in $\mathcal O(V_l, C([0,1],G))$, $l\in L$. Let us consider cocycle $\{c_{lm}\}_{l,m\in L}$ on $\mathfrak V$ defined by the formulas
\begin{equation}\label{equ9.2}
c_{lm}:=z_l^{-1}\, z_m\quad {\rm on}\quad V_l\cap V_m.
\end{equation}
Each $c_{lm}$ has range in the complex Lie group $C_0([0,1], G)$ of maps in $C([0,1], G)$ equal to $1_G$ at the endpoints. By definition, each element of $C_0([0,1], G)$ is a loop in $G$ with the basepoint $1_G$. Since $G$ is simply connected, such a loop can be joined in $C_0([0,1], G)$ with the constant loop of the value $1_G$. This shows that group $C_0([0,1], G)$ is connected. 

Next, cocycle $\{c_{lm}\}_{l,m\in L}$ determines a holomorphic principal $C_0([0,1], G)$-bundle $P$ on $\cup_{l\in L}V_l$. Since the fibre of $P$ is connected, Corollary \ref{cor1.5}(1) implies that $P|_{W'}$ is trivial on an open neighbourhood $W'\Subset W$ of $K$. Hence, there exist $c_l\in\mathcal O(W'\cap V_l,C_0([0,1],G))$ (we assume that all $K\cap V_l\ne\emptyset$ so that all $W'\cap V_l\ne\emptyset$ as well) such that
\begin{equation}\label{equ9.3}
c_l^{-1}\, c_m=c_{lm}\quad {\rm on}\quad (W'\cap V_l)\cap (W'\cap V_m),\quad l,m\in L.
\end{equation}
Comparing \eqref{equ9.2} and \eqref{equ9.3} we get a map $H\in \mathcal O(W',C([0,1],G))$ such that
\[
H|_{W'\cap V_l}:=z_l\, c_l^{-1},\quad l\in L.
\]
Clearly, $H(x,0)=1_G$ and $H(x,1)=f(x)$ for all $x\in W'$.  Thus $H|_K$ determines a path in $\mathcal O(K,G)$ joining $f$ with the constant map of the value $1_G$. This shows that $\mathcal A(K,G)$ is connected.

The proof of the theorem is complete.
\subsection{Proof of Theorem \ref{teo3.5}} (1) Let $X$ be a complex Banach homogeneous space under the action of a complex Banach Lie group $G$. We use the following result see, e.g., \cite[Prop.\,1.4]{R}:
\begin{itemize}
\item[($\star$)]
For each $p\in X$, $g\in G$ there exist neighbourhoods $U$ of $g$ in $G$ and $V$ of $g\cdot p$ in $X$ and a holomorphic map $f_{g,p}:V\rightarrow U$ such that $\pi^p\circ f_{g,p}$ is the identity on $V$.
\end{itemize}
Let $G_0$ be the connected component of the unit of $G$ and $p$ be a point in a connected component $X'$ of $X$. Then ($\star$) implies that  $O_p:=\{g\cdot p\}_{g\in G_0}$ is an open connected subset of $X'$. Clearly, for distinct $p_1,p_2\in X'$, sets $O_{p_k}$, $k=1, 2$, either coincide of disjoint. Therefore if $X'\ne O_p$, it can presented as disjoint union of open connected subsets which contradicts connectedness of $X'$. Hence, each connected component $X'$ of $X$ is a complex Banach homogeneous space under the action of $G_0$.

Further, let $\psi: G_u\rightarrow G_0$ be the universal covering of $G_0$. Since $\psi$ is a locally biholomorphic epimorphism of groups,  each connected component $X'$ of $X$ can be considered as a complex Banach homogeneous space under the action $G_u\times X'\rightarrow X'$,
$(g,p)\mapsto\psi(g)\cdot p$.
Finally, since space $M(\mathscr A_Z)$ is connected, images of homotopic maps $M(\mathscr A_Z)\rightarrow X$ belong to the same connected component. These arguments show that it suffices to prove the following statement:\smallskip

\noindent ($\circ$) {\em Suppose $X$ is a connected complex Banach homogeneous space under the action of a simply connected complex Banach Lie group $G$. If $f_1,f_2\in \mathcal O(M(\mathscr A_Z),X)$ are homotopic, then they are homotopic in $\mathcal O(M(\mathscr A_Z),X)$.}\smallskip

To this end, we fix $p\in X$ and consider an open cover $\mathfrak U=(U_i)_{i\in I}$ such that there exist maps $s_i\in \mathcal O(U_i,G)$ with $s_i\cdot p={\rm id}_{U_i}$, $i\in I$, cf. ($\star$). Then $s_i(x)\cdot p=s_j(x)\cdot p=x$ for all $x\in U_i\cap U_j$. This implies that $c_{ij}(x):=s_{i}^{-1}(x) s_j(x)$ satisfy $c_{ij}(x)\cdot p=p$ for all $x\in U_i\cap U_j$,
i.e., $c_{ij}\in\mathcal O(U_i\cap U_j, G(p))$. Thus, cocycle $c=\{c_{ij}\}_{i,j\in I}$ on $\mathfrak U$ determines a holomorphic principal $G(p)$-bundle $P$ on $X$. Since $f_1$ and $f_2$ are homotopic, pullbacks $f_1^*P$ and $f_2^*P$ are topologically isomorphic holomorphic principal $G(p)$-bundles on $M(\mathscr A_Z)$, see, e.g., \cite[Ch.\,4]{Hus}. Thus, by Theorem \ref{teo2.9} they are holomorphically isomorphic. Hence, there exists a common finite open refinement $\mathfrak V=(V_l)_{l\in L}$ of covers $(f_k^{-1}(U_i))_{i\in I}$ with refinements maps $\tau_k: L\rightarrow I$, $k=1,2$, and maps $c_l\in \mathcal O(V_l\cap V_m,G(p))$, $l\in L$, such that
\[
c_l^{-1} (f_1^* c_{\tau_1(l)\tau_1(m)})\, c_m=f_2^* c_{\tau_2(l)\tau_2(m)}\quad {\rm on}\quad V_l\cap V_m,\ \, l,m\in L.
\]
From here, using the definition of cocycle $c$,  we get
\begin{equation}\label{equ9.4}
c_l^{-1}\,(f_1^* s_{\tau_1(l)})^{-1} (f_1^*s_{\tau_1(m)})\, c_m=(f_2^* s_{\tau_2(l)})^{-1} (f_2^*s_{\tau_2(m)})\quad {\rm on}\quad V_l\cap V_m,\ \, l,m\in L.
\end{equation}
This determines a map $g\in\mathcal O(M(\mathscr A_Z), G)$ such that
\[
g|_{V_l}=(f_2^* s_{\tau_2(l)})\,c_l^{-1}\,(f_1^* s_{\tau_1(l)})^{-1}, \ \, l\in L.
\]
Then from \eqref{equ9.4} we obtain
\[
f_2=f_2^*s_{\tau_2(l)}\cdot p=(g\,f_1^*s_{\tau_1(l)}\,c_l)\cdot p=
(g\,f_1^*s_{\tau_1(l)})\cdot p=g\cdot f_1\quad {\rm on}\quad V_l,\ \, l\in L.
\]
Since $G$ is simply connected $g$ can be joined by a path $g_t$, $t\in [0,1]$, with the constant map $g_0$ of value $1_G$, see Theorem \ref{teo3.4}. Then $g_t\cdot f_1$ determines homotopy in $\mathcal O(M(\mathscr A_Z), X)$ joining $f_1$ and $f_2$.

This proves injectivity of map $\mathscr O: [M(\mathscr A_Z),X]_{\mathcal O}\rightarrow [M(\mathscr A_Z),X]$ of the theorem.\smallskip

(2) Suppose that $f\in C(M(\mathscr A_Z))$. For $A\in\mathscr A_Z$,  consider $M(\mathscr A_Z)$ as the inverse limit of the inverse limiting system $\{M(A_\alpha),F_\beta^\alpha\}$, where $\alpha$ runs over directed set $D$, see Section~2.2. Then by the Stone-Weierstrass theorem and since $X$ is an absolute neighbourhood retract (see \cite{P}), there are $\alpha\in D$ and a map $g\in C(U,X)$ defined on a neighbourhood of $M(A_\alpha)$ such that maps $f$ and $F_\alpha^*g$ are homotopic. Further, as $M(A_\alpha)\Subset\Co^{\#\alpha}$ is polynomially convex, there exists an open Stein neighbourhood $N$ of $M(A_\alpha)$ containing in $U$. Then due to Ramspott's ``Oka principle'' 
\cite{Ra}, \cite[Th.\,2.1]{R} there is a map $\tilde g\in \mathcal O(N,X)$ homotopic to $g|_{N}$. Hence, map $f$ is homotopic to map $F_\alpha^{*\,}\tilde g\in\mathcal O(M(\mathscr A_Z),X)$.

This proves surjectivity of map $\mathscr O: [M(\mathscr A_Z),X]_{\mathcal O}\rightarrow [M(\mathscr A_Z),X]$ and completes the proof of the theorem.

\subsection{Proof of Corollary \ref{cor3.6}}
Since ${\rm dim}\, M(\mathscr A_Z)=2$ and $H^{2}(M(\mathscr A_Z),\pi_2(X))=0$, see Lemma \ref{lem6.3}, under the hypotheses of the corollary there is a one-to-one correspondence between elements of $[M(\mathscr A_Z),X]$ and the \v{C}ech cohomology group $H^1(M(\mathscr A_Z),\pi_1(X))$, see \cite[Th.\,(11.4)]{Hu2}. Since by Theorem \ref{teo3.5} $\mathscr O: [M(\mathscr A_Z),X]_{\mathcal O}\rightarrow [M(\mathscr A_Z),X]$ is a bijection, this gives the required statement.
\begin{R}\label{rem10.1}
{\rm The correspondence of the corollary is defined as follows.

Let $r: X_u\rightarrow  X$ be the universal covering of $X$.  It can be considered as a principal bundle $P$ on $X$ with fibre $\pi_1(X)$ defined by a cocycle $c$ with values in $\pi_1(X)$ on an open cover of $X$. By definition, $c$ determines an element $\{c\}\in H^1(X,\pi_1(X))$. Now, if $f\in C(M(\mathscr A_Z), X)$ then pullback $f^*P$ is a principal $\pi_1(X)$-bundle on $M(\mathscr A_Z)$ defined by cocycle $f^*c$ representing class $\{f^*c\}\in H^1(M(\mathscr A_Z),\pi_1(X))$. If $f,g\in C(M(\mathscr A_Z), X)$ are homotopic, then $\{f^*c\}=\{g^*c\}$. Map $\mathcal O(M(\mathscr A_Z),X)\ni f\mapsto \{f^*c\}\in H^1(M(\mathscr A_Z),\pi_1(X))$ determines the required correspondence.
}
\end{R}

\subsection{Proof of Theorem \ref{teo3.6}}
(1) Let $f\in \mathcal O(M(\mathscr A_Z),X)$. Due to the arguments of the proof of Theorem \ref{teo3.5}(2), there exist $\alpha\in D$ and a holomorphic map $h\in\mathcal O(N,X)$ defined in a neighbourhood $N$ of $M(A_\alpha)$ such that holomorphic maps $f$ and $F_\alpha^*h$ are homotopic. Then by Theorem \ref{teo3.5} these maps are homotopic in $\mathcal O(M(\mathscr A_Z),X)$. This shows that map
$\mathfrak F_A: \displaystyle \lim_{\longrightarrow}\,[M(A_\alpha), X]_{\mathcal O}\rightarrow [M(\mathscr A_Z),X]_{\mathcal O}$ is surjective.\smallskip

(2) Suppose holomorphic maps $f_1,f_2$ into $X$ defined on a neighbourhood of $M(A_\beta)$ are such that  $F_\beta^*f_1$ and $F_\beta^*f_2$ are homotopic in $\mathcal O(M(\mathscr A_Z),X)$. Then by the definition of the inverse limit topology and since $X$ is an absolute neighbourhood retract, there exists $\alpha\supseteq\beta$ such that maps $(F_\beta^\alpha)^*f_1$ and $(F_\beta^\alpha)^*f_2$ are defined and homotopic on a neighbourhood $U$ of $M(A_\alpha)$, see, e.g., \cite[Lm.\,1]{Li} and the reference therein. As $M(A_\alpha)$ is polynomially convex, there is a Stein neighbourhood $N$ of $M(A_\alpha)$ containing in $U$. Then maps $(F_\beta^\alpha)^*f_1|_N$ and $(F_\beta^\alpha)^*f_2|_N$ are homotopic and so the Ramspott ``Oka principle'' \cite{Ra}, \cite[Th.\,2.1]{R} implies that they are homotopic in $\mathcal O(N,G)$. 

This shows that map
$\mathfrak F_A: \displaystyle \lim_{\longrightarrow}\,[M(A_\alpha), X]_{\mathcal O}\rightarrow [M(\mathscr A_Z),X]_{\mathcal O}$ is injective and completes the proof of the theorem.

\subsection{Proof of Theorem \ref{teo3.7} }
Let $f\in\mathcal O(M(\mathscr A_Z), X)$. Using ($\star$) as in the proof of Theorem \ref{teo3.5} we construct a holomorphic principal $G(p)$-bundle $P$ on the connected component of $X$ containing  the image of $f$. Since the fibre $G(p)$ of holomorphic principal bundle $f^*P$ on $M(\mathscr A_Z)$ is  connected, $f^*P$ is trivial by Corollary \ref{cor1.5}(1). Thus it has a holomorphic section. Repeating literally the arguments of the proof of Theorem \ref{teo3.5} we construct by means of this section a map $\tilde f\in\mathcal O(M(\mathscr A_Z),X)$ such that $f(x)=\tilde f(x)\cdot p$ for all $x\in M(\mathscr A_Z)$. We leave the details to the reader.

The proof of the theorem is complete.

\sect{Proofs of Theorems \ref{nonlinrunge} and \ref{te2.2}}
\begin{proof}[Proof of Theorem \ref{nonlinrunge}]
(1) Suppose $f\in\mathcal O(U,X)$ can be uniformly approximated on the compact subset $K\subset U$ by maps in $C(M(\mathscr A_Z),X)$.
Since $X$ is an absolute neighbourhood retract, see \cite{P}, the latter implies that  $f$ is homotopic to the restriction to $U$ of a map in $C(M(\mathscr A_Z),X)$. Then according to Theorem 
\ref{teo3.5} it is homotopic to the restriction to $U$ of a map $g\in\mathcal O(M(\mathscr A_Z),X)$. In particular, since $M(\mathscr A_Z)$ is connected, images of $g$ and $f$ belong to a connected component $X_0$ of $X$.  As in the proof of Theorem \ref{teo3.5},  $X_0$ is a complex Banach homogeneous space under the action of a simply connected complex Banach Lie group $G$. Let $H\subset G$ be the stationary subgroup of a point in $X_0$ under this action. Then $G$ is biholomorphic to the holomorphic principal bundle $P$ on $X_0$ with fibre $H$ (cf. ($\star$) in the proof of Theorem \ref{teo3.5}).  Since $f$ and $g|_U$ are homotopic, the holomorphic principal bundles $f^*P$ and $g^*P|_U$ on $U$ are topologically isomorphic. Then by Theorem \ref{teo8.1} they are holomorphically isomorphic on an open neighbourhood $V\Subset U$ of $K$. From here as in the proof of Theorem \ref{teo3.5}(1) we obtain that there is a map $h\in\mathcal O(V,G)$ such that $f=h\cdot g|_V$. Since $G$ is simply connected, for an open neighbourhood $W\Subset V$ of $K$ map $h|_W$ can be joined by a path in $\mathcal O(W,G)$ with the constant map of the value $1_G$, see the proof of Theorem \ref{teo3.4}.
Since $K$ is holomorphically convex,  the latter and the Runge approximation theorem (see Theorem \ref{runge}) imply, by a standard argument, that $h|_K$ can be uniformly approximated on $K$ by  maps $h_n\in\mathcal O(M(\mathscr A_Z),G)$, $n\in\N$. In particular, maps $h_n\cdot g$ converges to $f$ uniformly on $K$.

This completes the proof of part (1) of the theorem.\smallskip

\noindent (2) Suppose $X$ is simply connected. Then it is the complex Banach homogeneous space under the action of a simply connected complex Banach Lie group $G$ and the stationary subgroup $H\subset G$ of a point $p\in X$ under this action is  {\em connected}. (This follows, e.g., from the long exact sequence of homotopy groups of fibration $G\rightarrow X$ with fibre $H$.) In particular, in the notation of the first part of the proof, holomorphic bundle $f^*P$ on $U$, $f\in\mathcal O(U,X)$, has connected fibre $H$. Thus, by Corollary \ref{cor1.5}\,(1), $f^*P|_V$ is 
trivial on an open neighbourhood $V\Subset U$ of $K$. From here as in the proof of Theorem \ref{teo3.5} (cf. also Theorem \ref{teo3.7}) we obtain that $f|_V=\tilde f\cdot p$ for some $\tilde f\in\mathcal O(V,G)$. Since $G$ is simply connected, as in the proof of the first part of the theorem we get that $\tilde f|_K$ can be uniformly approximated on $K$ by maps $\tilde f_n\in\mathcal O(M(\mathscr A_Z),G)$, $n\in\N$. In particular, maps $\tilde f_n\cdot p$ converges to $f$ uniformly on $K$.

The proof of the theorem is complete.
\end{proof}

\begin{proof}[Proof of Theorem \ref{te2.2}]
First, we prove the particular case of part (2) of the theorem for $X$ being a connected complex Banach Lie group $G$.

Suppose that $f\in\mathcal O(U,G)$ for an open neighbourhood $U$ of a hull $Z$. Let $Q_Z: M(H^\infty)\rightarrow M(\mathscr A_Z)$ be the quotient map of Proposition \ref{prop1.1} sending $Z$ to a point $z$.  Then $Q_Z(U)\subset M(\mathscr A_Z)$ is an open neighbourhood of $z$. Since $Q_Z$ is one-to-one outside $Z$, there is a map $h\in \mathcal O(M(\mathscr A_Z)\setminus\{z\},G)$ such that $Q_Z^*h=f$ on $M(H^\infty)\setminus Z$. Consider the open cover $\mathfrak U:=\bigl\{M(\mathscr A_Z)\setminus\{z\}, Q_Z(U)\bigr\}$ of $M(\mathscr A_Z)$. Let $P$ be the holomorphic principal $G$-bundle on $M(\mathscr A_Z)$ defined on  $\mathfrak U$ by cocycle $c:=h|_{Q_Z(U)\setminus\{z\}}\in \mathcal O (Q_Z(U)\setminus\{z\},G)$. Since group $G$ is connected, Corollary \ref{cor1.5}(1) implies that bundle $P$ is trivial. Equivalently, there exists a global holomorphic section of $P$ which in local coordinates on $\mathfrak U$ is given by maps $c_1\in \mathcal O(M(\mathscr A_Z)\setminus\{z\},G)$ and $c_2\in\mathcal O(Q_Z(U),G)$ such that
\begin{equation}\label{equ9.1}
c_1 c_2^{-1}=c\quad {\rm on}\quad Q_Z(U)\setminus\{z\}.
\end{equation}
Replacing $c_i$ by $c_i c_2(z)^{-1}$, $i=1,2$, if necessary, without loss of generality we may assume that
$c_2(z)=1_G$. Then \eqref{equ9.1} implies that  maps $Q_Z^*c_1\in\mathcal O(M(H^\infty)\setminus Z,G)$ and $g\, Q_Z^*c_2\in\mathcal O(U,G)$ are equal on $U\setminus Z$ and therefore they glue together to determine a map $\tilde f\in\mathcal O(M(H^\infty),G)$. By definition, $\tilde f|_{Z}=f$, as required.

Next, we establish the following result.
\begin{Lm}\label{lem11.1}
Under hypotheses (1), (2) of the theorem there is an open neighbourhood $V\Subset U$ of $Z$ and a map $g\in\mathcal O(M(H^\infty),X)$ such that maps $f|_V$ and $g|_V$ are homotopic.
\end{Lm}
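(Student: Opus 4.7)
My plan is to split the argument into two independent tasks: (a) produce, under each of hypotheses (1) and (2) separately, a holomorphic map $g\in\mathcal O(M(H^\infty),X)$ with $g|_Z$ homotopic in $C(Z,X)$ to $f|_Z$; and (b) promote this homotopy on the compact set $Z$ to a continuous homotopy on some open neighbourhood $V\Subset U$ of $Z$. The first task is a global Oka-type question on $M(H^\infty)$ and the second is a homotopy-extension question on a normal space relying only on the local absolute-retract structure of the Banach manifold $X$.

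For task (a) under hypothesis (1), a continuous extension $\bar F\in C(M(H^\infty),X)$ of $f|_Z$ is given; applying the surjectivity part of Theorem \ref{teo3.5} to $M(H^\infty)=M(\mathscr A_\emptyset)$ yields $g\in\mathcal O(M(H^\infty),X)$ homotopic to $\bar F$ in $C(M(H^\infty),X)$, so that $g|_Z\sim \bar F|_Z=f|_Z$. For task (a) under hypothesis (2), the $n$-simplicity of $X$ for $n\le 2$ together with $\dim Z\le \dim M(H^\infty)=2$ (Proposition \ref{prop1.1}(b)) lets me identify, via classical obstruction theory, both $[M(H^\infty),X]_{\mathcal O}=[M(H^\infty),X]$ and $[Z,X]$ with $H^1(M(H^\infty),\pi_1(X))$ and $H^1(Z,\pi_1(X))$ respectively (using Corollary \ref{cor3.6} for the first). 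Producing $g$ thereby reduces to showing that the natural pullback map $H^1(M(H^\infty),\pi_1(X))\to H^1(Z,\pi_1(X))$ is surjective, equivalently that $H^2(M(H^\infty),Z;\pi_1(X))=0$ in the \v{C}ech long exact sequence of the pair. Strong excision (as in the proof of Proposition \ref{prop1.1}(c)) identifies this relative group with $H^2(M(\mathscr A_Z),\pi_1(X))$; the latter vanishes by combining $H^2(M(\mathscr A_Z),\mathbb Z)=0$ from Proposition \ref{prop1.1}(c) with $\dim M(\mathscr A_Z)=2$, using the short exact sequences $0\to\mathbb Z\to\mathbb Z\to\mathbb Z/n\to 0$ to reduce to finite cyclic coefficients and then passing to arbitrary abelian coefficients via direct limits, since the \v{C}ech cohomology of a compact space commutes with directed unions of coefficient groups.

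For task (b), fix an open neighbourhood $W$ of $Z$ with $\overline W\subset U$ and let $H:Z\times[0,1]\to X$ be any continuous homotopy from $f|_Z$ to $g|_Z$. Define $\Phi:A\to X$ on the closed subset $A:=(\overline W\times\{0,1\})\cup(Z\times[0,1])$ of $\overline W\times[0,1]$ by $\Phi(x,0):=f(x)$, $\Phi(x,1):=g(x)$ for $x\in\overline W$, and $\Phi(z,t):=H(z,t)$ for $(z,t)\in Z\times[0,1]$; consistency on $Z\times\{0,1\}$ is built into the endpoint conditions of $H$. Since $\Phi(A)$ is compact in $X$ it is covered by finitely many holomorphic chart neighbourhoods biholomorphic to open subsets of a Banach space, so the local absolute-retract property of $X$ together with normality of the compact Hausdorff space $\overline W\times[0,1]$ produces (via Tietze--Urysohn patching within these charts) a continuous extension $\tilde\Phi$ of $\Phi$ to an open neighbourhood $N$ of $A$. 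By compactness of $Z\times[0,1]$ one then finds an open $V$ with $Z\subset V\Subset W$ and $V\times[0,1]\subset N$; the restriction $\tilde\Phi|_{V\times[0,1]}$ is the required homotopy between $f|_V$ and $g|_V$.

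The principal obstacle I foresee is the cohomological vanishing step inside task (a) under hypothesis (2): the group $\pi_1(X)$ may be an arbitrary topological abelian group, so passing from $H^2(M(\mathscr A_Z),\mathbb Z)=0$ to $H^2(M(\mathscr A_Z),\pi_1(X))=0$ is not automatic and must be chained through the dimension bound, the universal-coefficient-style long exact sequences for finite quotients of $\mathbb Z$, and a direct-limit argument over finitely generated subgroups of $\pi_1(X)$. The homotopy-extension step in task (b) is expected to be routine once the local Banach-space chart structure of $X$ and the fact that $f$ is defined on a neighbourhood $U\supset\overline W$ of $Z$ are exploited.
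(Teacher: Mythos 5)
Your proposal is correct and follows essentially the same route as the paper: part (1) is a direct application of Theorem \ref{teo3.5}, part (2) reduces to surjectivity of the restriction $H^1(M(H^\infty),\pi_1(X))\to H^1(Z,\pi_1(X))$ via the Hu classification and Corollary \ref{cor3.6}, and the passage from a homotopy on $Z$ to one on a neighbourhood uses the ANR property of $X$. The only difference is that you spell out the coefficient-change argument (from $\mathbb Z$ to $\pi_1(X)$, using $H^2(M(\mathscr A_Z),\mathbb Z)=0$, the dimension bound, and direct limits) and the Tietze--Urysohn homotopy-extension step in detail, whereas the paper invokes Treil's theorem and a reference of Lin for these; mathematically the content is the same.
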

\begin{proof}
For part (1), there is a map in $C(M(H^\infty),X)$ extending $f|_Z$. Such a map is homotopic to a map $g\in\mathcal O(M(H^\infty), X)$ by Theorem \ref{teo3.5}. This gives the required result with $V:=U$.

Now suppose that  $f$ satisfies conditions of part (2) of the theorem.  Since due to Proposition \ref{prop1.1} ${\rm dim}\, Z\le 2$ and  $H^2(Z,\Z)=0$,
there is a one-to-one correspondence between homotopy classes of continuous maps $Z\rightarrow X$ and elements of $H^1(Z,\pi_1(X))$, see \cite[Th.\,(11.4)]{Hu2}, described similarly to that of Remark \ref{rem10.1}.
On the other hand, by Treil's theorem \cite{T6} and its corollary \cite[Th\,1.3]{S1}  inclusion $Z\hookrightarrow M(H^\infty)$ induces an epimorphism of groups $H^1(M(H^\infty),\Z)\rightarrow H^1(Z,\Z)$.  Thus Corollary \ref{cor3.6} and Remark \ref{rem10.1} imply that there is a map $g\in\mathcal O(M(H^\infty),X)$ such that maps $f|_Z$ and $g|_Z$ are homotopic. Further, since $Z$
is the inverse limit of the inverse system of compacta $\{\bar V,\subseteq\,\}$, where $V$ runs over the directed set (with the order given by inclusions of sets) of open neighbourhoods of $Z$, the previous statement implies that $f|_V$ and $g|_V$ are homotopic on an open neighbourhood $V\Subset U$ of $Z$, see, e.g., \cite[Lm.\,1]{Li} and the reference there.
\end{proof}

Using this lemma, as in the proof of Theorem \ref{nonlinrunge} we obtain that for some open neighbourhood $W\Subset V$ of $Z$ there exists $h\in\mathcal O(W,G)$ such that $f|_W=h\cdot g|_W$. Here $G$ is a simply connected complex Banach Lie group such that $X$ is the complex Banach homogeneous space with respect to the action of $G$. Further, according to the particular case of part (2) of the theorem established above, there is a map $\tilde h\in\mathcal O(M(H^\infty),G)$ such that $\tilde h|_Z=h|_Z$. We set $\tilde f:=\tilde h\cdot g$. Then $\tilde f\in \mathcal O(M(H^\infty),X)$ and $\tilde f|_Z= h|_Z\cdot g|_Z=f_Z$. 

The proof of the theorem is complete.
\end{proof}

\sect{Proofs of Theorems \ref{bmoa1}, \ref{teo4.1}, \ref{teo4.5}, \ref{teo4.6} and Proposition \ref{prop4.4}}
\begin{proof}[Proof of Theorem \ref{bmoa1}]
Let $\mathfrak U=(U_i)_{i\in I}$ be a finite open cover of $\Co\mathbb T^n$ by simply connected sets. Then  
there are maps $\varphi_i\in\mathcal O(U_i,\Co^n)$ such that $\pi\circ\varphi_i={\rm id}_{U_i}$, $i\in I$. In particular, $c_{ij}:=\varphi_{i}-\varphi_j\in C(U_i\cap U_j,\Gamma)$ so that $c=\{c_{ij}\}_{i,j\in I}$ is an additive $1$-cocycle on $\mathfrak U$ with values in $\Gamma$. 

Suppose $f\in\mathcal O(\Di,\Co\mathbb T^n)$ is the restriction of a continuous map $\hat f\in C(M(H^\infty),\Co\mathbb T^n)$.  We set $\mathfrak V=(V_i)_{i\in I}$, $V_i:=f^{-1}(U_i)$, $i\in I$, and consider additive $\Gamma$-valued $1$-cocycle  $\hat f^*c=\{\hat f^*c_{ij}\}_{i,j\in I}$ on cover $\mathfrak V$ of  $M(H^\infty)$. Since $\Gamma$ is isomorphic to $\Z^{2n}$, there are linearly independent over $\RR$ vectors $v_1,\dots, v_{2n}\in\Co^n$ such that $\Gamma=\Z v_1 \oplus \Z v_2 \oplus\cdots\oplus \Z v_{2n} $. Hence, each $\hat f^*c_{ij}=\sum_{k=1}^{2n}c_{ij}^k v_k$ for some $c_{ij}^k\in C(V_i\cap V_j,\Z)$ and $c^k:=\{c_{ij}^k\}_{i,j\in I}$ are integer-valued $1$-cocycles on $\mathfrak V$, $1\le k\le 2n$. Cocycles $c^k$ determine holomorphic principal $\Co$-bundles on $M(H^\infty)$ which are trivial by Corollary \ref{cor1.5}\,(1). Thus there exist functions $g_i^k\in \mathcal O(V_i)$ such that  
\begin{equation}\label{eq11.1}
g_i^k-g_j^k=c_{ij}^k\qquad {\rm on}\qquad  V_i\cap V_j,\quad i,j\in I,\ \, 1\le k\le 2n.
\end{equation}

On the other hand, since $\Di$ is contractible, restrictions $c^k|_{\Di}$ are trivial cocycles, i.e., there exist functions $n_i^k\in C(V_i\cap \Di,\Z)$ such that
\begin{equation}\label{eq11.2}
 n_i^k-n_j^k=c_{ij}^k\qquad{\rm on}\qquad (V_i\cap\Di)\cap ( V_j\cap\Di), \quad i, j\in I,\ \, 1\le k\le 2n.
\end{equation} 

Equations \eqref{eq11.1} and \eqref{eq11.2} show that functions 
$g_{i}^k-n_i^k\in \mathcal O(V_i\cap\Di)$, $i\in I$, glue together over all nonempty 
intersections $(V_i\cap\Di)\cap ( V_j\cap\Di)$ to determine functions $h^k\in \mathcal O(\Di)$, $k\in\{1,\dots,2n\}$. By definition, each $e^{2\pi\sqrt{-1}\, h^k}\in\mathcal O(\Di,\Co^*)$ extends to a function in $\mathcal O(M(H^\infty),\Co^*)$. In particular, the imaginary part of each $h^k$ is bounded and so each $h_k\in {\rm BMOA}$, see, e.g., \cite[Ch.\,VI,\,Th.\,1,5]{Ga}.  

Next, let us consider maps $\hat f^*\varphi_i-\sum_{k=1}^{2n}g_i^k v_k\in\mathcal O(V_i,\Co^n)$, $i\in I$. By the definition of cocycle $c$ and due to equation \eqref{eq11.1}, these maps glue together over all nonempty intersections $V_i\cap V_j$ to determine a map $g\in\mathcal O(M(H^\infty),\Co^n)$. These imply that map $\tilde f\in \mathcal O(\Di,\Co^n)$, 
\begin{equation}\label{eq11.3}
\tilde f(x)=(\hat f^*\varphi_i)(x)-\sum_{k=1}^{2n}n_i^k(x)v_k,\qquad x\in V_i\cap\Di,\ \, i\in I,
\end{equation}
is well-defined and $\tilde f=g+\sum_{k=1}^{2n}h^k v_k$ on $\Di$. Therefore, since all $h^k\in {\rm BMOA}$, all coordinates $\tilde f_k$ of  map $\tilde f=(\tilde f_1,\dots,\tilde f_n)$ belong to ${\rm BMOA}$. Also, according to \eqref{eq11.3},
\[
f=\pi\circ\tilde f.
\]
This completes the proof of the first part of the theorem.\smallskip

Conversely, suppose that $f=\pi\circ\tilde f$ for some $\tilde f=(\tilde f_1,\dots,\tilde f_n)\in\mathcal O(\Di,\Co^n)$ with all $\tilde f_k\in {\rm BMOA}$. We must show that $f$ extends to a map in $C(M(H^\infty),\Co\mathbb T^n)$. 

In fact, as follows from \cite[Th.1.11]{Br4}
there exist an open finite cover $\mathfrak U=(U_i)_{i\in I}$ of $M(H^\infty)$ and locally constant continuous maps $c_i\in C(U_i\cap\Di,\mathbb C^n)$ such that 
\begin{equation}\label{eq11.4}
\sup_{x\in U_i\cap\Di,\, i\in I}\|\tilde f(x)-c_i(x)\|_{\Co^n}<\infty .
\end{equation}
Let $\Pi\Subset\Co^n$ be the symmetric convex hull of vectors $v_1,\dots, v_{2n}$ generating $\Gamma$. Then $\Pi$ is the fundamental compact under the action of $\Gamma$ on $\Co^n$, i.e.,
\[
\Co^n=\bigcup_{z\in\Gamma}(z+\Pi) .
\]
Let $V$ be a connected component of  $U_i\cap\Di$. By definition, $c_i|_V$ is a constant vector. Thus there is a vector $d_{i,V}\in\Gamma$ such that $c_i|_V-d_{i,V}\in\Pi$. In this way we construct  locally constant maps $d_i\in C(U_i\cap\Di,\Gamma)$,  $d_i|_V:=d_{i,V}$, $V\subset U_i\cap\Di$ is clopen, such maps $c_i-d_i$ have ranges in $\Pi$, $i\in I$. From here and \eqref{eq11.4} we get
\begin{equation}\label{eq11.5}
\sup_{x\in U_i\cap\Di,\, i\in I}\|\tilde f(x)-d_i(x)\|_{\Co^n}<\infty .
\end{equation}

Next, since each map $\tilde f|_{U_i\cap\Di}-d_i$ is bounded holomorphic, according to Su\'{a}rez theorem \cite[Th.3.2]{S1}, it admits an extension $g_i\in\mathcal O(U_i,\Co^n)$. Then continuous maps $\pi\circ g_i\in \mathcal O(U_i,\Co\mathbb T^n)$ satisfy $\pi\circ g_i=f$ on $U_i\cap\Di$,  $i\in I$. In particular,
\begin{equation}\label{eq11.6}
\pi\circ g_i-\pi\circ g_j=0\qquad {\rm on}\qquad (U_i\cap U_j)\cap\Di,\ \, i,j\in I.
\end{equation}
Since, due to the Carleson corona theorem, each $(U_i\cap U_j)\cap\Di$ is dense in $U_i\cap U_j$, equation \eqref{eq11.6} implies that  maps $\pi\circ g_i$ glue together over all nonempty intersections $U_i\cap U_j$ to determine a map $\hat f\in \mathcal (M(H^\infty),\Co\mathbb T^n)$ such that
$\hat f|_{\Di}=f$. 

This completes the proof of the theorem.
\end{proof}
\begin{proof}[Proof of Theorem \ref{teo4.1}]
Parts (a) and (b) follow straightforwardly from  Theorems \ref{teo3.5} and \ref{teo3.7}. For part (c) we apply Theorem \ref{teo3.6} to find for each $f\in\mathcal O(M(\mathscr A_Z),{\rm id}\,\mathfrak A)$ 
some $\alpha\in D$, a map $\tilde h\in \mathcal O(U, {\rm id}\,\mathfrak A)$ defined on a neighbourhood $U$ of $M(A_\alpha)$ and a map $g\in \mathcal O(M(\mathscr A_Z),\mathfrak A_0^{-1})_0$ such that $gfg^{-1}=F_\alpha^*\tilde h$. Using that $M(A_\alpha)$ is polynomially convex, we choose a {\em Weil polynomial polyhedron} $\Pi$ such that $M(A_\alpha)\Subset \Pi\Subset U$. Then the application of the Weil integral representation formula \cite{W} to $\tilde h|_{\Pi}$ implies that this map is  the uniform limit of a sequence of maps of the algebraic tensor product $\Co[z_1,\dots, z_{\#\alpha}]\otimes\mathfrak A$ restricted to $\Pi$. (Here $\Co[z_1,\dots, z_{\#\alpha}]$ is the ring of holomorphic polynomials on $\Co^{\#\alpha}$.) Hence, map $h:=F_\alpha^*\tilde h$ 
is uniformly approximated by a sequence of maps in $\widehat{A}_\alpha\otimes\mathfrak A$ as required.\end{proof}
\begin{proof}[Proof of Proposition \ref{prop4.4}]
For a fixed $p\in\mathfrak A_l^{-1}$ consider  holomorphic  map $\pi^p: \mathfrak A_0^{-1}\rightarrow \mathfrak A_l^{-1}$, $\pi^p(g):=gp$. Its differential
$d\pi_{1_{\mathfrak A}}^p\in L(\mathfrak A)$  at $1_{\mathfrak A}$ is given by the formula  $d\pi_{1_{\mathfrak A}}^p(a):=ap$.\\
Let us show that  $d\pi_{1_{\mathfrak A}}^p$ is surjective and its kernel is a complemented subspace of $\mathfrak A$. 

Indeed, if $q\in\mathfrak A$ is such that $q p=1_{\mathfrak A}$, then linear map $r:\mathfrak A\rightarrow\mathfrak A$, $r(a):=aq$,
is a continuous right inverse of $d\pi_{1_{\mathfrak A}}^p$ which gives the required statement. 

This and the implicit function theorem (see, e.g., \cite[Prop.\,1.2]{R}) imply that there exist open neighbourhoods $U_p$ of $0\in\mathfrak A$ and $V_p$ of $p$ such that $\pi^p(U_p)=V_p$. From here one deduces easily that  $\mathfrak A_0^{-1}$ acts transitively on each connected component of $\mathfrak A_l^{-1}$.

In fact, suppose  $\gamma : [0,1]\rightarrow \mathfrak A_l^{-1}$ is a path joining left-invertible elements $p$ and $q$. Then for each $t\in [0,1]$ there is $\varepsilon_t>0$ such that $\gamma([0,1]\cap (t-\varepsilon_t, t+\varepsilon_t))\subset V_{\gamma(t)}$. This and compactness of $[0,1]$ show that there exist points $0=t_1<t_2<\cdots <t_{k+1}=1$ such that $\gamma([t_i,t_{i+1}])\subset V_{\gamma(t_i)}$ for all $1\le i\le k$. In turn, there are elements $g_i\in U_{\gamma(t_i)}\subset \mathfrak A_0^{-1}$ such that $\gamma(t_{i+1})=g_i\gamma(t_i)$ for all $1\le i\le k$. This yields $q= (g_{k}\cdots g_1)\,p$ completing the proof of the claim.

Thus according to the definition of Section 3.2, each connected component of $\mathfrak A_l^{-1}$ is a complex Banach homogeneous space under the action of $\mathfrak A_0^{-1}$.
\end{proof}
\begin{proof}[Proof of Theorem \ref{teo4.5}]
The results follow from Theorems \ref{teo3.6} and \ref{teo3.7} and the Weil integral representation formula as in the proof of Theorem \ref{teo4.1} above.
\end{proof}
\begin{proof}[Proof of Theorem \ref{teo4.6}]
Let us show that if $f\in \mathcal O(M(\mathscr A_Z),\mathfrak A_l^{-1})$, then 
$ f\in\mathcal O(M(\mathscr A_Z),\mathfrak  A)_l^{-1}$. 
Indeed, according to Theorem \ref{teo4.5}\,(b) it suffices to assume also that $f\in \cup_{\alpha\in D}\,\widehat{A}_\alpha\otimes_{\varepsilon} \mathfrak A\, \bigl(\subset \mathcal O(M(\mathscr A_Z))\otimes_{\varepsilon}\mathfrak A\bigr)$. Then $f\in \mathcal O(M(\mathscr A_Z),\mathfrak A)_l^{-1}$ by a special case of the  Bochner-Phillips-Allan-Markus-Sementsul theory, see \cite[Th.\,2.2]{V} and references therein. 

Thus, $ \mathcal O(M(\mathscr A_Z),\mathfrak A_l^{-1})\subset \mathcal O(M(\mathscr A_Z),\mathfrak  A)_l^{-1}$. 

The converse implication $\mathcal O(M(\mathscr A_Z),\mathfrak  A)_l^{-1}\subset \mathcal O(M(\mathscr A_Z),\mathfrak A_l^{-1})$ is obvious. 

The statement of the theorem for  algebra $C(M(\mathscr A_Z),\mathfrak A)$ follows from \cite[Th.\,2.2]{V}  as $C(M(\mathscr A_Z),\mathfrak A)=C(M(\mathscr A_Z))\otimes_{\varepsilon}\mathfrak A$ by the approximation property of $C(M(\mathscr A_Z))$.

Finally, the statement of the second part of theorem follows from Theorem \ref{teo3.5}.
\end{proof}

\end{document}